\def\doi#1{   {\href{http://dx.doi.org/#1}
   {{\mdseries\ttfamily DOI}}}}
\def\Xint#1{\mathchoice
  {\XXint\displaystyle\textstyle{#1}}%
  {\XXint\textstyle\scriptstyle{#1}}%
  {\XXint\scriptstyle\scriptscriptstyle{#1}}%
  {\XXint\scriptscriptstyle\scriptscriptstyle{#1}}%
  \!\int}
\def\XXint#1#2#3{{\setbox0=\hbox{$#1{#2#3}{\int}$}
  \vcenter{\hbox{$#2#3$}}\kern-.5\wd0}}
\def\dashint{\Xint-}
\newcommand{\al}{\alpha}                
\newcommand{\om}{\Omega}                \newcommand{\pa}{\partial}
\newcommand{\va}{\varepsilon}           
\newcommand{\be}{\begin{equation}}      \newcommand{\ee}{\end{equation}}
\newcommand{\R}{\mathbb{R}}
\DeclareMathOperator{\dist}{dist}
\DeclareMathOperator{\diam}{diam}
\DeclareMathOperator{\BMO}{BMO}
\DeclareMathOperator{\VMO}{VMO}
\def\<{\langle}             \def\>{\rangle}
\def\({\left(}                 \def\){\right)}
\numberwithin{equation}{section}
\theoremstyle{plain}
\newtheorem{thm}{Theorem}[section]
\newtheorem{cor}[thm]{Corollary}
\newtheorem{lem}[thm]{Lemma}
\theoremstyle{definition}
\newtheorem{defn}[thm]{Definition}
\newtheorem{rem}[thm]{Remark}
\title[Uniform Estimates of Resolvents in Homogenization Theory of Elliptic Systems]
{Uniform Estimates of Resolvents in Homogenization Theory of Elliptic Systems}
\author{Wei Wang}
\address{School of Mathematical Science\\Peking University\\Bejing 100871, P. R. China}
		\email{2201110024@stu.pku.edu.cn}
\begin{document}
\bibliographystyle{plain}
\date{}

\maketitle

\begin{abstract}
\noindent
In this paper, we study the estimates of resolvents $ R(\lambda,\mathcal{L}_{\va})=(\mathcal{L}_{\va}-\lambda I)^{-1} $, where 
$$
\mathcal{L}_{\va}=-\operatorname{div}(A(x/\va)\nabla)
$$ 
is a family of second elliptic operators with symmetric, periodic and oscillating coefficients defined on a bounded domain $ \om $ with $ \va>0 $. For $ 1<p<\infty $, we will establish uniform $ L^p\to L^p $, $ L^p\to W_0^{1,p} $, $ W^{-1,p}\to L^p $ and $ W^{-1,p}\to W_0^{1,p} $ estimates by using the real variable method. Meanwhile, we use Green functions for operators $ \mathcal{L}_{\va}-\lambda I $ to study the asymptotic behavior of $ R(\lambda,\mathcal{L}_{\va}) $ and obtain convergence estimates in $ L^p\to L^p $, $ L^p\to W_0^{1,p} $ norm.\\
\textbf{Keywords:} Homogenization; Uniform estimates; Resolvents; Convergence.
\end{abstract}

\section{Introduction and main results}
\noindent
The main purpose of this paper is to study estimates of resolvents for a family of elliptic operators with rapidly oscillating and symmetric coefficients. Precisely speaking, we consider the resolvents of operators
\begin{align}
\mathcal{L}_{\va}=-\operatorname{div}(A(x/\va)\nabla)=-\frac{\pa}{\pa x_i}\left\{a_{ij}^{\al\beta}\left(\frac{x}{\va}\right)\frac{\pa}{\pa x_j}\right\},\quad\va>0,\label{ho}
\end{align}
where $ 1\leq i,j\leq d $ and $ 1\leq\al,\beta\leq m $. Here, $ d\geq 2 $ denotes the dimension of Euclidean space and $ m\geq 1 $ is the number of equations in the system. The summation convention for repeated indices is used throughout the paper. In discussion, we will always assume that the measurable matrix-valued functions $ A(y)=(a_{ij}^{\al\beta}(y)):\R^d\to\R^{m^2\times d^2} $ satisfy the symmetry condition
\begin{align}
a_{ij}^{\al\beta}(y)=a_{ji}^{\beta\al}(y)\text{ for any }1\leq i,j\leq d,1\leq\al,\beta\leq m\text{ and } y\in\R^d,\label{sy}
\end{align}
the uniform ellipticity condition 
\begin{align}
\mu|\xi|^2\leq a_{ij}^{\al\beta}(y)\xi_{i}^{\al}\xi_{j}^{\beta}\leq \mu^{-1}|\xi|^2\text{ for any }y\in\R^d \text{ and }\xi=(\xi_{i}^{\al})\in\R^{m\times d},\label{el}
\end{align} 
where $ \mu>0 $ is a positive constant and the periodicity condition
\begin{align}
A(y+z)=A(y)\text{ for any }y\in\R^d\text{ and }z\in\mathbb{Z}^d.\label{pe}
\end{align}
To ensure $ L^p $, $ W^{1,p} $ and Lipschitz estimates of operators $ \mathcal{L}_{\va} $, in some situations, we need more smoothness conditions on the coefficients matrix $ A $, i.e. the Hölder regularity of $ A $,
\begin{align}
|A(x)-A(y)|\leq \tau|x-y|^{\nu}\text{ for any }x,y\in\R^d,\label{Hol}
\end{align}
where $ \tau>0,\nu\in(0,1) $ and the $ \VMO $ condition (vanishing mean oscillation condition),
\begin{align}
\sup_{x\in\R^d,0<\rho<t}\dashint_{B(x,\rho)}\left|A(y)-\dashint_{B(x,\rho)}A\right|dy\leq\omega(t)\text{ and } \lim_{t\to 0}\omega(t)=0,\label{VMO}
\end{align}
where $ \omega(t) $ is a continuous nondecreasing function. To make the notation simpler, we denote that
 $ A\in\VMO(\R^d) $ if $ A $ satisfies the $ \VMO $ condition $ \eqref{VMO} $. 

Assume that $ A(y) =(a_{ij}^{\al\beta}(y)) $, the coefficient matrix of $ \mathcal{L}_{\va}=-\operatorname{div}(A(x/\va)\nabla) $, satisfies $ \eqref{el} $ and $ \eqref{pe} $. Let $ \chi_{j}^{\beta}(y)=(\chi_{j}^{\al\beta}(y)) $ denote the matrix of correctors for $ \mathcal{L}_1=-\operatorname{div}(A(x)\nabla) $ in $ \mathbb{R}^d $, where $ \chi_j^{\beta}(y)=(\chi_j^{1\beta}(y),...,\chi_j^{m\beta}(y))\in H_{\operatorname{per}}^1(Y;\mathbb{R}^m) $ is defined by the following cell problem
\begin{align}
\left\{\begin{aligned}
&-\operatorname{div}(A(x)\nabla\chi_{j}^{\beta})=\operatorname{div}(A(x)\nabla P_{j}^{\beta})\text{ in }[0,1)^d,\\
&\chi_{j}^{\beta}\text{ is periodic with respect to }\mathbb{Z}^d\text{ and }\int_{Y}\chi_{j}^{\beta}dy=0,
\end{aligned}\right.\label{correctors}
\end{align}
where $ 1\leq j\leq d $, $ 1\leq\beta\leq m $, $ Y=[0,1)^d\cong \mathbb{R}^d/\mathbb{Z}^d $ and $ P_{j}^{\beta}=(P_{j}^{\al\beta}(x))=(x_j\delta^{\al\beta}) $ with $ \delta^{\al\beta}=1 $ if $ \al=\beta $, $ \delta^{\al\beta}=0 $ otherwise. The homogenized operator is defined by $ \mathcal{L}_{0}=-\operatorname{div}(\widehat{A}\nabla) $, where the coefficients $ \widehat{A}=(\widehat{a}_{ij}^{\al\beta}) $ are given by
\begin{align}
\widehat{a}_{ij}^{\al\beta}=\int_{Y}\left[a_{ij}^{\al\beta}(y)+a_{ik}^{\al\gamma}(y)\frac{\pa}{\pa y_k}\chi_{j}^{\gamma\beta}(y)\right]dy.\label{hoco}
\end{align}
It is easy to show that if $ A $ satisfies $ \eqref{sy} $, then $ \widehat{A} $ is also symmetric, that is
\begin{align}
\widehat{a}_{ij}^{\al\beta}=\widehat{a}_{ji}^{\beta\al}\text{ for any }1\leq i,j\leq d\text{ and } 1\leq\al,\beta\leq m.\label{syl0}
\end{align}
Moreover, if $ A $ satisfies $ \eqref{el} $, then there exists $ \mu_1>0 $ depending only on $ \mu $, such that
\begin{align}
\mu_1|\xi|^2\leq \widehat{a}_{ij}^{\al\beta}\xi_{i}^{\al}\xi_{j}^{\beta}\leq \mu_1^{-1}|\xi|^2\text{ for any }\xi=(\xi_{i}^{\al})\in\R^{m\times d}.\label{ell0}
\end{align} 
For the sake of simplicity, we set $ \min(\mu,\mu_1) $ as new $ \mu $ in the rest of the paper. Let
\begin{align}
b_{ij}^{\al\beta}(y)=\widehat{a}_{ij}^{\al\beta}-a_{ij}^{\al\beta}(y)-a_{ik}^{\al\gamma}(y)\frac{\partial}{\pa y_k}\chi_{j}^{\gamma\beta}(y),\label{Flux coefficients}
\end{align}
where $ 1\leq i,j\leq d $ and $ 1\leq\al,\beta\leq m $. It is easy to see that $ \int_{Y}b_{ij}^{\al\beta}(y)dy=0 $ by $ \eqref{hoco} $. Then in view of $ \eqref{correctors} $ and $ \eqref{hoco} $, there exists the flux corrector $ (F_{kij}^{\al\beta}(y))_{1\leq \al\leq m}\in H_{\operatorname{per}}^1(Y;\R^m) $ such that 
\begin{align}
b_{ij}^{\al\beta}(y)=\frac{\pa}{\partial y_k}\left\{F_{kij}^{\al\beta}(y)\right\}\quad\text{and}\quad  F_{kij}^{\al\beta}(y)=-F_{ikj}^{\al\beta}(y)\label{Flux correctors}
\end{align}
for any $ 1\leq i,j,k\leq d $ and $ 1\leq\al,\beta\leq m $. Moreover, $ (F_{kij}^{\al\beta})\in L^{\infty}(Y) $ if $ (\chi_{j}^{\al\beta}) $ is Hölder continuous. For details about the proof of this fact, one can refer to Chaper 2 of \cite{Shen2}.

Let $ A $ satisfy $ \eqref{sy} $, $ \eqref{el} $ and $ \eqref{pe} $. For $ F\in L^2(\om;\mathbb{C}^m) $ and $ \va\geq 0 $, we can define a linear operator $ T_{\va}:L^2(\om;\mathbb{C}^m)\to H_0^1(\om;\mathbb{C}^m)\subset L^2(\om;\mathbb{C}^m) $ by $ T_{\va}(F)=u_{\va}\in H_0^1(\om;\mathbb{C}^m) $ such that $ \mathcal{L}_{\va}(u_{\va})=F $ in $ \om $ and $ u_{\va}=0 $ on $ \pa\om $. Using standard arguments in \cite{Kenig3}, operators $ T_{\va} $  with $ \va\geq 0 $ are positive and self-adjoint. Therefore, applying the spectrum theory, it is natural to consider properties of resolvents $ R(\lambda,\mathcal{L}_{\va})=(\mathcal{L}_{\va}-\lambda I)^{-1} $ with $ \lambda\in\mathbb{C}\backslash(0,\infty) $. To simplify notations, we will use $ R(\lambda,\mathcal{L}) $ to denote the resolvent of the elliptic operator $ \mathcal{L} $ in the rest of the paper. In order to better characterize resolvents of operators, for $ \lambda=|\lambda|e^{i\theta}\in\mathbb{C}\backslash(0,\infty) $, we define a constant $ c(\lambda,\theta) $ by
\begin{align}
c(\lambda,\theta)=\left\{\begin{matrix}
1&\text{if}&\theta\in[\pi/2,3\pi/2]\text{ or }\lambda=0\\
|\sin\theta|^{-1}&\text{if}&\theta\in(0,\pi/2)\cup(3\pi/2,2\pi)\text{ and }|\lambda|>0.
\end{matrix}\right.\label{ac}
\end{align}
To present our results more conveniently and study the estimates of resolvents, we need to introduce the matrix of Dirichlet correctors $ \Phi_{\va}(x)=(\Phi_{\va,j}^{\beta}(x))_{1\leq j\leq d,1\leq \beta\leq m} $ in $ \om $, defined by
\begin{align}
\mathcal{L}_{\va}(\Phi_{\va,j}^{\beta}(x))=0\text{ in }\om\quad\text{and}\quad \Phi_{\va,j}^{\beta}(x)=P_j^{\beta}(x)\text{ on }\partial\om.\label{Dirichlet correctors}
\end{align}
The Dirichlet correctors were first introduced in \cite{Av1} to study the uniform Lipschitz estimates of homogenization problems. It is known that if $ A $ satisfies $ \eqref{el} $, $ \eqref{pe} $, $ \eqref{Hol} $ and $ \om $ is a bounded $ C^{1,\eta} $ $ \eta\in(0,1) $ domain in $ \R^d $ with $ d\geq 2 $, then
\begin{align}
\|\Phi_{\va,j}^{\beta}-P_j^{\beta}\|_{L^{\infty}(\om)}\leq C\va\quad\text{and}\quad \|\nabla\Phi_{\va}\|_{L^{\infty}(\om)}\leq C,\label{Estimate for Dirichlet correctors}
\end{align}
where $ C $ depends only on $ \mu,d,m,\tau,\nu,\eta $ and $ \om $. The following are the main results of this paper. For the sake of simplicity, we will denote $ \diam(\om) $, the diameter of the bounded domain $ \om $ in $ \R^d $ with $ d\geq 2 $ by $ R_0 $ throughout this paper. 

\begin{thm}[$ L^2 $ and $ H_0^1 $ convergence of resolvents]\label{Approximation 1}
Suppose that $ d\geq 2 $, $ A $ satisfies $ \eqref{sy} $, $ \eqref{el} $, $ \eqref{pe} $ and $ \eqref{Hol} $. Let $ \om $ be a bounded $ C^{1,1} $ domain in $ \mathbb{R}^d $ and $ \lambda=|\lambda|e^{i\theta}\in\mathbb{C}\backslash(0,\infty) $. For $ \va\geq 0 $ and $ F\in L^2(\om;\mathbb{C}^m) $, let $ u_{\va,\lambda}\in H_0^1(\om;\mathbb{C}^m) $ be the unique solution of the Dirichlet problem $
(\mathcal{L}_{\va}-\lambda I)(u_{\va,\lambda})=F $ in $ \om $ and $ u_{\va,\lambda}=0 $ on $ \pa\om $. Then
\begin{align}
\|u_{\va,\lambda}-u_{0,\lambda}\|_{L^2(\om)}&\leq C\va c^2(\lambda,\theta)(R_0^{-2}+|\lambda|)^{-\frac{1}{2}}\|F\|_{L^2(\om)},\label{Convergence rate 11}\\
\|u_{\va,\lambda}-u_{0,\lambda}-(\Phi_{\va,j}^{\beta}-P_{j}^{\beta})\frac{\pa u_{0,\lambda}^{\beta}}{\pa x_j}\|_{H_0^1(\om)}&\leq C\va c^2(\lambda,\theta)\|F\|_{L^2(\om)},\label{Convergence rate 1}
\end{align}
where $ C $ depends only on $ \mu,d,m,\tau,\nu,\om $ and $ \Phi_{\va} $ is given by $ \eqref{Dirichlet correctors} $. In operator forms,
\begin{align}
\|R(\lambda,\mathcal{L}_{\va})-R(\lambda,\mathcal{L}_{0})\|_{L^2(\om)\to L^2(\om)}&\leq C\va c^2(\lambda,\theta)(R_0^{-2}+|\lambda|)^{-\frac{1}{2}},\label{Operator estimate 11}\\
\|R(\lambda,\mathcal{L}_{\va})-R(\lambda,\mathcal{L}_{0})-K_{\va}(\lambda)\|_{L^2(\om)\to H_0^1(\om)}&\leq C\va c^2(\lambda,\theta),\label{Operator estimate 1}
\end{align}
where $ K_{\va}(\lambda) $ are the operator correctors given by the formula
\begin{align}
K_{\va}(\lambda)=\{K_{\va}^{\al}(\lambda)\}_{1\leq\al\leq m}=\{(\Phi_{\va,j}^{\al\beta}(x)-P_{j}^{\al\beta}(x))\pa_{x_j}R(\lambda,\mathcal{L}_{0})^{\beta}\}_{1\leq\al\leq m}.\label{Operator corrector}
\end{align}
\end{thm}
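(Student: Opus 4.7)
The plan is to adapt the two-scale Dirichlet-corrector method of \cite{Shen2} to the resolvent setting, being careful to keep track of how the parameter $\lda$ enters every inequality. First, by testing the weak formulation of $(\mathcal{L}_\va-\lda I)u_{\va,\lda}=F$ against $\bar u_{\va,\lda}$ and separating the real and imaginary parts of the identity $\int A(x/\va)\nabla u_{\va,\lda}\cdot\nabla\bar u_{\va,\lda}-\lda\int|u_{\va,\lda}|^2 = \int F\cdot\bar u_{\va,\lda}$, one obtains the basic resolvent bounds
\begin{equation*}
\|u_{\va,\lda}\|_{L^2(\om)}\leq Cc(\lda,\theta)(R_0^{-2}+|\lda|)^{-1}\|F\|_{L^2(\om)},\qquad \|\nabla u_{\va,\lda}\|_{L^2(\om)}\leq Cc(\lda,\theta)(R_0^{-2}+|\lda|)^{-1/2}\|F\|_{L^2(\om)},
\end{equation*}
uniformly in $\va\geq 0$. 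Because $\widehat{A}$ has constant coefficients and $\om$ is $C^{1,1}$, standard $H^2$-regularity applied to $\mathcal{L}_0 u_{0,\lda}=F+\lda u_{0,\lda}$ together with the above bounds yields $\|u_{0,\lda}\|_{H^2(\om)}\leq C\bigl(\|F\|_{L^2}+|\lda|\|u_{0,\lda}\|_{L^2}\bigr)\leq Cc(\lda,\theta)\|F\|_{L^2(\om)}$.

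Next, introduce the two-scale remainder
\begin{equation*}
w_\va := u_{\va,\lda}-u_{0,\lda}-(\Phi_{\va,j}^\beta-P_j^\beta)\pa_{x_j}u_{0,\lda}^\beta,
\end{equation*}
which lies in $H_0^1(\om;\mathbb{C}^m)$ by \eqref{Dirichlet correctors}. A direct computation using $(\mathcal{L}_\va-\lda I)u_{\va,\lda}=(\mathcal{L}_0-\lda I)u_{0,\lda}=F$, $\mathcal{L}_\va \Phi_{\va,j}^\beta=0$, the identity $a_{ij}^{\al\beta}(y)-\widehat a_{ij}^{\al\beta}+a_{ik}^{\al\gamma}(y)\pa_{y_k}\chi_j^{\gamma\beta}(y) = -\pa_{y_k}F_{kij}^{\al\beta}(y)$, and the antisymmetry \eqref{Flux correctors} rewrites
\begin{equation*}
(\mathcal{L}_\va-\lda I)w_\va = \va\,\pa_{x_i}\Psi_\va^i + \lda(\Phi_{\va,j}^\beta-P_j^\beta)\pa_{x_j}u_{0,\lda}^\beta,
\end{equation*}
where each component of $\Psi_\va^i$ is a bounded oscillating factor (built from $F_{kij}^{\al\beta}(x/\va)$, $\chi_j^{\al\beta}(x/\va)$, or $\va^{-1}(\Phi_{\va,j}^\beta - P_j^\beta)$) times a second derivative of $u_{0,\lda}$. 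Testing against $\bar w_\va$, applying \eqref{Estimate for Dirichlet correctors}, the $L^\infty$-bound on the flux correctors, and the $H^2$-estimate on $u_{0,\lda}$, and then invoking the basic resolvent inequality for $w_\va$ itself to absorb the new zeroth-order $\lda$-term, produces $\|w_\va\|_{H_0^1(\om)}\leq C\va c^2(\lda,\theta)\|F\|_{L^2}$, which is \eqref{Convergence rate 1}; one factor of $c(\lda,\theta)$ comes from the $H^2$-bound on $u_{0,\lda}$, the other from the resolvent inequality applied to $w_\va$.

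For \eqref{Convergence rate 11}, the piece $(\Phi_{\va,j}^\beta-P_j^\beta)\pa_{x_j}u_{0,\lda}^\beta$ is controlled in $L^2$ by $C\va\|\nabla u_{0,\lda}\|_{L^2}\leq C\va c(\lda,\theta)(R_0^{-2}+|\lda|)^{-1/2}\|F\|_{L^2}$ directly, which already has the desired rate. For $\|w_\va\|_{L^2}$ I would invoke duality: for $G\in L^2(\om;\mathbb{C}^m)$, let $z_{\va,\lda}$ solve the adjoint problem $(\mathcal{L}_\va-\bar\lda I)z_{\va,\lda}=G$ in $\om$ with $z_{\va,\lda}=0$ on $\pa\om$. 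By \eqref{sy} the operator $\mathcal{L}_\va$ is self-adjoint, so $\int_\om w_\va\cdot\bar G\,dx = \int_\om (\mathcal{L}_\va-\lda I)w_\va\cdot\bar z_{\va,\lda}\,dx$; substituting the expression for $(\mathcal{L}_\va-\lda I)w_\va$ above and integrating the divergence piece by parts once converts the right-hand side into $-\va\int \Psi_\va^i\,\pa_{x_i}\bar z_{\va,\lda}$ plus the zeroth-order contribution, each of which is bounded by $C\va c^2(\lda,\theta)(R_0^{-2}+|\lda|)^{-1/2}\|F\|_{L^2}\|G\|_{L^2}$ upon applying the basic resolvent estimates to $u_{0,\lda}$ and $z_{\va,\lda}$. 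Taking the supremum over $G$ gives \eqref{Convergence rate 11}, and the operator statements \eqref{Operator estimate 11}--\eqref{Operator estimate 1} follow immediately from the definitions of $R(\lda,\mathcal{L}_\va)$ and $K_\va(\lda)$. The main technical point—and the only genuinely new feature compared to the classical $\lda=0$ case—is the careful bookkeeping of the zeroth-order term $\lda(\Phi_{\va,j}^\beta-P_j^\beta)\pa_{x_j}u_{0,\lda}^\beta$, which is the source of the extra factor $c(\lda,\theta)$ appearing throughout the estimates.
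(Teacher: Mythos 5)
Your overall strategy---the two-scale remainder $w_{\va,\lambda}=u_{\va,\lambda}-u_{0,\lambda}-(\Phi_{\va,j}^{\beta}-P_j^{\beta})\pa_{x_j}u_{0,\lambda}^{\beta}$, energy estimates with careful tracking of $\lambda$, and the $H^2$ bound on $u_{0,\lambda}$---is exactly the paper's, but your stated identity for $(\mathcal{L}_{\va}-\lambda I)w_{\va,\lambda}$ is wrong in a way that breaks the argument. The correct computation (see $\eqref{Equality 1}$) produces, in addition to the divergence-form pieces $-\va\,\pa_{x_i}\{F_{jik}^{\al\gamma}(x/\va)\pa_j\pa_k u_{0,\lambda}^{\gamma}\}$ and $\pa_{x_i}\{a_{ij}^{\al\beta}(x/\va)[\Phi_{\va,k}^{\beta\gamma}-x_k\delta^{\beta\gamma}]\pa_j\pa_ku_{0,\lambda}^{\gamma}\}$ and the zeroth-order $\lambda$-term, the \emph{non-divergence} term
\begin{equation*}
a_{ij}^{\al\beta}(x/\va)\,\frac{\pa}{\pa x_j}\bigl[\Phi_{\va,k}^{\beta\gamma}(x)-x_k\delta^{\beta\gamma}-\va\chi_k^{\beta\gamma}(x/\va)\bigr]\,\frac{\pa^2u_{0,\lambda}^{\gamma}}{\pa x_i\pa x_k}.
\end{equation*}
This term cannot be absorbed into $\va\,\pa_{x_i}\Psi_{\va}^{i}$ with $\Psi_{\va}^{i}$ a bounded factor times $\nabla^2u_{0,\lambda}$: its coefficient $\nabla[\Phi_{\va}-P-\va\chi(\cdot/\va)]$ is only $O(\min\{1,\va\,\delta(x)^{-1}\})$, hence $O(1)$ (not $O(\va)$) in the boundary layer of width $\va$, and it sits outside any divergence. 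Your list of building blocks for $\Psi_{\va}^{i}$ ($F_{kij}(x/\va)$, $\chi_j(x/\va)$, $\va^{-1}(\Phi_{\va,j}^{\beta}-P_j^{\beta})$) accounts for every term except this one. If you simply bound it by $C\|\nabla^2u_{0,\lambda}\|_{L^2}\|w_{\va,\lambda}\|_{L^2}$ when testing against $\bar w_{\va,\lambda}$, no factor of $\va$ is produced and $\eqref{Convergence rate 1}$ does not follow.

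The missing ingredient is precisely the weighted estimate $\eqref{Claim 1}$,
\begin{equation*}
\|\nabla[\Phi_{\va,j}^{\beta}-P_j^{\beta}-\va\chi_j^{\beta}(\cdot/\va)]\,w_{\va,\lambda}\|_{L^2(\om)}\leq C\va\|\nabla w_{\va,\lambda}\|_{L^2(\om)},
\end{equation*}
which the paper proves by exploiting that $h_{\va}=\Phi_{\va,j_0}^{\beta_0}-P_{j_0}^{\beta_0}-\va\chi_{j_0}^{\beta_0}(\cdot/\va)$ satisfies $\mathcal{L}_{\va}(h_{\va})=0$ with $\|h_{\va}\|_{L^{\infty}(\om)}\leq C\va$, together with an integration by parts and Cauchy's inequality (an equivalent route is Hardy's inequality combined with the pointwise bound $|\nabla h_{\va}|\leq C\min\{1,\va\,\delta(x)^{-1}\}$, but either way a genuine lemma is needed here). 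With this claim in hand the rest of your outline goes through; your duality argument for $\eqref{Convergence rate 11}$ is a legitimate alternative to the paper's direct derivation from the $L^2$--$H^1$ interplay in $\eqref{wvadiyi}$, but it too must invoke the same weighted estimate (applied to the adjoint solution $z_{\va,\lambda}$) to handle the non-divergence term.
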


Theorem \ref{Approximation 1} is actually a quantitative result of the homogenization theory for the operator $ \mathcal{L}_{\va}-\lambda I $. In fact, if $ \om $ is a bounded Lipschitz domain in $ \R^d $ with $ d\geq 2 $ and $ A $ satisfies $ \eqref{el},\eqref{pe} $, it can be shown that $ u_{\va,\lambda}\to u_{0,\lambda} $ weakly in $ H_0^1(\om;\mathbb{C}^m) $ and strongly in $ L^2(\om;\mathbb{C}^m) $. For more details about the homogenization problems of elliptic systems, one can refer to \cite{Bensou} and \cite{Shen2}. 

In \cite{Su1}, assuming that $ \om $ is a bounded and $ C^{1,1} $ domain, the convergence rates of such resolvents are established, that is, for $ \lambda\in\mathbb{C}\backslash[0,\infty) $ and $ 0\leq\va<1 $,
\begin{align}
\|R(\lambda,\mathcal{L}_{\va})-R(\lambda,\mathcal{L}_{0})\|_{L^2(\om)\to L^2(\om)}&\leq Cc^2(\lambda,\theta)(\va|\lambda|^{-\frac{1}{2}}+\va^2),\label{su1l2}\\
\|R(\lambda,\mathcal{L}_{\va})-R(\lambda,\mathcal{L}_{0})-K_0(\va,\lambda)\|_{L^2(\om)\to H^1(\om)}&\leq Cc^2(\lambda,\theta)\va^{\frac{1}{2}},\label{su1l22}
\end{align}
where $ K_0(\va,\lambda) $ is some corrector of the homogenization problem and $ C $ depends only on $ \mu,d,m $ and $ \om $. These estimates are obtained by applying the results when $ \om=\R^d $ in \cite{Birman1}, \cite{Birman2} and some extension theorems. By using such approximation estimates for $ R(\lambda,\mathcal{L}_{\va}) $, \cite{Meshkova1} and \cite{Meshkova2} gave the convergence rates for homogenization problems of parabolic and hyperbolic systems in $ L^2 $ and $ H_0^1 $ space.  What is new for Theorem \ref{Approximation 1} is that we use different operator corectors and obtain a more brief proof under higher regularity assumptions of $ A $. The main method is developed in the proof of Theorem 2.4 in \cite{Kenig3}, which is used to deal with the case that $ \lambda=0 $. In Theorem 1.5 of \cite{Xu1}, the author generalized convergence results in \cite{Kenig3} to elliptic operators with lower order terms. Such results are somewhat similar to this paper, but this does not mean that the conclusions of this paper can be trivially covered. To some extent, Theorem \ref{Approximation 1} is a generalization of Theorem 2.4 in \cite{Kenig3} and Theorem 1.5 in \cite{Xu1}. The difference between this and Theorem 1.5 in \cite{Xu1} is that the constants on the right hand side of $ \eqref{Convergence rate 11} $ do not depend on the module of $ \lambda $, i.e. $ |\lambda| $. Moreover, we remark that these estimates in Theorem \ref{Approximation 1} can also be applied to evolution systems and obtain similar results given in \cite{Lin1}, \cite{Meshkova1} and \cite{Meshkova2}.

Given a sectorial domain
\begin{align}
\Sigma_{\theta_0}=\left\{\lambda=|\lambda|e^{i\theta}\in\mathbb{C}:|\lambda|>0,|\arg\theta|>\pi-\theta_0\right\},\label{td}
\end{align}
where $ \theta_0\in(0,\frac{\pi}{2}) $, we can obtain the following estimates. 

\begin{thm}[$ L^p $ and $ W_0^{1,p} $ estimates of resolvents]\label{Lp estimates of resolventsf}
Suppose that $ \va\geq 0 $ and $ d\geq 2 $. Let $ \lambda\in\Sigma_{\theta_0}\cup\{0\} $ with $ \theta_0\in(0,\frac{\pi}{2}) $ and $ \om $ be a bounded $ C^1 $ domain in $ \R^d $. Assume that $ A $ satisfies $ \eqref{sy} $, $ \eqref{el} $, $ \eqref{pe} $ and $ \eqref{VMO} $. Then for any $ 1<p<\infty $, $ F\in L^p(\om;\mathbb{C}^m) $ and $ f\in L^p(\om;\mathbb{C}^{m\times d}) $, there exists a unique $ u_{\va,\lambda}\in W_0^{1,p}(\om;\mathbb{C}^m) $ such that $ (\mathcal{L}_{\va}-\lambda I)(u_{\va,\lambda})=F+\operatorname{div}(f) $ in $ \om $, $ u_{\va,\lambda}=0 $ on $ \pa\om $
and satisfies the uniform estimates
\begin{align}
\|u_{\va,\lambda}\|_{L^p(\om)}&\leq C_{p,\theta_0}(R_0^{-2}+|\lambda|)^{-1}\|F\|_{L^p(\om)}+C_{p,\theta_0}(R_0^{-2}+|\lambda|)^{-\frac{1}{2}}\|f\|_{L^p(\om)},\label{Lpestiamesu3}\\
\|\nabla u_{\va,\lambda}\|_{L^p(\om)}&\leq C_{p,\theta_0}(R_0^{-2}+|\lambda|)^{-\frac{1}{2}}\|F\|_{L^p(\om)}+C_{p,\theta_0}\|f\|_{L^p(\om)},\label{LpW1pestiamesu3}
\end{align}
where $ C_{p,\theta_0} $ depends only on $ \mu,d,m,\omega(t),p,\theta_0 $ and $ \om $. In operator forms,
\begin{align}
\|R(\lambda,\mathcal{L}_{\va})\|_{L^p(\om)\to L^p(\om)}&\leq C_{p,\theta_0}(R_0^{-2}+|\lambda|)^{-1},\label{reLpLp}\\
\|R(\lambda,\mathcal{L}_{\va})\|_{W^{-1,p}(\om)\to L^p(\om)}&\leq C_{p,\theta_0}(R_0^{-2}+|\lambda|)^{-\frac{1}{2}},\label{reW-1pLp}\\
\|R(\lambda,\mathcal{L}_{\va})\|_{L^p(\om)\to W_0^{1,p}(\om)}&\leq C_{p,\theta_0}(R_0^{-2}+|\lambda|)^{-\frac{1}{2}},\label{reLpW1p}\\
\|R(\lambda,\mathcal{L}_{\va})\|_{W^{-1,p}(\om)\to W_0^{1,p}(\om)}&\leq C_{p,\theta_0},\label{reW-1pW1p}
\end{align} 
where $ W^{-1,p}(\om;\mathbb{C}^m)\triangleq(W_0^{1,p'}(\om;\mathbb{C}^m))^* $ with $ p'=\frac{p}{p-1} $ being the conjugate number of $ p $. 
\end{thm}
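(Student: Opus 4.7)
The plan is to combine an $L^2$ energy estimate with Shen's real-variable $L^p$ machinery and duality, prefaced by a rescaling that reduces matters to the unit scale. After rescaling so that $R_0^{-2}+|\lambda|$ is of unit order, the operator $\mathcal{L}_\va-\lambda I$ becomes a bounded perturbation of $\mathcal{L}_\va$, to which the known uniform $W^{1,p}$ theory for $\mathcal{L}_\va$ on $C^1$ domains with VMO coefficients applies.

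\textbf{Step 1 ($L^2$ baseline and rescaling).} For $\lambda\in\Sigma_{\theta_0}\cup\{0\}$, the sesquilinear form $B_\lambda(u,v)=\int_\om A(x/\va)\nabla u\cdot\overline{\nabla v}\,dx-\lambda\int_\om u\cdot\bar v\,dx$ satisfies $\operatorname{Re}B_\lambda(u,u)\geq\mu\|\nabla u\|_{L^2}^2+(\cos\theta_0)|\lambda|\,\|u\|_{L^2}^2$; combined with the Poincaré inequality $\|u\|_{L^2(\om)}\leq R_0\|\nabla u\|_{L^2(\om)}$, Lax--Milgram yields existence, uniqueness and the $L^2$ analogues of \eqref{Lpestiamesu3}--\eqref{LpW1pestiamesu3}, with constants involving $c(\lambda,\theta)\leq(\sin\theta_0)^{-1}$. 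Next, setting $\rho=(R_0^{-2}+|\lambda|)^{-1/2}$ and rescaling by $\tilde u(y)=u(\rho y)$, $\tilde F(y)=\rho^2 F(\rho y)$, $\tilde f(y)=\rho f(\rho y)$ on $\tilde\om=\rho^{-1}\om$ transforms the equation into $(\mathcal{L}_{\va/\rho}-\rho^2\lambda I)\tilde u=\tilde F+\operatorname{div}\tilde f$ with $\operatorname{diam}(\tilde\om)\geq 1$ and $|\rho^2\lambda|\leq 1$. It therefore suffices to prove the stated bounds in the normalized regime $R_0^{-2}+|\lambda|\leq 1$; scaling back produces the factors $(R_0^{-2}+|\lambda|)^{-1}$ and $(R_0^{-2}+|\lambda|)^{-1/2}$.

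\textbf{Step 2 ($L^p$ for $p>2$ via the real-variable method).} I apply Shen's real-variable lemma (Chapter~4 of \cite{Shen2}) to the linear map $(F,f)\mapsto\nabla u_{\va,\lambda}$. The method reduces the $L^p$-bound to a local weak reverse Hölder inequality
$$\Bigl(\dashint_{B(x_0,r)\cap\om}|\nabla u|^q\Bigr)^{1/q}\leq C\Bigl(\dashint_{B(x_0,2r)\cap\om}|\nabla u|^2\Bigr)^{1/2}+C\Bigl(\dashint_{B(x_0,2r)\cap\om}|\lambda u|^2\Bigr)^{1/2}$$
for some $q>p$, whenever $u$ solves $(\mathcal{L}_\va-\lambda I)u=0$ in $B(x_0,2r)\cap\om$ (with $u=0$ on $B(x_0,2r)\cap\pa\om$ in the boundary case). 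In the normalized regime this follows from Shen's uniform interior and boundary $W^{1,q}$ estimates for $\mathcal{L}_\va$ on $C^1$ domains with VMO coefficients, applied to the rewriting $\mathcal{L}_\va u=\lambda u$, with $\|\lambda u\|_{L^2}$ absorbed through the $L^2$ bound of Step~1 and a Caccioppoli iteration on concentric balls.

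\textbf{Step 3 (Duality and conclusion).} The symmetry \eqref{sy} gives $(\mathcal{L}_\va-\lambda I)^*=\mathcal{L}_\va-\bar\lambda I$ with $\bar\lambda\in\Sigma_{\theta_0}\cup\{0\}$. Hence Step~2 applied to the adjoint with exponent $p'$ dualizes to the bounds \eqref{reLpLp} and \eqref{reW-1pLp} for $1<p<2$, and a further duality argument produces \eqref{reW-1pW1p} across the whole range $1<p<\infty$. The principal obstacle lies in Step~2: establishing the weak reverse Hölder inequality uniformly in $\va$ and $\lambda$. Interior balls submit to Meyers-type self-improvement of the perturbed equation, but boundary balls require careful bookkeeping between the small-scale VMO regime ($r<\va$) and the large-scale homogenized regime ($r>\va$), and the $\lambda u$ term must be absorbed uniformly across both regimes without degrading the explicit $(R_0^{-2}+|\lambda|)$ dependence restored by scaling.
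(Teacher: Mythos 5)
Your overall strategy -- Shen's real-variable method combined with localized $W^{1,p}$ estimates for the perturbed operator and a duality argument for $1<p<2$ -- is the same as the paper's, and the rescaling normalization in Step~1 is a legitimate reorganization of the paper's practice of carrying the factor $(1+|\lambda|R^2)^{-k}$ explicitly through scaling-invariant Caccioppoli and localization lemmas. However, there are two concrete gaps.

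First, your Step~2 applies the real-variable machinery only to the map $(F,f)\mapsto\nabla u_{\va,\lambda}$, which at best yields \eqref{LpW1pestiamesu3} and \eqref{reLpW1p}--\eqref{reLpW1p}. It does not yield \eqref{Lpestiamesu3} or \eqref{reLpLp}: after your rescaling the normalized domain $\tilde\om$ has diameter $L=R_0(R_0^{-2}+|\lambda|)^{1/2}$, which is unbounded when $|\lambda|R_0^2$ is large, so passing from $\|\nabla\tilde u\|_{L^p}$ to $\|\tilde u\|_{L^p}$ by Poincar\'{e} loses a factor of $L$ and destroys precisely the gain $(R_0^{-2}+|\lambda|)^{-1}$ versus $(R_0^{-2}+|\lambda|)^{-1/2}$ that makes \eqref{reLpLp} sharp. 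The paper avoids this by running the real-variable theorem a second time on the function $H=|u_{\va,\lambda}|$ itself, with the weighted datum $h=(R_0^{-2}+|\lambda|)^{-1}|F|+(R_0^{-2}+|\lambda|)^{-\frac12}|f|$, using the local solvability estimate of Lemma~\ref{l2reses} on each ball $4B$ (whose constant $(r^{-2}+|\lambda|)^{-1}$ is dominated by $(R_0^{-2}+|\lambda|)^{-1}$) together with the localized $L^p$ bound \eqref{loW1pu} for the homogeneous part. You need an analogous second pass.

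Second, your weak reverse H\"{o}lder inequality carries the term $\bigl(\dashint_{2B}|\lambda u|^2\bigr)^{1/2}$ on the right, but the hypothesis \eqref{rvc1} of the real-variable theorem requires $R_B$ to be controlled by averages of $|\nabla u|$ and of the data only. On boundary balls $u$ vanishes on the flat portion and Poincar\'{e} absorbs $\lambda u$ into $\nabla u$; on interior balls it does not, and one needs the reverse Caccioppoli-type estimate
\begin{align}
\Bigl(\dashint_{B(x_0,R)}|u|^2\Bigr)^{\frac12}\leq \frac{C_{\theta_0}}{|\lambda|R}\Bigl(\dashint_{B(x_0,2R)}|\nabla u|^2\Bigr)^{\frac12}\nonumber
\end{align}
for local solutions of $(\mathcal{L}_{\va}-\lambda I)u=0$ (the paper's \eqref{fanxiangzy}, proved by testing with $\varphi^2u$ and taking imaginary or real parts according to the sign of $\operatorname{Re}\lambda$), applied after subtracting the mean of $u$ over the ball. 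Your phrase ``a Caccioppoli iteration on concentric balls'' gestures at this but does not supply the inequality, and without it the interior case of the reverse H\"{o}lder hypothesis is not established uniformly in $\lambda$.
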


The $ L^p\to L^p $ estimates of resolvents for elliptic operators are widely studied and have abundant materials. We will list some results for non-homogenization problems. For $ m=1 $ and $ \BMO $ coefficients, see \cite{Kang1}; for constant coefficients and Dirichlet boundary conditions, see \cite{Shen1}; for constant coefficients and Neumann boundary conditions, see \cite{Wei2}; and for variable coefficients and Lipschitz domains, see \cite{Wei1}. It is noteworthy that in \cite{Wei1}, the authors derived the $ L^p\to L^p $ estimate of resolvents $ R(\lambda,\mathcal{L}_{1}) $ without any regularity assumptions on $ A $ with $ p $ being closed to $ 2 $ when $ \om $ is a bounded Lipschitz domain. These results can be used for homogenization problems. In this point of view, Theorem \ref{Lp estimates of resolventsf} gives stronger results under more assumptions of $ A $ and $ \om $. We see that the estimates established in Theorem \ref{Lp estimates of resolventsf} are sharp in view of the estimates for $ p=2 $, which will be given later. 

The main tool for the proof of Theorem \ref{Lp estimates of resolventsf} is real variable method, which was original used in \cite{Caf1} to deal with $ W^{1,p} $ estimates of elliptic equations. We remark that for the operator $ \mathcal{L}_{\va}-\lambda I $, the use of real variable method is slightly different from the case for $ \mathcal{L}_{\va} $. 

Besides the uniform $ W^{-1,p},L^p\to L^p,W_0^{1,p} $ estimates of resolvents, one may also concern about the convergence rates in $ L^p\to L^p $ or $ L^p\to W_0^{1,p} $ norm with $ 1<p<\infty $. For these topics, we have the following theorems. 

\begin{thm}[$ L^p\to L^p $ convergence rates of resolvents]\label{Lpconres}
Suppose that $ d\geq 2 $, $ \lambda\in\Sigma_{\theta_0}\cup\{0\} $ with $ \theta_0\in(0,\frac{\pi}{2}) $ and $ A $ satisfies $ \eqref{sy} $, $ \eqref{el} $, $ \eqref{pe} $, $ \eqref{Hol} $. Let $ \om $ be a bounded $ C^{1,1} $ domain in $ \mathbb{R}^d $. For $ \va\geq 0 $ and $ F\in L^p(\om;\mathbb{C}^m) $, let $ u_{\va,\lambda}\in W_0^{1,p}(\om;\mathbb{C}^m) $ be the unique solution of the Dirichlet problem $ (\mathcal{L}_{\va}-\lambda I)(u_{\va,\lambda})=F $ in $ \om $ and $ u_{\va,\lambda}=0 $ on $ \pa\om $. Then for any $ 1<p<\infty $,
\begin{align}
\|u_{\va,\lambda}-u_{0,\lambda}\|_{L^p(\om)}&\leq C_{p,\theta_0}\va (R_0^{-2}+|\lambda|)^{-\frac{1}{2}}\|F\|_{L^p(\om)},\label{Convergence rate Lp}
\end{align}
where $ C_{p,\theta_0} $ depends only on $ \mu,d,m,\tau,\nu,p $ and $ \om $. In operator forms,
\begin{align}
\|R(\lambda,\mathcal{L}_{\va})-R(\lambda,\mathcal{L}_{0})\|_{L^p(\om)\to L^p(\om)}&\leq C_{p,\theta_0}\va (R_0^{-2}+|\lambda|)^{-\frac{1}{2}}.\label{**-}
\end{align}
\end{thm}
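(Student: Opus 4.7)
The plan is to pass from the $L^2\to L^2$ convergence supplied by Theorem~\ref{Approximation 1} to an $L^p\to L^p$ bound by proving pointwise estimates on the Dirichlet Green's matrices of $\mathcal{L}_{\va}-\lambda I$ and $\mathcal{L}_0-\lambda I$ in $\om$ and then invoking Schur's test to handle all $p\in(1,\infty)$ simultaneously. Since $\lambda\in\Sigma_{\theta_0}\cup\{0\}$, the quantity $c(\lambda,\theta)$ from Theorem~\ref{Approximation 1} is bounded by a constant depending only on $\theta_0$, so it can be absorbed into $C_{p,\theta_0}$ throughout.

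I would introduce Green's matrices $G_{\va}(x,y;\lambda)$ and $G_0(x,y;\lambda)$ for the Dirichlet problems of $\mathcal{L}_{\va}-\lambda I$ and $\mathcal{L}_0-\lambda I$ in $\om$, so that
\[
u_{\va,\lambda}(x)-u_{0,\lambda}(x)=\int_\om\bigl(G_{\va}(x,y;\lambda)-G_0(x,y;\lambda)\bigr)F(y)\,dy.
\]
The symmetry assumption \eqref{sy} makes these matrices symmetric in $(x,y)$ up to transpose, so any bound on $\sup_x\int_\om|G_{\va}-G_0|(x,y;\lambda)\,dy$ automatically yields the dual bound on $\sup_y\int_\om|G_{\va}-G_0|(x,y;\lambda)\,dx$.

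The main technical step is the pointwise estimate
\[
|G_{\va}(x,y;\lambda)-G_0(x,y;\lambda)|\leq C_{\theta_0}\,\va\,|x-y|^{-(d-1)}\exp\!\bigl(-c(R_0^{-2}+|\lambda|)^{1/2}|x-y|\bigr)
\]
(with the natural logarithmic modification when $d=2$). To derive it I would fix $x\neq y$, set $r=|x-y|$, regard $w=G_{\va}(\cdot,y;\lambda)-G_0(\cdot,y;\lambda)$ as a weak solution of $(\mathcal{L}_{\va}-\lambda I)w=(\mathcal{L}_0-\mathcal{L}_{\va})G_0(\cdot,y;\lambda)$ on $B(x,r/2)$, and apply Theorem~\ref{Approximation 1} at that scale. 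The $L^2$-smallness is then upgraded to a pointwise bound through the interior $C^{1,\eta}$-estimates for $\mathcal{L}_0$, available under the H\"older condition \eqref{Hol}, together with uniform Lipschitz estimates at scale $r$ for $\mathcal{L}_{\va}$ of Avellaneda--Lin type. The exponential factor in $\sqrt{|\lambda|}$ is produced by a standard exponentially-weighted energy argument: testing the equation for $G_{\va}(\cdot,y;\lambda)$ with multipliers of the form $e^{\kappa\phi}$ for a suitable Lipschitz weight $\phi$ and using the extra $-\lambda I$ term (which has positive real part in $\Sigma_{\theta_0}\cup\{0\}$) to absorb cross terms yields $|G_{\va}(x,y;\lambda)|\leq C|x-y|^{-(d-2)}e^{-c\sqrt{|\lambda|}|x-y|}$, and the same weighting applied to the difference equation gives the claimed factor on $|G_{\va}-G_0|$.

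With the pointwise kernel bound in hand, a polar-coordinate integration gives
\[
\sup_{x\in\om}\int_\om|G_{\va}-G_0|(x,y;\lambda)\,dy \;\leq\; C_{\theta_0}\,\va\int_0^{R_0} e^{-c(R_0^{-2}+|\lambda|)^{1/2}r}\,dr \;\leq\; C_{\theta_0}\,\va\,(R_0^{-2}+|\lambda|)^{-1/2},
\]
and the symmetric estimate in $y$ follows as noted above. Schur's lemma then yields \eqref{Convergence rate Lp}, and hence \eqref{**-} in operator form, for every $p\in(1,\infty)$. The chief obstacle is the pointwise Green's matrix difference bound: capturing both the improved $|x-y|^{-(d-1)}$ power (one better than the size of $G_{\va}$ itself) and the sharp $\sqrt{|\lambda|}$-exponential decay requires carefully combining the corrector-based $L^2$ comparison of Theorem~\ref{Approximation 1}, exponentially-weighted energy identities, and $C^{1,\eta}$-regularity all at the single scale $r=|x-y|$; once the scaling is correctly set up, the rest of the argument is a packaging of standard estimates.
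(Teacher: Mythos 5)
Your proposal follows essentially the same route as the paper: the paper's own proof of this theorem consists of exactly your key kernel bound (its Theorem 5.1, $|G_{\va,\lambda}(x,y)-G_{0,\lambda}(x,y)|\leq C_{k,\theta_0}\va(1+|\lambda||x-y|^2)^{-k}|x-y|^{1-d}$, which is interchangeable with your exponential-weight version for the purpose of the kernel integral), followed by the resulting $L^1\to L^1$ and $L^{\infty}\to L^{\infty}$ bounds and Riesz interpolation against the $L^2$ estimate of Theorem 1.1 — the same packaging as your Schur-test step. The only caveats are cosmetic: your parenthetical that $|x-y|^{-(d-1)}$ is ``one better than the size of $G_{\va}$'' is backwards (it is one power more singular; the gain is the factor $\va$), and your sketch of the kernel bound compresses what in the paper is the bulk of Section 5 (corrector/flux-corrector identities, the maximum principle via nontangential maximal function estimates, and a duality step), all of which you correctly identify as the chief obstacle.
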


\begin{thm}[$ L^p\to W_0^{1,p} $ convergence rates of resolvents]\label{LpW1pconreso}
Suppose that $ d\geq 2 $, $ \lambda\in\Sigma_{\theta_0}\cup\{0\} $ with $ \theta_0\in(0,\frac{\pi}{2}) $ and $ A $ satisfies $ \eqref{sy} $, $ \eqref{el} $, $ \eqref{pe} $, $ \eqref{Hol} $. Let $ \om $ be a bounded $ C^{2,1} $ domain in $ \mathbb{R}^d $. For $ \va\geq 0 $ and $ F\in L^p(\om;\mathbb{C}^m) $, let $ u_{\va,\lambda}\in W_0^{1,p}(\om;\mathbb{C}^m) $ be the unique solution of the Dirichlet problem $ (\mathcal{L}_{\va}-\lambda I)(u_{\va,\lambda})=F $ in $ \om $ and $  u_{\va,\lambda}=0 $ on $ \pa\om $. Then for any $ 1<p<\infty $,
\begin{align}
\|u_{\va,\lambda}-u_{0,\lambda}-(\Phi_{\va,j}^{\beta}-P_j^{\beta})\frac{\pa u_{0,\lambda}^{\beta}}{\pa x_j}\|_{W_0^{1,p}(\om)}&\leq C_{p,\theta_0}\va\left\{\ln[\va^{-1}R_0+2]\right\}^{4|\frac{1}{2}-\frac{1}{p}|}\|F\|_{L^p(\om)},\label{Convergence rate LpW1p}
\end{align}
where $ C_{p,\theta_0} $ depends only on $ \mu,d,m,\tau,\nu,p $ and $ \om $. In operator forms,
\begin{align}
\|R(\lambda,\mathcal{L}_{\va})-R(\lambda,\mathcal{L}_{0})-K_{\va}(\lambda)\|_{L^p(\om)\to W_0^{1,p}(\om)}&\leq C_{p,\theta_0}\va\left\{\ln[\va^{-1}R_0+2]\right\}^{4|\frac{1}{2}-\frac{1}{p}|},\label{***-}
\end{align}
where $ K_{\va}(\lambda) $ is defined by $ \eqref{Operator corrector} $.
\end{thm}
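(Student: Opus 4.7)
The plan is to combine the $L^2\to H_0^1$ convergence bound of Theorem \ref{Approximation 1} with an endpoint $L^{p_0}\to W_0^{1,p_0}$ bound carrying a factor $\ln^2[\va^{-1}R_0+2]$, and then interpolate to extract the exponent $4|\tfrac12-\tfrac1p|$ on the logarithm. Throughout, let
\begin{equation*}
w_{\va,\lambda}:=u_{\va,\lambda}-u_{0,\lambda}-(\Phi_{\va,j}^\beta-P_j^\beta)\pa_{x_j}u_{0,\lambda}^\beta,
\end{equation*}
so that $w_{\va,\lambda}=0$ on $\pa\om$ thanks to $\Phi_{\va,j}^\beta=P_j^\beta$ on $\pa\om$.

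Using $\mathcal{L}_\va(\Phi_{\va,j}^\beta)=0$ together with the flux-corrector identity \eqref{Flux correctors}, a direct computation shows that $w_{\va,\lambda}$ solves a Dirichlet problem of the form $(\mathcal{L}_\va-\lambda I)w_{\va,\lambda}=\va\operatorname{div}(h_{\va,\lambda})+\lambda(\Phi_{\va,j}^\beta-P_j^\beta)\pa_{x_j}u_{0,\lambda}^\beta$ in $\om$, where $h_{\va,\lambda}$ is a sum of terms of the schematic form $F_{kij}^{\al\beta}(x/\va)(\nabla\Phi_\va\cdot\nabla u_{0,\lambda})$ and $F_{kij}^{\al\beta}(x/\va)\nabla^2 u_{0,\lambda}$, pointwise controlled by $|\nabla u_{0,\lambda}|+|\nabla^2 u_{0,\lambda}|$ via \eqref{Estimate for Dirichlet correctors} and the boundedness $F_{kij}^{\al\beta}\in L^\infty(Y)$ (which follows from the Hölder regularity of $A$). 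One now applies Theorem \ref{Lp estimates of resolventsf}: the $W^{-1,p}\to W_0^{1,p}$ part absorbs $\va\operatorname{div}(h_{\va,\lambda})$, while the $L^p\to W_0^{1,p}$ estimate \eqref{reLpW1p} absorbs the $\lambda$-term and gives a contribution $\lesssim\va\|F\|_{L^p}$ (the weight $(R_0^{-2}+|\lambda|)^{-1/2}\cdot|\lambda|\cdot(R_0^{-2}+|\lambda|)^{-1/2}$ is harmless on $\Sigma_{\theta_0}\cup\{0\}$). The remaining $\va\|h_{\va,\lambda}\|_{L^p}$ would, by naive $W^{2,p}$-regularity of $\mathcal L_0-\lambda I$ on the $C^{2,1}$ domain $\om$, yield an estimate with a $p$-dependent constant but no log factor at $p=2$.

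To produce the precise exponent $4|\tfrac12-\tfrac1p|$ on the logarithm, I would take a boundary-layer approach to obtain an endpoint bound. Split the estimation of $h_{\va,\lambda}$ into a tubular neighborhood $\om_\va=\{x\in\om:\dist(x,\pa\om)<C\va\ln[\va^{-1}R_0+2]\}$ and its complement. On $\om\setminus\om_\va$, interior Lipschitz estimates for $\mathcal{L}_\va$-harmonic functions yield improved pointwise control of $\nabla\Phi_\va-\nabla P_j$, and a dyadic decomposition in $\dist(\cdot,\pa\om)$ contributes one factor $\ln[\va^{-1}R_0+2]$. On $\om_\va$, since $w_{\va,\lambda}$ vanishes on $\pa\om$, Hardy's inequality together with the $L^2$ bound \eqref{Convergence rate 1} of Theorem \ref{Approximation 1} absorbs the boundary-layer mass and produces the second $\ln[\va^{-1}R_0+2]$ factor. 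This gives the endpoint
\begin{equation*}
\|w_{\va,\lambda}\|_{W_0^{1,p_0}(\om)}\leq C_{p_0,\theta_0}\,\va\,\{\ln[\va^{-1}R_0+2]\}^2\,\|F\|_{L^{p_0}(\om)}
\end{equation*}
for $p_0$ close to $1$ (and by duality close to $\infty$). Riesz--Thorin interpolation of the linear map $T_{\va,\lambda}\colon F\mapsto w_{\va,\lambda}$ between this bound and the $L^2\to H_0^1$ bound $\leq C_{\theta_0}\va$ (from \eqref{Convergence rate 1} together with boundedness of $c(\lambda,\theta)$ on $\Sigma_{\theta_0}\cup\{0\}$) produces $\|T_{\va,\lambda}\|_{L^p\to W_0^{1,p}}\leq C_{p,\theta_0}\va\{\ln[\va^{-1}R_0+2]\}^{4|\tfrac12-\tfrac1p|}$, the $4$ arising as the product of the endpoint exponent $2$ and the complex-interpolation parameter $\theta=2|\tfrac12-\tfrac1p|$ as $p_0\to 1$ or $\infty$.

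The main obstacle will be the endpoint estimate of paragraph three: producing the precise $\ln^2$ factor uniformly in $\lambda\in\Sigma_{\theta_0}\cup\{0\}$. The delicate points are (i) tracking the sectoral weights $(R_0^{-2}+|\lambda|)^{-1/2}$ through both the $W^{-1,p}\to W_0^{1,p}$ estimate of Theorem \ref{Lp estimates of resolventsf} and the Hardy--$L^2$ argument on $\om_\va$, so that the bound does not degenerate as $|\lambda|\to\infty$ within the sector; and (ii) arranging the dyadic sums on $\om_\va$ and on $\om\setminus\om_\va$ to contribute exactly one $\ln[\va^{-1}R_0+2]$ each, without losing additional powers from the interplay between $\nabla\Phi_\va-\nabla P_j$ and the second-derivative part of $h_{\va,\lambda}$ near $\pa\om$.
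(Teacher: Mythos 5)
Your overall architecture — an endpoint bound carrying $\{\ln[\va^{-1}R_0+2]\}^{2}$, the log-free $L^2\to H_0^1$ bound \eqref{Convergence rate 1}, and Riesz interpolation to produce the exponent $4|\tfrac12-\tfrac1p|$ — is exactly the paper's. But the endpoint is where the entire difficulty sits, and there your proposal has a genuine gap. First, a structural problem: interpolating between $p_0$ and $2$ with $p_0>1$ gives the log exponent $2(1/p-1/2)/(1/p_0-1/2)$, which is \emph{strictly larger} than $4|\tfrac12-\tfrac1p|$ for every fixed $p_0>1$; the limit $p_0\to1$ is of no use because your endpoint constant $C_{p_0,\theta_0}$ is not controlled in that limit. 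To get the stated exponent you must prove the $\ln^2$ bound at $p=1$ and $p=\infty$ themselves. The paper does this by establishing the pointwise convergence rate for gradients of Green functions, Theorem \ref{Convergence of Green's functions 2t}: $|\pa_{x_i}G_{\va,\lambda}^{\al\beta}(x,y)-\pa_{x_i}\Phi_{\va,j}^{\al\beta}(x)\pa_{x_j}G_{0,\lambda}^{\beta\gamma}(x,y)|\le C\va\ln[\va^{-1}|x-y|+2](1+|\lambda||x-y|^2)^{-k}|x-y|^{-d}$, and then applying Schur's test to the kernel $K_\va$, whose row and column integrals are bounded by $C\{\ln[\va^{-1}R_0+2]\}^2$ as in \eqref{Kva1}. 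Your boundary-layer/Hardy sketch does not produce an $L^1\to L^1$ or $L^\infty\to L^\infty$ statement, and you yourself flag it as unresolved; nothing in the proposal substitutes for the Green-function machinery (Theorems \ref{lipgg}, \ref{Convergence of Green's functions}, and the corrector comparison lemma leading to \eqref{ww79}) that the paper builds for this purpose.

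A second, smaller error: your reduction in paragraph two misreads the source term for $w_{\va,\lambda}$. By \eqref{Equality 1}, the right-hand side contains the non-divergence-form term $a_{ij}^{\al\beta}(x/\va)\,\pa_{x_j}[\Phi_{\va,k}^{\beta\gamma}-x_k\delta^{\beta\gamma}-\va\chi_k^{\beta\gamma}(x/\va)]\,\pa_{x_i}\pa_{x_k}u_{0,\lambda}^{\gamma}$, which carries \emph{no} explicit factor of $\va$; its smallness comes only from $|\nabla(\Phi_\va-P-\va\chi(\cdot/\va))|\le C\min\{1,\va/\delta(x)\}$, and $\|\min\{1,\va/\delta\}\nabla^2u_{0,\lambda}\|_{L^p}$ is not $O(\va)\|F\|_{L^p}$. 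If your schematic $\va\operatorname{div}(h_{\va,\lambda})$ with $|h_{\va,\lambda}|\lesssim|\nabla u_{0,\lambda}|+|\nabla^2u_{0,\lambda}|$ were correct, Theorem \ref{Lp estimates of resolventsf} would immediately give \eqref{Convergence rate LpW1p} \emph{without any logarithm}, which is stronger than the theorem and signals that the decomposition cannot be right. This boundary term is precisely why the logarithms appear and why the paper routes the proof through the Green function estimates rather than through the global $W^{-1,p}\to W_0^{1,p}$ resolvent bound.
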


For the operator $ \mathcal{L}_{\va} $, the estimates of $ L^p $ and $ W_0^{1,p} $ convergence rates are established in \cite{Kenig4}. These estimates are established by obtaining the convergence rates of Green functions of operators $ \mathcal{L}_{\va} $. The existences and pointwise estimates of Green functions for operators $ \mathcal{L}_{\va} $ are well-known and one can refer to \cite{Hofmann1} and \cite{Taylor} for details. However, for operators $ \mathcal{L}_{\va}-\lambda I $ with $ \lambda\in\Sigma_{\theta_0} $ and $ \theta_0\in(0,\frac{\pi}{2}) $, the constructions and estimates of Green functions are still unknown. We will construct the Green functions for operators $ \mathcal{L}_{\va}-\lambda I $ in this paper and establish the uniform estimates of them. We point out that the constructions of Green functions for operators $ \mathcal{L}_{\va}-\lambda I $ with $ d\geq 3 $ are similar to which of the Green functions of elliptic operators with lower order terms in \cite{Xu1}. There are some differences between these two. The first is that the problems considered in this paper are about complex valued functions. This makes calculations a little more complicated. The second difference is that the impact of the parameter $ \lambda $ should be calculated explicitly. On the other hand, for $ d=2 $, we need to employ arguments in \cite{Dong2}, \cite{Dong1} and \cite{Taylor}. The constructions of Green functions for $ d\geq 3 $ and $ d=2 $ are rather different and to deal with the case $ d=2 $, we need some properties of $ \BMO $ space and Hardy space. Moreover, proofs of uniform estimates on two dimensional Green functions are much more difficult than that of the case $ d\geq 3 $. These estimates are new and are the most important innovations of this paper. What is essential is that all the uniform regularity estimates of Green functions in this paper are scaling invariant (see \cite{Xu2} for such estimates of fundamental solutions related to generalized elliptic operators with lower order terms). 

After constructing Green functions for the operator $ \mathcal{L}_{\va}-\lambda I $, we will use almost the same methods in \cite{Kenig4} to prove Theorem \ref{Lpconres}-\ref{LpW1pconreso}. In view of the convergence rates with $ p=2 $, we can also see that estimates $ \eqref{**-} $ and $ \eqref{***-} $ are sharp.

The rest of this paper is organized as follows. In Section \ref{Preliminaries}, we will give some basic ingredients in the proof, including the $ L^2 $ regularity theory, Caccioppoli's inequality and  some properties about $ \BMO $, Hardy space. In Section \ref{W1pand Lipschitz estimates for}, we will establish the $ W^{1,p} $, Hölder and Lipschitz estimates for the operator $ \mathcal{L}_{\va}-\lambda I $. In Section \ref{Lp estimates of resolventssection}, we will first prove Theorem \ref{Lp estimates of resolventsf} by using real variable methods. Then we will construct Green functions $ G_{\va,\lambda}(x,y) $ for operators $ \mathcal{L}_{\va}-\lambda I $ and obtain the regularity estimates for them. In Section \ref{Estimates of convergence of resolvents}, firstly, we will use standard methods given in \cite{Kenig3} to prove Theorem \ref{Approximation 1}. Next, we will calculate the convergence rates of Green functions for $ (\mathcal{L}_{\va}-\lambda I) $. Using these tools, we can prove Theorem \ref{Lpconres}-\ref{LpW1pconreso}. 

\section{Preliminaries}\label{Preliminaries}
\subsection{Energy estimates and Caccioppoli's inequality}

A notable observation gives that for any $ \xi=\xi^{(1)}+i\xi^{(2)}\in \mathbb{C}^{m\times d} $, where $ \xi^{(1)},\xi^{(2)}\in\R^{m\times d} $,
\begin{align}
a_{kj}^{\al\beta}(y)\xi_{k}^{\al}\overline{\xi_{j}^{\beta}}&=a_{kj}^{\al\beta}(y)\left(\xi_{k}^{(1)\al}+i\xi_{k}^{(2)\al}\right)\left(\xi_{j}^{(1)\beta}-i\xi_{j}^{(2)\beta}\right)=a_{kj}^{\al\beta}(y)\xi_{k}^{(1)\al}\xi_{j}^{(1)\beta}+a_{kj}^{\al\beta}(y)\xi_{k}^{(2)\al}\xi_{j}^{(2)\beta}.\nonumber
\end{align}
This, together with $ \eqref{el} $, gives the ellipticity condition for $ A $ with complex variables, i.e.
\begin{align}
2\mu|\xi|^2\leq a_{ij}^{\al\beta}(y)\xi_{i}^{\al}\overline{\xi_{j}^{\beta}}\leq 2\mu^{-1}|\xi|^2\text{ for any }  y\in\mathbb{R}^d\text{ and }\xi=(\xi_i^{\al})\in\mathbb{C}^{m\times d}.\label{ellcom}
\end{align}
Similarly, in view of $ \eqref{syl0} $ and $ \eqref{ell0} $, we can also infer that
\begin{align}
2\mu|\xi|^2\leq \widehat{a}_{ij}^{\al\beta}\xi_{i}^{\al}\overline{\xi_{j}^{\beta}}\leq 2\mu^{-1}|\xi|^2\text{ for any }  y\in\mathbb{R}^d\text{ and }\xi=(\xi_i^{\al})\in\mathbb{C}^{m\times d}.\label{ellcoml0}
\end{align}
Moreover, for $ d\geq 2 $, let $ \om $ be a bounded domain in $ \R^d $, $ \va\geq 0 $, $ \lambda=|\lambda|e^{i\theta}\in\mathbb{C}\backslash(0,\infty) $ and $ A $ satisfy $ \eqref{sy},\eqref{el} $. Define a bilinear form $ B_{\va,\lambda,\om}[\cdot,\cdot]:H_0^1(\om;\mathbb{C}^m)\times H_0^1(\om;\mathbb{C}^m)\to\mathbb{C} $ by
\begin{align}
B_{\va,\lambda,\om}[u,v]=\int_{\om}A_{\va}(x)\nabla u(x)\overline{\nabla v(x)}dx-\lambda\int_{\om}u(x)\overline{v(x)}dx,\label{bilinearBva}
\end{align}
where $ A_{\va}(x)=A(x/\va) $ for $ \va>0 $ and $ A_0(x)=\widehat{A} $. For $ \lambda\in\mathbb{C}\backslash[0,\infty) $ and $ u\in H_0^1(\om;\mathbb{C}^m) $,
\begin{align}
B_{\va,\lambda,\om}[u,u]=\int_{\om}A_{\va}(x)\nabla u(x)\overline{\nabla u(x)}dx-\lambda\int_{\om}|u(x)|^2dx.\label{buu}
\end{align}
If $ \operatorname{Re}(\lambda)\geq 0 $, we can take imaginary parts of both sides of $ \eqref{buu} $ and obtain that
\begin{align}
|B_{\va,\lambda,\om}[u,u]|\geq|\operatorname{Im}(\lambda)|\int_{\om}|u(x)|^2dx\Rightarrow \|u\|_{L^2(\om)}^2\leq c(\lambda,\theta)|\lambda|^{-1}|B_{\va,\lambda,\om}[u,u]|.\label{buuim}
\end{align}
If $ \operatorname{Re}(\lambda)<0 $, we can take real parts of both sides of $ \eqref{buu} $ and get that
\begin{align}
|B_{\va,\lambda,\om}[u,u]|\geq|\operatorname{Re}(\lambda)|\int_{\om}|u(x)|^2dx.\label{buure}
\end{align}
Adding $ \eqref{buure} $ by $ \eqref{buuim} $, it can be easily shown that
\begin{align}
\|u\|_{L^2(\om)}^2\leq Cc(\lambda,\theta)|\lambda|^{-1}|B_{\va,\lambda,\om}[u,u]|\text{ for any }\lambda\in\mathbb{C}\backslash[0,\infty).\label{ubuuboun}
\end{align}
This, together with $ \eqref{ellcom} $, $ \eqref{ellcoml0} $ and $ \eqref{buu} $, implies that
\begin{align}
\mu\|\nabla u\|_{L^2(\om)}^2\leq |B_{\va,\lambda,\om}[u,u]|+|\lambda|\|u\|_{L^2(\om)}^2\leq 2c(\lambda,\theta)|B_{\va,\lambda,\om}[u,u]|.\label{laxmil}
\end{align}
If $ \lambda=0 $, it is obvious that $ \eqref{laxmil} $ is still true. By using Poincaré's inequality, then
\begin{align}
\|u\|_{L^2(\om)}^2\leq CR_0^2\|\nabla u\|_{L^2(\om)}^2\leq Cc(\lambda,\theta)R_0^2|B_{\va,\lambda,\om}[u,u]|,\nonumber
\end{align}
which, together with $ \eqref{ubuuboun} $, implies that for any $ \lambda\in\mathbb{C}\backslash(0,\infty) $,
\begin{align}
\|u\|_{L^2(\om)}^2\leq Cc(\lambda,\theta)(R_0^{-2}+|\lambda|)^{-1}|B_{\va,\lambda,\om}[u,u]|,\label{ABCDE}
\end{align}
where $ C $ depends only on $ \mu,d,m $ and $ \om $. Another important fact is that if $ A $ satisfies $ \eqref{sy} $, the adjoint operator of $ \mathcal{L}_{\va}-\lambda I $ is $ \mathcal{L}_{\va}-\overline{\lambda} I $. Indeed,
\begin{align}
\langle(\mathcal{L}_{\va}-\lambda I)(u),v\rangle_{H^{-1}(\om)\times H_0^1(\om)}&=B_{\va,\lambda,\om}[u,v]=\overline{B_{\va,\overline{\lambda},\om}[v,u]}=\langle u,(\mathcal{L}_{\va}-\overline{\lambda} I)(v)\rangle_{H_0^1(\om)\times H^{-1}(\om)}.\nonumber
\end{align}
In view of $ \eqref{laxmil} $ and well-known Lax-Milgram theorem, it is easy to show the following theorem for existence of solutions corresponding to operators $ \mathcal{L}_{\va}-\lambda I $.
\begin{thm}\label{existencethm}
Let $ \om $ be a bounded $ C^1 $ domain in $ \R^d $ with $ d\geq 2 $. Assume that $ A $ satisfies $ \eqref{sy} $ and $ \eqref{el} $. Then for any $ \va\geq 0 $, $ F\in H^{-1}(\om;\mathbb{C}^m) $, there is a unique weak solution $ u_{\va,\lambda}\in H_0^1(\om;\mathbb{C}^m) $ satisfying the Dirichlet problem $
(\mathcal{L}_{\va}-\lambda I)(u_{\va,\lambda})=F $ in $ \om $ and $ u_{\va,\lambda}=0 $ on $ \pa\om $. That is, for any $ \varphi\in H_0^1(\om;\mathbb{C}^m) $,
\begin{align}
B_{\va,\lambda,\om}[u_{\va,\lambda},\varphi]=\langle F,\varphi\rangle_{H^{-1}(\om)\times H_0^1(\om)}.\nonumber
\end{align}
\end{thm}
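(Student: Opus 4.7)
The plan is to apply the Lax--Milgram theorem to the sesquilinear form $B_{\va,\lambda,\om}$ on the complex Hilbert space $H_0^1(\om;\mathbb{C}^m)$. The preliminary calculations in the subsection have already done essentially all the analytic work; the proof amounts to verifying the two classical hypotheses (boundedness and coercivity) and packaging the result.

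First I would check continuity of the bilinear form. Using the upper bound in \eqref{ellcom} for $\va>0$ and in \eqref{ellcoml0} for $\va=0$, together with Cauchy--Schwarz and Poincaré's inequality, one obtains
\begin{equation*}
|B_{\va,\lambda,\om}[u,v]|\leq 2\mu^{-1}\|\nabla u\|_{L^2(\om)}\|\nabla v\|_{L^2(\om)}+|\lambda|\|u\|_{L^2(\om)}\|v\|_{L^2(\om)}\leq C(\mu,R_0,|\lambda|)\|u\|_{H_0^1(\om)}\|v\|_{H_0^1(\om)},
\end{equation*}
so $B_{\va,\lambda,\om}$ is bounded on $H_0^1(\om;\mathbb{C}^m)\times H_0^1(\om;\mathbb{C}^m)$.

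For coercivity I would appeal directly to \eqref{laxmil}: combined with Poincaré's inequality it gives
\begin{equation*}
\|u\|_{H_0^1(\om)}^2\leq C\|\nabla u\|_{L^2(\om)}^2\leq Cc(\lambda,\theta)\,|B_{\va,\lambda,\om}[u,u]|
\end{equation*}
for every $\lambda\in\mathbb{C}\setminus(0,\infty)$. The only subtlety is that this is modulus-coercivity rather than real-part coercivity. To invoke the standard Lax--Milgram theorem I would rotate the form: using the structure $B_{\va,\lambda,\om}[u,u]=R-\lambda\|u\|_{L^2(\om)}^2$ where $R\geq 2\mu\|\nabla u\|_{L^2(\om)}^2$ is real and nonnegative (this uses the symmetry \eqref{sy}, so that the complex sesquilinear quadratic form of $A$ is real), one can pick a unimodular phase $e^{i\phi}$, depending only on $\arg\lambda$, such that
\begin{equation*}
\operatorname{Re}\bigl(e^{i\phi}B_{\va,\lambda,\om}[u,u]\bigr)\geq c(\mu,R_0,\lambda)\,\|u\|_{H_0^1(\om)}^2.
\end{equation*}
The choice is transparent: $\phi=0$ works when $\operatorname{Re}\lambda\leq 0$, while for $\lambda$ in the right half-plane off the positive real axis one picks $\phi$ so that the rotated spectral term $-e^{i\phi}\lambda$ has positive real part, the resulting constant matching the factor $c(\lambda,\theta)^{-1}$ appearing above.

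With boundedness and real-part coercivity of $e^{i\phi}B_{\va,\lambda,\om}$ in hand, the standard (complex) Lax--Milgram theorem provides, for each bounded antilinear functional $\varphi\mapsto\overline{e^{i\phi}}\langle F,\varphi\rangle_{H^{-1}(\om)\times H_0^1(\om)}$, a unique $u_{\va,\lambda}\in H_0^1(\om;\mathbb{C}^m)$ satisfying
\begin{equation*}
e^{i\phi}B_{\va,\lambda,\om}[u_{\va,\lambda},\varphi]=e^{i\phi}\langle F,\varphi\rangle_{H^{-1}(\om)\times H_0^1(\om)}\quad\text{for all }\varphi\in H_0^1(\om;\mathbb{C}^m),
\end{equation*}
which is the weak formulation of the Dirichlet problem for $\mathcal{L}_{\va}-\lambda I$. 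Uniqueness is immediate from \eqref{ABCDE}, which forces any weak solution of the homogeneous problem to vanish. I do not anticipate any real obstacle: the only mild subtlety is the rotation step to convert the modulus-coercivity estimate into the hypothesis needed by Lax--Milgram, and that is dictated entirely by the form of $c(\lambda,\theta)$ already introduced in \eqref{ac}.
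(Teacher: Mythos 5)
Your proposal is correct and follows the same route the paper takes, which is simply to invoke the coercivity estimate \eqref{laxmil} together with the Lax--Milgram theorem; the phase-rotation step you spell out (choosing $e^{i\phi}$ so that $\operatorname{Re}(e^{i\phi}B_{\va,\lambda,\om}[u,u])\geq c(\lambda,\theta)^{-1}\mu\|\nabla u\|_{L^2(\om)}^2$) is exactly the standard way to make the paper's one-line invocation rigorous, and your constant matches \eqref{ac}. The only blemish is the stray $\overline{e^{i\phi}}$ in your description of the antilinear functional, which should be $e^{i\phi}$ as in your displayed equation; this does not affect the argument.
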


\begin{lem}\label{l2reses}
Suppose that $ d\geq 2 $ and $ \lambda=|\lambda|e^{i\theta}\in\mathbb{C}\backslash(0,\infty) $. Let $ \om $ be a bounded $ C^{1} $ domain in $ \mathbb{R}^d $. Assume that $ A $ satisfies $ \eqref{sy} $ and $ \eqref{el} $. Then for any $ \va\geq 0 $, $ F\in L^2(\om;\mathbb{C}^m) $ and $ f\in L^2(\om;\mathbb{C}^{m\times d}) $, there exists a unique weak solution $ u_{\va,\lambda}\in H_0^1(\om;\mathbb{C}^m) $ of the Dirichlet problem $ (\mathcal{L}_{\va}-\lambda I)(u_{\va,\lambda})=F+\operatorname{div}(f) $ in $ \om $ and $ u_{\va,\lambda}=0 $ on $ \pa\om $, satisfying the uniform energy estimates
\be
\begin{aligned}
\|u_{\va,\lambda}\|_{L^2(\om)}&\leq Cc(\lambda,\theta)\left\{(R_0^{-2}+|\lambda|)^{-1}\|F\|_{L^2(\om)}+(R_0^{-2}+|\lambda|)^{-\frac{1}{2}}\|f\|_{L^2(\om)}\right\},\\
\|\nabla u_{\va,\lambda}\|_{L^2(\om)}&\leq Cc(\lambda,\theta)\left\{(R_0^{-2}+|\lambda|)^{-\frac{1}{2}}\|F\|_{L^2(\om)}+\|f\|_{L^2(\om)}\right\},
\end{aligned}\label{L2uva}
\ee
where $ C $ depends only on $ \mu,d,m $ and $ \om $. Moreover, if $ \om $ is a bounded $ C^{1,1} $ domain, $ \va=0 $ and $ f\equiv 0 $, then
\begin{align}
\|\nabla^2 u_{0,\lambda}\|_{L^2(\om)}\leq Cc(\lambda,\theta)\|F\|_{L^2(\om)}.\label{L2n2u0}
\end{align}
In operator forms, 
\begin{align}
\|R(\lambda,\mathcal{L}_{\va})\|_{L^2(\om)\to L^2(\om)}&\leq Cc(\lambda,\theta)(R_0^{-2}+|\lambda|)^{-1},\label{L2o1}\\
\max\left\{\|R(\lambda,\mathcal{L}_{\va})\|_{H^{-1}(\om)\to L^2(\om)},\|R(\lambda,\mathcal{L}_{\va})\|_{L^2(\om)\to H_0^1(\om)}\right\}&\leq Cc(\lambda,\theta)(R_0^{-2}+|\lambda|)^{-\frac{1}{2}},\label{L2o2}\\
\max\left\{\|R(\lambda,\mathcal{L}_{0})\|_{L^2(\om)\to H^2(\om)},\|R(\lambda,\mathcal{L}_{\va})\|_{H^{-1}(\om)\to H_0^1(\om)}\right\}&\leq Cc(\lambda,\theta).\label{L2o4}
\end{align}
\end{lem}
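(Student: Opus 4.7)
The plan is to derive \eqref{L2uva} by testing the variational equation against $u_{\va,\lambda}$ itself and then invoking the two coercivity bounds \eqref{ABCDE} and \eqref{laxmil} that have already been established. Existence and uniqueness of $u_{\va,\lambda}\in H^1_0(\om;\mathbb{C}^m)$ are immediate from Theorem \ref{existencethm}, since $F+\operatorname{div}(f)\in H^{-1}(\om;\mathbb{C}^m)$. Taking $\varphi=u_{\va,\lambda}$ in the variational identity gives, after integrating $\operatorname{div}(f)$ by parts,
\begin{align}
B_{\va,\lambda,\om}[u_{\va,\lambda},u_{\va,\lambda}]=\int_{\om}F\,\overline{u_{\va,\lambda}}\,dx-\int_{\om}f\cdot\overline{\nabla u_{\va,\lambda}}\,dx,\nonumber
\end{align}
so by Cauchy--Schwarz $|B_{\va,\lambda,\om}[u_{\va,\lambda},u_{\va,\lambda}]|\leq \|F\|_{L^2(\om)}\|u_{\va,\lambda}\|_{L^2(\om)}+\|f\|_{L^2(\om)}\|\nabla u_{\va,\lambda}\|_{L^2(\om)}$.

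To absorb these right-hand terms it is convenient to set $s=(R_0^{-2}+|\lambda|)^{1/2}$, $X=s\,\|u_{\va,\lambda}\|_{L^2(\om)}$, $Y=\|\nabla u_{\va,\lambda}\|_{L^2(\om)}$, $A=s^{-1}\|F\|_{L^2(\om)}$ and $B=\|f\|_{L^2(\om)}$. Multiplying \eqref{ABCDE} by $s^2$ and rewriting \eqref{laxmil} in these variables yields
\begin{align}
X^{2}+Y^{2}\leq C\,c(\lambda,\theta)\bigl(AX+BY\bigr)\leq C\,c(\lambda,\theta)\sqrt{A^{2}+B^{2}}\,\sqrt{X^{2}+Y^{2}},\nonumber
\end{align}
so cancelling $\sqrt{X^2+Y^2}$ gives $X+Y\leq C\,c(\lambda,\theta)(A+B)$, which, when unpacked, is precisely the pair of estimates in \eqref{L2uva}. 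The three operator inequalities \eqref{L2o1}--\eqref{L2o2} and the second half of \eqref{L2o4} are direct translations of these bounds and of the dual pairing $W^{-1,2}=(H^1_0)^{*}$.

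For the $H^{2}$ bound \eqref{L2n2u0}, when $\va=0$ and $f\equiv 0$ I would rewrite the equation as $\mathcal{L}_{0}u_{0,\lambda}=F+\lambda u_{0,\lambda}$ in $\om$ with $u_{0,\lambda}=0$ on $\pa\om$. Since $\mathcal{L}_{0}$ has constant coefficients and $\om$ is $C^{1,1}$, the classical $H^{2}$ regularity for constant-coefficient systems (e.g.\ by flattening the boundary and applying difference quotients, together with \eqref{ellcoml0}) provides $\|\nabla^{2}u_{0,\lambda}\|_{L^{2}(\om)}\leq C\|F+\lambda u_{0,\lambda}\|_{L^{2}(\om)}\leq C\|F\|_{L^{2}(\om)}+C|\lambda|\,\|u_{0,\lambda}\|_{L^{2}(\om)}$. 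Plugging in the $L^{2}$ bound just obtained controls $|\lambda|\,\|u_{0,\lambda}\|_{L^{2}(\om)}$ by $Cc(\lambda,\theta)\,\frac{|\lambda|}{R_{0}^{-2}+|\lambda|}\,\|F\|_{L^{2}(\om)}\leq Cc(\lambda,\theta)\|F\|_{L^{2}(\om)}$, giving \eqref{L2n2u0} and the first half of \eqref{L2o4}.

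The only place that requires any real care is making sure the factor $c(\lambda,\theta)$ appears only to the first power on the right; this is handled cleanly by the symmetric pairing $(X,Y)$ above rather than by treating the $L^{2}$ and $H^{1}$ parts separately (which would naively produce $c(\lambda,\theta)^{2}$). Everything else is a routine combination of the Lax--Milgram-type coercivity \eqref{ABCDE}--\eqref{laxmil}, Poincar\'e's inequality, and the constant-coefficient $H^{2}$ regularity theory.
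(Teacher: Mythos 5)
Your proof is correct and rests on the same mechanism as the paper's: test the weak formulation against $u_{\va,\lambda}$ itself and feed the resulting identity into the coercivity bounds \eqref{ABCDE} and \eqref{laxmil}. The only difference is organizational: the paper splits $u_{\va,\lambda}=u_{\va,\lambda}^{(1)}+u_{\va,\lambda}^{(2)}$ according to the two source terms $F$ and $\operatorname{div}(f)$ and estimates each piece separately (first the $L^2$ norm, then the gradient, each absorption producing one factor of $c(\lambda,\theta)$ under a square root), whereas you keep the solution whole and run a single absorption in the weighted variables $X=s\|u_{\va,\lambda}\|_{L^2}$, $Y=\|\nabla u_{\va,\lambda}\|_{L^2}$. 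Both routes land on \eqref{L2uva} with $c(\lambda,\theta)$ to the first power; your weighted pairing makes that linearity in $c(\lambda,\theta)$ visible in one line, while the paper's splitting has the minor advantage of directly producing the two separate bounds \eqref{ww71} and \eqref{flam-1} that are reused verbatim in the duality argument for $1<p<2$ in the proof of Theorem \ref{Lp estimates of resolventsf}. Your treatment of \eqref{L2n2u0} via constant-coefficient $H^2$ regularity plus the already-proved $L^2$ bound is exactly the paper's.
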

\begin{proof}
The existence is ensured by Theorem \ref{existencethm}, we only need to show $ \eqref{L2uva} $ and $ \eqref{L2n2u0} $. Firstly, we can choose $ u_{\va,\lambda}^{(1)} $ and $ u_{\va,\lambda}^{(2)} $ in $ H_0^1(\om;\mathbb{C}^m) $ such that
\begin{align}
\left\{\begin{matrix}
(\mathcal{L}_{\va}-\lambda I)(u_{\va,\lambda}^{(1)})=F&\text{in}&\om,\\
u_{\va,\lambda}^{(1)}=0&\text{on}&\pa\om,
\end{matrix}\right.\quad\text{and}\quad \left\{\begin{matrix}
(\mathcal{L}_{\va}-\lambda I)(u_{\va,\lambda}^{(2)})=\operatorname{div}(f)&\text{in}&\om,\\
u_{\va,\lambda}^{(2)}=0&\text{on}&\pa\om.
\end{matrix}\right.\nonumber
\end{align}
The next thing is to estimate $ u_{\va,\lambda}^{(1)} $ and $ u_{\va,\lambda}^{(2)} $ respectively. For $ u_{\va,\lambda}^{(1)} $, we need to prove that
\begin{align}
\|\nabla u_{\va,\lambda}^{(1)}\|_{L^2(\om)}+(R_0^{-2}+|\lambda|)^{\frac{1}{2}}\|u_{\va,\lambda}^{(1)}\|_{L^2(\om)}&\leq Cc(\lambda,\theta)(R_0^{-2}+|\lambda|)^{-\frac{1}{2}}\|F\|_{L^2(\om)}.\label{ww71}
\end{align}
Choosing $ u_{\va,\lambda}^{(1)} $ as the test function, it can be obtained that
\begin{align}
B_{\va,\lambda,\om}[u_{\va,\lambda}^{(1)},u_{\va,\lambda}^{(1)}]=\int_{\om}F(x)\overline{u_{\va,\lambda}^{(1)}(x)}dx=(F,u_{\va,\lambda}^{(1)})_{L^2(\om)\times L^2(\om)}.\label{wef1}
\end{align}
In view of $ \eqref{laxmil} $ and $ \eqref{ABCDE} $, we have
\begin{align}
\|u_{\va,\lambda}^{(1)}\|_{L^2(\om)}^2&\leq Cc(\lambda,\theta)(R_0^{-2}+|\lambda|)^{-1}\|u_{\va,\lambda}^{(1)}\|_{L^2(\om)}\|F\|_{L^2(\om)},\nonumber\\
\mu\|\nabla u_{\va,\lambda}^{(1)}\|_{L^2(\om)}^2&\leq Cc(\lambda,\theta)\|u_{\va,\lambda}^{(1)}\|_{L^2(\om)}\|F\|_{L^2(\om)},\nonumber
\end{align}
which complete the proof of $ \eqref{ww71} $. For $ u_{\va,\lambda}^{(2)} $, it suffices to show that
\begin{align}
\|\nabla u_{\va,\lambda}^{(2)}\|_{L^2(\om)}+(R_0^{-2}+|\lambda|)^{\frac{1}{2}}\|u_{\va,\lambda}^{(2)}\|_{L^2(\om)}&\leq Cc(\lambda,\theta)\|f\|_{L^2(\om)}.\label{flam-1}
\end{align}
Choosing $ u_{\va,\lambda}^{(2)} $ as the test function, then
\begin{align}
B_{\va,\lambda,\om}[u_{\va,\lambda}^{(2)},u_{\va,\lambda}^{(2)}]=-\int_{\om}f(x)\overline{\nabla u_{\va,\lambda}^{(2)}(x)}dx=-(f,\nabla u_{\va,\lambda}^{(2)})_{L^2(\om)\times L^2(\om)}.\label{wef2}
\end{align}
Again, by applying $ \eqref{laxmil} $ and $ \eqref{ABCDE} $, it is easy to get that
\begin{align}
\|u_{\va,\lambda}^{(2)}\|_{L^2(\om)}^2&\leq Cc(\lambda,\theta)(R_0^{-2}+|\lambda|)^{-1}\|\nabla u_{\va,\lambda}^{(2)}\|_{L^2(\om)}\|f\|_{L^2(\om)},\nonumber\\
\mu\|\nabla u_{\va,\lambda}^{(2)}\|_{L^2(\om)}^2&\leq Cc(\lambda,\theta)\|\nabla u_{\va,\lambda}^{(2)}\|_{L^2(\om)}\|f\|_{L^2(\om)},\nonumber
\end{align}
which implies $ \eqref{flam-1} $. At last, for the proof of $ \eqref{L2n2u0} $, since $ \om $ is $ C^{1,1} $, one can deduce that
\begin{align}
\|\nabla^2u_{0,\lambda}\|_{L^2(\om)}\leq C\left\{\|F\|_{L^2(\om)}+|\lambda|\|u_{0,\lambda}\|_{L^2(\om)}\right\}\leq Cc(\lambda,\theta)\|F\|_{L^2(\om)},\nonumber
\end{align}
where, for the first inequality, we have used standard $ H^2 $ estimates for elliptic systems with constant coefficients and for the second inequality, we have used $ \eqref{L2uva} $.
\end{proof}

\begin{rem}
For $ d\geq 3 $ and $ \lambda\in\mathbb{C}\backslash(0,\infty) $, using Hölder's inequality and Sobolev embedding theorem $ H_0^1(\om)\subset L^{\frac{2d}{d-2}}(\om) $, we have
\begin{align}
|(F,u_{\va,\lambda}^{(1)})_{L^2(\om)\times L^2(\om)}|\leq\|F\|_{L^{\frac{2d}{d+2}}(\om)}\|u_{\va,\lambda}^{(1)}\|_{L^{\frac{2d}{d-2}}(\om)}\leq \|F\|_{L^{\frac{2d}{d+2}}(\om)}\|\nabla u_{\va,\lambda}^{(1)}\|_{L^2(\om)}.\nonumber
\end{align}
By using the same arguments in the proof of Theorem \ref{l2reses}, it is easy to find that for any $ \va\geq 0 $,
\begin{align}
\|\nabla u_{\va,\lambda}\|_{L^2(\om)}+(R_0^{-2}+|\lambda|)^{\frac{1}{2}}\|u_{\va,\lambda}\|_{L^2(\om)}\leq Cc(\lambda,\theta)\left\{\|F\|_{L^{\frac{2d}{d+2}}(\om)}+\|f\|_{L^{2}(\om)}\right\},\label{u2dd+2}
\end{align}
where $ C $ depends only on $ \mu,d,m $ and $ \om $. Similarly, If $ d=2 $, in view of the Sobolev embedding $ H_0^1(\om)\subset L^{q'}(\om) $ with $ q'=\frac{q}{q-1}>1 $, it can be easily seen that for any $ q>1 $,
\begin{align}
\|\nabla u_{\va,\lambda}\|_{L^2(\om)}+(R_0^{-2}+|\lambda|)^{\frac{1}{2}}\|u_{\va,\lambda}\|_{L^2(\om)}\leq Cc(\lambda,\theta)\left\{R_0^{2(1-\frac{1}{q})}\|F\|_{L^{q}(\om)}+\|f\|_{L^{2}(\om)}\right\},\label{uq}
\end{align}
where $ C $ depends only on $ \mu,m,q $ and $ \om $.
\end{rem}

Next, we will establish the Caccioppoli's inequality for the operator $ \mathcal{L}_{\va}-\lambda I $. In some point of view, Caccioppoli's inequality can be seen as the localization of $ \eqref{u2dd+2} $. In this paper, we need to obtain Caccioppoli's inequality that is scaling invariant. This means that $ \lambda $ cannot be regarded as a constant and its influence on the constants of this inequality should be calculated explicitly. The scaling invariant Caccioppoli's inequality plays a vital role in the proofs of other scaling invariant inequalities. The idea comes from \cite{Shen1,Xu2} and we combine the methods in both of them to complete the proof for the sake of completeness. To simplify the notations, we can define
\begin{align}
\om(x_0,R)=\om\cap B(x_0,R)\quad\text{and}\quad \Delta(x_0,R)=\pa\om\cap B(x_0,R)\label{bn}
\end{align}
for any $ 0<R<R_0 $ and $ x_0\in\om $. Sometimes we will use $ \om_r $ and $ \Delta_r $ to denote $ \om(x_0,r) $ and $ \Delta(x_0,r) $ if no confusion would be made.

\begin{lem}[Caccioppoli's inequality]\label{Caccio}
Let $ \va\geq 0 $, $ d\geq 2 $, $ \lambda\in\Sigma_{\theta_0}\cup\{0\} $ with $ \theta_0\in(0,\frac{\pi}{2}) $ and $ \om $ be a bounded $ C^1 $ domain in $ \R^d $. Assume that $ A $ satisfies $ \eqref{sy} $ and $ \eqref{el} $. Suppose that $ 0<R<R_0 $, $ x_0\in\om $, $ f\in L^2(\om(x_0,2R);\mathbb{C}^m) $ and $ F\in L^{q}(\om(x_0,2R);\mathbb{C}^m) $, where $ q=\frac{2d}{d+2} $ if $ d\geq 3 $ and $ q>1 $ if $ d=2 $. If $ \Delta(x_0,2R)\neq\emptyset $, assume that $ u_{\va,\lambda}\in H^1(\om(x_0,2R),\mathbb{C}^m) $ is the weak solution of the boundary problem 
\begin{align}
(\mathcal{L}_{\va}-\lambda I)(u_{\va,\lambda})=F+\operatorname{div}(f)\text{ in }\om(x_0,2R)\quad\text{and}\quad 
u_{\va,\lambda}=0\text{ on }\Delta(x_0,2R).\nonumber
\end{align}
If $ \Delta(x_0,2R)=\emptyset $, assume that $ u_{\va,\lambda}\in H^1(B(x_0,2R);\mathbb{C}^m) $ is the weak solution of the interior problem 
\begin{align}
(\mathcal{L}_{\va}-\lambda I)(u_{\va,\lambda})=F+\operatorname{div}(f)\text{ in } B(x_0,2R).\nonumber  
\end{align}
Then for any $ k\in\mathbb{N}_+ $,
\be
\begin{aligned}
\left(\dashint_{\om(x_0,R)}|u_{\va,\lambda}|^2\right)^{\frac{1}{2}}&\leq \frac{C_{k,\theta_0}}{(1+|\lambda|R^2)^{k}}\left(\dashint_{\om(x_0,2R)}|u_{\va,\lambda}|^2\right)^{\frac{1}{2}}\\
&\quad+\frac{C_{k,\theta_0}R}{(1+|\lambda|R^2)^{\frac{1}{2}}}\left\{R\left(\dashint_{\om(x_0,2R)}|F|^q\right)^{\frac{1}{q}}+\left(\dashint_{\om(x_0,2R)}|f|^2\right)^{\frac{1}{2}}\right\},
\end{aligned}\label{Cau1}
\ee
\be
\begin{aligned}
\left(\dashint_{\om(x_0,R)}|\nabla u_{\va,\lambda}|^2\right)^{\frac{1}{2}}&\leq \frac{C_{k,\theta_0}}{(1+|\lambda|R^2)^{k}R}\left(\dashint_{\om(x_0,2R)}|u_{\va,\lambda}|^2\right)^{\frac{1}{2}}\\
&\quad+C_{k,\theta_0}\left\{R\left(\dashint_{\om(x_0,2R)}|F|^q\right)^{\frac{1}{q}}+\left(\dashint_{\om(x_0,2R)}|f|^2\right)^{\frac{1}{2}}\right\},
\end{aligned}\label{Cau2}
\ee
where $ C_{k,\theta_0} $ depends only on $ \mu,d,m,q,k,\theta_0,\om $.
\end{lem}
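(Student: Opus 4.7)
The plan is to first derive a single-scale \emph{seed} energy estimate by the standard Caccioppoli test-function argument, and then iterate it in order to extract an arbitrary power of $(1+|\lambda|R^2)^{-1}$ coming from the coercivity that the sectoriality $\lambda\in\Sigma_{\theta_0}$ provides.

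\emph{Seed estimate.} Pick a cutoff $\eta\in C_c^\infty(B(x_0,R'))$ with $\eta\equiv 1$ on $B(x_0,r')$, $0\leq\eta\leq 1$, and $|\nabla\eta|\leq C/(R'-r')$, for any $R/2\leq r'<R'\leq R$. In the boundary case $u_{\va,\lambda}\eta^2$ has zero trace on $\pa\om$ (since $u_{\va,\lambda}=0$ on $\Delta(x_0,2R)$); in the interior case it is compactly supported in $B(x_0,2R)$. In either situation, $u_{\va,\lambda}\eta^2$ may be inserted in the weak form of the equation. Taking real parts and using \eqref{ellcom} together with the sectorial inequality $-\operatorname{Re}(\lambda)\geq c_{\theta_0}|\lambda|$ (with $c_{\theta_0}=\cos\theta_0>0$, and the bound vacuous at $\lambda=0$), I obtain
\begin{align*}
2\mu\int |\nabla u_{\va,\lambda}|^2\eta^2+c_{\theta_0}|\lambda|\int|u_{\va,\lambda}|^2\eta^2\leq 2\Bigl|\int\eta\,A_\va\nabla u_{\va,\lambda}\cdot\overline{u_{\va,\lambda}\nabla\eta}\Bigr|+|\text{sources}|.
\end{align*}
The cross term is absorbed via Young's inequality; the $f$-source, split through $\nabla(u\eta^2)=\eta^2\nabla u+2\eta u\nabla\eta$, is likewise absorbed. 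For the $F$-source I use $|\int F\overline{u_{\va,\lambda}\eta^2}|\leq\|F\|_{L^q}\|u_{\va,\lambda}\eta^2\|_{L^{q'}}$ together with the Sobolev embedding $H_0^1\hookrightarrow L^{q'}$ (valid with $q'=2d/(d-2)$ when $d\geq 3$, and for every finite $q'$ when $d=2$; this matches exactly the hypothesis on $q$) applied to $u_{\va,\lambda}\eta^2$ extended by zero, followed by Young's inequality. The outcome is the seed
\begin{align*}
(\star)\quad\int_{\om_{r'}}|\nabla u_{\va,\lambda}|^2+c_{\theta_0}|\lambda|\int_{\om_{r'}}|u_{\va,\lambda}|^2\leq \frac{C}{(R'-r')^2}\int_{\om_{R'}}|u_{\va,\lambda}|^2+C\bigl(\|F\|_{L^q(\om_{2R})}^2+\|f\|_{L^2(\om_{2R})}^2\bigr),
\end{align*}
for any pair $r'<R'$ with $r',R'\in[R,2R]$.

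\emph{Iteration to get \eqref{Cau1}.} Drop $|\nabla u|^2$ in $(\star)$ and, when $\lambda\neq 0$, divide by $|\lambda|$ to get $\int_{\om_{r'}}|u|^2\leq C|\lambda|^{-1}(R'-r')^{-2}\int_{\om_{R'}}|u|^2+CT|\lambda|^{-1}$ with $T=\|F\|_{L^q(\om_{2R})}^2+\|f\|_{L^2(\om_{2R})}^2$. Subdivide $[R,2R]$ by $\rho_i=R+iR/k$, $0\leq i\leq k$, and iterate along the chain $(\rho_{i-1},\rho_i)$. When $|\lambda|R^2\geq 2Ck^2$ the multiplier at each step is $\leq 1/2$, so the resulting geometric series converges and yields $\int_{\om_R}|u|^2\leq C_k(|\lambda|R^2)^{-k}\int_{\om_{2R}}|u|^2+CT/|\lambda|$; using $1+|\lambda|R^2\leq 2|\lambda|R^2$ in this regime converts these into the factors in \eqref{Cau1}. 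When $|\lambda|R^2<2Ck^2$ (including $\lambda=0$), $(1+|\lambda|R^2)^k$ is bounded by a $k$-dependent constant, so the inclusion $\om_R\subset\om_{2R}$ supplies the main term while one single application of $(\star)$ to the pair $(R,2R)$ produces the source in the required form $CR^2T/(1+|\lambda|R^2)$. Passing to averages and rewriting $\|F\|_{L^q(\om_{2R})}^2/|\om_R|$ as $R^4(\dashint|F|^q)^{2/q}$ (the volume factor $|\om_{2R}|^{2/q}/|\om_R|\sim R^2$ is automatic because $1/q-1/2=1/d$ when $d\geq 3$; a direct volume count handles $d=2$) and $\|f\|_{L^2(\om_{2R})}^2/|\om_R|\sim\dashint|f|^2$ yields \eqref{Cau1}.

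\emph{Deducing \eqref{Cau2}.} Apply $(\star)$ at the pair $(R,3R/2)$ to bound $\int_{\om_R}|\nabla u|^2$ by $CR^{-2}\int_{\om_{3R/2}}|u|^2+CT$. The \eqref{Cau1}-type bound for the pair $(3R/2,2R)$ (obtained by exactly the same iteration, since $(\star)$ holds for any $r'<R'$) gives $\int_{\om_{3R/2}}|u|^2\leq C_k(1+|\lambda|R^2)^{-k}\int_{\om_{2R}}|u|^2+C_kR^2T/(1+|\lambda|R^2)$; combining the two and rewriting in averages yields \eqref{Cau2}. The hardest piece of the argument is the iteration in the previous step: one must track $C_k$ carefully across both regimes of $|\lambda|R^2$ so that the source in the small-$|\lambda|R^2$ regime (from a single use of $(\star)$) glues to the large-$|\lambda|R^2$ regime with the uniform factor $(1+|\lambda|R^2)^{-1/2}$ demanded by the statement, and the length of the chain must be chosen so that the constant produced by the geometric series grows only polynomially in $k$ rather than absorbing the desired $(1+|\lambda|R^2)^{-k}$ decay.
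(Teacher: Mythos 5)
Your proposal is correct and follows essentially the same route as the paper: test the equation with $\eta^2 u_{\va,\lambda}$, use the sectoriality of $\lambda$ to retain a coercive term $|\lambda|\int\eta^2|u_{\va,\lambda}|^2$ alongside the gradient term, and iterate the resulting one-scale estimate to accumulate the factor $(1+|\lambda|R^2)^{-k}$ before deducing the gradient bound. The only (harmless) cosmetic differences are that you take real parts throughout using $-\operatorname{Re}\lambda\geq\cos(\theta_0)|\lambda|$ on $\Sigma_{\theta_0}$ where the paper splits into real/imaginary-part cases, and you iterate over an explicit chain of $k$ intermediate radii rather than repeating the two-radius estimate $2k$ times.
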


\begin{proof}
We will only prove the case for $ d\geq 3 $, since the case $ d=2 $ follows from almost the same methods by substituting the Sobolev embedding theorem $ H_0^1\subset L^{\frac{2d}{d-2}} $ with $ H_0^1\subset L^{q'} $, $ 1<q<\infty $. Firstly, we can assume that $ R=1 $ and $ x_0=0 $ and for general case, $ \eqref{Cau1} $ and $ \eqref{Cau2} $ follows by rescaling and translation. For $ \lambda =0 $, the results are obvious by standard Caccioppoli's inequality for the operator $ \mathcal{L}_{\va} $. Then we can assume that $ \lambda\neq 0 $. Choose $ \varphi\in C_0^{\infty}(B(0,2);\R) $ such that $ \varphi\equiv 1 $ for $ x\in B(0,1) $, $ \varphi\equiv 0 $ for $ x\in B(0,\frac{3}{2})^c $, $ |\nabla\varphi|\leq C $ and $ 0\leq\varphi\leq 1 $. We can set $ \psi=\varphi^2u_{\va,\lambda} $ as the test function. By applying the definition of weak solutions, we have
\begin{align}
\int_{\om_2}A_{\va}\nabla u_{\va,\lambda}\overline{\nabla(\varphi^2u_{\va,\lambda})}-\lambda \int_{\om_2}\varphi^2|u_{\va,\lambda}|^2=\int_{\om_2}F\overline{\varphi^2u_{\va,\lambda}}-\int_{\om_2}f\overline{\nabla(\varphi^2u_{\va,\lambda})},\label{ww1}
\end{align}
where $ A_{\va}=A(x/\va) $ if $ \va>0 $ and $ A_{0}=\widehat{A} $. In the following calculations, we use $ C_{\theta_0} $ to denote a constant depending on $ \theta_0 $ and $ C_{k,\theta_0} $ a constant depending on $ \theta_0 $ and $ k $. If $ \operatorname{Re}\lambda\geq 0 $, we can take the imaginary parts of both sides of $ \eqref{ww1} $, then 
\be
\begin{aligned}
&|\operatorname{Im}\lambda|\int_{\om_2}\varphi^2|u_{\va,\lambda}|^2\\
&\quad\quad\leq C\left\{\int_{\om_2}\varphi|\nabla u_{\va,\lambda}||\nabla\varphi||u_{\va,\lambda}|+\int_{\om_2}\varphi^2|F||u_{\va,\lambda}|+\int_{\om_2}\varphi|f|\left(|\nabla\varphi||u_{\va,\lambda}|+\varphi|\nabla u_{\va,\lambda}|\right)\right\}.
\end{aligned}\label{ww42}
\ee
Noticing that $ |\lambda|=c(\lambda,\theta)|\operatorname{Im}\lambda| $, it actually means that
\begin{align}
\int_{\om_2}\varphi^2|u_{\va,\lambda}|^2&\leq \frac{C_{\theta_{0}}}{|\lambda|}\left\{\int_{\om_2}|\nabla\varphi|^2|u_{\va,\lambda}|^2+\delta\int_{\om_2}\varphi^2|\nabla u_{\va,\lambda}|^2dx+\int_{\om_2}|\varphi F||\varphi u_{\va,\lambda}|+\int_{\om_2}\varphi^2|f|^2\right\},\nonumber
\end{align}
where we have used Cauchy's inequality and the following inequality
\begin{align}
ab\leq \delta a^2+\frac{1}{4\delta}b^2\text{ for any }\delta>0\text{ and }a,b>0,\label{inte}
\end{align}
with $ \delta>0 $ being sufficiently small. By using Hölder's inequality, it is easy to see that
\be
\begin{aligned}
\int_{\om_2}\varphi^2|u_{\va,\lambda}|^2&\leq \frac{C_{\theta_{0}}}{|\lambda|}\left\{\int_{\om_2}|\nabla\varphi|^2|u_{\va,\lambda}|^2+\delta\int_{\om_2}\varphi^2|\nabla u_{\va,\lambda}|^2\right.\\
&\quad\left.+\left(\int_{\om_2}|\varphi F|^{\frac{2d}{d+2}}\right)^{\frac{d+2}{2d}}\left(\int_{\om_2}|\varphi u_{\va,\lambda}|^{\frac{2d}{d-2}}\right)^{\frac{d-2}{2d}}+\int_{\om_2}\varphi^2|f|^2\right\}.
\end{aligned}\label{ww6}
\ee
If $ \operatorname{Re}\lambda<0 $, we can take real parts of both sides of $ \eqref{ww1} $ and conclude that
\be
\begin{aligned}
&|\operatorname{Re}\lambda|\int_{\om_2}\varphi^2|u_{\va,\lambda}|^2\\
&\quad\quad\leq C\left\{\int_{\om_2}\varphi|\nabla u_{\va,\lambda}||\nabla\varphi||u_{\va,\lambda}|+\int_{\om_2}\varphi^2|F||u_{\va,\lambda}|+\int_{\om_2}\varphi|f|\left(|\nabla\varphi||u_{\va,\lambda}|+\varphi|\nabla u_{\va,\lambda}|\right)\right\}.
\end{aligned}\label{ww43}
\ee
By adding $ \eqref{ww42} $ to $ \eqref{ww43} $, we can also derive $ \eqref{ww6} $ for the case $ \operatorname{Re}\lambda<0 $. Then $ \eqref{ww6} $ is true for any $ \lambda\in\Sigma_{\theta_0} $. Owing to $ H_0^1(\om_2)\subset L^{\frac{2d}{d-2}}(\om_2) $ and $ \eqref{inte} $, it can be seen that
\begin{align}
&\left(\int_{\om_2}|\varphi F|^{\frac{2d}{d+2}}\right)^{\frac{d+2}{2d}}\left(\int_{\om_2}|\varphi u_{\va,\lambda}|^{\frac{2d}{d-2}}\right)^{\frac{d-2}{2d}}\leq C\left(\int_{\om_2}|\varphi F|^{\frac{2d}{d+2}}\right)^{\frac{d+2}{d}}+\delta\left(\int_{\om_2}|\varphi u_{\va,\lambda}|^{\frac{2d}{d-2}}\right)^{\frac{d-2}{d}}\nonumber\\
&\quad\quad\quad\quad\quad\quad\leq C\left(\int_{\om_2}|\varphi F|^{\frac{2d}{d+2}}\right)^{\frac{d+2}{d}}+\delta\left(\int_{\om_2}(|\nabla\varphi u_{\va,\lambda}|+|\varphi\nabla u_{\va,\lambda}|)^{2}\right)^{\frac{1}{2}}.\nonumber
\end{align}
This, together with $ \eqref{ww6} $ and the properties of $ \varphi $, leads to
\begin{align}
\left(\int_{\om_1}|u_{\va,\lambda}|^2\right)^{\frac{1}{2}}\leq \frac{C_{\theta_0}}{|\lambda|^{\frac{1}{2}}}\left\{\left(\int_{\om_2}|u_{\va,\lambda}|^2\right)^{\frac{1}{2}}+\left(\int_{\om_2}|F|^{\frac{2d}{d+2}}\right)^{\frac{d+2}{2d}}+\left(\int_{\om_2}|f|^2\right)^{\frac{1}{2}}\right\}.\label{ww5}
\end{align}
Combining $ \eqref{ww1} $ and $ \eqref{ww5} $, we have
\begin{align}
\left(\int_{\om_1}|\nabla u_{\va,\lambda}|^2\right)^{\frac{1}{2}}\leq C_{\theta_0}\left\{\left(\int_{\om_2}|u_{\va,\lambda}|^2\right)^{\frac{1}{2}}+\left(\int_{\om_2}|F|^{\frac{2d}{d+2}}\right)^{\frac{d+2}{2d}}+\left(\int_{\om_2}|f|^2\right)^{\frac{1}{2}}\right\}.\label{ww4}
\end{align}
In view of the obvious fact that $ \|u_{\va,\lambda}\|_{L^2(\om_1)}\leq C\|u_{\va,\lambda}\|_{L^2(\om_2)} $, we can deduce from $ \eqref{ww5} $ that for any $ \lambda\in\Sigma_{\theta_0} $,
\begin{align}
\left(\int_{\om_1}|u_{\va,\lambda}|^2\right)^{\frac{1}{2}}\leq \frac{C_{\theta_0}}{(1+|\lambda|)^{\frac{1}{2}}}\left\{\left(\int_{\om_2}|u_{\va,\lambda}|^2\right)^{\frac{1}{2}}+\left(\int_{\om_2}|F|^{\frac{2d}{d+2}}\right)^{\frac{d+2}{2d}}+\left(\int_{\om_2}|f|^2\right)^{\frac{1}{2}}\right\}.\nonumber
\end{align}
This, together with $ \eqref{ww4} $, gives the proof by repeating the procedure for $ 2k $ times.
\end{proof}

\begin{rem}
Choose $ \varphi\in C_0^{\infty}(B(0,2R);\R) $ such that $ \varphi\equiv 1 $ for $ x\in B(x_0,R) $, $ \varphi\equiv 0 $ for $ x\in B(x_0,\frac{3}{2}R)^c $, $ |\nabla\varphi|\leq C/R $ and $ 0\leq\varphi\leq 1 $. According to $ \eqref{ww42} $, $ \eqref{inte} $ and $ \eqref{ww43} $, it can be easily seen that, if $ \lambda\neq 0 $, $ f\equiv 0 $ and $ F\equiv 0 $, then
\begin{align}
\int_{\om(x_0,2R)}\varphi^2|u_{\va,\lambda}|^2&\leq\frac{C_{\theta_0}}{|\lambda|}\int_{\om(x_0,2R)}\varphi|\nabla u_{\va,\lambda}||\nabla\varphi||u_{\va,\lambda}|\nonumber\\
&\leq \frac{C_{\theta_0}}{|\lambda|^2}\int_{\om(x_0,2R)}|\nabla\varphi|^2|\nabla u_{\va,\lambda}|^2+\frac{1}{2}\int_{\om(x_0,2R)}\varphi^2|u_{\va,\lambda}|^2,\nonumber
\end{align}
which, together with the properties of $ \varphi $, implies that
\begin{align}
\left(\dashint_{\om(x_0,R)}|u_{\va,\lambda}|^2\right)^{\frac{1}{2}}\leq\frac{C_{\theta_0}}{|\lambda|}\left(\dashint_{\om(x_0,2R)}|\nabla\varphi|^2|\nabla u_{\va,\lambda}|^2\right)^{\frac{1}{2}}\leq \frac{C_{\theta_0}}{|\lambda|R}\left(\dashint_{\om(x_0,2R)}|\nabla u_{\va,\lambda}|^2\right)^{\frac{1}{2}}.\label{fanxiangzy}
\end{align}
\end{rem}

\subsection{$ \operatorname{BMO} $ and Hardy space}
To deal with the Green functions with $ d=2 $, we will need some basic knowledge on real analysis, mainly about $ \BMO $ space and Hardy space. 
\begin{defn}[$ \operatorname{BMO} $ space and atom functions]\label{BMO atom}
Let $ d\geq 2 $. Assume that $ x_0\in\mathbb{R}^d $, $ r>0 $ and $ \om $ is a bounded domain in $ \mathbb{R}^d $. Denote $ \om(x_0,r)=\om\cap B(x_0,r) $ as before. $ \operatorname{BMO}(\om;\mathbb{C}^m) $ is a space containing all measurable, $ \mathbb{C}^m $-valued functions such that
\begin{align}
\left\|u\right\|_{\BMO(\om)}=\sup\left\{\dashint_{\om(x_0,r)}|u-u_{x_0,r}|:x_0\in\overline{\om},r>0\right\}<\infty,\label{BMOmo}
\end{align}
where
\begin{align}
u_{x_0,r}:=\left\{\begin{array}{ccc}
0 & \text{ if }  & r\geq \delta(x_0)=\operatorname{dist}(x_0,\pa\om), \\
\dashint_{\om(x_0,r)}u & \text{ if } & r< \delta(x_0)=\operatorname{dist}(x_0,\pa\om).
\end{array}\right.\label{ww35}
\end{align}
We call a bounded measurable function $ a(x) $ as an atom function in $ \om $ if $ \operatorname{supp}(a)\subset\om(x_0,r) $ with $ x_0\in\overline{\om} $, $ 0<r<R_0 $ and
\begin{align}
\left\|a\right\|_{L^{\infty}(\om)}\leq\frac{1}{|\om(x_0,r)|},\quad a_{x_0,r}=0. \nonumber
\end{align}
\end{defn}

\begin{defn}[Hardy space] Let $ \om $ be a $ C^1 $ domain in $ \mathbb{R}^d $ with $ d\geq 2 $. A function $ f $ is an element in the Hardy space $ \mathcal{H}^1(\om;\mathbb{C}^m) $, if there exist a sequence of atoms $ \left\{a_i\right\}_{i=1}^{\infty} $ and a sequence of complex numbers $ \left\{\eta_{i}\right\}_{i=1}^{\infty}\subset l^1(\mathbb{C}) $, such that $ f(x)=\sum_{i=1}^{\infty}\eta_ia_i(x) $. We define the norm in this space by
\begin{align}
\left\|f\right\|_{\mathcal{H}^1(\om)}=\inf\left\{\sum_{i=1}^{\infty}|\eta_i|:f(x)=\sum_{i=1}^{\infty}\eta_ia_i(x)\right\}.\nonumber
\end{align}
We notice the expression
\begin{align}
\sup\left\{\int_{\om}a(y)u(y)dy:a=a(x)\text{ is an atom in }\om\right\},\nonumber
\end{align}
gives the equivalent norm of $ \operatorname{BMO}(\om) $. This is because that $ \operatorname{BMO}(\om) $ is the dual space of $ \mathcal{H}^1 $.
\end{defn}

\section{Uniform regularity estimates}\label{W1pand Lipschitz estimates for}
It is well-known that for the homogenization problem with real vector values, the $ W^{1,p} $ estimates for $ u_{\va} $ are established (see \cite{Av1} and \cite{Shen3}). By simple observations, we can obtain similar results for the problem with complex vector values.
 
\begin{lem}[$ W^{1,p} $ estimates for the operator $ \mathcal{L}_{\va} $]
For $\va\geq 0 $ and $ d\geq 2 $, let $ 2<p<\infty $ and $ \om $ be a bounded $ C^1 $ domain in $ \R^d $. Suppose that $ A $ satisfies $ \eqref{sy} $, $ \eqref{el} $, $ \eqref{pe} $ and $ \eqref{VMO} $. Assume that $ f=(f_{i}^{\al})\in L^p(\om;\mathbb{C}^{m\times d}) $ and $ F\in L^q(\om;\mathbb{C}^m) $ with $ q=\frac{pd}{p+d} $. Then the weak solution of $ \mathcal{L}_{\va}(u_{\va})=F+\operatorname{div}(f) $ in $ \om $ and $ u_{\va}=0 $ on $ \pa\om $ satisfies the uniform estimate
\begin{align}
\|\nabla u_{\va}\|_{L^p(\om)}\leq C\left\{\|f\|_{L^p(\om)}+\|F\|_{L^q(\om)}\right\},\label{W1pom}    
\end{align}
where $ C $ depends only on $ \mu,d,m,p,q,\omega(t) $ and $ \om $.
\end{lem}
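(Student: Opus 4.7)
The plan is to reduce the complex-valued problem to the known uniform real-valued $ W^{1,p} $ estimate for $ \mathcal{L}_{\va} $ with divergence-form right-hand side, due to Avellaneda--Lin \cite{Av1} and Shen \cite{Shen3}. The argument has three steps: (i) reduce to real-valued data by linearity; (ii) convert the zeroth-order source $ F $ into an equivalent divergence-form source by solving an auxiliary Poisson problem; and (iii) invoke the established real-valued estimate.

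For step (i), write $ u_{\va}=u_{\va}^{(1)}+iu_{\va}^{(2)} $ and split $ F=F^{(1)}+iF^{(2)} $, $ f=f^{(1)}+if^{(2)} $ into real and imaginary parts. Because the matrix $ A $ is real valued, the Dirichlet problem decouples into two independent real-valued problems of the same form for $ u_{\va}^{(1)} $ and $ u_{\va}^{(2)} $, so it suffices to prove $ \eqref{W1pom} $ assuming all data are real.

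For step (ii), let $ w\in W_0^{1,q}(\om;\R^m)\cap W^{2,q}(\om;\R^m) $ be the unique weak solution of the Poisson problem $ -\Delta w=F $ in $ \om $, $ w=0 $ on $ \pa\om $. The classical Agmon--Douglis--Nirenberg/Calder\'on--Zygmund $ W^{2,q} $ estimate on the $ C^1 $ domain $ \om $ yields $ \|w\|_{W^{2,q}(\om)}\leq C\|F\|_{L^q(\om)} $. Since the exponent $ q=\frac{pd}{p+d} $ is chosen precisely so that $ \frac{1}{q}-\frac{1}{d}=\frac{1}{p} $, the Sobolev embedding $ W^{1,q}(\om)\hookrightarrow L^p(\om) $ gives
\begin{align}
\|\nabla w\|_{L^p(\om)}\leq C\|\nabla w\|_{W^{1,q}(\om)}\leq C\|F\|_{L^q(\om)}.\nonumber
\end{align}
Setting $ \tilde{f}:=-\nabla w $, we have $ F=\operatorname{div}(\tilde{f}) $ with $ \|\tilde{f}\|_{L^p(\om)}\leq C\|F\|_{L^q(\om)} $, so the equation for $ u_{\va} $ becomes $ \mathcal{L}_{\va}(u_{\va})=\operatorname{div}(f-\tilde{f}) $ in $ \om $ with $ u_{\va}=0 $ on $ \pa\om $.

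For step (iii), the uniform $ W^{1,p} $ estimate for $ \mathcal{L}_{\va} $ on $ C^1 $ domains under the $ \VMO $ hypothesis (\cite{Av1,Shen3}) with purely divergence-form data gives
\begin{align}
\|\nabla u_{\va}\|_{L^p(\om)}\leq C\|f-\tilde{f}\|_{L^p(\om)}\leq C\bigl\{\|f\|_{L^p(\om)}+\|F\|_{L^q(\om)}\bigr\},\nonumber
\end{align}
which is exactly $ \eqref{W1pom} $. No substantive obstacle is expected: the real-valued theorem is quoted verbatim, and the reduction is book-keeping. The only technical point worth verifying is the $ W^{2,q} $ Calder\'on--Zygmund estimate for $ -\Delta $ up to the boundary of a $ C^1 $ domain, which is classical, together with the easy observation that $ q=\frac{pd}{p+d}>1 $ for every admissible $ p\in(2,\infty) $ and $ d\geq 2 $, so the auxiliary Poisson problem falls within the linear $ L^q $-theory.
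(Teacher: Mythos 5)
Your overall route is the same as the paper's: the paper disposes of the real-valued case by citing the established uniform $W^{1,p}$ theory for $\mathcal{L}_{\va}$ on $C^1$ domains with $\VMO$ coefficients (\cite{Av1}, \cite{Shen3}), and then handles complex data exactly as in your step (i), by splitting $f$ and $F$ into real and imaginary parts and using that $A$ is real. Your extra step (ii), converting $F$ into divergence form, is a reasonable way to make the real-valued case explicit when the cited estimate is stated only for $\operatorname{div}(f)$ data.

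One point in step (ii) is wrong as stated: the global $W^{2,q}$ Calder\'on--Zygmund estimate for the Dirichlet Laplacian is \emph{not} classical on a merely $C^1$ domain; the standard up-to-the-boundary $W^{2,q}$ theory requires $C^{1,1}$ (or at least more than $C^1$) regularity of $\pa\om$, and the lemma here assumes only $C^1$. The fix is immediate: extend $F$ by zero to a ball $B\supset\overline{\om}$ and solve $-\Delta w=F$ in $B$ with $w=0$ on $\pa B$ (or use the Newtonian potential of the extension), obtaining $\|\nabla w\|_{L^p(B)}\leq C\|w\|_{W^{2,q}(B)}\leq C\|F\|_{L^q(\om)}$ on the smooth domain $B$; then $\tilde f:=-\nabla w|_{\om}$ still satisfies $F=\operatorname{div}(\tilde f)$ in $\om$ and the rest of your argument goes through unchanged. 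With that repair the proposal is correct.
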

\begin{proof}
For the case that $ f\in L^p(\om;\R^{m\times d}) $ and $ F\in L^q(\om;\R^m) $, the proof is trivial. For $ f\in L^p(\om;\mathbb{C}^{m\times d}) $ and $ F\in L^q(\om;\mathbb{C}^m) $, we can write that $ f=g+ih $ and $ F=H+iG $ with $ g,h\in L^p(\om;\R^{m\times d}) $, $ G,H\in L^q(\om;\R^m) $ and use the $ W^{1,p} $ estimates for the case of real functions.
\end{proof}

\begin{thm}[Localization of $ W^{1,p} $ estimates for the operator $ \mathcal{L}_{\va}-\lambda I $]\label{LoW1p}
For $ \va\geq 0 $ and $ d\geq 2 $, let $ \lambda\in\Sigma_{\theta_0}\cup\{0\} $ with $ \theta_0\in(0,\frac{\pi}{2}) $, $ \om $ be a bounded $ C^1 $ domain in $ \R^d $ and $ 2<p<\infty $. Suppose that $ A $ satisfies $ \eqref{sy} $, $ \eqref{el} $, $ \eqref{pe} $ and $ \eqref{VMO} $. Assume that $ x_0\in\om $, $ 0<R<R_0 $, $ f\in L^p(\om(x_0,2R);\mathbb{C}^{m\times d}) $, $ F\in L^q(\om(x_0,2R);\mathbb{C}^m)$ and $ q=\frac{pd}{p+d} $. If $ \Delta(x_0,2R)\neq\emptyset $, assume that $ u_{\va,\lambda}\in H^1(\om(x_0,2R);\mathbb{C}^m) $ is the weak solution of the boundary problem
\begin{align}
(\mathcal{L}_{\va}-\lambda I)(u_{\va,\lambda})=F+\operatorname{div}(f)\text{ in }\om(x_0,2R)\quad\text{and}\quad 
u_{\va,\lambda}=0\text{ on }\Delta(x_0,2R).\nonumber
\end{align}
If $ \Delta(x_0,2R)=\emptyset $, assume that $ u_{\va,\lambda}\in H^1(B(x_0,2R);\mathbb{C}^m) $ is the weak solution of the interior problem 
\begin{align}
(\mathcal{L}_{\va}-\lambda I)(u_{\va,\lambda})=F+\operatorname{div}(f)\text{ in } B(x_0,2R).\nonumber  
\end{align}
Then there exists $ n\in\mathbb{N}_+ $, a constant integer depending only on $ d $, such that for any $ k\in\mathbb{N}_+ $,
\be 
\begin{aligned}
\left(\dashint_{\om(x_0,R)}|\nabla u_{\va,\lambda}|^p\right)^{\frac{1}{p}}&\leq \frac{C_{k,\theta_0}}{(1+|\lambda|R^2)^{k}R}\left(\dashint_{\om(x_0,2R)}|u_{\va,\lambda}|^2\right)^{\frac{1}{2}}\\
&\quad+C_{k,\theta_0}(1+|\lambda|R^2)^{n}\left\{R\left(\dashint_{\om(x_0,2R)}|F|^q\right)^{\frac{1}{q}}+\left(\dashint_{\om(x_0,2R)}|f|^p\right)^{\frac{1}{p}}\right\},
\end{aligned}\label{loW1pnu}
\ee
\be
\begin{aligned}
\left(\dashint_{\om(x_0,R)}|u_{\va,\lambda}|^p\right)^{\frac{1}{p}}&\leq \frac{C_{k,\theta_0}}{(1+|\lambda|R^2)^{k}}\left(\dashint_{\om(x_0,2R)}|u_{\va,\lambda}|^2\right)^{\frac{1}{2}}\\
&\quad+C_{k,\theta_0}(1+|\lambda|R^2)^{n}\left\{R^2\left(\dashint_{\om(x_0,2R)}|F|^q\right)^{\frac{1}{q}}+R\left(\dashint_{\om(x_0,2R)}|f|^p\right)^{\frac{1}{p}}\right\},
\end{aligned}\label{loW1pu}
\ee
where $ C_{k,\theta_0} $ depends only on $ \mu,d,m,k,\theta_0,p,\omega(t) $ and $\om $.
\end{thm}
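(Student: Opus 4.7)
The plan is to reduce \eqref{loW1pnu} to the known local $W^{1,p}$ estimate for $\mathcal{L}_{\va}$ (which follows by a cutoff argument from the global $W^{1,p}$ estimate \eqref{W1pom}, after the straightforward passage from real to complex valued targets) by treating $\lambda u_{\va,\lambda}$ as an extra right-hand side, and then to import the sharp decay $(1+|\lambda|R^2)^{-k}$ from Caccioppoli's inequality \eqref{Cau1}. By rescaling and translation we may assume $R=1$ and $x_0=0$; the case $\lambda=0$ is immediate from the unperturbed localization, so we fix $\lambda\in\Sigma_{\theta_0}$.

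First I would rewrite the equation as $\mathcal{L}_{\va}(u_{\va,\lambda})=(F+\lambda u_{\va,\lambda})+\operatorname{div}(f)$ and apply the local $W^{1,p}$ estimate for $\mathcal{L}_{\va}$ between $\om_1$ and $\om_{3/2}$, which controls $\bigl(\dashint_{\om_1}|\nabla u_{\va,\lambda}|^p\bigr)^{1/p}$ by $C\,\bigl(\dashint_{\om_{3/2}}|u_{\va,\lambda}|^2\bigr)^{1/2}$ together with $C\,\bigl(\dashint_{\om_{3/2}}|F+\lambda u_{\va,\lambda}|^q\bigr)^{1/q}$ and $C\,\bigl(\dashint_{\om_{3/2}}|f|^p\bigr)^{1/p}$. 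Since $q=pd/(p+d)<2$, Hölder's inequality gives $\bigl(\dashint_{\om_{3/2}}|u_{\va,\lambda}|^q\bigr)^{1/q}\leq\bigl(\dashint_{\om_{3/2}}|u_{\va,\lambda}|^2\bigr)^{1/2}$, so the $\lambda u_{\va,\lambda}$ contribution folds into the $L^2$ term at the cost of one extra factor $C(1+|\lambda|)$. I next apply a variant of Caccioppoli's inequality \eqref{Cau1} between $\om_{3/2}$ and $\om_2$ (obtained from the proof of Lemma \ref{Caccio} with a cutoff adapted to this pair of radii) with exponent $k+1$ in place of $k$; this replaces $\bigl(\dashint_{\om_{3/2}}|u_{\va,\lambda}|^2\bigr)^{1/2}$ by $C(1+|\lambda|)^{-(k+1)}\bigl(\dashint_{\om_2}|u_{\va,\lambda}|^2\bigr)^{1/2}$ plus data terms weighted by $(1+|\lambda|)^{-1/2}$. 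Combining, the $L^2$ coefficient becomes $C(1+|\lambda|)^{-k}$ while the accumulated data weight is at most $C(1+|\lambda|)^{1/2}$, which after rescaling back to general $R$ gives \eqref{loW1pnu} with $n=1$.

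For the $L^p$ estimate \eqref{loW1pu}, I would invoke the Sobolev--Poincaré inequality
\begin{align*}
\Bigl(\dashint_{\om_R}|u_{\va,\lambda}|^p\Bigr)^{1/p} \leq CR\Bigl(\dashint_{\om_R}|\nabla u_{\va,\lambda}|^p\Bigr)^{1/p} + \Bigl(\dashint_{\om_R}|u_{\va,\lambda}|^2\Bigr)^{1/2},
\end{align*}
which is valid on a ball and, in the boundary case, after extending $u_{\va,\lambda}$ by zero across $\Delta_{2R}$. Plugging in \eqref{loW1pnu} for the $\nabla u_{\va,\lambda}$ term and applying Lemma \ref{Caccio} once more to the trailing $L^2$ term yields \eqref{loW1pu}, with the prefactor $R$ from the Sobolev inequality producing exactly the extra factor of $R$ on the data side.

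The main obstacle is the algebraic bookkeeping of the $(1+|\lambda|R^2)$-weights: each passage from $L^q$ to $L^2$ via Hölder and each enlargement of a concentric ball costs a positive power of $(1+|\lambda|R^2)$, while the only source of negative powers is Lemma \ref{Caccio}. The freedom to choose the Caccioppoli exponent arbitrarily large is precisely what lets the negative powers dominate, so that $n$ can be taken as a fixed integer depending only on $d$ regardless of $k$.
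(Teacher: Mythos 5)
There is a genuine gap in your argument for \eqref{loW1pnu}: the claim that $q=\frac{pd}{p+d}<2$, which is what lets you fold the term $\lambda u_{\va,\lambda}$ into the $L^2$ average by a single application of H\"older's inequality, is false for large $p$ when $d\geq 3$. Indeed $q\leq 2$ is equivalent to $p\leq \frac{2d}{d-2}$, and $q\to d$ as $p\to\infty$; so for, say, $d=3$ and $p=10$ one has $q=30/13>2$, and the inequality $\bigl(\dashint|u_{\va,\lambda}|^{q}\bigr)^{1/q}\leq\bigl(\dashint|u_{\va,\lambda}|^{2}\bigr)^{1/2}$ goes the wrong way. Since the only a priori control on $u_{\va,\lambda}$ is in $L^2$, the source term $\lambda u_{\va,\lambda}$ cannot be absorbed in one step. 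This is precisely why the paper's proof does not stop after one application of the $W^{1,p}$ estimate: it uses the cutoff identity $\mathcal{L}_{\va}(\varphi u_{\va,\lambda})=F_{\va}+\operatorname{div}(f_{\va})$ together with the embedding \eqref{aux2}, $\|u\|_{L^p}\leq C\{\|\nabla u\|_{L^q}+\|u\|_{L^2}\}$, and iterates the resulting self-improving inequality $\|\nabla u_{\va,\lambda}\|_{L^p(\om_1)}\leq C(1+|\lambda|)\{\|\nabla u_{\va,\lambda}\|_{L^q(\om_2)}+\|u_{\va,\lambda}\|_{L^2(\om_2)}\}+\dots$ along the Sobolev chain $\frac{1}{q_{j+1}}=\frac{1}{q_j}+\frac{1}{d}$ until the exponent drops to $2$; this takes roughly $d/2$ steps, each costing a factor $(1+|\lambda|)$, which is exactly the origin of the dimensional constant $n=n(d)$ in the statement. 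Your value $n=1$ is therefore not attainable by your route (except when $d=2$ or $p\leq\frac{2d}{d-2}$, where your single-step argument is essentially a special case of the paper's iteration). Only after this bootstrapping does the paper invoke Caccioppoli's inequality \eqref{Cau1} with exponent $k+n$ to convert the accumulated positive power $(1+|\lambda|)^{n}$ in front of the $L^2$ term into the decay $(1+|\lambda|)^{-k}$ — that final step of yours is correct in spirit.

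A secondary point: you quote a ``local $W^{1,p}$ estimate for $\mathcal{L}_{\va}$'' bounding $\bigl(\dashint_{\om_1}|\nabla u|^p\bigr)^{1/p}$ by $\bigl(\dashint_{\om_{3/2}}|u|^2\bigr)^{1/2}$ plus data as a black box. The paper does not have such a lemma; it only has the global estimate \eqref{W1pom}, and the localization is part of what is being proved (the cutoff commutator $A_{\va}\nabla u_{\va,\lambda}\nabla\varphi$ lands in $L^q$ and is subject to the same $q>2$ obstruction, hence needs the same iteration). Your treatment of \eqref{loW1pu} via the scaled form of \eqref{aux2} and a further application of Caccioppoli is fine and matches the paper's one-line derivation from Poincar\'e's inequality.
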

\begin{proof}
By rescaling and translation, it can be assumed that $ R=1 $ and $ x_0=0 $. We can choose $ \varphi\in C_0^{\infty}(B(0,2);\R) $ such that $ 0\leq\varphi\leq 1 $, $ \varphi\equiv 1 $ in $ B(0,1) $, $ \varphi\equiv 0 $ in $ B(0,\frac{3}{2})^c $ and $ |\nabla\varphi|\leq C $. Then by setting $ A_{\va}$ as $ A(x/\va) $ if $ \va>0 $ and $ \widehat{A} $ if $ \va=0 $, we have
\begin{align}
\mathcal{L}_{\va}(\varphi u_{\va,\lambda})=F_{\va}+\operatorname{div}(f_{\va})\text{ in }\om_2\quad\text{and}\quad \varphi u_{\va,\lambda}=0\text{ on } \pa(\om_2),\nonumber
\end{align}
where $ F_{\va},f_{\va} $ are defined by
\begin{align}
F_{\va}=\lambda \varphi u_{\va,\lambda}+\varphi F-f\nabla\varphi-A_{\va}\nabla u_{\va,\lambda}\nabla\varphi\quad\text{and}\quad f_{\va}=\varphi f-A_{\va}\nabla \varphi u_{\va,\lambda}.\nonumber
\end{align} 
Then owing to $ \eqref{W1pom} $ and Hölder's inequality, it follows that
\begin{align}
&\|\nabla(\varphi u_{\va,\lambda})\|_{L^p(\om_{3/2})}\leq C\left\{\|F_{\va}\|_{L^q(\om_{3/2})}+\|f_{\va}\|_{L^p(\om_{3/2})}\right\}\nonumber\\
&\quad\quad\leq C\left\{|\lambda|\|u_{\va,\lambda}\|_{L^q(\om_{3/2})}+\|F\|_{L^q(\om_{3/2})}+\|u_{\va,\lambda}\|_{L^p(\om_{3/2})}+\|\nabla u_{\va,\lambda}\|_{L^q(\om_{3/2})}+\|f\|_{L^p(\om_{3/2})}\right\},\nonumber
\end{align}
where $ q=\frac{pd}{p+d} $. By using Hölder's inequality again, this implies that
\begin{align}
\|\nabla u_{\va,\lambda}\|_{L^p(\om_1)}&\leq C\left\{(1+|\lambda|)\|u_{\va,\lambda}\|_{L^p(\om_{3/2})}+\|F\|_{L^q(\om_{3/2})}+\|\nabla u_{\va,\lambda}\|_{L^q(\om_{3/2})}+\|f\|_{L^p(\om_{3/2})}\right\}\nonumber\\
&\leq C\left\{(1+|\lambda|)\left(\|\nabla u_{\va,\lambda}\|_{L^q(\om_2)}+\|u_{\va,\lambda}\|_{L^2(\om_2)}\right)+\|F\|_{L^q(\om_2)}+\|f\|_{L^p(\om_2)}\right\}.\nonumber
\end{align}
Here, we have used the inequality that for any $ 2<p<\infty $ and $ u\in W^{1,q}(\om;\mathbb{C}^m) $ with $ q=\frac{pd}{p+d} $,
\begin{align}
\|u\|_{L^p(\om)}\leq C\|u\|_{W^{1,q}(\om)}\leq C\left\{\|\nabla u\|_{L^q(\om)}+\|u\|_{L^2(\om)}\right\},\label{aux2}
\end{align}
where $ C $ depends only on $ p,d,m,\om $. One can refer to \cite{RA} for details about $ \eqref{aux2} $. By iterating for finite times (depending only on $ d $), we can get $ n=n(d)\in\mathbb{N}_+ $, such that
\begin{align}
\|\nabla u_{\va,\lambda}\|_{L^p(\om_1)}\leq C(1+|\lambda|)^{n}\|u_{\va,\lambda}\|_{L^2(\om_2)}+C(1+|\lambda|)^{n-1}\left\{\|F\|_{L^q(\om_2)}+\|f\|_{L^p(\om_2)}\right\}.\nonumber  
\end{align}
In view of $ \eqref{Cau1} $, this yields that for any $ k\in\mathbb{N}_+ $,
\begin{align}
\|\nabla u_{\va,\lambda}\|_{L^p(\om_1)}&\leq \frac{C_{k,\theta_0}(1+|\lambda|)^{n}}{(1+|\lambda|)^{k+n}}\|u_{\va,\lambda}\|_{L^2(\om_2)}+C_{k,\theta_0}(1+|\lambda|)^{n}\left\{\|F\|_{L^q(\om_2)}+\|f\|_{L^p(\om_2)}\right\}\nonumber\\
&\leq C_{k,\theta_0}(1+|\lambda|)^{-k}\|u_{\va,\lambda}\|_{L^2(\om_2)}+C_{k,\theta_0}(1+|\lambda|)^{n}\left\{\|F\|_{L^q(\om_2)}+\|f\|_{L^p(\om_2)}\right\}.\nonumber
\end{align}
This inequality implies $ \eqref{loW1pnu} $. On the other hand, $ \eqref{loW1pu} $ follows from Poincaré's inequality and the same arguments of localization.
\end{proof}

By using Sobolev embedding theorem and convex arguments (see \cite{Shen4}), we can use $ W^{1,p} $ estimates to obtain the Hölder and $ L^{\infty} $ estimates as follows. The proofs are trivial and for more details, one can refer to \cite{Xu1}.

\begin{cor}[Localization of Hölder and $ L^{\infty} $ estimates for the operator $ \mathcal{L}_{\va}-\lambda I $]
For $ \va\geq 0 $ and $ d\geq 2 $, let $ \lambda\in\Sigma_{\theta_0}\cup\{0\} $ with $ \theta_0\in(0,\frac{\pi}{2}) $ and $ \om $ be a bounded $ C^1 $ domain in $ \R^d $. Suppose that $ A $ satisfies $ \eqref{sy} $, $ \eqref{el} $, $ \eqref{pe} $ and $ \eqref{VMO} $. Assume that $ x_0\in \om $, $ 0<R<R_0 $, $ f\in L^p(\om(x_0,2R);\mathbb{C}^{m\times d})$, $ F\in L^q(\om(x_0,2R);\mathbb{C}^m)$ and $ q=\frac{pd}{p+d} $. If $ \Delta(x_0,2R)\neq\emptyset $, assume that $ u_{\va,\lambda}\in H^1(\om(x_0,2R);\mathbb{C}^m) $ is the weak solution of the boundary problem
\begin{align}
(\mathcal{L}_{\va}-\lambda I)(u_{\va,\lambda})=F+\operatorname{div}(f)\text{ in }\om(x_0,2R)\quad\text{and}\quad 
u_{\va,\lambda}=0\text{ on }\Delta(x_0,2R).\nonumber
\end{align}
If $ \Delta(x_0,2R)=\emptyset $, assume that $ u_{\va,\lambda}\in H^1(B(x_0,2R);\mathbb{C}^m) $ is the weak solution of the interior problem
\begin{align}
(\mathcal{L}_{\va}-\lambda I)(u_{\va,\lambda})=F+\operatorname{div}(f)\text{ in } B(x_0,2R).\nonumber  
\end{align}
Then there exists $ n\in\mathbb{N}_+ $, a constant integer depending only on $ d $, such that for $ \gamma=1-\frac{d}{p} $, any $ k\in\mathbb{N}_+ $ and $ 0<s<\infty $,
\be
\begin{aligned}
\left[u_{\va,\lambda}\right]_{C^{0,\gamma}(\om(x_0,R))}&\leq \frac{C_{k,\theta_0}}{(1+|\lambda|R^2)^{k}R^{\gamma}}\left(\dashint_{\om(x_0,2R)}|u_{\va,\lambda}|^2\right)^{\frac{1}{2}}\\
&\quad+\frac{C_{k,\theta_0}(1+|\lambda|R^2)^{n}R}{R^{\gamma}}\left\{R\left(\dashint_{\om(x_0,2R)}|F|^q\right)^{\frac{1}{q}}+\left(\dashint_{\om(x_0,2R)}|f|^p\right)^{\frac{1}{p}}\right\},
\end{aligned}\label{Holder}
\ee
\be
\begin{aligned}
\|u_{\va,\lambda}\|_{L^{\infty}(\om(x_0,R))}&\leq \frac{C_{k,\theta_0}}{(1+|\lambda|R^2)^{k}}\left(\dashint_{\om(x_0,2R)}|u_{\va,\lambda}|^s\right)^{\frac{1}{s}}\\
&\quad+C_{k,\theta_0}(1+|\lambda|R^2)^{n}\left\{R^{2}\left(\dashint_{\om(x_0,2R)}|F|^q\right)^{\frac{1}{q}}+R\left(\dashint_{\om(x_0,2R)}|f|^p\right)^{\frac{1}{p}}\right\},
\end{aligned}\label{Linfty}
\ee
where $ C_{k,\theta_0} $ depends only on $ \mu,d,m,k,\theta_0,p,s,\omega(t) $ and $ \om $.
\end{cor}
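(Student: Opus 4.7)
The plan is to derive both $\eqref{Holder}$ and $\eqref{Linfty}$ from the $W^{1,p}$ bounds of Theorem \ref{LoW1p} via standard Morrey/Sobolev embeddings, choosing $p>d$ so that $\gamma=1-d/p\in(0,1)$. The key is that both embeddings admit scaling-invariant forms on the $C^{1}$ subdomain $\om(x_{0},R)$, so that the powers of $R$ and the $\lambda$-dependent weights $(1+|\lambda|R^{2})^{\pm}$ align exactly with those in $\eqref{loW1pnu}$ and $\eqref{loW1pu}$.

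For $\eqref{Holder}$, I would apply Morrey's inequality on $\om(x_{0},R)$, using the $C^{1}$ regularity of $\pa\om$ together with a standard extension operator when $\Delta(x_{0},2R)\neq\emptyset$ (and the usual inequality on the ball when $\Delta(x_{0},2R)=\emptyset$). After rescaling to unit scale this yields
\[
R^{\gamma}[u_{\va,\lambda}]_{C^{0,\gamma}(\om(x_{0},R))}\le CR\left(\dashint_{\om(x_{0},R)}|\nabla u_{\va,\lambda}|^{p}\right)^{1/p}.
\]
Inserting $\eqref{loW1pnu}$ with radii $(R,2R)$ immediately produces $\eqref{Holder}$, since $R^{1-\gamma}\cdot R^{-1}=R^{-\gamma}$ on the $L^{2}$-term and $R^{1-\gamma}=R/R^{\gamma}$ on the source terms.

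For $\eqref{Linfty}$ I would first prove the case $s=2$ via the scaling-invariant Sobolev embedding $W^{1,p}\hookrightarrow L^{\infty}$ (for $p>d$),
\[
\|u_{\va,\lambda}\|_{L^{\infty}(\om(x_{0},R))}\le C\left\{\left(\dashint_{\om(x_{0},R)}|u_{\va,\lambda}|^{p}\right)^{1/p}+R\left(\dashint_{\om(x_{0},R)}|\nabla u_{\va,\lambda}|^{p}\right)^{1/p}\right\},
\]
and then substitute $\eqref{loW1pu}$ and $\eqref{loW1pnu}$. To pass from $s=2$ to general $0<s<\infty$: the range $s\ge 2$ is immediate from Hölder's inequality on $\om(x_{0},2R)$. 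For $0<s<2$ I would invoke the Shen-type convex/self-improvement argument (cf.\ \cite{Shen4}): Young's inequality yields
\[
\left(\dashint_{\om(x_{0},r)}|u_{\va,\lambda}|^{2}\right)^{1/2}\le \delta\|u_{\va,\lambda}\|_{L^{\infty}(\om(x_{0},r))}+C_{\delta}\left(\dashint_{\om(x_{0},r)}|u_{\va,\lambda}|^{s}\right)^{1/s},
\]
and combining this with the $s=2$ bound on a sequence of concentric shells $r_{j}\in[R,2R]$, followed by the standard iteration that absorbs the $\delta\|u_{\va,\lambda}\|_{L^{\infty}}$ term, completes the proof.

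The only delicate point is the self-improvement iteration for $0<s<2$, but the weight $(1+|\lambda|R^{2})^{-k}$ survives it, because $1+|\lambda|r_{j}^{2}\simeq 1+|\lambda|R^{2}$ uniformly in $j$; any finite multiplicative loss from the iteration can be absorbed by choosing the exponent $k$ in Theorem \ref{LoW1p} slightly larger at the outset. Everything else reduces to the scaling-invariant forms of Morrey's inequality and the Sobolev embedding on the $C^{1}$ subdomain $\om(x_{0},R)$.
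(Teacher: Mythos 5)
Your proposal is correct and follows exactly the route the paper indicates: the paper dispenses with this corollary in one line ("Sobolev embedding theorem and convex arguments, see \cite{Shen4}, applied to the $W^{1,p}$ estimates of Theorem \ref{LoW1p}"), and your argument -- scaled Morrey inequality for \eqref{Holder}, scaled $W^{1,p}\hookrightarrow L^{\infty}$ embedding for \eqref{Linfty} with $s=2$, then the self-improvement iteration on shells $r_j\in[R,2R]$ for $0<s<2$ -- is precisely that outline with the details supplied. The bookkeeping of the powers of $R$ and of $(1+|\lambda|R^2)^{\pm}$ checks out, including your observation that the iteration preserves the weight since $1+|\lambda|r_j^2\simeq 1+|\lambda|R^2$ uniformly.
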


\begin{thm}[Localization of Lipschitz estimates for the operator $ \mathcal{L}_{\va} $]\label{Lipschitz estimate localization}
For $ \va\geq 0 $ and $ d\geq 2 $, let $ \om $ be a bounded $ C^{1,\eta} $ domain in $ \mathbb{R}^d $ with $ 0<\eta<1 $. Suppose that $ A $ satisfies $ \eqref{el} $, $ \eqref{pe} $ and $ \eqref{Hol} $. Assume that $ x_0\in\om $, $ 0<R<R_0 $ and $ F\in L^p(\om(x_0,2R);\mathbb{C}^m) $ with $ p>d $. If $ \Delta(x_0,2R)\neq\emptyset $, assume that $ u_{\va}\in H^1(\om(x_0,2R);\mathbb{C}^m) $ is the weak solution of the boundary problem
\begin{align}
\mathcal{L}_{\va}(u_{\va})=F\text{ in }\om(x_0,2R)\quad\text{and}\quad u_{\va}=0\text{ on }\Delta(x_0,2R).\nonumber
\end{align}
If $ \Delta(x_0,2R)=\emptyset $, assume that $ u_{\va}\in H^1(B(x_0,2R);\mathbb{C}^m) $ is the weak solution of the interior problem
\begin{align}
\mathcal{L}_{\va}(u_{\va})=F\text{ in } B(x_0,2R).\nonumber
\end{align}
then
\begin{align}
\|\nabla u_{\va}\|_{L^{\infty}(\om(x_0,R))}\leq \frac{C}{R}\left\{\left(\dashint_{\om(x_0,2R)}|u_{\va}|^2\right)^{\frac{1}{2}}+R^2\left(\dashint_{\om(x_0,2R)}|F|^p\right)^{\frac{1}{p}}\right\},\label{LipforL}
\end{align}
where $ C $ depends only on $ \mu,d,m,\tau,\nu,\eta $ and $ \om $.
\end{thm}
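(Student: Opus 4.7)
The statement is essentially the Avellaneda--Lin Lipschitz estimate for $\mathcal{L}_{\va}$ (see \cite{Av1} and Chapter 5 of \cite{Shen2}) upgraded to complex-valued systems and localized to a ball of radius $R$. The plan is first to rescale to the unit scale, then to invoke the classical compactness scheme separately in the interior and boundary cases, and finally to pass from the real-valued to the complex-valued setting by splitting into real and imaginary parts.

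For the rescaling, I would set $v(x)=R^{-1}u_{\va}(Rx+x_{0})$ and $\widetilde{F}(x)=R\,F(Rx+x_{0})$, so that $v$ solves $\mathcal{L}_{\va/R}(v)=\widetilde{F}$ on the rescaled region, with the homogeneous Dirichlet condition preserved on the rescaled portion of $\pa\om$. Because $A$ satisfies $\eqref{el}$, $\eqref{pe}$ and $\eqref{Hol}$ uniformly in the scale parameter $\va/R$, the desired bound $\eqref{LipforL}$ is equivalent to the unit-scale interior/boundary Lipschitz estimate
\begin{align}
\|\nabla v\|_{L^{\infty}(\om(0,1))}\leq C\bigl\{\|v\|_{L^{2}(\om(0,2))}+\|\widetilde{F}\|_{L^{p}(\om(0,2))}\bigr\},\nonumber
\end{align}
which is the object of the classical theory. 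The scale-invariant Caccioppoli inequality from Lemma \ref{Caccio} (with $\lambda=0$) plays the usual role of passing from $H^{1}$ control to $L^{2}$ control.

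On the unit scale the Avellaneda--Lin compactness method supplies the estimate. The argument has two regimes separated by $\va/R$: at scales $r\gtrsim\va/R$ one iterates a comparison with the homogenized operator $\mathcal{L}_{0}=-\operatorname{div}(\widehat{A}\nabla)$, whose constant-coefficient Lipschitz estimate combined with a Campanato-type dyadic iteration yields the desired bound; the key compactness input is $H$-convergence of $\mathcal{L}_{\va/R}$ to $\mathcal{L}_{0}$ together with the $C^{1,\alpha}$ boundary estimate for $\mathcal{L}_{0}$ on the $C^{1,\eta}$ domain $\om$. At scales $r\lesssim\va/R$ one rescales so that $A(\cdot/\va)$ becomes $A(\cdot)$, which is H\"{o}lder continuous by $\eqref{Hol}$, and invokes the Schauder estimate for systems with H\"{o}lder coefficients. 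In the boundary case, the Dirichlet correctors $\Phi_{\va}$ defined in $\eqref{Dirichlet correctors}$ and satisfying $\eqref{Estimate for Dirichlet correctors}$ play the role of the periodic correctors $\chi_{j}^{\beta}$ in the comparison step, after flattening the boundary in each local chart. The complex-valued version follows at once by writing $u_{\va}=\operatorname{Re}u_{\va}+i\operatorname{Im}u_{\va}$ and noting that each part solves a real system of the same form with right-hand side $\operatorname{Re}F$ or $\operatorname{Im}F$.

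The only substantive obstacle is not mathematical novelty but bookkeeping: one must verify that the constants emerging from the two-scale iteration remain uniform in $R$, and that the H\"{o}lder modulus of $\pa\om$ enters the final constant only through $\om$ (and the parameter $\eta$). Since the whole argument is documented in detail in \cite{Av1} and Chapter 5 of \cite{Shen2}, I would present the result by reducing via rescaling to the unit-scale estimate and citing those references, adding only the brief decomposition into real and imaginary parts that is needed to accommodate complex data.
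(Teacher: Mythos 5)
Your proposal is correct and matches the paper's treatment: the paper proves this theorem simply by citing \cite{Av2} and Theorems 3.1.1 and 4.5.1 of \cite{Shen2}, i.e.\ exactly the rescaling-plus-Avellaneda--Lin-compactness reduction (with the trivial real/imaginary splitting for complex data) that you describe.
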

\begin{proof}
See \cite{Av2} or Theorem 3.1.1 and Theorem 4.5.1 of \cite{Shen2}.
\end{proof}

\begin{thm}[Localization of Lipschitz estimates for the operator $ \mathcal{L}_{\va}-\lambda I $]
For $ \va\geq 0 $ and $ d\geq 2 $, Let $ \lambda\in\Sigma_{\theta_0}\cup\{0\} $ with $ \theta_0\in(0,\frac{\pi}{2}) $ and $ \om $ be a bounded $ C^{1,\eta} $ domain in $ \mathbb{R}^d $ with $ 0<\eta<1 $. Suppose that $ A $ satisfies $ \eqref{sy} $,$ \eqref{el} $, $ \eqref{pe} $ and $ \eqref{Hol} $. Assume that $ x_0\in\om $, $ 0<R<R_0 $ and $ F\in L^p(\om(x_0,2R);\mathbb{C}^m) $ with $ p>d $. If $ \Delta(x_0,2R)\neq\emptyset $, assume that $ u_{\va,\lambda}\in H^1(\om(x_0,2R);\mathbb{C}^m) $ is the weak solution of the boundary problem
\begin{align}
(\mathcal{L}_{\va}-\lambda I)(u_{\va,\lambda})=F\text{ in }\om(x_0,2R)\quad\text{and}\quad 
u_{\va,\lambda}=0\text{ on }\Delta(x_0,2R).\nonumber
\end{align}
If $ \Delta(x_0,2R)=\emptyset $, assume that $ u_{\va,\lambda}\in H^1(B(x_0,2R);\mathbb{C}^m) $ is the weak solution of the interior problem
\begin{align}
(\mathcal{L}_{\va}-\lambda I)(u_{\va,\lambda})=F\text{ in }B(x_0,2R).\nonumber
\end{align}
Then there exists $ n\in\mathbb{N}_+ $, a constant integer depending only on $ d $, such that for any $ k\in\mathbb{N}_+ $,
\be
\begin{aligned}
\|\nabla u_{\va,\lambda}\|_{L^{\infty}(\om(x_0,R))}&\leq \frac{C_{k,\theta_0}}{(1+|\lambda|R^2)^kR}\left(\dashint_{\om(x_0,2R)}|u_{\va,\lambda}|^2\right)^{\frac{1}{2}}\\
&\quad+C_{k,\theta_0}(1+|\lambda|R^2)^{n}R\left(\dashint_{\om(x_0,2R)}|F|^p\right)^{\frac{1}{p}},
\end{aligned}\label{Lipesu}
\ee
where $ C_{k,\theta_0} $ depends only on $ \mu,d,m,k,\theta_0,\tau,\nu,\eta $ and $ \om $.
\end{thm}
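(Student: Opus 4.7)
\medskip

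\noindent\textbf{Proof plan.} The idea is to put the zeroth-order term on the right and reduce to the already-established Lipschitz estimate for $\mathcal{L}_{\va}$ (Theorem \ref{Lipschitz estimate localization}), then control the resulting $\lambda u_{\va,\lambda}$ term using the localized $L^{\infty}$ estimate \eqref{Linfty}. The case $\lambda=0$ is immediate from Theorem \ref{Lipschitz estimate localization}, so I assume $\lambda\in\Sigma_{\theta_0}$. By the usual rescaling $x\mapsto x_0+Ry$ and replacing $\va$ by $\va/R$, I may further assume $R=1$ and $x_0=0$; this converts $|\lambda|R^2$ into $|\lambda|$ and accounts for all the $(1+|\lambda|R^2)$ powers in the final statement.

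Rewrite the equation as $\mathcal{L}_{\va}(u_{\va,\lambda})=F+\lambda u_{\va,\lambda}$ on $\om(0,2)$ (or $B(0,2)$ in the interior case), with $u_{\va,\lambda}=0$ on $\Delta(0,2)$ when present. A standard covering argument upgrades Theorem \ref{Lipschitz estimate localization} (which is stated for the ratio $R\!:\!2R$) to any fixed ratio $r_1\!:\!r_2$ with $r_1<r_2$; applying it at scales $1<3/2$ yields
\begin{align}
\|\nabla u_{\va,\lambda}\|_{L^{\infty}(\om(0,1))}
\;\le\; C\left\{\left(\dashint_{\om(0,3/2)}|u_{\va,\lambda}|^2\right)^{\frac12}
+ \left(\dashint_{\om(0,3/2)}|F|^p\right)^{\frac1p}
+ |\lambda|\,\|u_{\va,\lambda}\|_{L^{\infty}(\om(0,3/2))}\right\},\nonumber
\end{align}
where the last term majorizes $|\lambda|(\dashint_{\om(0,3/2)}|u_{\va,\lambda}|^p)^{1/p}$.

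To handle $|\lambda|\,\|u_{\va,\lambda}\|_{L^{\infty}(\om(0,3/2))}$, I apply the localized $L^{\infty}$ estimate \eqref{Linfty} at scales $3/2<2$ (again via covering to free the fixed ratio). This gives, for any $k'\in\mathbb{N}_+$,
\begin{align}
\|u_{\va,\lambda}\|_{L^{\infty}(\om(0,3/2))}
\;\le\; \frac{C_{k',\theta_0}}{(1+|\lambda|)^{k'}}\left(\dashint_{\om(0,2)}|u_{\va,\lambda}|^2\right)^{\frac12}
+ C_{k',\theta_0}(1+|\lambda|)^{n_1}\left(\dashint_{\om(0,2)}|F|^q\right)^{\frac1q},\nonumber
\end{align}
with $q=pd/(p+d)<p$, and Jensen's inequality gives $(\dashint|F|^q)^{1/q}\le(\dashint|F|^p)^{1/p}$. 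Multiplying by $|\lambda|\le 1+|\lambda|$, combining with the previous display, and choosing $k'=k+n_1+1$ absorbs one power of $(1+|\lambda|)$ on both terms; this produces the desired $(1+|\lambda|)^{-k}$ on the $L^2$-of-$u_{\va,\lambda}$ term and a polynomial factor $(1+|\lambda|)^n$ on the $L^p$-of-$F$ term, with $n=n(d)\in\mathbb{N}_+$. Unwinding the rescaling restores all $R$-dependent factors and reinserts $(1+|\lambda|R^2)$ throughout.

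The only delicate point is the bookkeeping of these $(1+|\lambda|R^2)$-powers: one must verify that the gain $(1+|\lambda|)^{-k'}$ from \eqref{Linfty} can be chosen arbitrarily large (it can, since $k'$ is free) while the loss $(1+|\lambda|)^{n_1}$ on the data side is absolute and $k$-independent, so the extra factor $|\lambda|$ coming from the right-hand side is harmless. Everything else is a direct application of Theorem \ref{Lipschitz estimate localization}, the estimate \eqref{Linfty}, and standard covering/rescaling arguments, carried out identically in the interior and boundary cases.
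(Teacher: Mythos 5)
Your overall strategy coincides with the paper's: rewrite the equation as $\mathcal{L}_{\va}(u_{\va,\lambda})=F+\lambda u_{\va,\lambda}$, apply Theorem \ref{Lipschitz estimate localization} at the intermediate scale $R:\tfrac{3}{2}R$, and then control the $\lambda u_{\va,\lambda}$ contribution by a localized estimate with $(1+|\lambda|R^2)^{-k'}$ decay (the paper uses \eqref{loW1pu} where you use \eqref{Linfty}; this difference is immaterial since both carry the same structure of a decaying $L^2$-of-$u$ term plus a $(1+|\lambda|R^2)^{n}$-weighted data term).

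There is, however, one genuine gap in your bookkeeping. After the first application of Theorem \ref{Lipschitz estimate localization} you are left with the term $C\bigl(\dashint_{\om(0,3/2)}|u_{\va,\lambda}|^2\bigr)^{1/2}$, which carries \emph{no} decay in $\lambda$: the constant there comes from the $\lambda$-free operator $\mathcal{L}_{\va}$ and is independent of $|\lambda|$. Your absorption step (choosing $k'=k+n_1+1$ in \eqref{Linfty}) only produces the factor $(1+|\lambda|)^{-k}$ on the term that originates from $|\lambda|\,\|u_{\va,\lambda}\|_{L^{\infty}(\om(0,3/2))}$; it cannot retroactively attach a decay factor to the leading $L^2$ term, so the claim that the combination ``produces the desired $(1+|\lambda|)^{-k}$ on the $L^2$-of-$u_{\va,\lambda}$ term'' is not justified as written, and your argument only yields \eqref{Lipesu} with $k=0$ in the first term. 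The missing step is one further application of the scaling-invariant Caccioppoli inequality \eqref{Cau1} (at scales $\tfrac{3}{2}:2$, iterated) to $\bigl(\dashint_{\om(0,3/2)}|u_{\va,\lambda}|^2\bigr)^{1/2}$, which converts it into $\frac{C_{k,\theta_0}}{(1+|\lambda|)^{k}}\bigl(\dashint_{\om(0,2)}|u_{\va,\lambda}|^2\bigr)^{1/2}$ plus admissible data terms; this is exactly how the paper closes its proof. With that one line added, your argument is complete.
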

\begin{proof}

In view of $ \eqref{Lipschitz estimate localization} $, we can infer that
\begin{align}
\|\nabla u_{\va,\lambda}\|_{L^{\infty}(\om_R)}&\leq \frac{C}{R}\left\{\left(\dashint_{\om_{3/2R}}|u_{\va,\lambda}|^2\right)^{\frac{1}{2}}+R^2\left(\dashint_{\om_{3/2R}}|F|^p\right)^{\frac{1}{p}}+|\lambda|R^2\left(\dashint_{\om_{3/2R}}|u_{\va,\lambda}|^p\right)^{\frac{1}{p}}\right\}.\nonumber
\end{align}
By applying $ \eqref{loW1pu} $ to $ u_{\va,\lambda} $ for index $ p $, this yields that for any $ k\in\mathbb{N}_+ $,
\begin{align}
\|\nabla u_{\va,\lambda}\|_{L^{\infty}(\om_R)}&\leq \frac{C}{R}\left(\dashint_{\om_{3/2R}}|u_{\va,\lambda}|^2\right)^{\frac{1}{2}}+CR\left(\dashint_{\om_{3/2R}}|F|^p\right)^{\frac{1}{p}}\nonumber\\
&\quad+\frac{C_{k,\theta_0}}{(1+|\lambda|R^2)^{k-1}}\left\{\left(\dashint_{\om_{2R}}|u_{\va,\lambda}|^p\right)^{\frac{1}{2}}+(1+|\lambda|R^2)^{k+n}R^2\left(\dashint_{\om_{2R}}|F|^p\right)^{\frac{1}{p}}\right\}.\nonumber
\end{align}
This, together with $ \eqref{Cau1} $, implies $ \eqref{Lipesu} $.
\end{proof}

\section{Green functions of operators}\label{Lp estimates of resolventssection}

\subsection{Proof of Theorem \ref{Lp estimates of resolventsf} and relevant estimates}

To begin with, we will use the well-known real variable method to prove Theorem \ref{Lp estimates of resolventsf}. Unlike what had been done in the proof of $ W^{1,p} $ estimates for $ \mathcal{L}_{\va} $, the operators $ \mathcal{L}_{\va}-\lambda I $ do not have the homogeneous property, that is, if $ \lambda\neq 0 $ and $ 0\neq c\in\mathbb{C}^m $ is a constant vector, $ (\mathcal{L}_{\va}-\lambda I)(c)\neq 0 $. For this reason, we need to make some adjustments for the original method employed on $ \mathcal{L}_{\va} $. For simplicity, we use $ B=B(x,r) $ to denote a ball in $ \R^d $ ($ d\geq 2 $) and $ tB=B(x,tr) $ ($ t\in \R_+ $) to denote balls with center $ x $ and radius $ tr $ if no confusion would be caused. 

\begin{thm}[Real variable method]\label{Real variable method}
Let $ q>2 $ and $ \om $ be a bounded Lipschitz domain in $ \R^d $ with $ d\geq 2 $. Let $ F\in L^{2}(\om;\mathbb{C}^m) $ and $ f\in L^{p}(\om;\mathbb{C}^{m\times d}) $ for some $ 2<p<q $. Suppose that for each ball $ B $ with the property that $ |B|\leq c_{0}|\om| $ and either $ 4B\subset\om $ or $ B $ is centered on $ \pa\om $, there exist two measurable functions $ F_{B} $ and $ R_{B} $ on $ \om\cap 2B $, such that $ |F|\leq|F_{B}|+|R_{B}|$ on $ \om\cap 2B $,
\begin{align}
&\left(\dashint_{\om\cap 2B}|R_{B}|^{q}\right)^{\frac{1}{q}} \leq N_{1}\left\{\left(\dashint_{\om \cap 4B}|F|^{2}\right)^{\frac{1}{2}}+\sup _{4 B_{0} \supset B^{\prime} \supset B}\left(\dashint_{\om \cap B^{\prime}}|f|^{2}\right)^{\frac{1}{2}}\right\},\label{rvc1}\\
&\left(\dashint_{\om \cap 2 B}|F_{B}|^{2}\right)^{\frac{1}{2}} \leq N_{2} \sup _{4 B_{0} \supset B^{\prime} \supset B}\left(\dashint_{\om \cap B^{\prime}}|f|^{2}\right)^{\frac{1}{2}}+\eta\left(\dashint_{\om \cap 4B}|F|^{2}\right)^{\frac{1}{2}},\label{rvc2}
\end{align}
where $ N_{1}, N_{2}>0 $ and $ 0<c_{0}<1 $. Then there exists $ \eta_{0}>0 $, depending only on $ N_{1},N_{2},c_{0} $, $ p,q $ and the Lipschitz character of $ \om $, with the property that if $ 0 \leq \eta<\eta_{0} $, then $ F\in L^{p}(\om;\mathbb{C}^m) $ and
\begin{align}
\left(\dashint_{\om}|F|^{p}\right)^{\frac{1}{p}} \leq C\left\{\left(\dashint_{\om}|F|^{2}\right)^{\frac{1}{2}}+\left(\dashint_{\om}|f|^{p}\right)^{\frac{1}{p}}\right\},\label{rer}
\end{align}
where $ C $ depends at most on $ N_{1},N_{2},c_{0}, p,q $ and the Lipschitz character of $ \om $.
\end{thm}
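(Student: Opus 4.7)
The plan is to establish $\eqref{rer}$ via a Calderón--Zygmund stopping-time decomposition that yields a good-$\lambda$ inequality for a localized maximal function of $|F|^2$, which is then integrated against $\lambda^{p/2-1}\,d\lambda$ to produce the $L^p$ bound. The hypothesis is custom-designed for this: on each selected ball the ``rough'' part $R_B$ lives in $L^q$ with $q>p$ (giving a Chebyshev gain that beats integration), while the ``small'' part $F_B$ is controlled in $L^2$ by the data $f$ plus an $\eta$-fraction of the local $L^2$ norm of $F$ (providing the absorption room).

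First, I would introduce the localized maximal operator $M_\om g(x) = \sup \dashint_{\om\cap B}|g|$, where the supremum is taken over balls $B$ such that either $4B\subset\om$ or $B$ is centered on $\pa\om$, with $|B|\leq c_0|\om|$. Set the threshold $\lambda_0 = \bar C\dashint_\om|F|^2$, large enough that for every $\lambda>\lambda_0$ a Vitali/Whitney selection produces a disjoint family $\{B_i\}$ of admissible balls with $\dashint_{\om\cap 2B_i}|F|^2 > \lambda$, with the $2B_i$ having bounded overlap and $\bigcup 2B_i$ covering $\{M_\om(|F|^2)>\lambda\}\cap\om$ up to a null set. The Lipschitz character of $\om$ is what legitimizes mixing interior balls ($4B\subset\om$) and boundary-centered balls into a single covering with bounded overlap.

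Second, on each $B_i$ use $|F|\leq|F_{B_i}|+|R_{B_i}|$ on $\om\cap 2B_i$. By Chebyshev at level $A\lambda$ with $A>1$ large and hypothesis $\eqref{rvc1}$,
\[
\bigl|\{x\in\om\cap 2B_i:|R_{B_i}(x)|^2>A\lambda/4\}\bigr|\leq \frac{C N_1^q}{(A\lambda)^{q/2}}\int_{\om\cap 2B_i}|R_{B_i}|^q\leq \frac{C N_1^q|B_i|}{A^{q/2}}\Bigl(1+\lambda^{-1}\!\sup_{B'\supset B_i}\dashint_{\om\cap B'}|f|^2\Bigr)^{q/2},
\]
while $\eqref{rvc2}$ and Chebyshev give
\[
\bigl|\{x\in\om\cap 2B_i:|F_{B_i}(x)|^2>A\lambda/4\}\bigr|\leq \frac{C|B_i|}{A\lambda}\Bigl(N_2^2\sup_{B'\supset B_i}\dashint_{\om\cap B'}|f|^2+\eta^2\lambda\Bigr).
\]
Summing over $i$ and using the disjointness of the $B_i$ and the pointwise comparison $\sup_{B'\supset B_i}\dashint_{\om\cap B'}|f|^2\leq M_\om(|f|^2)$ on $B_i$, one obtains the good-$\lambda$ type estimate
\[
\bigl|\{M_\om(|F|^2)>A\lambda\}\bigr|\leq \bigl(C_1 A^{-q/2}+C_2\eta^2 A^{-1}\bigr)\bigl|\{M_\om(|F|^2)>\lambda\}\bigr|+C_3(A)\bigl|\{M_\om(|f|^2)>c\lambda\}\bigr|.
\]

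Third, integrate this inequality against $\lambda^{p/2-1}\,d\lambda$ on $(\lambda_0,\infty)$. After rescaling, the constant in front of $\int\lambda^{p/2-1}|\{M_\om(|F|^2)>\lambda\}|\,d\lambda$ on the right becomes $C_1 A^{p/2-q/2}+C_2\eta^2 A^{p/2-1}$; since $q>p>2$, first choose $A$ large so that $C_1 A^{p/2-q/2}<1/4$, and then fix $\eta_0>0$ small so that $C_2\eta^2 A^{p/2-1}<1/4$ whenever $\eta<\eta_0$. This absorbs the $F$-term into the left-hand side. The residual contribution from $\lambda\in(0,\lambda_0]$ is bounded by $\lambda_0^{p/2}|\om|\lesssim \|F\|_{L^2(\om)}^p|\om|^{1-p/2}$, and the $L^{p/2}$-boundedness of the Hardy--Littlewood maximal function (valid since $p>2$) converts $\|M_\om(|f|^2)\|_{L^{p/2}}$ into $\|f\|_{L^p(\om)}^2$. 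Averaging over $\om$ then yields $\eqref{rer}$.

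The main obstacle is the Calderón--Zygmund/Whitney selection step near the boundary: unlike the classical case, admissible balls include those centered on $\pa\om$, and one must verify that a disjoint family whose doubles cover the bad set $\{M_\om(|F|^2)>\lambda\}\cap\om$ and have bounded overlap can indeed be selected. This is exactly where the Lipschitz character of $\om$ enters quantitatively, through a Besicovitch-type covering adapted to $\pa\om$. Once the geometric step is in place, the Chebyshev estimates and the good-$\lambda$ integration are routine, and the role of $q>p$ and of the smallness of $\eta$ become transparent in the choice of $A$ and $\eta_0$.
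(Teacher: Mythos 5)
Your outline is essentially the standard proof of this result, and it is worth noting that the paper itself does not prove the theorem: it simply cites Theorem 3.2.6 of \cite{Shen2} for the real-valued case and observes that the complex case follows by the same argument (or by splitting into real and imaginary parts). The proof in that reference is exactly the good-$\lambda$ scheme you describe -- a Calder\'{o}n--Zygmund/Whitney selection for the localized maximal function $M_{\om}(|F|^2)$, Chebyshev at level $A\lambda$ using the $L^q$ control of $R_B$ and the $L^2$ control of $F_B$, and integration against $\lambda^{p/2-1}\,d\lambda$ with $A$ chosen large using $q>p$ and then $\eta_0$ chosen small -- so your proposal reconstructs the cited argument rather than replacing it. Two steps you gloss over but should spell out if writing this in full: (i) the passage from the pointwise bound $|F|\leq |F_{B_i}|+|R_{B_i}|$ on $\om\cap 2B_i$ to a bound on the superlevel set of $M_{\om}(|F|^2)$ at height $A\lambda$ requires the usual localization lemma (for $x$ in a Whitney ball, the supremum defining $M_{\om}(|F|^2)(x)>A\lambda$ is attained on balls of comparable radius, after which one applies the weak $(1,1)$ and weak $(q/2,q/2)$ bounds for the local maximal operator rather than bare Chebyshev); and (ii) the absorption of $\int\lambda^{p/2-1}|\{M_{\om}(|F|^2)>\lambda\}|\,d\lambda$ into the left-hand side must be performed on a truncated integral $\int_{\lambda_0}^{\Lambda}$, which is finite since $|\om|<\infty$, before letting $\Lambda\to\infty$. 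Both are routine, so the proposal is sound.
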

\begin{proof}
It is proved for the case that $ F\in L^2(\om;\R^m) $ and $ f\in L^p(\om;\R^m) $ in Theorem 3.2.6 of \cite{Shen2}. It is easy to generalize it to the complex case through almost the same arguments. 
\end{proof}

\begin{proof}[Proof of Theorem \ref{Lp estimates of resolventsf}]
Firstly, we assume that $ \lambda\neq 0 $, since the case $ \lambda=0 $ is trivial in view of the results for $ W^{1,p} $ estimates of $ \mathcal{L}_{\va} $. If $ p=2 $, the results are given by Lemma \ref{l2reses}. For $ 2<p<\infty $, choose $ q=p+1 $. Consider functions
\begin{align}
&H=|u_{\va,\lambda}|,\,\,h=(R_0^{-2}+|\lambda|)^{-1}|F|+(R_0^{-2}+|\lambda|)^{-\frac{1}{2}}|f|,\,\,G=|\nabla u_{\va,\lambda}|\nonumber\\
&\quad\quad\text{and }g=(R_0^{-2}+|\lambda|)^{-\frac{1}{2}}|F|+|f|.\nonumber
\end{align}
Now we will apply Theorem \ref{Real variable method} to complete the proof. For each ball $ B $ with the property that $ |B|\leq \frac{1}{100}|\om| $ and $ 4B\subset \om $, we write $ u_{\va,\lambda}=u_{\va,\lambda,1}+u_{\va,\lambda,2} $ in $ 2B $, where $ u_{\va,\lambda,1}\in H_0^1(4B;\mathbb{C}^m) $ is the weak solution of $ \mathcal{L}_{\va}(u_{\va,\lambda,1})-\lambda u_{\va,\lambda,1}=F+\operatorname{div}(f) $ in $ 4B $ and $ u_{\va,\lambda,1}=0 $ on $ \pa(4B) $. Let
\begin{align}
H_{B}=|u_{\va,\lambda,1}|,\,\, R_{B}=|u_{\va,\lambda,2}|,\,\, G_{B}=|\nabla u_{\va,\lambda,1}|\text{ and }T_{B}=|\nabla u_{\va,\lambda,2}|.\nonumber
\end{align}
Obviously, $ |H|\leq H_{B}+R_{B} $ and $ |G|\leq G_B+T_B $ in $ 2B $. It follows from Lemma \ref{l2reses} that
\be
\begin{aligned}
\left(\dashint_{4B}|H_{B}|^2\right)^{\frac{1}{2}}&\leq \frac{C_{\theta_0}}{r^{-2}+|\lambda|}\left(\dashint_{4B}|F|^2\right)^{\frac{1}{2}}+\frac{C_{\theta_0}}{(r^{-2}+|\lambda|)^{\frac{1}{2}}}\left(\dashint_{4B}|f|^2\right)^{\frac{1}{2}}\\
&\leq C_{\theta_0}\left\{\dashint_{4B}\left(\frac{|F|+(R_0^{-2}+|\lambda|)^{\frac{1}{2}}|f|}{R_0^{-2}+|\lambda|}\right)^2\right\}^{\frac{1}{2}}\leq C_{\theta_0}\left(\dashint_{4B}|h|^2\right)^{\frac{1}{2}}
\end{aligned}\label{HBestimates}
\ee
and
\be
\begin{aligned}
\left(\dashint_{4B}|G_{B}|^2\right)^{\frac{1}{2}}&\leq \frac{C_{\theta_0}}{(r^{-2}+|\lambda|)^{\frac{1}{2}}}\left(\dashint_{4B}|F|^2\right)^{\frac{1}{2}}+C_{\theta_0}\left(\dashint_{4B}|f|^2\right)^{\frac{1}{2}}\\
&\leq C_{\theta_0}\left\{\dashint_{4B}\left(\frac{|F|}{(R_0^{-2}+|\lambda|)^{\frac{1}{2}}}+|f|\right)^2\right\}^{\frac{1}{2}}\leq C_{\theta_0}\left(\dashint_{4B}|g|^2\right)^{\frac{1}{2}},
\end{aligned}\label{GBestimates}
\ee
where $ r $ is the radius of $ B $. Moreover, we note that $ u_{\va,\lambda,2}\in H^1(4B;\mathbb{C}^m) $ and $ \mathcal{L}_{\va}(u_{\va,\lambda,2})-\lambda u_{\va,\lambda,2}=0 $ in $ 4B $. Owing to $ \eqref{loW1pu} $, \eqref{HBestimates} and Lemma \ref{l2reses}, we can obtain that
\begin{align}
\left(\dashint_{2B}|R_B|^q\right)^{\frac{1}{q}}&\leq C_{\theta_0}\left(\dashint_{4B}|u_{\va,\lambda,2}|^2\right)^{\frac{1}{2}}\leq C_{\theta_0}\left\{\left(\dashint_{4B}|u_{\va,\lambda}|^2\right)^{\frac{1}{2}}+\left(\dashint_{4B}|u_{\va,\lambda,1}|^2\right)^{\frac{1}{2}}\right\}\nonumber\\
&\leq C_{\theta_0}\left\{\left(\dashint_{4B}|H|^2\right)^{\frac{1}{2}}+\left(\dashint_{4B}|h|^2\right)^{\frac{1}{2}}\right\}.\nonumber
\end{align}
To estimate $ T_B $, we first note that
\begin{align}
(\mathcal{L}_{\va}-\lambda I)\left(u_{\va,\lambda,2}-\dashint_{3B}u_{\va,\lambda,2}\right)=\lambda\left(\dashint_{3B}u_{\va,\lambda,2}\right)\text{ in }3B.\nonumber
\end{align}
Then it is easy to get that
\begin{align}
\left(\dashint_{2B}|T_B|^q\right)^{\frac{1}{q}}&\leq \frac{C_{\theta_0}}{r}\left(\dashint_{3B}\left|u_{\va,\lambda,2}-\dashint_{3B}u_{\va,\lambda,2}\right|^2\right)^{\frac{1}{2}}+C_{\theta_0}(1+|\lambda|r^2)^n|\lambda|r\left(\dashint_{3B}|u_{\va,\lambda,2}|\right)\nonumber\\
&\leq C_{\theta_0}\left\{\left(\dashint_{3B}|\nabla u_{\va,\lambda,2}|^2\right)^{\frac{1}{2}}+|\lambda|r\left(\dashint_{7/2 B}|u_{\va,\lambda,2}|^2\right)^{\frac{1}{2}}\right\}\leq C_{\theta_0}\left(\dashint_{4B}|\nabla u_{\va,\lambda,2}|^2\right)^{\frac{1}{2}}\nonumber\\
&\leq C_{\theta_0}\left\{\left(\dashint_{4B}|\nabla u_{\va,\lambda,1}|^2\right)^{\frac{1}{2}}+\left(\dashint_{4B}|\nabla u_{\va,\lambda}|^2\right)^{\frac{1}{2}}\right\}\leq C_{\theta_0}\left\{\left(\dashint_{4B}|G|^2\right)^{\frac{1}{2}}+\left(\dashint_{4B}|g|^2\right)^{\frac{1}{2}}\right\},\nonumber
\end{align}
where for the second inequality, we have used $ \eqref{Cau1} $ with $ k=n $, Poincaré's inequality, Hölder's inequality and for the third inequality, we have used $ \eqref{fanxiangzy} $.

For ball $ B=B(x,r) $ such that it is centered at $ \pa\om $ with $ 0<r<\frac{1}{16}R_0 $, write
$ u_{\va,\lambda}=u_{\va,\lambda,3}+u_{\va,\lambda,4} $ in $ 4B\cap\om $, where $ u_{\va,\lambda,3}\in H_0^1(4B;\mathbb{C}^m) $ is the weak solution of $ \mathcal{L}_{\va}(u_{\va,\lambda,3})-\lambda u_{\va,\lambda,3}=F+\operatorname{div}(f) $ in $ 4B\cap\om $ and $ u_{\va,\lambda,3}=0 $ on $ \pa(4B\cap\om) $. Let
\begin{align}
H_{B}=|u_{\va,\lambda,3}|,\,\, R_{B}=|u_{\va,\lambda,4}|,\,\, G_{B}=|\nabla u_{\va,\lambda,3}|\text{ and }T_{B}=|\nabla u_{\va,\lambda,4}|.\nonumber
\end{align}
By using almost the same arguments, we can obtain $ |H|\leq H_{B}+R_{B} $, $ |G|\leq G_B+T_B $ in $ 2B\cap\om $,
\begin{align}
\left(\dashint_{2B\cap\om}|R_B|^q\right)^{\frac{1}{q}}&\leq C_{\theta_0}\left(\dashint_{4B\cap\om}|H|^2\right)^{\frac{1}{2}}+C_{\theta_0}\left(\dashint_{4B\cap\om}|h|^2\right)^{\frac{1}{2}},\nonumber\\
\left(\dashint_{4B\cap\om}|G_{B}|^2\right)^{\frac{1}{2}}&\leq  C_{\theta_0}\left(\dashint_{4B\cap\om}|g|^2\right)^{\frac{1}{2}}\text{ and }
\left(\dashint_{4B\cap\om}|H_{B}|^2\right)^{\frac{1}{2}}\leq  C_{\theta_0}\left(\dashint_{4B\cap\om}|h|^2\right)^{\frac{1}{2}}.\nonumber
\end{align}
Moreover, since $ u_{\va,\lambda,4}=u_{\va,\lambda}-u_{\va,\lambda,3}=0 $ on $ \pa\om\cap 4B $, then in view of Poincaré's inequality and $ \eqref{loW1pnu} $, we have
\begin{align}
\left(\dashint_{2B\cap\om}|T_B|^q\right)^{\frac{1}{q}}&\leq \frac{C_{\theta_0}}{r}\left(\dashint_{4B\cap\om}|u_{\va,\lambda,4}|^2\right)^{\frac{1}{2}}\leq C_{\theta_0}\left(\dashint_{4B\cap\om}|\nabla u_{\va,\lambda,4}|^2\right)^{\frac{1}{2}}\nonumber\\
&\leq C_{\theta_0}\left\{\left(\dashint_{4B\cap\om}|\nabla u_{\va,\lambda,3}|^2\right)^{\frac{1}{2}}+\left(\dashint_{4B\cap\om}|\nabla u_{\va,\lambda}|^2\right)^{\frac{1}{2}}\right\}\nonumber\\
&\leq C_{\theta_0}\left\{\left(\dashint_{4B\cap\om}|G|^2\right)^{\frac{1}{2}}+\left(\dashint_{4B\cap\om}|g|^2\right)^{\frac{1}{2}}\right\}.\nonumber
\end{align}
Then by using Theorem \ref{Real variable method}, we have, for any $ 2<p<\infty $,
\begin{align}
\left(\dashint_{\om}|u_{\va,\lambda}|^{p}\right)^{\frac{1}{p}}&\leq C_{\theta_0}\left\{\left(\dashint_{\om}|u_{\va,\lambda}|^{2}\right)^{\frac{1}{2}}+\left(\dashint_{\om}|h|^{p}\right)^{\frac{1}{p}}\right\},\nonumber\\
 \left(\dashint_{\om}|\nabla u_{\va,\lambda}|^{p}\right)^{\frac{1}{p}}&\leq C_{\theta_0}\left\{\left(\dashint_{\om}|\nabla u_{\va,\lambda} |^{2}\right)^{\frac{1}{2}}+\left(\dashint_{\om}|g|^{p}\right)^{\frac{1}{p}}\right\}.\nonumber
\end{align}
In view of Lemma \ref{l2reses}, definitions of $ g,h $ and Hölder's inequality, we can complete the proof for the case $ 2<p<\infty $. For $ 1<p<2 $, the results follow form duality arguments. Using the same definitions of $ u_{\va,\lambda}^{(1)} $ and $ u_{\va,\lambda}^{(2)} $ in the proof of Lemma \ref{l2reses}, we only need to show that for any $ 1<p<2 $,
\begin{align}
\|\nabla u_{\va,\lambda}^{(1)}\|_{L^p(\om)}+(R_0^{-2}+|\lambda|)^{\frac{1}{2}}\|u_{\va,\lambda}^{(1)}\|_{L^p(\om)}&\leq C_{\theta_0}(R_0^{-2}+|\lambda|)^{-\frac{1}{2}}\|F\|_{L^p(\om)},\nonumber\\
\|\nabla u_{\va,\lambda}^{(2)}\|_{L^p(\om)}+(R_0^{-2}+|\lambda|)^{\frac{1}{2}}\|u_{\va,\lambda}^{(2)}\|_{L^p(\om)}&\leq C_{\theta_0}\|f\|_{L^p(\om)}.\nonumber
\end{align}
For $ F_1\in L^{p'}(\om;\mathbb{C}^m) $ and $ f_1\in L^{p'}(\om;\mathbb{C}^{m\times d}) $ with $ p'=\frac{p}{p-1} $ being the conjugate number of $ p $, let $ v_{\va,\lambda}^{(1)} $ and $ v_{\va,\lambda}^{(2)} $ be solutions of Dirichlet problems:
\begin{align}
\left\{\begin{matrix}
(\mathcal{L}_{\va}-\overline{\lambda} I)(v_{\va,\lambda}^{(1)})=F_1&\text{in}&\om,\\
v_{\va,\lambda}^{(1)}=0&\text{on}&\pa\om,
\end{matrix}\right.\quad\text{and}\quad \left\{\begin{matrix}
(\mathcal{L}_{\va}-\overline{\lambda} I)(v_{\va,\lambda}^{(2)})=\operatorname{div}(f_1)&\text{in}&\om,\\
v_{\va,\lambda}^{(2)}=0&\text{on}&\pa\om.
\end{matrix}\right.\nonumber
\end{align}
Then it follows from direct calculations and the definition of $ B_{\va,\lambda,\om}[\cdot,\cdot] $ that
\begin{align}
\int_{\om}F_1\overline{u_{\va,\lambda}^{(1)}}dx&=B_{\va,\overline{\lambda},\om}[v_{\va,\lambda}^{(1)},u_{\va,\lambda}^{(1)}]=\overline{B_{\va,\lambda,\om}[u_{\va,\lambda}^{(1)},v_{\va,\lambda}^{(1)}]}=\int_{\om}\overline{F}v_{\va,\lambda}^{(1)}dx,\nonumber\\
-\int_{\om}f_1\overline{\nabla u_{\va,\lambda}^{(1)}}dx&=B_{\va,\overline{\lambda},\om}[v_{\va,\lambda}^{(2)},u_{\va,\lambda}^{(1)}]=\overline{B_{\va,\lambda,\om}[u_{\va,\lambda}^{(1)},v_{\va,\lambda}^{(2)}]}=\int_{\om}\overline{F}v_{\va,\lambda}^{(2)}dx,\nonumber\\
\int_{\om}F_1\overline{u_{\va,\lambda}^{(2)}}dx&=B_{\va,\overline{\lambda},\om}[v_{\va,\lambda}^{(1)},u_{\va,\lambda}^{(2)}]=\overline{B_{\va,\lambda,\om}[u_{\va,\lambda}^{(2)},v_{\va,\lambda}^{(1)}]}=-\int_{\om}\overline{f}\nabla v_{\va,\lambda}^{(1)}dx,\nonumber\\
-\int_{\om}f_1\overline{\nabla u_{\va,\lambda}^{(2)}}dx&=B_{\va,\overline{\lambda},\om}[v_{\va,\lambda}^{(2)},u_{\va,\lambda}^{(2)}]=\overline{B_{\va,\lambda,\om}[u_{\va,\lambda}^{(2)},v_{\va,\lambda}^{(2)}]}=-\int_{\om}\overline{f}\nabla v_{\va,\lambda}^{(2)}dx.\nonumber
\end{align}
These, together with the results for $ 2<p<\infty $, imply that
\begin{align}
\left|\int_{\om}F_1\overline{u_{\va,\lambda}^{(1)}}dx\right|&\leq \|F\|_{L^p(\om)}\|v_{\va,\lambda}^{(1)}\|_{L^{p'}(\om)}\leq C_{\theta_0}(R_0^{-2}+|\lambda|)^{-1}\|F\|_{L^p(\om)}\|F_1\|_{L^{p'}(\om)},\nonumber\\
\left|\int_{\om}f_1\overline{\nabla u_{\va,\lambda}^{(1)}}dx\right|&\leq\|F\|_{L^p(\om)}\|v_{\va,\lambda}^{(2)}\|_{L^{p'}(\om)}\leq C_{\theta_0}(R_0^{-2}+|\lambda|)^{-\frac{1}{2}}\|F\|_{L^p(\om)}\|f_1\|_{L^{p'}(\om)},\nonumber\\
\left|\int_{\om}F_1\overline{u_{\va,\lambda}^{(2)}}dx\right|&\leq \|f\|_{L^p(\om)}\|\nabla v_{\va,\lambda}^{(1)}\|_{L^{p'}(\om)}\leq C_{\theta_0}(R_0^{-2}+|\lambda|)^{-\frac{1}{2}}\|f\|_{L^p(\om)}\|F_1\|_{L^{p'}(\om)},\nonumber\\
\left|\int_{\om}f_1\overline{\nabla u_{\va,\lambda}^{(2)}}dx\right|&\leq \|f\|_{L^p(\om)}\|\nabla v_{\va,\lambda}^{(2)}\|_{L^{p'}(\om)}\leq C_{\theta_0}\|f\|_{L^p(\om)}\|f_1\|_{L^{p'}(\om)},\nonumber
\end{align}
which give the proof.
\end{proof}

\begin{cor}
Assume that $ \om $ is a bounded $ C^{1,1} $ domain in $ \mathbb{R}^d $ with $ d\geq 2 $ and $ \lambda\in\Sigma_{\theta_0}\cup\{0\} $ with $ \theta_0\in (0,\frac{\pi}{2}) $. If $ u_{0,\lambda}\in W^{2,p}(\om;\mathbb{C}^m) $ is the unique weak solution for the Dirichlet problem
\begin{align}
(\mathcal{L}_{0}-\lambda I)(u_{0,\lambda})=F\text{ in }\om\quad\text{and}\quad u_{0,\lambda}=0\text{ on }\pa\om\nonumber
\end{align}
with $ 1<p<\infty $ and $ F\in L^p(\om;\mathbb{C}^m) $. Then
\begin{align}
\|\nabla^2u_{0,\lambda}\|_{L^p(\om)}\leq C_{\theta_0}\|F\|_{L^p(\om)},\label{W2p}
\end{align}
where $ C_{\theta_0} $ depends only on $ \mu,d,m,p,\theta_0 $ and $ \om $. In operator forms,
\begin{align}
\|R(\lambda,\mathcal{L}_0)\|_{L^p(\om)\to W^{2,p}(\om)}\leq C_{\theta_0}.\label{W2pop}
\end{align}
\end{cor}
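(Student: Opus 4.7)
The plan is to reduce the estimate to the well-known $W^{2,p}$ regularity for constant-coefficient elliptic systems on $C^{1,1}$ domains. Rewrite the equation as
\[
\mathcal{L}_0(u_{0,\lambda}) = F + \lambda u_{0,\lambda} \quad \text{in } \om,\qquad u_{0,\lambda}=0 \text{ on }\pa\om.
\]
Since $\widehat{A}$ is constant and satisfies the uniform ellipticity $\eqref{ellcoml0}$, and $\om$ is $C^{1,1}$, classical elliptic regularity (Agmon–Douglis–Nirenberg type estimates for strongly elliptic systems) applied to the right-hand side $F+\lambda u_{0,\lambda}\in L^p(\om;\mathbb{C}^m)$ yields
\[
\|\nabla^2 u_{0,\lambda}\|_{L^p(\om)} \leq C\bigl\{\|F\|_{L^p(\om)} + |\lambda|\,\|u_{0,\lambda}\|_{L^p(\om)}\bigr\},
\]
where $C$ depends only on $\mu,d,m,p$ and $\om$.

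Next I would absorb the $|\lambda|\|u_{0,\lambda}\|_{L^p(\om)}$ term using Theorem \ref{Lp estimates of resolventsf}. Indeed, $\eqref{reLpLp}$ with $\va=0$ gives
\[
\|u_{0,\lambda}\|_{L^p(\om)} \leq C_{p,\theta_0}(R_0^{-2}+|\lambda|)^{-1}\|F\|_{L^p(\om)},
\]
hence $|\lambda|\,\|u_{0,\lambda}\|_{L^p(\om)} \leq C_{p,\theta_0}\|F\|_{L^p(\om)}$ uniformly in $\lambda \in \Sigma_{\theta_0}\cup\{0\}$. Substituting into the previous inequality yields $\eqref{W2p}$, and the operator form $\eqref{W2pop}$ follows at once, after noting that the full $W^{2,p}$ norm is controlled by $\|\nabla^2 u_{0,\lambda}\|_{L^p(\om)}$ together with $\|u_{0,\lambda}\|_{L^p(\om)}$ and $\|\nabla u_{0,\lambda}\|_{L^p(\om)}$, each of which is already bounded by $C_{\theta_0}\|F\|_{L^p(\om)}$ by $\eqref{reLpLp}$ and $\eqref{reLpW1p}$.

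The only subtle point is the existence of a $W^{2,p}$ solution in the first place; for $p\geq 2$ this is immediate by bootstrapping the $H_0^1$ solution given by Theorem \ref{existencethm} through the constant-coefficient $W^{2,p}$ theory, while for $1<p<2$ one first produces $u_{0,\lambda}\in W_0^{1,p}(\om;\mathbb{C}^m)$ via Theorem \ref{Lp estimates of resolventsf} and then upgrades regularity by the same constant-coefficient theorem. No genuine obstacle arises because the constant-coefficient elliptic estimate is standard on $C^{1,1}$ domains for all $1<p<\infty$; the whole argument amounts to exploiting the sharp $L^p$ resolvent bound already proved for $\mathcal{L}_0$ to neutralize the lower-order term $\lambda u_{0,\lambda}$.
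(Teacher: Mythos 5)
Your proposal is correct and follows essentially the same route as the paper: apply the constant-coefficient $W^{2,p}$ estimate to $\mathcal{L}_0(u_{0,\lambda})=F+\lambda u_{0,\lambda}$ and absorb the term $|\lambda|\|u_{0,\lambda}\|_{L^p(\om)}$ via the resolvent bound $\eqref{reLpLp}$ from Theorem \ref{Lp estimates of resolventsf}. The extra remarks on existence and on controlling the full $W^{2,p}$ norm are fine but not needed beyond what the paper records.
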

\begin{proof}
The results follow from the $ W^{2,p} $ estimates for $ \mathcal{L}_0 $, i.e.
\begin{align}
\|\nabla u_{0,\lambda}\|_{L^p(\om)}\leq C\left\{|\lambda|\|u_{0,\lambda}\|_{L^p(\om)}+\|F\|_{L^p(\om)}\right\}\nonumber
\end{align}
and $ \|u_{0,\lambda}\|_{L^p(\om)}\leq C_{\theta_0}(R_0^{-2}+|\lambda|)^{-1}\|F\|_{L^p(\om)} $, which is given by Theorem \ref{Lp estimates of resolventsf}.
\end{proof}

\begin{lem}
Assume that $ \om $ is $ C^{1,1} $ domain in $ \mathbb{R}^d $ with $ d\geq 2 $, $ x_0\in\om $ and $ 0<R<R_0 $. If $ \Delta(x_0,2R)\neq\emptyset $, assume that $ u_{0,\lambda}\in H^2(\om(x_0,2R);\mathbb{C}^m) $ is the weak solution of the boundary problem
\begin{align}
(\mathcal{L}_0-\lambda I)(u_{0,\lambda})=0\text{ in }\om(x_0,2R)\quad\text{and}\quad u_{0,\lambda}=0\text{ on }\Delta(x_0,2R),\nonumber
\end{align}
if $ \Delta(x_0,2R)=\emptyset $, assume that $ u_{0,\lambda}\in H^2(B(x_0,2R);\mathbb{C}^m) $ is the weak solution of the interior problem
\begin{align}
(\mathcal{L}_0-\lambda I)(u_{0,\lambda})=0\text{ in }B(x_0,2R),\nonumber
\end{align}
with $ x_0\in\om $, $ 0<R<R_0 $ and $ 1<p<\infty $. Then there exists $ n\in\mathbb{N}_+ $, a constant depending only on $ d $, such that for any $ k\in\mathbb{N}_+ $,  
\be
\begin{aligned}
\left(\dashint_{\om(x_0,R)}|\nabla^2u_{0,\lambda}|^p\right)^{\frac{1}{p}}&\leq\frac{C_{k,\theta_0}}{(1+|\lambda|R^2)^kR^2}\left(\dashint_{\om(x_0,2R)}|u_{0,\lambda}|^2\right)^{\frac{1}{2}}.
\end{aligned}\label{W2p local}
\ee
If we further assume that $ \om $ is a bounded $ C^{2,1} $ domain and $ \rho\in(0,1) $, then
\begin{align}
\|\nabla^2u_{0,\lambda}\|_{L^{\infty}(\om(x_0,R))}&\leq\frac{C_{k,\theta_0}}{(1+|\lambda|R^2)^kR^{2}}\left(\dashint_{\om(x_0,2R)}|u_{0,\lambda}|^2\right)^{\frac{1}{2}},\label{ww80}\\
\left[\nabla^2u_{0,\lambda}\right]_{C^{0,\rho}(\om(x_0,R))}&\leq\frac{C_{k,\theta_0}}{(1+|\lambda|R^2)^kR^{2+{\rho}}}\left(\dashint_{\om(x_0,2R)}|u_{0,\lambda}|^2\right)^{\frac{1}{2}},\label{ww81}
\end{align}
where $ C $ depends on $ \mu,d,m,k,\theta_0,\rho $ and $ \om $.
\end{lem}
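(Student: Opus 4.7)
The plan is to reduce each of $\eqref{W2p local}$, $\eqref{ww80}$, $\eqref{ww81}$ to the global $W^{2,p}$ estimate $\eqref{W2p}$ via a cutoff argument, combined with the scaling--invariant localization bounds $\eqref{loW1pu}$ and $\eqref{loW1pnu}$ already in hand. After rescaling and translation I set $R=1$ and $x_0=0$; it then only remains to restore the powers of $R$ at the end.

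For $\eqref{W2p local}$, choose $\varphi\in C_0^\infty(B(0,3/2);\R)$ with $\varphi\equiv 1$ on $B(0,1)$ and $|\nabla^j\varphi|\leq C$ for $j=1,2$. Since $\widehat{A}$ is constant and $(\mathcal{L}_0-\lambda I)u_{0,\lambda}=0$, a direct computation gives
\begin{align*}
(\mathcal{L}_0-\lambda I)(\varphi u_{0,\lambda})=-2\widehat{a}_{ij}^{\al\beta}(\pa_i\varphi)(\pa_j u_{0,\lambda}^\beta)-\widehat{a}_{ij}^{\al\beta}(\pa_i\pa_j\varphi)u_{0,\lambda}^\beta\quad\text{in }\om_{3/2},
\end{align*}
while the Dirichlet datum of $u_{0,\lambda}$ on $\Delta(0,2)$ (boundary case) or the support property of $\varphi$ (interior case) ensures that $\varphi u_{0,\lambda}$ lies in $H_0^1$ of the enclosing $C^{1,1}$ region (either $\om$ itself or $B(0,3/2)$). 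Applying the corresponding global $W^{2,p}$ bound yields
\begin{align*}
\|\nabla^2 u_{0,\lambda}\|_{L^p(\om_1)}\leq\|\nabla^2(\varphi u_{0,\lambda})\|_{L^p(\om_{3/2})}\leq C_{\theta_0}\bigl(\|\nabla u_{0,\lambda}\|_{L^p(\om_{3/2})}+\|u_{0,\lambda}\|_{L^p(\om_{3/2})}\bigr).
\end{align*}
The right hand side is then pushed back to $\|u_{0,\lambda}\|_{L^2(\om_2)}$ by covering $\om_{3/2}$ with finitely many balls of radius $1/4$ and applying $\eqref{loW1pu}$ and $\eqref{loW1pnu}$ (with $F=f=0$) on each one; this delivers the $(1+|\lambda|)^{-k}$ factor, after which rescaling produces $\eqref{W2p local}$.

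For $\eqref{ww80}$ and $\eqref{ww81}$ I exploit that, because $\widehat{A}$ is constant, each derivative $\pa_\ell u_{0,\lambda}$ again solves $(\mathcal{L}_0-\lambda I)(\pa_\ell u_{0,\lambda})=0$ in the interior. In the interior case one applies $\eqref{W2p local}$ to $\pa_\ell u_{0,\lambda}$ on a slightly smaller ball, obtaining control of $\|\nabla^3 u_{0,\lambda}\|_{L^p}$ with the factor $(1+|\lambda|R^2)^{-k}R^{-3}$; Sobolev embedding $W^{1,p}\hookrightarrow C^{0,\rho}$ with $\rho=1-d/p$ and $p>d$ then converts the $W^{1,p}$ bound on $\nabla^2 u_{0,\lambda}$ into both the $L^\infty$ and the Hölder estimate. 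In the boundary case I locally flatten $\pa\om$, where the $C^{2,1}$ hypothesis is used; tangential derivatives $\pa_T u_{0,\lambda}$ preserve the vanishing Dirichlet datum and satisfy the same equation, so $\eqref{W2p local}$ is iterated on them, while the pure normal second derivatives are recovered algebraically from $\widehat{a}_{ij}^{\al\beta}\pa_i\pa_j u_{0,\lambda}^\beta=\lambda u_{0,\lambda}^\al$ using the Legendre--Hadamard condition.

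The main obstacle will be the careful bookkeeping for the boundary case in the last step: commutators generated by the change of variables that flattens $\pa\om$ must be treated as additional $L^p$ source terms, and one has to verify that the resulting iteration preserves both the sharp $R^{-2-\rho}$ scaling and the arbitrary polynomial decay $(1+|\lambda|R^2)^{-k}$. The interior portion of the argument and the $L^p$ estimate $\eqref{W2p local}$ itself are essentially routine consequences of $\eqref{W2p}$, $\eqref{loW1pu}$ and $\eqref{loW1pnu}$.
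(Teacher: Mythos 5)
Your argument is correct in outline and, for the $W^{2,p}$ part, follows essentially the same skeleton as the paper: the paper's proof is a one-line citation of the global estimate $\eqref{W2p}$, Caccioppoli's inequalities $\eqref{Cau1}$--$\eqref{Cau2}$, Theorem \ref{Lp estimates of resolventsf}, and the local $C^{2,\rho}$ theory for constant-coefficient systems (Theorem 5.23 of \cite{MG}). Your cutoff computation for $\varphi u_{0,\lambda}$ plus the localized bounds $\eqref{loW1pu}$, $\eqref{loW1pnu}$ (with $F=f=0$) to generate the $(1+|\lambda|R^2)^{-k}$ decay is exactly the intended route to $\eqref{W2p local}$. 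Where you genuinely diverge is in $\eqref{ww80}$--$\eqref{ww81}$: the paper simply invokes the classical local boundary $C^{2,\rho}$ (Schauder) estimate for constant-coefficient systems on $C^{2,1}$ domains, treating $\lambda u_{0,\lambda}$ as a H\"older right-hand side, whereas you propose to rederive that regularity by differentiating the equation, applying $\eqref{W2p local}$ to $\pa_\ell u_{0,\lambda}$, and Sobolev-embedding $W^{1,p}\hookrightarrow C^{0,1-d/p}$. In the interior this works verbatim and is arguably cleaner. At the boundary, however, be aware that after flattening, the tangential derivative $\pa_T u_{0,\lambda}$ no longer solves the \emph{homogeneous constant-coefficient} equation to which $\eqref{W2p local}$ applies: it solves a variable-coefficient equation with divergence-form source terms built from $\nabla u_{0,\lambda}$ and the derivatives of the flattening map. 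So you cannot literally ``iterate $\eqref{W2p local}$'' there; you need the inhomogeneous, variable-$C^{0,1}$-coefficient analogue (which follows from $\eqref{W2p}$ plus a perturbation/localization argument, in the same way $\eqref{loW1pnu}$ extends $\eqref{W1pom}$). You flag this bookkeeping yourself, and it is standard, but carrying it out amounts to reproving the boundary Schauder estimate that the paper outsources to \cite{MG}; citing that result directly, as the paper does, saves the entire second half of your argument while your route buys self-containedness. The recovery of the pure normal second derivatives from $\widehat{a}_{ij}^{\al\beta}\pa_i\pa_ju_{0,\lambda}^{\beta}=-\lambda u_{0,\lambda}^{\al}$ and the absorption of the resulting $|\lambda|\|u_{0,\lambda}\|_{L^{\infty}}$ and $|\lambda|[u_{0,\lambda}]_{C^{0,\rho}}$ terms into the $(1+|\lambda|R^2)^{-k}R^{-2-\rho}$ normalization (by taking $k$ one larger and using $\eqref{Linfty}$) are fine.
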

\begin{proof}
By applying $ C^{2,\rho} $ estimates for the constant elliptic system (see Theorem 5.23 in \cite{MG}), $ \eqref{Cau1} $, $ \eqref{Cau2} $, $ \eqref{W2p} $ and Theorem \ref{Lp estimates of resolventsf}. 
\end{proof}

\begin{rem}\label{remimp}
Under conditions that $ A $ satisfies $ \eqref{sy} $, $ \eqref{el} $, $ \eqref{pe} $, $ \eqref{VMO} $, $ \va\geq 0 $ and $ \om $ is a bounded $ C^1 $ domain in $ \R^d $ with $ d=2 $, if $ u_{\va,\lambda} $ is the unique solution for the Dirichlet problem $ (\mathcal{L}_{\va}-\lambda I)(u_{\va,\lambda})=F $ in $ \om $ and $ u_{\va,\lambda}=0 $ on $ \pa\om $, then in view of $ \eqref{W1pom} $ and Theorem \ref{Lp estimates of resolventsf}, we have
\begin{align}
\|\nabla u_{\va,\lambda}\|_{L^p(\om)}\leq C\left\{|\lambda|\|u_{\va,\lambda}\|_{L^{\frac{2p}{p+2}}(\om)}+\|F\|_{L^{\frac{2p}{p+2}}(\om)}\right\}\leq C_{\theta_0}\|F\|_{L^{\frac{2p}{p+2}}(\om)},\label{impoestim}
\end{align}
for $ 2<p<\infty $. If $ v_{\va,\lambda} $ is the unique solution for the Dirichlet problem $ (\mathcal{L}_{\va}-\lambda I)(v_{\va,\lambda})=\operatorname{div}(f) $ in $ \om $ and $ v_{\va,\lambda}=0 $ on $ \pa\om $, it is not hard to see that
\begin{align}
\|v_{\va,\lambda}\|_{L^{\frac{2p}{p-2}}(\om)}\leq C_{\theta_0}\|f\|_{L^{\frac{p}{p-1}}(\om)},\quad \text{for any }2<p<\infty,\label{impoestim2}
\end{align}
Indeed, for any $ F\in C_0^{\infty}(\om;\mathbb{C}^m) $, we can choose $ w_{\va,\lambda} $ such that $ (\mathcal{L}_{\va}-\overline{\lambda} I)(w_{\va,\lambda})=F $ in $ \om $ and $ w_{\va,\lambda}=0 $ on $ \pa\om $. Then it can be easily seen by duality that
\begin{align}
\int_{\om}F\overline{v_{\va,\lambda}}dx&=B_{\va,\overline{\lambda},\om}[w_{\va,\lambda},v_{\va,\lambda}]=\overline{B_{\va,\lambda,\om}[v_{\va,\lambda},w_{\va,\lambda}]}=-\int_{\om}\overline{f}\nabla w_{\va,\lambda}dx\nonumber
\end{align}
In view of Hölder's inequality and $ \eqref{impoestim} $, it can be got that
\begin{align}
\left|\int_{\om}F\overline{v_{\va,\lambda}}dx\right|\leq \|f\|_{L^{\frac{p}{p-1}}(\om)}\|\nabla w_{\va,\lambda}\|_{L^p(\om)}\leq C_{\theta_0}\|f\|_{L^{\frac{p}{p-1}}(\om)}\|F\|_{L^{\frac{2p}{p+2}}(\om)},\nonumber
\end{align}
which directly implies $ \eqref{impoestim2} $. Moreover, for $ F\in \dot{W}^{-1,p'}(\om) $, where $ \dot{W}^{-1,p'}(\om) $ denotes the dual space for the homogeneous Sobolev space $ \dot{W}_0^{1,p}(\om) $ and $ p' $ is the conjugate number of $ p $, we have
\begin{align}
\left\|u_{\va,\lambda}\right\|_{L^{\frac{2p}{p-2}}(\om)}\leq C_{\theta_0}\left\|F\right\|_{\dot{W}^{-1,p'}(\om)}. \label{Dual estimates}
\end{align}
This conclusion is also proved by using the duality arguments. Actually, for all $ g\in C_0^1(\om;\mathbb{C}^m) $ we can choose $ v_{\va,\lambda} $ such that $ (\mathcal{L}_{\va}-\overline{\lambda} I)(v_{\va,\lambda})=g $ in $ \om $ and $ v_{\va,\lambda}=0 $ on $ \pa\om $. Thus,
\begin{align}
\int_{\om}g\overline{u_{\va,\lambda}}dx=B_{\va,\overline{\lambda},\om}[v_{\va,\lambda},u_{\va,\lambda}]=\overline{B_{\va,\lambda,\om}[u_{\va,\lambda},v_{\va,\lambda}]}=\overline{\langle F,v_{\va,\lambda}\rangle_{\dot{W}^{-1,p'}(\om)\times\dot{W}_0^{1,p'}(\om)}}. \nonumber
\end{align}
By using Hölder's inequality and $ \eqref{impoestim} $, this shows that
\begin{align}
\left|\int_{\om}g\overline{u_{\va,\lambda}}dx\right|&=\left|\langle F,v_{\va,\lambda}\rangle_{\dot{W}^{-1,p'}(\om)\times\dot{W}_0^{1,p'}(\om)}\right|\leq \left\|F\right\|_{\dot{W}^{-1,p'}(\om)}\left\|v_{\va,\lambda}\right\|_{\dot{W}_0^{1,p}(\om)}\nonumber\\
&\leq\left\|F\right\|_{\dot{W}^{-1,p'}(\om)}\left\|\nabla v_{\va,\lambda}\right\|_{L^p(\om)}\leq C_{\theta_0}\left\|F\right\|_{\dot{W}^{-1,p'}(\om)}\left\|g\right\|_{L^{\frac{2p}{p+2}}(\om)},\nonumber
\end{align}
which gives the proof of $ \eqref{Dual estimates} $.
\end{rem}

\begin{lem}
Assume that $ A $ satisfies $ \eqref{sy} $, $ \eqref{el} $, $ \eqref{pe} $, $ \eqref{VMO} $, $ \lambda\in\Sigma_{\theta_0}\cup\{0\} $ with $ \theta_0\in(0,\frac{\pi}{2}) $ and $ \om $ is a $ C^{1} $ bounded domain in $ \mathbb{R}^2 $. Let $ a=a(x) $ be an atom function in $ \om $. If $ u_{\va,\lambda} $ is the unique weak solution for the Dirichlet problem $ (\mathcal{L}_{\va}-\lambda I)(u_{\va,\lambda})=a $ in $ \om $ and $ u_{\va,\lambda}=0 $ on $ \pa\om $, then there exists a constant $ C_{\theta_0} $ depending only on $ \mu,\omega(t),m,\theta_0 $ and $ \om $ such that
\begin{align}
\left\|u_{\va,\lambda}\right\|_{L^{\infty}(\om)}\leq C_{\theta_0}.\label{Linftyatom}
\end{align}
\end{lem}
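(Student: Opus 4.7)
\noindent\emph{Proof plan.} The plan is to combine a global dual Sobolev estimate on $u_{\va,\lambda}$ with the local $L^{\infty}$ regularity bound \eqref{Linfty}, choosing the localization radius $R=\min(r,|\lambda|^{-1/2})$ (with $|\lambda|^{-1/2}=+\infty$ when $\lambda=0$), so that $|\lambda|R^{2}\leq 1$ throughout and the polynomial factors $(1+|\lambda|R^{2})^{\pm N}$ appearing in \eqref{Linfty} collapse to harmless constants.

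First I will establish the global bound $\|u_{\va,\lambda}\|_{L^{q}(\om)}\leq C_{\theta_{0}}\,r^{2/q}$ for some fixed $q\in(2,\infty)$ (for instance $q=4$, corresponding to $p=4$ in \eqref{Dual estimates}). By the dual estimate \eqref{Dual estimates} this reduces to showing $\|a\|_{\dot{W}^{-1,p'}(\om)}\leq C\,r^{1-2/p}$. Testing against $\psi\in\dot{W}_{0}^{1,p}(\om)$, I would split according to whether $r<\delta(x_{0})$ or $r\geq\delta(x_{0})$: in the first case the mean-zero condition $a_{x_{0},r}=0$ allows one to replace $\psi$ by $\psi-\psi_{x_{0},r}$ and then apply the interior Poincar\'e inequality on $B(x_{0},r)$ together with H\"older; in the second case the ball $\om(x_{0},r)$ hits a non-trivial piece of the $C^{1}$ boundary (with $|\om(x_{0},r)|\geq c\,r^{2}$ and $|\pa\om\cap B(x_{0},r)|\geq c\,r$), so the boundary Poincar\'e inequality applied to $\psi$, which vanishes on this piece, yields the same bound without any cancellation. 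In parallel, the resolvent bound \eqref{reLpLp} with the same $q$ gives the complementary estimate $\|u_{\va,\lambda}\|_{L^{q}(\om)}\leq C_{\theta_{0}}(R_{0}^{-2}+|\lambda|)^{-1}\|a\|_{L^{q}(\om)}\leq C\,r^{2/q-2}/(R_{0}^{-2}+|\lambda|)$, and keeping the better of the two gives $\|u_{\va,\lambda}\|_{L^{q}(\om)}\leq C_{\theta_{0}}\,r^{2/q}\min\bigl(1,(|\lambda|r^{2})^{-1}\bigr)$.

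Second, for any $x^{*}\in\om$ I apply the local $L^{\infty}$ estimate \eqref{Linfty} to $u_{\va,\lambda}$ with $F=a$, $f\equiv 0$, $s=q$, and the adaptive radius $R=\min(r,|\lambda|^{-1/2})$. The data term is estimated using $\|a\|_{L^{\infty}}\leq 1/|\om(x_{0},r)|$ together with the obvious inclusion $|\supp a\cap\om(x^{*},2R)|\leq\min(r^{2},C R^{2})$, which produces a bound of order $\min(1,(R/r)^{2})\leq 1$. The interior term is dominated by $\|u_{\va,\lambda}\|_{L^{q}(\om)}/R^{2/q}$; substituting the combined bound from the previous paragraph and splitting into the regimes $|\lambda|r^{2}\leq 1$ (so $R=r$) and $|\lambda|r^{2}>1$ (so $R=|\lambda|^{-1/2}$) yields a uniform bound by $C_{\theta_{0}}$ in either case. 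Since $x^{*}\in\om(x^{*},R)$ is arbitrary, this delivers $|u_{\va,\lambda}(x^{*})|\leq C_{\theta_{0}}$ and hence \eqref{Linftyatom}.

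The main technical hurdle is the first step in the boundary-touching case $r\geq\delta(x_{0})$, where the atom has no cancellation available and one must lean entirely on the $C^{1}$ regularity of $\pa\om$ to supply both the volume lower bound $|\om(x_{0},r)|\geq c\,r^{2}$ and a Poincar\'e inequality on $\om(x_{0},r)$ for functions vanishing on $\pa\om\cap B(x_{0},r)$; once this is in place, reconciling the regimes $|\lambda|r^{2}\leq 1$ and $|\lambda|r^{2}>1$ is a routine matching of powers of $r$ and $|\lambda|$.
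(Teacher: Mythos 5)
Your proposal is correct, and it shares with the paper the single decisive ingredient: the bound $\|a\|_{\dot{W}^{-1,p'}(\om)}\leq Cr^{1-2/p}$, proved exactly as you describe (cancellation $a_{x_0,r}=0$ plus Poincar\'e/Morrey in the interior case, vanishing of the test function on a boundary portion in the case $r\geq\delta(x_0)$), fed into the $\lambda$-uniform dual estimate \eqref{Dual estimates}. Where you diverge is in how this global information is converted into a pointwise bound. The paper writes $|u(z)|\leq|u(z)-u_{z,\rho}|+|u_{z,\rho}|$ and controls the oscillation term by Morrey's embedding together with the global $W^{1,p}$ estimate \eqref{impoestim}, which gives $\|\nabla u_{\va,\lambda}\|_{L^p(\om)}\leq C_{\theta_0}\|a\|_{L^{2p/(p+2)}(\om)}\leq C_{\theta_0}\rho^{2/p-1}$ with a constant independent of $|\lambda|$; since no factor $(1+|\lambda|\rho^2)^{n}$ ever appears, the atom's own radius $\rho$ works as the localization scale and no case analysis in $|\lambda|\rho^2$ is needed. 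You instead invoke the local $L^{\infty}$ estimate \eqref{Linfty}, whose data term carries the factor $(1+|\lambda|R^2)^{n}$, and you neutralize it with the adaptive radius $R=\min(r,|\lambda|^{-1/2})$; this in turn forces you to import the complementary resolvent bound \eqref{reLpLp} to handle the interior term in the regime $|\lambda|r^2>1$, where $R\ll r$. Your matching of exponents in both regimes checks out (the interior term is $(|\lambda|r^2)^{1/q-1}\leq1$ when $|\lambda|r^2>1$, and the data term is $(R/r)^2\leq1$), so the argument closes; it is simply longer than necessary, because the $\lambda$-uniformity that the paper extracts once and for all from \eqref{impoestim} you have to reconstruct by hand through the choice of $R$ and the two-regime splitting.
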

\begin{proof}
For atom function $ a=a(\cdot) $, assume that
\begin{align}
\operatorname{supp}(a)\subset\om(x_0,\rho)\text{ and }\left\|a\right\|_{L^{\infty}(\om)}\leq\frac{1}{|\om(x_0,\rho)|}\nonumber
\end{align}
with $ x_0\in\om $ and $ 0<\rho< R_0 $. Fix $ z\in\om $, we can choose $ 2<p<\infty $. Then
\begin{align}
|u(z)| &\leq  |u(z)-u_{z,\rho}|+|u_{z,\rho}|\leq C\rho^{1-\frac{2}{p}}[u]_{C^{0,1-\frac{2}{p}}(\om(z,\rho))}+|u_{z,\rho}|\nonumber\\
 &\leq C\left\{\rho^{1-\frac{2}{p}}\left\|\nabla u_{\va,\lambda}\right\|_{L^p(\om)}+\rho^{\frac{2}{p}-1}\left\|u_{\va,\lambda}\right\|_{L^{\frac{2p}{p-2}}(\om)}\right\},\nonumber
\end{align}
due to Morrey's inequality and Hölder's inequality. Using $ \eqref{impoestim} $ and $ \eqref{Dual estimates} $, we have
\begin{align}
\left\|\nabla u_{\va,\lambda}\right\|_{L^p(\om)}&\leq C_{\theta_0}\left\|a\right\|_{L^{\frac{2p}{p+2}}(\om)}\leq C_{\theta_0}\rho^{\frac{2}{p}-1}\text{ and } \left\|u_{\va,\lambda}\right\|_{L^{\frac{2p}{p-2}}(\om)}\leq C_{\theta_0}\left\|a\right\|_{\dot{W}^{-1,p'}(\om)}.\nonumber
\end{align}
We claim that $ \left\|a\right\|_{\dot{W}^{-1,p'}(\om)}\leq \rho^{1-\frac{2}{p}} $. It is because that for all $ v\in \dot{W}_0^{1,p}
(\om;\mathbb{R}^m) $,
\begin{align}
\left|\int_{\om}a^{\alpha}(y)v^{\alpha}(y)dy\right|&\leq\left|\int_{\om}a^{\alpha}(y)(v^{\al}(y)-v_{x_0,\rho}^{\al})dy\right|\leq \|a\|_{L^1(\om)}\|v-v_{x_0,\rho}\|_{L^{\infty}(\om(x_0,\rho))}\nonumber\\
&\leq C\rho^{1-\frac{2}{p}}[v]_{C^{0,1-\frac{2}{p}}(\om(x_0,\rho))}\leq C\rho^{1-\frac{2}{p}}\left\|\nabla v\right\|_{L^p(\om)},\nonumber
\end{align}
where we have used Morrey's theory again for the last inequality. This completes the proof.
\end{proof}
\begin{rem}
The key point of $ \eqref{Linftyatom} $ is that $ C_{\theta_0} $ does not depend on the module of $ \lambda $. Such estimates are extremely important in the constructions of Green functions with $ d=2 $. If $ C_{\theta_0} $ depends on the module of $ \lambda $, we will not be able to obtain related estimates similar to the case that $ d\geq 3 $.
\end{rem}

\subsection{Green functions with dimension no less than three}

\begin{thm}[Green functions of $ \mathcal{L}_{\va}-\lambda I $ with $ d\geq 3 $]\label{Green3}
For $ \va\geq 0 $ and $ d\geq 3 $, let $ \lambda\in\Sigma_{\theta_0}\cup\{0\} $ with $ \theta_0\in(0,\frac{\pi}{2}) $ and $ \om $ be a bounded $ C^1 $ domain in $ \R^d $. Suppose that $ A $ satisfies $ \eqref{sy} $, $ \eqref{el} $, $ \eqref{pe} $ and $ \eqref{VMO} $. Then there exists a unique Green function, $ G_{\va,\lambda}=(G_{\va,\lambda}^{\al\beta}(\cdot,\cdot)):\om\times\om\to \mathbb{C}^{m^2}\cup\left\{\infty\right\} $ with $ 1\leq\al,\beta\leq m $, such that $ G_{\va,\lambda}(\cdot,y) \in H^1(\om\backslash B(y,r);\mathbb{C}^{m^2})\cap W_0^{1,s}(\om;\mathbb{C}^{m^2}) $ for any $ s\in [1,\frac{d}{d-1}) $, $ y\in\om $ and $ 0<r<R_0 $. $ G_{\va,\lambda} $ satisfies 
\begin{align}
B_{\va,\lambda,\om}[G_{\va,\lambda}^{\gamma}(\cdot,y),\phi(\cdot)]=\phi^{\gamma}(y),\label{defnG}
\end{align}
for any $ 1\leq \gamma\leq m $, $ \phi\in W_0^{1,p}(\om;\mathbb{C}^m) $ with $ p>d $. Particularly, if $ F\in L^q(\om;\mathbb{C}^m) $ with $ q>d/2 $,
\begin{align}
u_{\va,\lambda}(x)=\int_{\om}G_{\va,\lambda}(x,y)\overline{F(y)}dy,\label{repre}    
\end{align}
satisfies the Dirichlet problem $ (\mathcal{L}_{\va}-\lambda I)(u_{\va,\lambda})=F $ in $ \om $ and $ u_{\va,\lambda}=0 $ on $ \pa\om $. Moreover, let $ G_{\va,\overline{\lambda}}(x,y) $ be the Green function of the operator $ \mathcal{L}_{\va}-\overline{\lambda} I $, then 
\begin{align}
G_{\va,\lambda}(x,y)=[\overline{G_{\va,\overline{\lambda}}(y,x)}]^T,\label{duality}
\end{align}
which means that $ G_{\va,\lambda}^{\al\beta}(x,y)=\overline{G_{\va,\overline{\lambda}}^{\beta\al}(y,x)} $ for any $ 1\leq\al,\beta\leq m $,  $ x,y\in\om $ and $ x\neq y $. For any $ \sigma_1,\sigma_2\in(0,1) $ and $ k\in\mathbb{N}_+ $, the following estimates
\begin{align}
|G_{\va,\lambda}(x,y)|\leq\frac{C_{k,\theta_0}}{(1+|\lambda||x-y|^2)^{k}|x-y|^{d-2}}\min\left\{1,\frac{[\delta(x)]^{\sigma_1}}{|x-y|^{\sigma_1}},\frac{[\delta(y)]^{\sigma_2}}{|x-y|^{\sigma_2}}\right\}\label{Greene}
\end{align}
 hold for any $ x,y\in\om $ and $ x\neq y$, where $ \delta(x)=\dist(x,\pa\om) $ denotes the distance from $ x $ to the boundary of $ \om $ and $ C_{k,\theta_0} $ depends only on $ \mu,d,m,k,\theta_0,\omega(t),\sigma_1,\sigma_2 $ and $ \om $.
\end{thm}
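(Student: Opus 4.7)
My plan is to construct $G_{\va,\lambda}$ via the classical Hofmann--Kim approximation, using the scaling-invariant local regularity estimates of Section \ref{W1pand Lipschitz estimates for} to extract the polynomial factor $(1+|\lambda||x-y|^2)^{-k}$. For $y\in\om$, $1\le\beta\le m$ and $\rho\in(0,\delta(y)/2)$, let $G_{\va,\lambda}^{\rho,\beta}(\cdot,y)\in H_0^1(\om;\mathbb{C}^m)$ be the unique solution of $(\mathcal{L}_\va-\lambda I)(G_{\va,\lambda}^{\rho,\beta}(\cdot,y))=|B(y,\rho)|^{-1}\chi_{B(y,\rho)}e_\beta$ with zero Dirichlet data (existence by Theorem \ref{existencethm}). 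Write $G_{\va,\lambda}^{\rho}(x,y)$ for the matrix with these columns; the strategy is to prove $\rho$-uniform pointwise bounds and then pass to a subsequential limit.

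For the interior part of \eqref{Greene}, fix $x$ with $|x-y|\ge 4\rho$ and set $r=|x-y|/4$. On $\om(x,2r)$ the function $G_{\va,\lambda}^{\rho}(\cdot,y)$ is $(\mathcal{L}_\va-\lambda I)$-harmonic, so the $L^{\infty}$ estimate \eqref{Linfty} with $s=2d/(d-2)$ and arbitrary $k$ reduces the problem to controlling a mean value. That mean value is handled by Sobolev embedding $H_0^1\hookrightarrow L^{2d/(d-2)}$ and Caccioppoli's inequality \eqref{Cau2} iterated along dyadic annuli between $x$ and $y$, yielding $|G_{\va,\lambda}^{\rho}(x,y)|\le C_{k,\theta_0}(1+|\lambda||x-y|^2)^{-k}|x-y|^{-(d-2)}$ uniformly in $\rho$. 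For the boundary factor $[\delta(x)]^{\sigma_1}/|x-y|^{\sigma_1}$ I split on the ratio $\delta(x)/|x-y|$: when $\delta(x)\ge|x-y|/4$ the factor is $\sim 1$ and the interior bound already suffices; otherwise, choose $x_0\in\pa\om$ with $|x-x_0|=\delta(x)$ and apply the Hölder bound \eqref{Holder} on $\om(x_0,|x-y|/2)$ with $\gamma=\sigma_1$ (obtained by setting $p=d/(1-\sigma_1)$); since $G_{\va,\lambda}^{\rho}(x_0,y)=0$, this yields
\[ |G_{\va,\lambda}^{\rho}(x,y)|\le C[\delta(x)]^{\sigma_1}[G_{\va,\lambda}^{\rho}(\cdot,y)]_{C^{0,\sigma_1}(\om(x_0,|x-y|/2))}, \]
and the seminorm is controlled by \eqref{Holder} combined with the previously established interior bound. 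The symmetric decay in $y$ will follow once the duality \eqref{duality} is known.

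The duality \eqref{duality} is obtained from the adjoint identity recorded just after \eqref{bilinearBva}: evaluating $B_{\va,\lambda,\om}[G_{\va,\lambda}^{\rho,\beta}(\cdot,y),G_{\va,\overline\lambda}^{\rho',\al}(\cdot,x)]$ via \eqref{defnG} on each side and sending $\rho,\rho'\to 0$. The limit $G_{\va,\lambda}(\cdot,y)$ is extracted from $G_{\va,\lambda}^{\rho}(\cdot,y)$ using (a) the uniform pointwise bound just obtained, (b) a global $W_0^{1,s}$ bound for $s<d/(d-1)$ coming from the estimate $\|R(\lambda,\mathcal{L}_\va)\|_{W^{-1,s}\to W_0^{1,s}}\le C_{\theta_0}$ in Theorem \ref{Lp estimates of resolventsf} together with the convergence $|B(y,\rho)|^{-1}\chi_{B(y,\rho)}\to\delta_y$ in $W^{-1,s}(\om)$ (here $s'>d$, so $\delta_y$ indeed lies in $W^{-1,s}$), and (c) off-diagonal $H^1$ control from Caccioppoli \eqref{Cau2}. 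Uniqueness is an energy argument on the difference of two candidates, and the representation \eqref{repre} is Fubini applied to \eqref{defnG}. \textbf{The main obstacle} is the simultaneous bookkeeping of the two competing exponents: each application of \eqref{Linfty} or \eqref{Holder} costs a growth factor $(1+|\lambda|R^2)^n$ with $n=n(d)$ fixed, which must be absorbed while still delivering the prescribed decay of order $k$. Since $k$ is arbitrary in the local estimates and $n$ depends only on $d$, the iteration closes by choosing $k$ in the regularity estimates suitably larger than the $k$ appearing in the final claim.
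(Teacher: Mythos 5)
Your construction is the same Hofmann--Kim approximation scheme the paper uses, and the overall architecture matches: the local estimates \eqref{Linfty} and \eqref{Holder} supply the decay factor $(1+|\lambda|R^2)^{-k}$, the boundary weight $[\delta(x)]^{\sigma_1}|x-y|^{-\sigma_1}$ comes from splitting on $\delta(x)/|x-y|$ and applying the boundary H\"older estimate at the nearest boundary point, the second weight comes from \eqref{duality}, and \eqref{duality} itself comes from pairing the two families of approximants through the adjoint identity. One sub-step is genuinely different and is a legitimate shortcut: the paper does not obtain the uniform $W_0^{1,s}$ bound from $\|R(\lambda,\mathcal{L}_{\va})\|_{W^{-1,s}\to W_0^{1,s}}\leq C_{\theta_0}$; it instead derives off-diagonal $L^{2d/(d-2)}$ and $H^1$ bounds by cutoff arguments and runs the distribution-function estimates \eqref{ww31}--\eqref{ww32}. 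Since Theorem \ref{Lp estimates of resolventsf} is established before the Green function is constructed, your route is sound (for $s'>d$ Morrey's embedding indeed gives $\delta_y\in W^{-1,s}$, and norm convergence of the mollified data even upgrades the weak limit to norm convergence in $W_0^{1,s}$), and it shortens the limit-extraction step.

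The one step I would not accept as written is the control of $\bigl(\dashint_{\om(x,2r)}|G^{\rho}_{\va,\lambda}(\cdot,y)|^{2}\bigr)^{1/2}$ by ``Caccioppoli iterated along dyadic annuli between $x$ and $y$.'' Caccioppoli \eqref{Cau2} converts an $L^2$ bound on $G^{\rho}$ into a bound on $\nabla G^{\rho}$, never the reverse, and the only a priori $L^2$ information available at that stage is the global bound $\|G^{\rho}_{\va,\lambda}(\cdot,y)\|_{H_0^1(\om)}\leq C\rho^{-(d-2)/2}$, which blows up as $\rho\to 0$; no iteration of \eqref{Cau2} turns this into the $\rho$-uniform sharp bound $\bigl(\dashint_{\om(x,r)}|G^{\rho}|^2\bigr)^{1/2}\leq Cr^{2-d}$. (Your global $W_0^{1,s}$ bound only yields $r^{2-d-\epsilon}$ after Sobolev and H\"older, which is not sharp.) The correct mechanism --- the one the paper uses in \eqref{ww15}--\eqref{ww17} and again in \eqref{ww56} --- is duality: test $G^{\rho}_{\va,\lambda}(\cdot,y)$ against $F\in C_0^{\infty}(\om(x,r/3))$, solve the adjoint problem, and apply \eqref{Linfty} at $y$ (this is precisely where the $(1+|\lambda|r^2)^{-k}$ factor enters) together with \eqref{u2dd+2} and $H_0^1\hookrightarrow L^{2d/(d-2)}$ for the adjoint solution. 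Your mention of that embedding suggests you have this in mind, but it must be the stated argument for the on-diagonal mean value, not an afterthought; with that substitution the proof closes exactly as in the paper.
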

\begin{proof}
For $ \lambda=0 $, the results follow from constructions of Green functions of standard elliptic operators $ \mathcal{L}_{\va} $. Then we only need to consider the case that $ \lambda\neq 0 $. First of all, let $ I(u)=\dashint_{\om(x,\rho)}u^{\gamma}(\cdot) $, then $ I\in H^{-1}(\om;\mathbb{C}^m) $. Precisely speaking, for any $ u\in H_0^1(\om;\mathbb{C}^m) $, we can easily get by using Sobolev embedding theorem $ L^{\frac{2d}{d-2}}(\om)\subset H_0^1(\om) $ that
\begin{align}
|I(u)|\leq C|\om(x,\rho)|^{-\frac{d-2}{2d}}\|u\|_{L^{\frac{2d}{d-2}}(\om)}\leq C|\om(x,\rho)|^{-\frac{d-2}{2d}}\|u\|_{H_0^1(\om)}\leq C\rho^{-\frac{d-2}{2}}\|u\|_{H_0^1(\om)}.\label{I u}
\end{align}
Consider the approximating Green function $ G_{\rho,\va,\lambda}(\cdot,y)=(G_{\rho,\va,\lambda}^{\al\beta}(\cdot,y))\in H_0^1(\om;\mathbb{C}^{m^2}) $ with $ \rho>0 $ and $ 1\leq\al,\beta\leq m $, such that
\begin{align}
B_{\va,\lambda,\om}[G_{\rho,\va,\lambda}^{\gamma}(\cdot,y),u(\cdot)]=\dashint_{\om(y,\rho)}u^{\gamma}(\cdot)\text{ for any }u\in H_0^1(\om;\mathbb{C}^m),\label{apf1}    
\end{align}
where $ 1\leq\gamma\leq m $. We see that the existence of $ G_{\rho,\va,\lambda}^{\gamma}(\cdot,y) $ is a direct consequence of Theorem \ref{existencethm}. Choose $ G_{\rho,\va,\lambda}^{\beta}(\cdot,y) $ $ (1\leq\beta\leq m) $ itself as the test function, we have
\begin{align}
B_{\va,\lambda,\om}[G_{\rho,\va,\lambda}^{\gamma}(\cdot,y),G_{\rho,\va,\lambda}^{\beta}(\cdot,y)]=I(G_{\rho,\va,\lambda}^{\gamma\beta}(\cdot,y)).\label{ww9}
\end{align}
In view of $ \eqref{ubuuboun} $, $ \eqref{laxmil} $ and $ \eqref{I u} $, it is not hard to see that
\begin{align}
\|G_{\rho,\va,\lambda}^{\gamma}(\cdot,y)\|_{L^2(\om)}^2&\leq C_{\theta_0}|\lambda|^{-1}\rho^{-\frac{d-2}{2}}\|\nabla_1 G_{\rho,\va,\lambda}^{\gamma}(\cdot,y)\|_{L^2(\om)},\nonumber\\
\|\nabla_1G_{\rho,\va,\lambda}^{\gamma}(\cdot,y)\|_{L^2(\om)}^2&\leq C_{\theta_0}\rho^{-\frac{d-2}{2}}\|\nabla_1 G_{\rho,\va,\lambda}^{\gamma}(\cdot,y)\|_{L^2(\om)},\nonumber
\end{align}
where $ \nabla_i $ ($ i=1,2 $) denote the derivatives for the first or second variable of Green functions and we will use these notations throughout this paper. Then
\begin{align}
\|\nabla_1 G_{\rho,\va,\lambda}^{\gamma}(\cdot,y)\|_{L^2(\om)}\leq C_{\theta_0}\rho^{-\frac{d-2}{2}}\text{ and } \|G_{\rho,\va,\lambda}^{\gamma}(\cdot,y)\|_{L^2(\om)}\leq C_{\theta_0}|\lambda|^{-\frac{1}{2}}\rho^{-\frac{d-2}{2}}.\label{ww11}
\end{align}
On the other hand, using Poincaré's inequality, we can deduce that
\begin{align}
\|G_{\rho,\va,\lambda}^{\gamma}(\cdot,y)\|_{L^2(\om)}\leq CR_0\|\nabla_1 G_{\rho,\va,\lambda}^{\gamma}(\cdot,y)\|_{L^2(\om)}\leq C_{\theta_0}R_0\rho^{-\frac{d-2}{2}}.\nonumber
\end{align}
This, combined with $ \eqref{ww11} $ and the case of $ \lambda=0 $, gives the following estimate
\begin{align}
\|G_{\rho,\va,\lambda}^{\gamma}(\cdot,y)\|_{L^2(\om)}\leq C_{\theta_0}(R_0^{-2}+|\lambda|)^{-\frac{1}{2}}\rho^{-\frac{d-2}{2}},\label{ww27}
\end{align}
for any $ \lambda\in\Sigma_{\theta_0}\cup\{0\} $ and $ 1\leq\gamma\leq m $. Let $ F\in C_0^{\infty}(\om;\mathbb{C}^m) $, consider the Dirichlet problem $ (\mathcal{L}_{\va}-\overline{\lambda}I)(u_{\va,\lambda})=F $ in $ \om $ and $ u_{\va,\lambda}=0 $ on $ \pa\om $. If we choose $ G_{\rho,\va,\lambda}^{\gamma}(\cdot,y) $ as the test function, it follows from simple calculations that
\begin{align}
&\int_{\om}\overline{G_{\rho,\va,\lambda}^{\gamma}(\cdot,y)}F(\cdot)=B_{\va,\overline{\lambda},\om}[u_{\va,\lambda}(\cdot),G_{\rho,\va,\lambda}^{\gamma}(\cdot,y)]=\overline{B_{\va,\lambda,\om}[G_{\rho,\va,\lambda}^{\gamma}(\cdot,y),u_{\va,\lambda}(\cdot)]}=\dashint_{\om(y,\rho)}\overline{u_{\va,\lambda}^{\gamma}(\cdot)}.\nonumber
\end{align}
Suppose that $ \operatorname{supp}F\subset B(y,R)\subset\om $, by using $ \eqref{Linfty} $, it can be got that 
\be
\begin{aligned}
&\left|\int_{\om}\overline{G_{\rho,\va,\lambda}^{\gamma}(\cdot,y)}F(\cdot)\right|\leq \|u_{\va,\lambda}\|_{L^{\infty}(\om(y,\rho))}\leq\|u_{\va,\lambda}\|_{L^{\infty}(B(y,\frac{R}{4}))}\\
&\quad\quad\quad\quad\leq C_{\theta_0}\left(\dashint_{B(y,\frac{R}{2})}|u_{\va,\lambda}|^2\right)^{\frac{1}{2}}+(1+|\lambda|R^2)^{n}R^{2}\left(\dashint_{B(y,\frac{R}{2})}|F|^q\right)^{\frac{1}{q}}
\end{aligned}\label{ww15}
\ee
for any $ \rho<\frac{R}{4} $ and $ q>\frac{d}{2} $. In view of $ \eqref{ww15} $, we need to estimate $ \left(\dashint_{B(y,\frac{R}{2})}|u_{\va,\lambda}|^2\right)^{\frac{1}{2}} $. Owing to $ \eqref{u2dd+2} $, we can obtain without difficulty that
\begin{align}
\|\nabla u_{\va,\lambda}\|_{L^2(\om)}\leq C_{\theta_0}\|F\|_{L^{\frac{2d}{d+2}}(\om)}\text{ and }\|u_{\va,\lambda}\|_{L^2(\om)}\leq C_{\theta_0}(R_0^{-2}+|\lambda|)^{-\frac{1}{2}}\|F\|_{L^{\frac{2d}{d+2}}(\om)}.\nonumber
\end{align}
These, together with Sobolev embedding theorem that $ H_0^1(\om)\subset L^{\frac{2d}{d-2}}(\om) $, imply that
\be
\begin{aligned}
\left(\dashint_{B(y,\frac{R}{2})}|u_{\va,\lambda}|^2\right)^{\frac{1}{2}}&\leq\left(\dashint_{B(y,\frac{R}{2})}|u_{\va,\lambda}|^{\frac{2d}{d-2}}\right)^{\frac{d-2}{2d}}\leq CR^{1-\frac{d}{2}}\left(\int_{\om}|u_{\va,\lambda}|^{\frac{2d}{d-2}}\right)^{\frac{d-2}{2d}}\\
&\leq CR^{1-\frac{d}{2}}\left(\int_{\om}|\nabla u_{\va,\lambda}|^{2}\right)^{\frac{1}{2}}\leq C_{\theta_0}R^{1-\frac{d}{2}}\left(\int_{\om}|F|^{\frac{2d}{d+2}}\right)^{\frac{d+2}{2d}}\\
&\leq C_{\theta_0}R^{1-\frac{d}{2}}\left(\int_{B(y,R)}|F|^{\frac{2d}{d+2}}\right)^{\frac{d+2}{2d}}=C_{\theta_0}R^2\left(\dashint_{B(y,R)}|F|^{\frac{2d}{d+2}}\right)^{\frac{d+2}{2d}}.
\end{aligned}\label{l2uvlab}
\ee
Since $ d\geq 3 $, we have $ q>\frac{d}{2}>\frac{2d}{d+2} $. Then by using Hölder's inequality and $ \eqref{l2uvlab} $, it is not hard to see that
\begin{align}
\left(\dashint_{B(y,\frac{R}{2})}|u_{\va,\lambda}|^2\right)^{\frac{1}{2}}&\leq C_{\theta_0}R^2\left(\dashint_{B(y,R)}|F|^{\frac{2d}{d+2}}\right)^{\frac{d+2}{2d}}\leq C_{\theta_0}R^2\left(\dashint_{B(y,R)}|F|^{q}\right)^{\frac{1}{q}}.\label{ww14}
\end{align}
This, together with $ \eqref{ww15} $, gives that
\begin{align}
\left|\int_{\om}\overline{G_{\rho,\va,\lambda}^{\gamma}(\cdot,y)}F(\cdot)\right|&\leq C_{\theta_0}(1+|\lambda|R^2)^{n}R^{2}\left(\dashint_{B(y,R)}|F|^{q}\right)^{\frac{1}{q}}.\label{ww16}
\end{align}
By using duality arguments, we can get that
\begin{align}
&\left(\dashint_{B(y,R)}|G_{\rho,\va,\lambda}^{\gamma}(\cdot,y)|^s\right)^{\frac{1}{s}}\leq C_{\theta_0}(1+|\lambda|R^2)^{n}R^{2-d},\label{ww17}
\end{align}
for any $ \rho<\frac{R}{4} $, $ R\leq\delta(x) $ and $ s\in (1,\frac{d}{d-2}) $. For $ x,y\in\om $ such that $ x\neq y $, set $ r=|x-y| $. If $ r\leq\frac{1}{2}\delta(y) $, choosing $ p\in(1,\frac{d}{d-2}) $, we have $ (\mathcal{L}_{\va}-\lambda I)(G_{\rho,\va,\lambda}^{\gamma}(\cdot,y))=0 $ in $ B(x,\frac{r}{2}) $ for any $ \rho<\frac{1}{4}r $. Then by using $ \eqref{Linfty} $ and $ \eqref{ww17} $, it is easy to show that for any $ s\in (1,\frac{d}{d-2}) $,
\be
\begin{aligned}
|G_{\rho,\va,\lambda}^{\gamma}(x,y)|&\leq \frac{C_{\theta_0}}{(1+|\lambda|r^2)^{n}}\left(\dashint_{B(x,\frac{r}{2})}|G_{\rho,\va,\lambda}^{\gamma}(\cdot,y)|^s\right)^{\frac{1}{s}}\\
&\leq\frac{C_{\theta_0}(1+|\lambda|r^2)^{n}r^{2-d}}{(1+|\lambda|r^2)^{n}}\leq \frac{C_{\theta_0}}{|x-y|^{d-2}},
\end{aligned}\label{ww18}
\ee
where we have chosen $ k=n\in\mathbb{N}_+ $. Assume that $ R<\frac{1}{4}\delta(y) $, then for any $ \rho<\frac{R}{4} $, we have $ (\mathcal{L}_{\va}-\lambda I)(G_{\rho,\va,\lambda}^{\gamma}(\cdot,y))=0 $ in $ \om\backslash B(x,R) $. Choose $ \varphi\in C_0^1(\om;\mathbb{R}) $, such that $ 0\leq\varphi\leq 1 $, $ \varphi\equiv 1 $ in $ \om\backslash B(y,2R) $, $ \varphi\equiv 0 $ in $ B(y,R) $ and $ |\nabla\varphi|\leq\frac{C}{R} $. Set $ u(z)=\varphi^2G_{\rho,\va,\lambda}^{\gamma}(z,y) $ as the test function, then it is not hard to obtain that
\be
\begin{aligned}
&\int_{\om}\varphi^2A_{\va}(\cdot)\nabla_1 G_{\rho,\va,\lambda}^{\gamma}(\cdot,y)\overline{\nabla_1 G_{\rho,\va,\lambda}^{\gamma}(\cdot,y)}-\lambda\int_{\om}\varphi^2|G_{\rho,\va,\lambda}^{\gamma}(\cdot,y)|^2\\
&\quad\quad\quad\quad=-\int_{\om}2\varphi A_{\va}(\cdot)\nabla_1 G_{\rho,\va,\lambda}^{\gamma}(\cdot,y)\nabla\varphi\overline{G_{\rho,\va,\lambda}^{\gamma}(\cdot,y)},
\end{aligned}\label{ww19}
\ee
where $ A_{\va}(x)=A(x/\va) $ if $ \va>0 $ and $ A_{0}(x)=\widehat{A} $. For $ \operatorname{Re}\lambda\geq 0 $, we can take the imaginary parts of both sides of $ \eqref{ww19} $ and get that
\be
\begin{aligned}
\|\varphi G_{\rho,\va,\lambda}^{\gamma}(\cdot,y)\|_{L^2(\om)}^2&\leq \frac{C_{\theta_0}}{|\lambda|}\int_{\om}|\nabla\varphi(\cdot)||G_{\rho,\va,\lambda}^{\gamma}(\cdot,y)||\varphi(\cdot)||\nabla_1 G_{\rho,\va,\lambda}^{\gamma}(\cdot,y)|\\
&\leq \frac{C_{\theta_0}}{|\lambda|}\left\{\|\nabla\varphi G_{\rho,\va,\lambda}^{\gamma}(\cdot,y)\|_{L^2(\om)}^2+\delta\|\varphi \nabla_1G_{\rho,\va,\lambda}^{\gamma}(\cdot,y)\|_{L^2(\om)}^2\right\},
\end{aligned}\label{ww20}
\ee
where $ \delta>0 $ is sufficiently small. For $ \operatorname{Re}\lambda<0 $, we can take the real parts of both sides of $ \eqref{ww19} $ and obtain that $ \eqref{ww20} $ is also true. Then $ \eqref{ww20} $ is true for any $ \lambda\in\Sigma_{\theta_0} $. Owing to $ \eqref{ww19} $, $ \eqref{ww20} $ and properties of $ \varphi $, it is obvious that 
\be
\begin{aligned}
\|\nabla_1 G_{\rho,\va,\lambda}^{\gamma}(\cdot,y)\|_{L^2(\om\backslash B(y,2R))}^2&\leq C_{\theta_0}\|\nabla\varphi G_{\rho,\va,\lambda}^{\gamma}(\cdot,y)\|_{L^2(\om)}^2\\
&\leq \frac{C_{\theta_0}}{R^2}\|G_{\rho,\va,\lambda}^{\gamma}(\cdot,y)\|_{L^2(B(y,2R)\backslash B(y,R))}^2\\
&\leq \frac{C_{\theta_0}}{R^2}\int_{B(y,2R)\backslash B(y,R)}\frac{1}{|z-y|^{2d-4}}dz\leq C_{\theta_0}R^{2-d}.
\end{aligned}\label{ww21}
\ee
On the other hand, if $ \rho>\frac{R}{4} $, according to $ \eqref{ww11} $, it can be obtained that
\begin{align}
\int_{\om\backslash B(y,2R)}|\nabla_1 G_{\rho,\va,\lambda}^{\gamma}(\cdot,y)|^2\leq \int_{\om}|\nabla_1 G_{\rho,\va,\lambda}^{\gamma}(\cdot,y)|^2\leq C_{\theta_0}R^{2-d}.\label{ww22}
\end{align}
$ \eqref{ww21} $, together with $ \eqref{ww22} $, yields that
\begin{align}
\int_{\om\backslash B(y,2R)}|\nabla_1 G_{\rho,\va,\lambda}^{\gamma}(\cdot,y)|^2\leq C_{\theta_0}R^{2-d},\label{ww23}
\end{align}
for any $ \rho>0 $ and $ R<\frac{1}{4}\delta(y) $. Let $ \psi\in C_0^1(\om;\R) $ such that $ 0\leq\psi\leq 1 $, $ \psi\equiv 0 $ in $ B(y,2R) $, $ \psi\equiv 1 $ in $ \om\backslash B(y,\frac{5}{2}R) $ and $ |\nabla\psi|\leq\frac{C}{R} $. With the help of the definition of $ \psi $ and Sobolev embedding theorem, we can deduce that for any $ \rho<\frac{R}{4} $ and $ R<\frac{1}{4}\delta(y) $,
\be
\begin{aligned}
\|\psi G_{\rho,\va,\lambda}^{\gamma}(\cdot,y)\|_{L^{\frac{2d}{d-2}}(\om)}^{\frac{2d}{d-2}}&\leq C\|\nabla(\psi G_{\rho,\va,\lambda}^{\gamma}(\cdot,y))\|_{L^{2}(\om)}^{\frac{2d}{d-2}}\\
&\leq C\|\nabla \psi G_{\rho,\va,\lambda}^{\gamma}(\cdot,y)\|_{L^2(\om)}^{\frac{2d}{d-2}}+C\|\psi\nabla_1 G_{\rho,\va,\lambda}^{\gamma}(\cdot,y)\|_{L^2(\om)}^{\frac{2d}{d-2}}\leq C_{\theta_0}R^{-d},
\end{aligned}\label{psiGvala}
\ee
where for the third inequality, we have used $ \eqref{ww18} $ and $ \eqref{ww23} $. Combining $ \eqref{psiGvala} $ and properties of $ \psi $, it can be easily inferred that
\begin{align}
\int_{\om\backslash B(y,\frac{5}{2}R)}| G_{\rho,\va,\lambda}^{\gamma}(\cdot,y)|^{\frac{2d}{d-2}}\leq C_{\theta_0}R^{-d}\text{ for any }\rho<\frac{R}{4}\text{ and } R<\frac{1}{4}\delta(y).\nonumber
\end{align} 
For the case that $ \rho\geq \frac{R}{4} $, due to $ \eqref{ww11} $, one can find that
\begin{align}
\int_{\om\backslash B(y,\frac{5}{2}R)}| G_{\rho,\va,\lambda}^{\gamma}(\cdot,y)|^{\frac{2d}{d-2}}\leq \left(\int_{\om}|\nabla_1  G_{\rho,\va,\lambda}^{\gamma}(\cdot,y)|^{2}\right)^{\frac{d}{d-2}}\leq C_{\theta_0}R^{-d}.\nonumber
\end{align}
We now address ourselves to the uniform estimates of $ G_{\rho,\va,\lambda}^{\gamma}(\cdot,y) $ and $ \nabla_1 G_{\rho,\va,\lambda}^{\gamma}(\cdot,y) $ with respect to parameter $ \rho>0 $. In the case of $ t>(\frac{\delta(y)}{4})^{1-d} $, we can choose $ R=t^{-\frac{1}{d-1}}<\frac{1}{4}\delta(y) $ and obtain that for any $ \rho>0 $,
\be
\begin{aligned}
&\left|\left\{x\in\om:|\nabla_1 G_{\rho,\va,\lambda}^{\gamma}(\cdot,y)|>t\right\}\right|\leq CR^d+t^{-2}\int_{\om\backslash B(y,2R)}|\nabla_1 G_{\rho,\va,\lambda}^{\gamma}(\cdot,y)|^2\\
&\quad\quad\quad\quad\quad\quad\leq CR^d+C_{\theta_0}t^{-2}R^{2-d}\leq Ct^{-\frac{d}{d-1}}+C_{\theta_0}t^{-2}t^{\frac{d-2}{d-1}}\leq C_{\theta_0}t^{-\frac{d}{d-1}}.
\end{aligned}\label{ww31}
\ee
When $ t>(\frac{\delta(y)}{4})^{2-d} $, similarly, we can choose $ R=t^{-\frac{1}{d-2}} $ and obtain that for any $ \rho>0 $,
\be
\begin{aligned}
&\left|\left\{x\in\om:|G_{\rho,\va,\lambda}^{\gamma}(\cdot,y)|>t\right\}\right|\leq CR^d+t^{-\frac{2d}{d-2}}\int_{\om\backslash B(y,R)}|G_{\rho,\va,\lambda}^{\gamma}(\cdot,y)|^{\frac{2d}{d-2}}\\
&\quad\quad\quad\quad\quad\quad\leq CR^d+C_{\theta_0}t^{-\frac{2d}{d-2}}R^{-d}\leq Ct^{-\frac{d}{d-2}}+C_{\theta_0}t^{-\frac{2d}{d-2}}t^{\frac{d}{d-2}}\leq C_{\theta_0}t^{-\frac{d}{d-2}}.
\end{aligned}\label{ww32}
\ee
Then in view of $ \eqref{ww31} $ and $ \eqref{ww32} $, it can be shown by simple calculations that
\begin{align}
\int_{\om}|G_{\rho,\va,\lambda}^{\gamma}(\cdot,y)|^s&\leq C[\delta(y)]^{s(2-d)}+C_{\theta_0}\int_{(\delta(y)/4)^{2-d}}t^{s-1}t^{-\frac{d}{d-2}}dt\nonumber\\
&\leq C_{\theta_0}\left\{[\delta(y)]^{s(2-d)}+[\delta(y)]^{s(2-d)+d}\right\},\nonumber
\end{align}
for any $ s\in [1,\frac{d}{d-2}) $ and
\begin{align}
\int_{\om}|\nabla_1 G_{\rho,\va,\lambda}^{\gamma}(\cdot,y)|^s\leq C_{\theta_0}\left\{[\delta(y)]^{s(1-d)}+[\delta(y)]^{s(1-d)+d}\right\},\nonumber
\end{align}
for any $ s\in [1,\frac{d}{d-1}) $. From the uniform estimates above, it follows that there exists a subsequence of $ \{G_{\rho_n,\va,\lambda}^{\gamma}(\cdot,y)\}_{n=1}^{\infty} $ and $ G_{\va,\lambda}^{\gamma}(\cdot,y) $ such that for any $ s\in (1,\frac{d}{d-1}) $ and $ 1\leq\gamma\leq m $,
\begin{align}
G_{\rho_n,\va,\lambda}^{\gamma}(\cdot,y)\rightharpoonup G_{\va,\lambda}^{\gamma}(\cdot,y)\text{ weakly in }W_0^{1,s}(\om;\mathbb{C}^m)\text{ as }n\to\infty.\label{ww33}
\end{align}
Hence, we have, for any $ \phi\in W_0^1(\om;\mathbb{C}^m) $ with $ p>d $
\begin{align}
B_{\va,\lambda,\om}[G_{\va,\lambda}^{\gamma}(\cdot,y),\phi(\cdot)]=\lim_{n\to\infty}B_{\va,\lambda,\om}[G_{\rho_n,\va,\lambda}^{\gamma}(\cdot,y),\phi(\cdot)]=\lim_{n\to\infty}\dashint_{\om(y,{\rho_n})}\phi^{\gamma}(\cdot)=\phi^{\gamma}(y),\nonumber
\end{align}
where we have used the definition of the approximating Green matrix. We note that there exists a weak solution $ u_{\va,\lambda}\in W_0^{1,p}(\om;\mathbb{C}^m) $ satisfying $ (\mathcal{L}_{\va}-\overline{\lambda} I)(u_{\va,\lambda})=F $ in $ \om $ and $ u_{\va,\lambda}=0 $ on $ \pa\om $ for any $ F\in L^{\frac{p}{2}}(\om;\mathbb{C}^m) $ with $ p>d $ ($ \frac{pd}{p+d}<\frac{p}{2} $). Thus we can obtain that
\begin{align}
u_{\va,\lambda}^{\gamma}(y)&=B_{\va,\lambda,\om}[G_{\va,\lambda}^{\gamma}(\cdot,y),u_{\va,\lambda}(\cdot)]=\overline{B_{\va,\overline{\lambda},\om}[u_{\va,\lambda}(\cdot),G_{\va,\lambda}^{\gamma}(\cdot,y)]}=\int_{\om}G_{\va,\lambda}^{\gamma}(\cdot,y)\overline{F(\cdot)}.\nonumber
\end{align} 
We now verify the uniqueness. If $ \widetilde{G}_{\va,\lambda}^{\gamma}(\cdot,y) $ is another Green matrix, we can also derive by representation formula that $ u_{\va,\lambda}^{\gamma}(y)=\int_{\om}\widetilde{G}_{\va,\lambda}^{\gamma}(\cdot,y)\overline{F(\cdot)} $. It follows from the uniqueness of the weak solution that
\begin{align}
\int_{\om}[\widetilde{G}_{\va,\lambda}^{\gamma}(\cdot,y)-G_{\va,\lambda}^{\gamma}(\cdot,y)]\overline{F(\cdot)}=0\text{ for any }F\in L^{\frac{p}{2}}(\om;\mathbb{C}^m).\nonumber
\end{align}
Then $ \widetilde{G}_{\va,\lambda}^{\gamma}(\cdot,y)=G_{\va,\lambda}^{\gamma}(\cdot,y) $ a.e. in $ \om $ due to the arbitrariness of $ F $. Next, let $ G_{\tau,\va,\overline{\lambda}}(\cdot,x) $ denote the approximating for Green functions of the operator $ \mathcal{L}_{\va}-\overline{\lambda}I $, which satisfy
\begin{align}
B_{\va,\overline{\lambda},\om}[G_{\tau,\va,\overline{\lambda}}^{\xi}(\cdot,x),u(\cdot)]=\dashint_{\om(x,\tau)}u^{\xi}(\cdot)\text{ for any }u\in H_0^1(\om;\mathbb{C}^m).\label{ww34}
\end{align}
By the same argument, we can derive the existence and uniqueness of $ G_{\va,\overline{\lambda}}^{\xi}(\cdot,x) $. Thus for any $ \tau,\rho>0 $ and $ 1\leq\gamma,\xi\leq m $, it is obvious to see that
\begin{align}
\dashint_{\om(y,\rho)}G_{\tau,\va,\overline{\lambda}}^{\gamma\xi}(\cdot,x)&=B_{\va,\lambda,\om}[G_{\rho,\va,\lambda}^{\gamma}(\cdot,y),G_{\va,\overline{\lambda}}^{\xi}(\cdot,x)]\nonumber\\
&=\overline{B_{\va,\overline{\lambda},\om}[G_{\va,\overline{\lambda}}^{\xi}(\cdot,x),G_{\rho,\va,\lambda}^{\gamma}(\cdot,y)]}=\dashint_{\om(x,\tau)}\overline{G_{\rho,\va,\lambda}^{\xi\gamma}(\cdot,y)}.\nonumber
\end{align}
Note that $ (\mathcal{L}_{\va}-\overline{\lambda} I)(G_{\tau,\va,\overline{\lambda}}^{\xi}(\cdot,x))=0 $ in $ \om\backslash B(x,\tau) $ and $ (\mathcal{L}_{\va}-\lambda I)(G_{\rho,\va,\lambda}^{\gamma}(\cdot,y))=0 $ in $ \om\backslash B(y,\rho) $. In view of the $ W^{1,p} $ estimates, $ G_{\tau,\va,\overline{\lambda}}^{\xi}(\cdot,x) $ and $ G_{\rho,\va,\lambda}^{\gamma}(\cdot,y) $ are locally Hölder continuous. Therefore, by taking $ \tau,\rho\to\infty $, we have $ \overline{G_{\va,\lambda}^{\xi\gamma}(x,y)}=G_{\va,\overline{\lambda}}^{\gamma\xi}(y,x) $, which implies that $ G_{\va,\lambda}(x,y)=[\overline{G_{\va,\overline{\lambda}}(y,x)}]^T $ for any $ x,y\in\om $ such that $ x\neq y $. This gives the proof of $ \eqref{duality} $.

Finally, we will prove $ \eqref{Greene} $. Let $ r=|x-y| $ and $ F\in C_0^{\infty}(\om(x,{\frac{r}{3}});\mathbb{C}^m) $. Assume that $ u_{\va,\lambda} $ is the solution of $ (\mathcal{L}_{\va}-\overline{\lambda}I)(u_{\va,\lambda})=F $ in $ \om $ and $ u_{\va,\lambda}=0 $ on $ \pa\om $. Then $ u_{\va,\lambda}(y)=\int_{\om}\overline{F(\cdot)}G_{\va,\lambda}(\cdot,y) $. Since $ (\mathcal{L}_{\va}-\overline{\lambda} I)(u_{\va,\lambda})=0 $ in $ \om\backslash \om(x,\frac{r}{3}) $ and $ \om(y,\frac{r}{3})\subset\om\backslash \om(x,\frac{r}{3}) $, it follows from $ \eqref{u2dd+2} $ and $ \eqref{Linfty} $ that for any $ k\in\mathbb{N}_+ $,
\begin{align}
|u_{\va,\lambda}(y)|&\leq \frac{C_{k,\theta_0}}{(1+|\lambda|r^2)^k}\left(\dashint_{\om(y,\frac{r}{3})}|u_{\va,\lambda}|^{2}\right)^{\frac{1}{2}}\leq \frac{C_{k,\theta_0}}{(1+|\lambda|r^2)^k}r^{1-\frac{d}{2}}\|u_{\va,\lambda}\|_{L^{\frac{2d}{d-2}}(\om(y,\frac{r}{3}))}\nonumber\\
&\leq \frac{C_{k,\theta_0}}{(1+|\lambda|r^2)^k}r^{1-\frac{d}{2}}\|u_{\va,\lambda}\|_{L^{\frac{2d}{d-2}}(\om)}\leq \frac{C_{k,\theta_0}}{(1+|\lambda|r^2)^k}r^{1-\frac{d}{2}}\|\nabla u_{\va,\lambda}\|_{L^{2}(\om)}\nonumber\\
&\leq\frac{C_{k,\theta_0}}{(1+|\lambda|r^2)^{k}}r^{1-\frac{d}{2}}\|F\|_{L^{\frac{2d}{d+2}}(\om)}=\frac{C_{k,\theta_0}}{(1+|\lambda|r^2)^{k}}r^{1-\frac{d}{2}} \|F\|_{L^{\frac{2d}{d+2}}(\om(x,\frac{r}{3}))}\nonumber\\
&\leq\frac{C_{k,\theta_0}}{(1+|\lambda|r^2)^{k}}r^{2-\frac{d}{2}} \|F\|_{L^{2}(\om(x,\frac{r}{3}))}.\nonumber
\end{align}
In view of duality arguments, we can infer that for any $ k\in\mathbb{N}_+ $,
\begin{align}
\left(\dashint_{\om(x,\frac{r}{3})}|G_{\va,\lambda}(\cdot,y)|^2\right)^{\frac{1}{2}}\leq\frac{C_{k,\theta_0}}{(1+|\lambda|r^2)^{k}}r^{2-d}.\label{ww56}
\end{align}
Note that $ (\mathcal{L}_{\va}-\lambda I)(G_{\va,\lambda}^{\gamma}(\cdot,y))=0 $ in $ \om\backslash B(y,r) $ for any $ r>0 $. So in the case of $ \frac{r}{6}\leq \delta(x) $, it follows from $ \eqref{Linfty} $ and $ \eqref{ww56} $ that for any $ k\in\mathbb{N}_+ $,
\begin{align}
|G_{\va,\lambda}(x,y)|\leq C_{\theta_0}\left(\dashint_{\om(x,\frac{r}{3})}|G_{\va,\lambda}(\cdot,y)|^{2}\right)^{\frac{1}{2}}\leq \frac{C_{k,\theta_0}}{(1+|\lambda||x-y|^2)^{k}}\frac{1}{|x-y|^{d-2}}.\nonumber   
\end{align}
When $ \frac{r}{6}>\delta(x) $, on the other hand, in view of $ \eqref{Holder} $ and $ \eqref{ww56} $, for any $ \sigma_1\in (0,1) $ and $ k\in\mathbb{N}_+ $, we have,
\begin{align}
|G_{\va,\lambda}(x,y)|&=|G_{\va,\lambda}(x,y)-G_{\va,\lambda}(\overline{x},y)|\leq [G_{\va,\lambda}(\cdot,y)]_{C^{0,\sigma_1}(\om(x,\frac{r}{6}))}|x-\overline{x}|^{\sigma_1}\nonumber\\
&\leq C_{\theta_0}\left(\frac{|x-\overline{x}|}{r}\right)^{\sigma_1}\left(\dashint_{\om(x,\frac{r}{3})}|G_{\va,\lambda}(\cdot,y)|^{2}\right)^{\frac{1}{2}}\leq \frac{C_{k,\theta_0}}{(1+|\lambda||x-y|^2)^{k}}\frac{[\delta(x)]^{\sigma_1}}{|x-y|^{d-2+\sigma_1}},\nonumber
\end{align}
where $ \overline{x}\in\pa\om $ is the point such that $ |x-\overline{x}|=\delta(x) $. It is easy to complete the proof of $ \eqref{Greene} $ by considering the same estimates for $ G_{\va,\overline{\lambda}}(x,y) $ and using property $ \eqref{duality} $.
\end{proof}

\subsection{Two dimensional Green functions}

\begin{thm}[Green functions of $ \mathcal{L}_{\va}-\lambda I $ with $ d=2 $]\label{Green2}
Suppose that $ A $ satisfies $ \eqref{sy} $, $ \eqref{el} $, $ \eqref{pe} $, $ \eqref{VMO} $, $ \va\geq 0 $ and $ \om $ is a bounded $ C^{1} $ domain in $ \mathbb{R}^2 $. If $ \lambda\in\Sigma_{\theta_0}\cup\{0\} $ with $ \theta_0\in(0,\frac{\pi}{2}) $, then there exists a unique Green function $ G_{\va,\lambda}=(G_{\va,\lambda}^{\al\beta}(\cdot,\cdot)):\om\times\om\to \mathbb{C}^{m^2}\cup\left\{\infty\right\} $ with $ 1\leq\al,\beta\leq m $, such that
\be
\begin{aligned}
G_{\va,\lambda}(\cdot,y)\in \operatorname{BMO}(\om;\mathbb{C}^{m^2}) \text{ i.e. } \left\|G_{\va,\lambda}(\cdot,y)\right\|_{\BMO(\om)}\leq C_{\theta_0}\text{ uniformly for }y\in\om.
\end{aligned}\label{BMO estimates}
\ee
Moreover, for all $ u_{\va,\lambda} $ being the weak solution for the Dirichlet problem $ (\mathcal{L}_{\va}-\lambda I)(u_{\va,\lambda})=F $ in $ \om $ and $ u_{\va,\lambda}=0 $ on $ \pa\om $, where $ F\in L^p(\om;\mathbb{C}^m) $, $ (p>1) $, we have
\begin{align}
u_{\va,\lambda}(x)=\int_{\om}G_{\va,\lambda}(x,y)\overline{F(y)}dy.\label{Representation formula}
\end{align}
Furthermore, for Green functions $ G_{\va,\overline{\lambda}}(x,y) $ corresponding to the operators of $ \mathcal{L}_{\va}-\overline{\lambda} I $, we have $ G_{\va,\lambda}(x,y)=\overline{[G_{\va,\overline{\lambda}}(y,x)]}^T $. For all $ \sigma_1,\sigma_2,\sigma\in (0,1) $ and $ k\in\mathbb{N}_+ $, Green functions satisfy pointwise estimates
\be
|G_{\va,\lambda}(x,y)|\leq\frac{C_{k,\theta_0}}{(1+|\lambda||x-y|^2)^k}\left(\frac{R_0}{|x-y|}\right)^{\sigma}\text{ for any }x,y\in\om,\label{preliminary}
\ee
\be
|G_{\va,\lambda}(x,y)|\leq C_{k,\theta_0}\left\{1+|\lambda|^{\frac{\sigma}{2}}R_0^{\sigma}+\ln\left(\frac{R_0}{|x-y|}\right)\right\}\text{ for any }x,y\in\om,\label{Pointwise estimates for Green functions 4}
\ee
\begin{align}
|G_{\va,\lambda}(x,y)|\leq\frac{C_{k,\theta_0}[\delta(x)]^{\sigma_1}}{(1+|\lambda||x-y|^2)^k|x-y|^{\sigma_1}}   &\text{ if }   \delta(x)<\frac{1}{4}|x-y|,\label{Pointwise estimates for Green functions 1}\\
|G_{\va,\lambda}(x,y)|\leq\frac{C_{k,\theta_0}[\delta(y)]^{\sigma_2}}{(1+|\lambda||x-y|^2)^k|x-y|^{\sigma_2}}  & \text{ if }   \delta(y)<\frac{1}{4}|x-y|,\label{Pointwise estimates for Green functions 2}\\
|G_{\va,\lambda}(x,y)|\leq\frac{C_{k,\theta_0}[\delta(x)]^{\sigma_1}[\delta(y)]^{\sigma_2}}{(1+|\lambda||x-y|^2)^k|x-y|^{\sigma_1+\sigma_2}}&\text{ if } \min\left\{\delta(x),\delta(y)\right\}<\frac{1}{4}|x-y|,\label{Pointwise estimates for Green functions 3}
\end{align}
where $ \delta(x)=\operatorname{dist}(x,\pa\om) $ denotes the distance from $ x $ to the boundary of $ \om $, $ x\neq y $ and $ C_{k,\theta_0} $ are constants depending only on $ \sigma_1,\sigma_2,\sigma,\mu,\omega(t),k,\theta_0,m $ and $ \om $.
\end{thm}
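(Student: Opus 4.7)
The plan is to mimic the construction in Theorem \ref{Green3}, with the essential new ingredient that the critical Sobolev embedding $ H_0^1\subset L^{2d/(d-2)} $, unavailable in two dimensions, is replaced by the duality between $ \BMO $ and the Hardy space $ \mathcal{H}^1 $. First I would introduce the approximating Green matrices $ G_{\rho,\va,\lambda}^{\gamma}(\cdot,y)\in H_0^1(\om;\mathbb{C}^m) $ via the weak formulation $ \eqref{apf1} $ used in the proof of Theorem \ref{Green3}; existence follows from Theorem \ref{existencethm}, since the linear functional $ I(u)=\dashint_{\om(y,\rho)}u^{\gamma} $ lies in $ H^{-1}(\om;\mathbb{C}^m) $ thanks to the two-dimensional Sobolev embedding $ H_0^1(\om)\subset L^{q'}(\om) $ for every $ q'<\infty $ and the companion estimate $ \eqref{uq} $.

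The pivotal step, replacing the weighted $ L^s $ bounds used when $ d\geq 3 $, is a uniform control of $ G_{\rho,\va,\lambda}^{\gamma}(\cdot,y) $ in $ \BMO(\om) $. For any atom $ a $ supported in some $ \om(x_0,r) $, let $ w_{\va,\overline{\lambda}}\in H_0^1(\om;\mathbb{C}^m) $ solve $ (\mathcal{L}_{\va}-\overline{\lambda}I)(w_{\va,\overline{\lambda}})=a $ with zero boundary data; the a priori bound $ \eqref{Linftyatom} $ then gives $ \|w_{\va,\overline{\lambda}}\|_{L^{\infty}(\om)}\leq C_{\theta_0} $ with a constant \emph{independent of} $ |\lambda| $. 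Testing the equation for $ w_{\va,\overline{\lambda}} $ against $ G_{\rho,\va,\lambda}^{\gamma}(\cdot,y) $ and using $ \overline{B_{\va,\lambda,\om}[G,w]}=B_{\va,\overline{\lambda},\om}[w,G] $ yields
\[
\Bigl|\int_{\om}a(x)\overline{G_{\rho,\va,\lambda}^{\gamma}(x,y)}\,dx\Bigr|=\Bigl|\dashint_{\om(y,\rho)}w_{\va,\overline{\lambda}}^{\gamma}\Bigr|\leq C_{\theta_0},
\]
uniformly in $ \rho $ and $ y $; taking the supremum over atoms and invoking $ \BMO $--$ \mathcal{H}^1 $ duality furnishes $ \eqref{BMO estimates} $ at the level of the approximations.

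With the uniform $ \BMO $ control in hand, the homogeneous equation $ (\mathcal{L}_{\va}-\lambda I)G_{\rho,\va,\lambda}^{\gamma}(\cdot,y)=0 $ on $ \om\setminus\overline{\om(y,\rho)} $, combined with the Hölder estimate $ \eqref{Holder} $ and the John--Nirenberg embedding $ \BMO\subset L^p_{\mathrm{loc}} $, delivers local equicontinuity away from $ y $. A diagonal subsequence then converges locally uniformly on $ \om\setminus\{y\} $ and weakly in $ W_0^{1,s}(\om;\mathbb{C}^m) $ for small $ s>1 $ to a limit $ G_{\va,\lambda} $ inheriting the $ \BMO $ bound $ \eqref{BMO estimates} $. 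The representation formula $ \eqref{Representation formula} $, uniqueness, and the symmetry $ G_{\va,\lambda}(x,y)=\overline{[G_{\va,\overline{\lambda}}(y,x)]}^T $ follow by the same cross-testing arguments used in Theorem \ref{Green3} together with the uniqueness in Theorem \ref{existencethm}.

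The pointwise bounds $ \eqref{preliminary} $--$ \eqref{Pointwise estimates for Green functions 3} $ are then extracted from the $ \BMO $ bound as follows. For $ r\sim|x-y| $, the ball $ \om(x,2r) $ avoids $ y $, so $ \eqref{Linfty} $ with $ s=2 $ gives $ |G_{\va,\lambda}(x,y)|\leq C_{k,\theta_0}(1+|\lambda|r^2)^{-k}(\dashint_{\om(x,2r)}|G_{\va,\lambda}(\cdot,y)|^2)^{1/2} $. The $ L^2 $ average splits into the oscillation, controlled by $ \|G_{\va,\lambda}\|_{\BMO} $ via John--Nirenberg, plus the mean $ (G_{\va,\lambda}(\cdot,y))_{x,2r} $, whose modulus is bounded by a standard chain argument (doubling radii from $ 2r $ up to a scale where the prescription $ \eqref{ww35} $ forces the mean to vanish, each step costing $ C\|G_{\va,\lambda}\|_{\BMO} $) by $ C\log(R_0/r)\|G_{\va,\lambda}\|_{\BMO} $, producing $ \eqref{Pointwise estimates for Green functions 4} $; the power-type bound $ \eqref{preliminary} $ then follows from $ \log t\leq C_{\sigma}t^{\sigma} $ together with the $ (1+|\lambda|r^2)^{-k} $ decay. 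The boundary-decay estimates $ \eqref{Pointwise estimates for Green functions 1} $--$ \eqref{Pointwise estimates for Green functions 3} $ arise from combining the Hölder estimate $ \eqref{Holder} $ with $ G_{\va,\lambda}(\cdot,y)=0 $ on $ \pa\om $, in the same manner as at the end of Theorem \ref{Green3}. The principal obstacle is precisely the chain argument controlling the mean values $ (G_{\va,\lambda})_{x,r} $: it is the $ \lambda $-independence of the atom bound $ \eqref{Linftyatom} $, underlined in the remark preceding the theorem, that makes this programme deliver estimates which do not deteriorate with $ |\lambda| $.
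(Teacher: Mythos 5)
Your overall architecture coincides with the paper's: approximate Green matrices via Theorem \ref{existencethm}, test against the solution with an atom as datum, use the $\lambda$-independent bound $\eqref{Linftyatom}$ and $\mathcal{H}^1$--$\BMO$ duality to get the uniform $\BMO$ bound, pass to a weak* limit, and then extract the pointwise bounds from $\eqref{Linfty}$, the boundary H\"older estimates, and a dyadic chain of averages exactly as in $\eqref{Average difference estimates}$--$\eqref{Large scale integral}$. Two remarks. First, your claim of weak convergence in $W_0^{1,s}(\om)$ for $s>1$ is neither justified (the uniform $W^{1,s}$ bounds of the $d\geq3$ construction rest on the decay $|x-y|^{2-d}$ and are not available here) nor needed: the weak* convergence in $\BMO$ given by Banach--Alaoglu, together with $L^{p}\subset\mathcal{H}^1$ and the continuity of $u_{\va,\lambda}$, is what actually yields $\eqref{Representation formula}$, and this is the route the paper takes. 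Second, your derivation of the logarithmic bound is genuinely different from, and in fact cleaner than, the paper's. The paper estimates the term $|G_{\va,\lambda}(x_0,y_0)-G_{\va,\lambda}(\cdot,y_0)_{x_0,r_1}|$ by applying the local $L^{\infty}$ estimate to $G_{\va,\lambda}(\cdot,y_0)$ minus its mean, which solves an inhomogeneous equation with source $\lambda\times$constant; controlling that source forces an appeal to $\eqref{preliminary}$ (proved independently by duality via $\eqref{uq}$) and produces the extra $|\lambda|^{\sigma/2}R_0^{\sigma}$ in $\eqref{Pointwise estimates for Green functions 4}$ (see $\eqref{The second important estimates}$). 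You instead apply $\eqref{Linfty}$ directly to the homogeneous solution and only afterwards split the average into oscillation plus mean; the chain then gives the bound $C_{k,\theta_0}(1+|\lambda||x-y|^2)^{-k}\{1+\ln(R_0/|x-y|)\}$ with no $\lambda$-dependent additive term, which is stronger than the stated $\eqref{Pointwise estimates for Green functions 4}$ and lets you deduce $\eqref{preliminary}$ from $\ln t\leq C_{\sigma}t^{\sigma}$ rather than the other way around, removing the apparent circularity. The only care needed there is your use of John--Nirenberg at exponent $2$: for balls meeting $\pa\om$, where the modified mean $\eqref{ww35}$ is prescribed to vanish, the $L^2$ form of John--Nirenberg for this boundary-adapted $\BMO$ norm is an extra (unproved) ingredient; it is avoided entirely by taking $s=1$ in $\eqref{Linfty}$, as in $\eqref{Boundedness estimates}$.
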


\begin{proof}
For any $ y\in\om $ and $ \rho>0 $, in view of Theorem \ref{existencethm}, there exists a matrix-valued function $ G_{\rho,\va,\lambda}(\cdot,y)=(G_{\rho,\va,\lambda}^{\alpha\beta}(\cdot,y)):\om\to\mathbb{C}^{m^2} $, such that for any $ 1\leq\gamma\leq m $, $ G_{\rho,\va,\lambda}^{\gamma}(\cdot,y)\in H_0^1(\om;\mathbb{C}^m) $ satisfies
\begin{align}
B_{\va,\lambda,\om}[G_{\rho,\va,\lambda}^{\gamma}(\cdot,y),u(\cdot)]=\dashint_{\om(y,\rho)}u^{\gamma}(\cdot)\text{ for any }u\in H_0^1(\om;\mathbb{C}^m).\nonumber
\end{align}
For any atom function in $ \om $ denoted by $ a=a(\cdot) $, s.t.
\begin{align}
\operatorname{supp}(a)\subset\om(y,\rho)\text{ and }\left\|a\right\|_{L^{\infty}(\om)}\leq\frac{1}{|\om(y,\rho)|},\nonumber
\end{align}
we can get $ v_{\va,\lambda}\in H_0^1(\om;\mathbb{R}^m) $ such that $ (\mathcal{L}_{\va}-\overline{\lambda} I)(v_{\va,\lambda})=a $ in $ \om $ and $ v_{\va,\lambda}=0 $ on $ \pa\om $. Then
\begin{align}
\dashint_{\om(y,\rho)}v_{\va,\lambda}^{\gamma}(\cdot)&=B_{\va,\lambda,\om}[G_{\rho,\va,\lambda}^{\gamma}(\cdot,y),v_{\va,\lambda}(\cdot)]=\overline{B_{\va,\overline{\lambda},\om}[v_{\va,\lambda}(\cdot),G_{\rho,\va,\lambda}^{\gamma}(\cdot,y)]}=\int_{\om}G_{\rho,\va,\lambda}^{\alpha\gamma}(\cdot,y)\overline{a^{\alpha}(\cdot)}.\nonumber
\end{align}
This, together with $ \eqref{Linftyatom} $, implies the following estimate
\begin{align}
\left|\int_{\om}G_{\rho,\va,\lambda}(\cdot,y)\overline{a(\cdot)}\right|\leq \left|\dashint_{\om(y,\rho)}v_{\va,\lambda}(\cdot)\right|\leq\|v_{\va,\lambda}\|_{L^{\infty}(\om(y,\rho))} \leq\|v_{\va,\lambda}\|_{L^{\infty}(\om)}\leq C_{\theta_0},\nonumber
\end{align}
where $ C_{\theta_0} $ depends only on $ \mu,\omega(t),\theta_0,m $ and $ \om $. According to the fact that $ \mathcal{H}^1 $ is the dual space of $ \operatorname{BMO} $ space, we can derive that $ G_{\rho,\va,\lambda}(\cdot,y) $ has a uniform boundedness $ C_{\theta_0} $ in $ \operatorname{BMO} $ space, where $ C_{\theta_0} $ depends only on $ \mu,\omega(t),m,\theta_0 $ and $ \om $. In view of Banach-Alaoglu theorem, we have, for all $ y\in\om $, there exists a sequence $ \{\rho_j\} $ such that $ \rho_j\to 0 $ when $ j\to\infty $ and functions $ G_{\va,\lambda}^{\alpha\beta}(\cdot,y)\in \operatorname{BMO}(\om) $ such that $ G_{\rho_j,\va,\lambda}^{\alpha\beta}(\cdot,y) $ converge to $ G_{\va,\lambda}^{\alpha\beta}(\cdot,y) $ in the space $ \operatorname{BMO}(\om) $ with the sense of the weak*-topology. For $ F\in L^q(\om;\mathbb{R}^m) $ where $ q>1 $, we can choose $ 1<q_1<q $, $ p=\frac{2q_1}{2-q_1}>2 $ and $ u_{\va,\lambda}\in W_0^{1,p}(\om;\mathbb{R}^m) $, such that $ (\mathcal{L}_{\va}-\overline{\lambda}I)(u_{\va,\lambda})=F $ and $ u_{\va,\lambda}=0 $ on $ \pa\om $. Then it can be obtained that
\be
\begin{aligned}
\dashint_{\om(y,\rho)}u_{\va,\lambda}^{\gamma}(\cdot)&=B_{\va,\lambda,\om}[G_{\rho,\va,\lambda}^{\gamma}(\cdot,y),u_{\va,\lambda}(\cdot)]\\
&=\overline{B_{\va,\overline{\lambda},\om}[u_{\va,\lambda}(\cdot),G_{\rho,\va,\lambda}^{\gamma}(\cdot,y)]}=\int_{\om}G_{\rho,\va,\lambda}^{\al\gamma}(\cdot,y)\overline{F^{\al}(\cdot)}.
\end{aligned}\label{ww181}
\ee
In view of Remark \ref{remimp}, Poincaré's inequality and Hölder's inequality, we have
\begin{align}
\left\|u_{\va,\lambda}\right\|_{W^{1,p}(\om)}\leq C\|\nabla u_{\va,\lambda}\|_{L^p(\om)}\leq C_{\theta_0}\left\|F\right\|_{L^{q_1}(\om)}\leq C_{\theta_0}\left\|F\right\|_{L^q(\om)},\nonumber
\end{align}
where $ C_{\theta_0} $ is a constant depending only on $ \mu,\omega(t),\theta_0,m,p,q $ and $ \om $. Using Sobolev embedding theorem, we see that $ u_{\va,\lambda} $ is continuous. Letting $ \rho_j\to 0 $, the left hand side of $ \eqref{ww181} $ converges to $ u_{\va,\lambda}(y) $. On the other hand, according to the embedding theorem $ L^p\subset \mathcal{H}^1 $, one can obtain that
\begin{align}
u_{\va,\lambda}^{\gamma}(y)=\lim_{j\to\infty}\dashint_{\om(y,\rho_j)}u_{\va,\lambda}^{\gamma}(\cdot)=\lim_{j\to\infty}\int_{\om}G_{\rho_j,\va,\lambda}^{\al\gamma}(\cdot,y)\overline{F^{\al}(\cdot)}=\int_{\om}G_{\va,\lambda}^{\al\gamma}(\cdot,y)\overline{F^{\al}(\cdot)}.\label{ww191}
\end{align}
Since the representation formula, uniqueness of Green functions and duality property follow from almost the same arguments for the case $ d\geq 3 $, here we will not repeat the proofs of them.

Finally, we will prove the pointwise estimates of Green functions. Namely, we will prove $ \eqref{preliminary} $-$ \eqref{Pointwise estimates for Green functions 3} $. Letting $ x_0,y_0\in\om $ with $ x_0\neq y_0 $ and assuming that $ \delta(x_0)<\frac{1}{2}|x_0-y_0|=\frac{1}{2}r $, we have $ \om(x_0,\frac{1}{2}r)\subset\om\backslash\left\{y_0\right\} $. According to the definition of $ G_{\va,\lambda}^{\gamma}(\cdot,y_0) $, we have $ (\mathcal{L}_{\va}-\lambda I)(G_{\va,\lambda}^{\gamma}(\cdot,y_0))=0 $ in $ \om(x_0,\frac{1}{2}r) $ and $ G_{\va,\lambda}^{\gamma}(\cdot,y_0)=0 $ on $ \pa\om\cap B(x_0,\frac{1}{2}r) $. Then by $\eqref{Linfty} $, we can obtain that for any $ k\in\mathbb{N}_+ $,
\begin{align}
|G_{\va,\lambda}(x_0,y_0)|\leq C\left\|G_{\va,\lambda}(\cdot,y_0)\right\|_{L^{\infty}(\om(x_0,\frac{r}{4}))}\leq \frac{C_{k,\theta_0}}{(1+|\lambda|r^2)^k}\dashint_{\om(x_0,\frac{1}{2}r)}|G_{\va,\lambda}(\cdot,y_0)|.\nonumber
\end{align}
According to the definition of $ \BMO(\om) $ and the observation that $ G_{\va,\lambda}(\cdot,y_0)_{x_0,\frac{r}{2}}=0 $ by $ \eqref{ww35} $, it is easy to see that for any $ k\in\mathbb{N}_+ $,
\begin{align}
|G_{\va,\lambda}(x_0,y_0)|&\leq \frac{C_{k,\theta_0}}{(1+|\lambda|r^2)^k}\dashint_{\om(x_0,\frac{r}{2})}|G_{\va,\lambda}(z,y_0)-G_{\va,\lambda}(z,y_0)_{x_0,\frac{r}{2}}|dz\nonumber\\
&\leq \frac{C_{k,\theta_0}}{(1+|\lambda|r^2)^k}\left\|G_{\va,\lambda}(\cdot,y_0)\right\|_{\BMO(\om)}\leq \frac{C_{k,\theta_0}}{(1+|\lambda|r^2)^k},\nonumber
\end{align}
where $ C_{k,\theta_0} $ depends only on $ \mu,\omega(t),k,\theta_0,m $ and $ \om $. With the help of estimates above, we can find that if $ \delta(x)<\frac{1}{2}|x-y| $, then
\begin{align}
|G_{\va,\lambda}(x,y)|\leq \frac{C_{k,\theta_0}}{(1+|\lambda|r^2)^k}\leq \frac{C_{k,\theta_0}}{(1+|\lambda||x-y|^2)^k}.\label{Boundedness estimates}
\end{align}
Assume that $ \delta(x_0)<\frac{1}{4}|x_0-y_0|=\frac{1}{4}r $ and $ z_0\in\pa\om $ is chosen such that $ |x_0-z_0|=\delta(x_0) $, we note that $ (\mathcal{L}_{\va}-\lambda I)(G_{\va,\lambda}^{\gamma}(\cdot,y_0))=0 $ in $ \om(x_0,\frac{r}{2}) $ and $ G_{\va,\lambda}^{\gamma}(\cdot,y_0)=0 $ on $ \pa\om\cap B(z_0,\frac{r}{2}) $. In view of the localized boundary H\"{o}lder estimates $ \eqref{Holder} $, then for all $ \sigma_1\in (0,1) $, there is
\be
\begin{aligned}
|G_{\va,\lambda}^{\gamma}(x_0,y_0)|&= |G_{\va,\lambda}^{\gamma}(x_0,y_0)-G_{\va,\lambda}^{\gamma}(z_0,y_0)|\\
&\leq |x_0-z_0|^{\sigma_1}[G_{\va,\lambda}^{\gamma}(\cdot,y_0)]_{C^{0,\sigma_1}(\om(z_0,\frac{3r}{8}))}\\
&\leq C_{\theta_0}\left(\frac{[\delta(x_0)]}{r}\right)^{\sigma_1}\left(\dashint_{\om(z_0,\frac{r}{2})}|G_{\va,\lambda}^{\gamma}(\cdot,y_0)|^2\right)^{\frac{1}{2}}.
\end{aligned}\label{Gvalamcthe}
\ee
Simple obervations give that for all $ x\in \om(z_0,\frac{r}{2}) $, we have $ \delta(x)<\frac{1}{2}r $. Then due to $ \eqref{Boundedness estimates} $ and $ \eqref{Gvalamcthe} $
\begin{align}
|G_{\va,\lambda}(x_0,y_0)|\leq\frac{C_{k,\theta_0}[\delta(x_0)]^{\sigma_1}}{(1+|\lambda||x_0-y_0|^2)^k|x_0-y_0|^{\sigma_1}}.\nonumber
\end{align}
This proves $ \eqref{Pointwise estimates for Green functions 1} $. On the other hand, $ \eqref{Pointwise estimates for Green functions 2} $ can be obtained naturally by considering the same estimates for Green functions of $ \mathcal{L}_{\va}-\overline{\lambda}I $. In addition, we can see from the above proof that $ \frac{1}{4} $ is not an essential constant in $ \eqref{Pointwise estimates for Green functions 1} $-$ \eqref{Pointwise estimates for Green functions 2} $. That is, $ \eqref{Pointwise estimates for Green functions 1} $-$ \eqref{Pointwise estimates for Green functions 2} $ are also true if we change $ \frac{1}{4} $ to any constants $ C_0 $ such that $ 0<C_0<\frac{1}{2} $ by using almost the same arguments. Next, let us prove $ \eqref{Pointwise estimates for Green functions 3} $. We can assume that $ \delta(x_0)<\frac{1}{4}|x_0-y_0| $ and $ \delta(y_0)<\frac{1}{4}|x_0-y_0| $. Otherwise, $ \eqref{Pointwise estimates for Green functions 3} $ follows from $ \eqref{Pointwise estimates for Green functions 1} $-$ \eqref{Pointwise estimates for Green functions 2} $ directly. By using almost the same methods, we have, for any $ 1\leq\gamma\leq m $,
\begin{align}
|G_{\va,\lambda}^{\gamma}(x_0,y_0)|&\leq C_{\theta_0}\left(\frac{[\delta(x_0)]}{r}\right)^{\sigma_1}\left(\dashint_{\om(z_0,\frac{7r}{16})}|G_{\va,\lambda}^{\gamma}(\cdot,y_0)|^2\right)^{\frac{1}{2}}.\label{ww10000}
\end{align}
For all $ y\in B(z_0,\frac{7r}{16})\cap\om $, it is esay to find by triangular inequality that $ |y-y_0|\geq |x_0-y_0|-|x_0-y|\geq\frac{9}{16}r $. Meanwhile, it is not hard to obtain that 
\begin{align}
|G_{\va,\lambda}^{\gamma}(z,y_0)|\leq\frac{C_{k,\theta_0}[\delta(y_0)]^{\sigma_2}}{(1+|\lambda||z-y_0|^2)^k|z-y_0|^{\sigma_2}}\text{ for any }k\in\mathbb{N}_+\text{ and }z\in \om\left(z_0,\frac{7r}{16}\right).\nonumber
\end{align}
This, together with $ \eqref{ww10000} $, gives the proof of $ \eqref{Pointwise estimates for Green functions 3} $. Indeed, for any $ k\in\mathbb{N}_+ $ and $ \sigma_1,\sigma_2\in(0,1) $,
\begin{align}
|G_{\va,\lambda}^{\gamma}(x_0,y_0)|&\leq C_{k,\theta_0}\left(\frac{[\delta(x_0)]}{r}\right)^{\sigma_1}\left(\dashint_{B(z_0,\frac{7r}{16})\cap\om}\frac{[\delta(y_0)]^{2\sigma_2}}{(1+|\lambda||y-y_0|^2)^{2k}|y-y_0|^{2\sigma_2}}dy\right)^{\frac{1}{2}}\nonumber\\
&\leq\frac{C_{k,\theta_0}[\delta(x_0)]^{\sigma_1}[\delta(y_0)]^{\sigma_2}}{(1+|\lambda||x_0-y_0|^2)^{k}|x_0-y_0|^{\sigma_1+\sigma_2}}.\nonumber
\end{align}
At last, if $ \delta(x_0)\geq\frac{1}{4}|x_0-y_0| $ and $ \delta(y_0)\geq\frac{1}{4}|x_0-y_0| $, we choose $ F\in C_0^{\infty}(\om(x_0,\frac{r}{4});\mathbb{C}^m) $. Obviously, here, $ \om(x_0,\frac{r}{4})=B(x_0,\frac{r}{4}) $. Let $ w_{\va,\lambda}\in H_0^1(\om;\mathbb{C}^m) $ satisfy $ (\mathcal{L}_{\va}-\overline{\lambda} I)(w_{\va,\lambda})=F $ in $ \om $ and $ w_{\va,\lambda}=0 $ on $ \pa\om $. In view of the representation theorem, we have that $ w_{\va,\lambda}(y)=\int_{\om}\overline{G_{\va,\lambda}(\cdot,y)}F(\cdot) $. Since $ F\equiv 0 $ in $ \om\backslash\om(x_0,\frac{r}{4}) $, then $ (\mathcal{L}_{\va}-\overline{\lambda} I)(w_{\va,\lambda})=0 $ in $ \om\backslash\om(x_0,\frac{r}{4}) $. For all $ p>2 $ and $ 1<q<2 $, by using $ \eqref{Linfty} $ and Sobolev embedding theorem $ H_0^1(\om)\subset L^p(\om) $ with $ p>1 $, it is not hard to get that
\begin{align}
|w_{\va,\lambda}(y_0)|&\leq  C_{\theta_0}\left(\dashint_{\om(y_0,\frac{r}{4})}|w_{\va,\lambda}|^2\right)^{\frac{1}{2}}\leq C_{\theta_0}\left(\dashint_{\om(y_0,\frac{r}{4})}|w_{\va,\lambda}|^p\right)^{\frac{1}{p}}=C_{\theta_0}R_0^{\frac{2}{p}}r^{-\frac{2}{p}}\left(\int_{\om}|\nabla w_{\va,\lambda}|^2\right)^{\frac{1}{2}}\nonumber\\
&\leq C_{\theta_0}R_0^{\frac{2}{p}+2-\frac{2}{q}}r^{-\frac{2}{p}}\left(\int_{\om}|F|^q\right)^{\frac{1}{q}}\leq C_{\theta_0}R_0^{\frac{2}{p}+2-\frac{2}{q}}r^{-\frac{2}{p}+\frac{2}{q}-1}\left(\int_{\om(x_0,\frac{r}{4})}|F|^2\right)^{\frac{1}{2}},\nonumber
\end{align}
where for the third inequality, we have used $ \eqref{uq} $. This implies that
\begin{align}
\left|\int_{\om(x_0,\frac{r}{4})}G_{\va,\lambda}(\cdot,y_0)\overline{F(\cdot)}\right|\leq C_{\theta_0}r\left(\frac{R_0}{r}\right)^{\frac{2}{p}-\frac{2}{q}+2}\left(\int_{\om(x_0,\frac{r}{4})}|F|^2\right)^{\frac{1}{2}}.\label{ww211}
\end{align}
Owing to duality arguments, $ \eqref{ww211} $ implies that for all $ p>2 $ and $ 1<q<2 $
\begin{align}
\left(\dashint_{\om(x_0,\frac{r}{4})}|G_{\va,\lambda}(\cdot,y_0)|^2\right)^{\frac{1}{2}}\leq C_{\theta_0}\left(\frac{R_0}{r}\right)^{\frac{2}{p}-\frac{2}{q}+2}.\label{ww2222}
\end{align}
For any $ \sigma\in(0,1) $, we can choose special $ p,q $ such that $ -\frac{2}{p}+\frac{2}{q}-2=-\sigma $. Then $ \eqref{ww2222} $, together with $ \eqref{Linfty} $, gives the proof of $ \eqref{preliminary} $. Of course, there is still a certain distance between this and $ \eqref{Pointwise estimates for Green functions 4} $, so we need to make more precise estimates.

For $ x_0,y_0\in\om $, letting $ r_1=\frac{1}{2}|x_0-y_0| $, it can be seen that if $ \delta(x_0)<\frac{1}{2}|x_0-y_0| $, then by $ \eqref{Boundedness estimates} $, we have $ |G_{\va,\lambda}(x_0,y_0)|\leq C_{\theta_0} $. If $ \delta(x_0)\geq\frac{1}{2}|x_0-y_0| $, we need to consider the sequence of subsets of $ \om $ denoted as $ \om_j=\om(x_0,2^jr_1) $ with $ j=0,1,...,N $ such that $ 2^Nr_1\geq \delta(x_0) $ and $ 2^{N-1}r_1<\delta(x_0) $. Obviously, we can bound $ N $ by the inequality
\begin{align}
N\leq C\left\{1+\ln\left(\frac{ R_0}{|x_0-y_0|}\right)\right\}.\label{lnN}
\end{align}
According to the fact that $ G_{\va,\lambda}(\cdot,y_0)\in \operatorname{BMO}(\om) $, we have, if $ 1\leq j\leq N-2 $, then
\be
\begin{aligned}
&|G_{\va,\lambda}(\cdot,y_0)_{x_0,2^{j}r_1}-G_{\va,\lambda}(\cdot,y_0)_{x_0,2^{j+1}r_1}|\\
&\quad\quad=\left|\dashint_{\om(x_0,2^jr_1)}G_{\va,\lambda}(\cdot,y_0)-\dashint_{\om(x_0,2^{j+1}r_1)}G_{\va,\lambda}(\cdot,y_0)\right|\\
&\quad\quad\leq C\dashint_{\om(x_0,2^{j+1}r_1)}|G_{\va,\lambda}(x,y_0)-G_{\va,\lambda}(\cdot,y_0)_{x_0,2^{j+1}r_1}|dx\\
&\quad\quad\leq C\left\|G_{\va,\lambda}(\cdot,y_0)\right\|_{\BMO(\om)}\leq C_{\theta_0}.
\end{aligned}\label{Average difference estimates}
\ee
With the choice of $ N $, we can get that
\be
\begin{aligned}
\dashint_{\om_{N-1}}|G_{\va,\lambda}(\cdot,y_0)|&\leq C\dashint_{\om_{N}}|G_{\va,\lambda}(\cdot,y_0)|\leq C\left\|G_{\va,\lambda}(\cdot,y_0)\right\|_{\BMO(\om)}\leq C_{\theta_0}. 
\end{aligned}\label{Large scale integral}
\ee
In the view of $ \eqref{Linfty} $, if $ p>2 $ and $ q=\frac{2p}{p+2} $, then for any $ k\in\mathbb{N}_+ $,
\begin{align}
|G_{\va,\lambda}(x_0,y_0)-G_{\va,\lambda}(\cdot,y_0)_{x_0,r_1}|&\leq\frac{C_{k,\theta_0}}{(1+|\lambda|r_1^2)^k}\dashint_{B(x_0,r_1)}|G_{\va,\lambda}(x,y_0)-G_{\va,\lambda}(\cdot,y_0)_{x_0,r_1}|dx
\nonumber\\
&\quad+C_{k,\theta_0}(1+|\lambda|r_1^2)^{n}|\lambda|r_1^2|G_{\va,\lambda}(\cdot,y_0)_{x_0,r_1}|\label{The second important estimates}\\
&\leq \frac{C_{k,\theta_0}}{(1+|\lambda|r_1^2)^k}+C_{k,\theta_0}(1+|\lambda|r_1^2)^{n}|\lambda|r_1^2|G_{\va,\lambda}(\cdot,y_0)_{x_0,r_1}|,\nonumber
\end{align}
where the second inequality in $ \eqref{The second important estimates} $ is derived from $ \eqref{Average difference estimates} $ and the fact that
\begin{align}
\dashint_{B(x_0,r_1)}|G_{\va,\lambda}(x,y_0)-G_{\va,\lambda}(\cdot,y_0)_{x_0,r_1}|dx\leq \left\|G_{\va,\lambda}(\cdot,y_0)\right\|_{\BMO(\om)}\leq C_{\theta_0}.\nonumber
\end{align}
In view of $ \eqref{Average difference estimates} $, $\eqref{Large scale integral} $ and $ \eqref{The second important estimates} $, it is not hard to see that
\be
\begin{aligned}
|G_{\va,\lambda}(x_0,y_0)|&\leq  C\sum_{j=-1}^{N-2}\left|\dashint_{\om_j}G_{\va,\lambda}(\cdot,y_0)-\dashint_{\om_{j+1}}G_{\va,\lambda}(\cdot,y_0)\right|+\dashint_{\om_{N-1}}|G_{\va,\lambda}(\cdot,y_0)|\\
&\leq C_{\theta_0}\left(1+\ln\left(\frac{ R_0}{|x_0-y_0|}\right)\right)+C|G_{\va,\lambda}(x_0,y_0)-G_{\va,\lambda}(\cdot,y_0)_{x_0,r_1}|\\
&\leq C_{\theta_0}\left(1+\ln\left(\frac{ R_0}{|x_0-y_0|}\right)\right)+C_{\theta_0}(1+|\lambda|r_1^2)^{n}|\lambda|r_1^2|G_{\va,\lambda}(\cdot,y_0)_{x_0,r_1}|,
\end{aligned}\label{lnzuiy}
\ee
where we denote $ \dashint_{\om_{-1}}G_{\va,\lambda}(\cdot,y_0)=G_{\va,\lambda}(x_0,y_0) $ and $ C_{\theta_0} $ depends only on $ \mu,\omega(t),\theta_0,m $ and $ \om $. Setting $ \sigma\in (0,1) $ and using $ \eqref{preliminary} $, then for any $ k\in\mathbb{N}_+ $,
\begin{align}
|G_{\va,\lambda}(\cdot,y_0)_{x_0,r_1}|&= \left|\dashint_{\om_0}G_{\va,\lambda}(\cdot,y_0)\right|\leq\dashint_{\om_0}|G_{\va,\lambda}(\cdot,y_0)|\nonumber\\
&\leq \dashint_{B(x_0,r_1)}\frac{C_{k,\theta_0}R_0^{\sigma}}{(1+|\lambda||x-y_0|^2)^k|x-y_0|^{\sigma}}dx\leq\frac{C_{k,\theta_0}R_0^{\sigma}}{(1+|\lambda||x_0-y_0|^2)^k|x_0-y_0|^{\sigma}}.\nonumber
\end{align}
Choosing $ k\in\mathbb{N}_+ $ such that $ k>n $, we have
\begin{align}
C_{\theta_0}(1+|\lambda|r_1^2)^{n}|\lambda|r_1^2|G_{\va,\lambda}(\cdot,y_0)_{x_0,r_1}|\leq C_{\theta_0} |\lambda|^{\frac{\sigma}{2}}R_0^{\sigma}.\nonumber
\end{align}
This, together with $ \eqref{lnzuiy} $, completes the proof.
\end{proof}

\subsection{More estimates of Green functions}

\begin{thm}\label{lipgg}
For $ \va\geq 0 $ and $ d\geq 2 $, $ \lambda\in\Sigma_{\theta_0}\cup\{0\} $ with $ \theta_0\in(0,\frac{\pi}{2}) $, let $ \om $ be a bounded $ C^{1,\eta} $ $ (0<\eta<1) $ domain in $ \mathbb{R}^d $. Suppose that $ A $ satisfies $ \eqref{sy} $, $ \eqref{el} $, $ \eqref{pe} $ and $ \eqref{Hol} $. Then for any $ k\in\mathbb{N}_+ $, the Green functions of $ (\mathcal{L}_{\va}-\lambda I) $ satisfy the uniform pointwise estimates
\begin{align}
|\nabla_1 G_{\va,\lambda}(x,y)|+|\nabla_2 G_{\va,\lambda}(x,y)|&\leq \frac{C_{k,\theta_0} }{(1+|\lambda||x-y|^2)^{k}|x-y|^{d-1}},\label{Green Lipschitz 11}\\
|\nabla_1G_{\va,\lambda}(x,y)|&\leq \frac{C_{k,\theta_0} \delta(y)}{(1+|\lambda||x-y|^2)^{k}|x-y|^{d}},\label{Green Lipschitz 12}\\
|\nabla_2G_{\va,\lambda}(x,y)|&\leq \frac{C_{k,\theta_0}\delta(x)}{(1+|\lambda||x-y|^2)^{k}|x-y|^{d}},\label{Green Lipschitz 13}\\
|\nabla_1\nabla_2G_{\va,\lambda}(x,y)|&\leq\frac{C_{k,\theta_0}}{(1+|\lambda||x-y|^2)^{k}|x-y|^{d}},\label{Green Lipschitz 14}
\end{align}
for $ x,y\in\om $ and $ x\neq y $, where $ C_{k,\theta_0} $ depends only on $ \mu,d,m,\tau,\nu,k,\theta_0,\eta $ and $ \om $.
\end{thm}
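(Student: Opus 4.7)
The plan is to extract all four pointwise gradient bounds from the localized Lipschitz estimate $\eqref{Lipesu}$, combined with the already-established pointwise estimates on $G_{\va,\lambda}$ in Theorems \ref{Green3} and \ref{Green2} and the duality identity $G_{\va,\lambda}(x,y)=\overline{G_{\va,\overline{\lambda}}(y,x)}^T$. Duality reduces $\eqref{Green Lipschitz 13}$ to $\eqref{Green Lipschitz 12}$, so it suffices to prove $\eqref{Green Lipschitz 11}$, $\eqref{Green Lipschitz 12}$ and $\eqref{Green Lipschitz 14}$; they will be established in this order, with each step using the previous one.

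Fix $x\neq y$ in $\om$ and set $r=|x-y|$. For $\eqref{Green Lipschitz 11}$, observe that $G_{\va,\lambda}(\cdot,y)$ is a weak solution of $(\mathcal{L}_\va-\lambda I)u=0$ in $\om\cap B(x,r/2)$ and, if $\delta(x)<r/2$, vanishes on $\pa\om\cap B(x,r/2)$. Applying $\eqref{Lipesu}$ at $x_0=x$ (or at $x_0=\overline{x}\in\pa\om$ with $|x-\overline{x}|=\delta(x)$ in the boundary case) with $R=r/8$ gives
\begin{equation*}
|\nabla_1 G_{\va,\lambda}(x,y)|\leq\frac{C_{k,\theta_0}}{(1+|\lambda|r^2)^k\,r}\left(\dashint_{\om(x_0,r/4)}|G_{\va,\lambda}(\cdot,y)|^2\right)^{\!1/2}.
\end{equation*}
For $d\geq 3$, insert $\eqref{Greene}$ on $\om(x_0,r/4)$ (where $|\cdot-y|\geq r/4$) to obtain $\eqref{Green Lipschitz 11}$ after choosing $k$ large enough to absorb the extra polynomial factor in $|\lambda|r^2$. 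For $d=2$, where $\eqref{Pointwise estimates for Green functions 4}$ only provides a logarithmic bound, apply $\eqref{Lipesu}$ instead to the shifted function $G_{\va,\lambda}(\cdot,y)-G_{\va,\lambda}(\cdot,y)_{x_0,r/2}$, which solves $(\mathcal{L}_\va-\lambda I)(\cdot)=-\lambda\,G_{\va,\lambda}(\cdot,y)_{x_0,r/2}$ locally; its $L^2$-oscillation is controlled by the uniform BMO bound $\eqref{BMO estimates}$, while the $|\lambda|$-weighted inhomogeneity is absorbed by taking $k$ sufficiently large. The $\nabla_2$-statement in $\eqref{Green Lipschitz 11}$ follows from the duality identity.

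For $\eqref{Green Lipschitz 12}$, the case $\delta(y)\geq r/4$ is immediate from $\eqref{Green Lipschitz 11}$. When $\delta(y)<r/4$, first upgrade the pointwise bound on $G_{\va,\lambda}$ using the $\nabla_2$-estimate from $\eqref{Green Lipschitz 11}$: since $G_{\va,\lambda}(x,\overline{y})=0$ for the boundary projection $\overline{y}$ of $y$, the fundamental theorem of calculus along the segment $[\overline{y},y]$ yields
\begin{equation*}
|G_{\va,\lambda}(x,y)|\leq\delta(y)\,\sup_{z\in[\overline{y},y]}|\nabla_2 G_{\va,\lambda}(x,z)|\leq\frac{C_{k,\theta_0}\,\delta(y)}{(1+|\lambda|r^2)^k\,r^{d-1}}.
\end{equation*}
Feeding this sharper pointwise estimate into the same Lipschitz-estimate argument (applied to $G_{\va,\lambda}(\cdot,y)$ near $x$, again with the BMO-shift trick when $d=2$) produces $\eqref{Green Lipschitz 12}$. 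Finally, for $\eqref{Green Lipschitz 14}$, fix $x$ and view $\nabla_2 G_{\va,\lambda}(x,\cdot)$ as a function of $y$ solving $(\mathcal{L}_\va-\overline{\lambda}I)(\cdot)=0$ in $\om\setminus\{x\}$; apply $\eqref{Lipesu}$ in the $y$-variable on $B(y,r/8)$ and insert the bound from $\eqref{Green Lipschitz 11}$ on $B(y,r/4)$ to conclude.

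The main obstacle is the two-dimensional case: because $|G_{\va,\lambda}|$ is only logarithmically controlled, a naive application of the Lipschitz estimate would overshoot the target by a factor of $\log(R_0/r)$. The BMO-shift trick described above is designed precisely to circumvent this, and it is the reason the uniform BMO bound $\eqref{BMO estimates}$ and the $|\lambda|$-independent atom estimate $\eqref{Linftyatom}$ had to be established first. A secondary subtlety is the need to promote the boundary decay from $\delta^{\sigma_1}$ (any $\sigma_1<1$) in $\eqref{Greene}$ or $\eqref{Pointwise estimates for Green functions 1}$ to a linear $\delta$-factor in $\eqref{Green Lipschitz 12}$--$\eqref{Green Lipschitz 13}$; this is handled by using $\eqref{Green Lipschitz 11}$ as an intermediate input via the mean-value theorem, exploiting the $C^{1,\eta}$-regularity of $\pa\om$ and the Hölder regularity of $A$ that make the full boundary Lipschitz estimate $\eqref{Lipesu}$ available up to $\pa\om$.
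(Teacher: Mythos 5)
Your argument for $d\geq 3$ matches the paper's: there one first upgrades $\eqref{Greene}$ to a linear $\delta$-weight, then runs the interior/boundary Lipschitz estimate $\eqref{Lipesu}$ against the pointwise bound on $G_{\va,\lambda}$, and finishes $\eqref{Green Lipschitz 14}$ by applying $\eqref{Lipesu}$ to $\nabla_2G_{\va,\lambda}(x,\cdot)$. The problem is the two-dimensional case, where your ``BMO-shift trick'' for $\eqref{Green Lipschitz 11}$ does not close. Writing $c=G_{\va,\lambda}(\cdot,y)_{x_0,r/2}$ and applying $\eqref{Lipesu}$ to $w=G_{\va,\lambda}(\cdot,y)-c$, which solves $(\mathcal{L}_{\va}-\lambda I)(w)=\lambda c$, the inhomogeneous term contributes $C_{k}(1+|\lambda|r^2)^{n}\,r\,|\lambda|\,|c|$ to the gradient bound. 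The only a priori controls on $|c|$ available at that stage in $d=2$ are $\eqref{preliminary}$ and $\eqref{Pointwise estimates for Green functions 4}$, which carry a factor $(R_0/r)^{\sigma}$, respectively $1+|\lambda|^{\sigma/2}R_0^{\sigma}+\ln(R_0/r)$. Hence the ratio of the obstruction term to the target $\tfrac{C_k}{(1+|\lambda|r^2)^kr}$ is at best of order $|\lambda|r^2(R_0/r)^{\sigma}(1+|\lambda|r^2)^{-N}$, and no choice of $k$ (or $N$) kills the unbounded factor $(R_0/r)^{\sigma}$ when $|\lambda|r^2\asymp 1$ and $r/R_0\to 0$; taking $k$ large only buys decay in $1+|\lambda|r^2$, not in $R_0/r$. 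This is exactly the difficulty the paper's remark flags: $\mathcal{L}_{\va}-\lambda I$ does not annihilate constants, so subtracting the average genuinely produces a source term of the size of $G$ itself, which in $d=2$ is only logarithmically (or $\sigma$-power) bounded.

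There is also a structural circularity in your ordering: your proof of $\eqref{Green Lipschitz 14}$ inserts $\eqref{Green Lipschitz 11}$, while in $d=2$ the only way (known to the paper, and apparently necessary) to reach $\eqref{Green Lipschitz 11}$ is to have $\eqref{Green Lipschitz 12}$ and $\eqref{Green Lipschitz 14}$ already in hand. The paper's route for $d=2$ is: first prove the averaged bounds $\left(\dashint_{\om(x_0,r/4)}|G_{\va,\lambda}(\cdot,y_0)|^2\right)^{1/2}\leq C_{\theta_0}\delta(y_0)/r$ and $\left(\dashint_{\om(x_0,r/4)}|\nabla_2G_{\va,\lambda}(\cdot,y_0)|^2\right)^{1/2}\leq C_{\theta_0}(1+|\lambda|r^2)^n/r$ by duality against the scale-invariant $W^{1,p}$ resolvent estimates $\eqref{impoestim}$ and $\eqref{impoestim2}$ (this is where the $\lambda$-independence of those estimates, not the BMO bound $\eqref{BMO estimates}$, is the decisive input); these give $\eqref{Green Lipschitz 12}$, $\eqref{Green Lipschitz 13}$ and $\eqref{Green Lipschitz 14}$ via $\eqref{Lipesu}$. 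Then $\eqref{Green Lipschitz 11}$ is obtained \emph{last} by a chaining argument: increments $|\nabla_1(G_{\va,\lambda}(x_0,y_j)-G_{\va,\lambda}(x_0,y_{j+1}))|$ along a geometric sequence of points $y_j$ marching from $y_0$ toward $\pa\om$ are controlled by $\eqref{Green Lipschitz 14}$, and the chain terminates once $\delta(y_N)<\tfrac14|x_0-y_N|$, where $\eqref{Green Lipschitz 12}$ applies; the geometric decay of the summands gives the sharp $|x-y|^{-1}$ bound with the full $(1+|\lambda||x-y|^2)^{-k}$ weight. You would need to replace your $d=2$ treatment of $\eqref{Green Lipschitz 11}$ by an argument of this type (and reorder the proof accordingly) for the proposal to be complete.
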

\begin{rem}
The Lipschitz estimates of Green functions for operators $ \mathcal{L}_{\va}-\lambda I $ given above are sharp. These are of great significance in this paper. Further proofs rely heavily on these estimates. In fact, we point out that after proving these estimates, the proofs of theorems on convergence rates for Green functions are standard and the reason why the standard proofs can be carried out later is that we calculated the impact of $ \lambda $ in Lipschitz estimates to the best. 
\end{rem}
\begin{rem}
The proof of Theorem \ref{lipgg} is quite different from which of Lipschitz estimates of Green functions for the operator $ \mathcal{L}_{\va} $, especially when $ d=2 $. Recall that in \cite{Shen2}, the case $ d=2 $ was discussed by using the fact that $ \mathcal{L}_{\va}(c)=0 $ for any constant vector $ c\in\mathbb{C}^m $. However operator $ \mathcal{L}_{\va}-\lambda I $ do not have such property. In this point of view, we need to modify the proofs in \cite{Shen2} and use some technical calculations here.
\end{rem}
\begin{proof}[Proof of Theorem \ref{lipgg}]
Firstly, we consider the case $ d\geq 3 $. Under the condition that $ A $ is Hölder continuous, we can improve the estimates $ \eqref{Greene} $ to the case that $ \sigma_1=\sigma_2=1 $. Setting $ R=|x-y| $ and applying $ \eqref{Greene} $, it is apparent that if $ \frac{R}{6}\leq \delta(x) $, then
\begin{align}
|G_{\va,\lambda}(x,y)|\leq \frac{C_{k,\theta_0}}{(1+|\lambda||x-y|^2)^{k}}\frac{1}{|x-y|^{d-2}},\nonumber
\end{align}
for any $ k\in\mathbb{N}_+ $. If $ \frac{R}{6}>\delta(x) $, we can choose $ \overline{x}\in\pa\om $ such that $ |x-\overline{x}|=\delta(x) $. Then it follows from $ \eqref{Lipesu} $ that for any $ k\in\mathbb{N}_+ $,
\begin{align}
|G_{\va,\lambda}(x,y)|&=|G_{\va,\lambda}(x,y)-G_{\va,\lambda}(\overline{x},y)|\leq|x-\overline{x}|\|\nabla_1G_{\va,\lambda}(\cdot,y)\|_{L^{\infty}(\om(x,\frac{R}{6}))}\nonumber\\
&\leq\frac{C_{k,\theta_0}\delta(x)}{(1+|\lambda|R^2)^kR}\left(\dashint_{\om(x,\frac{R}{3})}|G_{\va,\lambda}(\cdot,y)|^2\right)^{\frac{1}{2}}.\nonumber
\end{align}
This, together with $ \eqref{ww56} $, implies that for any $ k\in\mathbb{N}_+ $,
\begin{align}
|G_{\va,\lambda}(x,y)|\leq\frac{C_{k,\theta_0}\delta(x)}{(1+|\lambda||x-y|^2)^{k}|x-y|^{d-1}}.\nonumber
\end{align}
By applying the same arguments on $ G_{\va,\overline{\lambda}}(x,y) $ and using $ \eqref{duality} $, we have
\begin{align}
|G_{\va,\lambda}(x,y)|\leq\frac{C_{k,\theta_0}}{(1+|\lambda||x-y|^2)^{k}|x-y|^{d-2}}\min\left\{1,\frac{\delta(x)}{|x-y|},\frac{\delta(y)}{|x-y|}\right\}.\label{ww57}
\end{align}
Noticing $ (\mathcal{L}_{\va}-\lambda I)(G_{\va,\lambda}^{\gamma}(\cdot,y))=0 $ for any $ 1\leq\gamma\leq m $ in $ \om\backslash\{y\} $, it follows from  $ \eqref{Lipesu} $ and $ \eqref{ww56} $ that
\be
\begin{aligned}
|\nabla_1G_{\va,\lambda}(x,y)|&\leq \frac{C_{\theta_0}}{R}\left(\dashint_{\om(x,\frac{1}{2}R)}|G_{\va,\lambda}(\cdot,y)|^{2}\right)^{\frac{1}{2}}\leq\frac{C_{k,\theta_0}}{(1+|\lambda|R^2)^{k}R^{d-1}}\\
&\leq \frac{C_{k,\theta_0}}{(1+|\lambda||x-y|^2)^{k}|x-y|^{d-1}}.
\end{aligned}\label{weds}
\ee
for any $ k\in\mathbb{N}_+ $. Again, in view of $ \eqref{duality} $, we can see that $ \eqref{Green Lipschitz 11} $ is true. Similarly, if we use the estimate
\begin{align}
|G_{\va,\lambda}(x,y)|\leq\frac{C_{k,\theta_0}\delta(y)}{(1+|\lambda||x-y|^2)^{k}|x-y|^{d-1}}\nonumber
\end{align}
for the second inequality of $ \eqref{weds} $, it is easy to show that
\begin{align}
|\nabla_1G_{\va,\lambda}(x,y)|\leq \frac{C_{k,\theta_0}\delta(y)}{(1+|\lambda||x-y|^2)^{k}|x-y|^{d}},\label{ww70}
\end{align}
which gives the proof of $ \eqref{Green Lipschitz 12} $. Similarly, $ \eqref{Green Lipschitz 13} $ can be derived by using $ \eqref{duality} $ and $ \eqref{ww70} $. Now, we only need to show $ \eqref{Green Lipschitz 14} $. Obviously, for any $ 1\leq\gamma\leq m $,
\begin{align}
(\mathcal{L}_{\va}-\lambda I)(\nabla_2G_{\va,\lambda}^{\gamma}(\cdot,y))=0\quad\text{in}\quad\om\backslash\{y\}. \nonumber
\end{align} 
For any $ k_1,k_2\in\mathbb{N}_+ $, $ k=k_1+k_2 $, using $ \eqref{Green Lipschitz 11} $, we have
\begin{align}
|\nabla_1\nabla_2G_{\va,\lambda}(x,y)|&\leq\frac{C_{k,\theta_0}}{(1+|\lambda|R^2)^{k_2}R}\left(\dashint_{\om(x,\frac{1}{2}R)}|\nabla_2G_{\va,\lambda}(\cdot,y)|^{2}\right)^{\frac{1}{2}}\nonumber\\
&\leq\frac{C_{k,\theta_0}}{(1+|\lambda|R^2)^{k_2}R}\frac{C_{k,\theta_0}}{(1+|\lambda|R^2)^{k_1}R^{d-1}}\leq\frac{C_{k,\theta_0}}{(1+|\lambda||x-y|^2)^{k}|x-y|^{d}},\nonumber
\end{align}     
which completes the proof for the case $ d\geq 3 $. Next, we consider the case $ d=2 $. Set $ r=|x_0-y_0| $ with $ x_0\neq y_0 $ and $ x_0,y_0\in\om $. For $ F\in C_0^{\infty}(\om(x_0,\frac{r}{4});\mathbb{C}^m) $, let $ v_{\va,\lambda} $ satisfy $ (\mathcal{L}_{\va}-\overline{\lambda} I)(v_{\va,\lambda})=F $ in $ \om $ and $ v_{\va,\lambda}=0 $ on $ \pa\om $. Then $ v_{\va,\lambda}(y)=\int_{\om}\overline{G_{\va,\lambda}(\cdot,y)}F(\cdot) $ due to representation formula $ \eqref{Representation formula} $. Since $ F\equiv 0 $ in $ \om\backslash\om(x_0,\frac{r}{4}) $, we have, $ (\mathcal{L}_{\va}-\overline{\lambda} I)(v_{\va,\lambda})=0 $ in $ \om\backslash\om(x_0,\frac{r}{4}) $. Owing to $ \eqref{Linfty} $, $ \eqref{impoestim} $, Poincaré's inequality and Hölder's inequality, we can get, for any $ p>2 $,
\begin{align}
|v_{\va,\lambda}(y_0)|&\leq C_{\theta_0}\left(\dashint_{\om(y_0,\frac{r}{4})}|v_{\va,\lambda}|^2\right)^{\frac{1}{2}}\leq C_{\theta_0}\left(\dashint_{\om(y_0,\frac{r}{4})}|v_{\va,\lambda}|^p\right)^{\frac{1}{p}}=C_{\theta_0}\delta(y_0)r^{-\frac{2}{p}}\left(\int_{\om}|\nabla v_{\va,\lambda}|^p\right)^{\frac{1}{p}}\nonumber\\
&\leq C_{\theta_0}\delta(y_0)r^{-\frac{2}{p}}\|F\|_{L^{\frac{2p}{p+2}}(\om)}\leq C_{\theta_0}\delta(y_0)r^{-\frac{2}{p}}\|F\|_{L^{\frac{2p}{p+2}}(\om(x_0,\frac{r}{4}))} \leq C_{\theta_0}\delta(y_0)\|F\|_{L^{2}(\om(x_0,\frac{r}{4}))}.\nonumber
\end{align}
By duality arguments, this implies that
\begin{align}
\left(\dashint_{\om(x_0,\frac{r}{4})}|G_{\va,\lambda}(\cdot,y_0)|^2\right)^{\frac{1}{2}}\leq \frac{C_{\theta_0}\delta(y_0)}{r}\leq \frac{C_{\theta_0}\delta(y_0)}{|x_0-y_0|}.\label{ww22122}
\end{align}
According to the Lipschitz estimates $ \eqref{Lipesu} $ and $ \eqref{ww22122} $, then for any $ k\in\mathbb{N}_+ $,
\begin{align}
|\nabla_1G_{\va,\lambda}(x_0,y_0)|&\leq \frac{C_{k,\theta_0}}{(1+|\lambda|r^2)^kr}\left(\dashint_{\om(x_0,\frac{r}{4})}|G_{\va,\lambda}(\cdot,y_0)|^2\right)^{\frac{1}{2}}\leq \frac{C_{k,\theta_0}\delta(y_0)}{(1+|\lambda||x_0-y_0|^2)^k|x_0-y_0|^2}.\nonumber
\end{align}
Then $ \eqref{Green Lipschitz 12} $ is proved and $ \eqref{Green Lipschitz 13} $ follows directly by considering same estimates of $ G_{\va,\overline{\lambda}}(x,y) $. Moreover, for $ v_{\va,\lambda} $, we have $ (\mathcal{L}_{\va}-\overline{\lambda} I)(v_{\va,\lambda}-(v_{\va,\lambda})_{y_0,\frac{r}{4}})=\overline{\lambda} (v_{\va,\lambda})_{y_0,\frac{r}{4}} $ in $ \om $. By using $ \eqref{L2uva} $, $ \eqref{Lipesu} $, $ \eqref{impoestim} $, Hölder's inequality and Poincaré's inequality, it can be obtained that
\begin{align}
|\nabla v_{\va,\lambda}(y_0)|&\leq \frac{C_{\theta_0}}{r}\left(\dashint_{\om(y_0,\frac{r}{4})}|v_{\va,\lambda}-(v_{\va,\lambda})_{y_0,\frac{r}{4}}|^2\right)^{\frac{1}{2}}+C_{\theta_0}(1+|\lambda|r^2)^n|\lambda|r|(v_{\va,\lambda})_{y_0,\frac{r}{4}}|\nonumber\\
&\leq C_{\theta_0}\left(\dashint_{\om(y_0,\frac{r}{4})}|\nabla v_{\va,\lambda}|^2\right)^{\frac{1}{2}}+C_{\theta_0}(1+|\lambda|r^2)^n|\lambda|r\left(\dashint_{\om(y_0,\frac{r}{4})}|v_{\va,\lambda}|^2\right)^{\frac{1}{2}}\nonumber\\
&=C_{\theta_0}r^{-\frac{2}{p}}\left(\int_{\om}|\nabla v_{\va,\lambda}|^p\right)^{\frac{1}{p}}+C_{\theta_0}(1+|\lambda|r^2)^n\|F\|_{L^{2}(\om(x_0,\frac{r}{4}))}\nonumber\\
&\leq C_{\theta_0}r^{-\frac{2}{p}}\|F\|_{L^{\frac{2p}{p+2}}(\om(x_0,\frac{r}{4}))}+C_{\theta_0}(1+|\lambda|r^2)^n\|F\|_{L^{2}(\om(x_0,\frac{r}{4}))}\nonumber\\
&\leq C_{\theta_0}(1+|\lambda|r^2)^n\|F\|_{L^{2}(\om(x_0,\frac{r}{4}))},\nonumber
\end{align}
when $ 2<p<\infty $. In view of the definition of $ v_{\va,\lambda} $, we can get that
\begin{align}
\left|\int_{\om(x_0,\frac{r}{4})}\nabla_2G_{\va,\lambda}(\cdot,y_0)\overline{F(\cdot)}\right|\leq C_{\theta_0}(1+|\lambda|r^2)^n\|F\|_{L^{2}(\om(x_0,\frac{r}{4}))}.\label{ww2112222}
\end{align}
Again, by duality arguments, it can be inferred that
\begin{align}
\left(\dashint_{\om(x_0,\frac{r}{4})}|\nabla_2G_{\va,\lambda}(\cdot,y_0)|^2\right)^{\frac{1}{2}}\leq \frac{C_{\theta_0}(1+|\lambda|r^2)^n}{r}.\label{ww22122667}
\end{align}
Similarly, with the help of Lipschitz estimates $ \eqref{Lipesu} $ and $ \eqref{ww22122667} $, for any $ k\in\mathbb{N}_+ $,
\begin{align}
|\nabla_1\nabla_2G_{\va,\lambda}(x_0,y_0)|&\leq \frac{C_{k,\theta_0}}{(1+|\lambda|r^2)^{k+n}r}\left(\dashint_{\om(x_0,\frac{r}{4})}|\nabla_2G_{\va,\lambda}(z,y_0)|^2dz\right)^{\frac{1}{2}}\nonumber\\
&\leq \frac{C_{k,\theta_0}(1+|\lambda|r^2)^n}{(1+|\lambda|r^2)^{k+n}r^2}\leq \frac{C_{k,\theta_0}}{(1+|\lambda||x_0-y_0|^2)^k|x_0-y_0|^2}.\nonumber
\end{align}
This completes the proof of $ \eqref{Green Lipschitz 14} $. Then we only need to show $ \eqref{Green Lipschitz 11} $. To begin with, we note that if $ |y-z|<\frac{1}{2}|x-y| $, then
\be
\begin{aligned}
|\nabla_1(G_{\va,\lambda}(x,y)-G_{\va,\lambda}(x,z))|&\leq\|\nabla_1\nabla_2G_{\va,\lambda}(x,\cdot)\|_{L^{\infty}(\om(y,\frac{|x-y|}{2}))}|y-z|\\
&\leq \frac{C_{k,\theta_0}|y-z|}{(1+|\lambda||x-y|^2)^k|x-y|^2}\leq \frac{C_{k,\theta_0}}{(1+|\lambda||x-y|^2)^k|x-y|},
\end{aligned}\label{lianggexiangjian}
\ee
for any $ k\in\mathbb{N}_+ $. If $ \frac{1}{4}|x_0-y_0|\leq\delta(y_0) $, we get that
\begin{align}
|x-x_0|\leq \frac{1}{2}|x-y_0|\text{ for any }x\in B\left(x_0,\frac{1}{4}|x_0-y_0|\right).\label{ww39}
\end{align}
At first, there exists a point $ \overline{y}\in\partial\Omega $ (see Figure 1), such that
\begin{align}
 (x_0-\overline{y})//(x_0-y_0)\quad\text{and}\quad (x_0-\overline{y})\cdot(x_0-y_0)>0.\nonumber
\end{align}
Then according to the fact that $ \frac{1}{4}|x_0-y_0|\leq\delta(y_0)\leq |y_0-\overline{y}| $, there always exists a positive integer $ N\in\mathbb{N}_+ $ and a sequence of points $ \left\{y_j\right\}_{j=1}^{N}\subset\om $ such that
\begin{align}
&y_{j}=x_0+\frac{5}{4}(y_{j-1}-x_0),\ j=1,\ldots, N,\nonumber\\
&\frac{1}{4}|x_0-y_{N-1}|\leq |y_{N-1}-\overline{y}|\quad\text{and}\quad \frac{1}{4}|x_0-y_{N}|>|y_{N}-\overline{y}|.\nonumber
\end{align}
Moreover, in view of the definition of $ \left\{y_j\right\}_{j=1}^{N} $, one can obtain that
\begin{equation}
|x_0-y_{j}|=\frac{5}{4}|x_0-y_{j-1}|=\ldots=\left(\frac{5}{4}\right)^j|x_0-y_0|,\ \ j=1,\ldots, N.
\end{equation}

\centerline{
\begin{tikzpicture}
\filldraw[color=black, fill=gray!50] (-5,0) circle (0.5);
\filldraw[color=black, fill=black](-5,0) circle (0.05);
\filldraw[color=black, fill=black](-3,0) circle (0.05);
\filldraw[color=black, fill=black](-2.5,0) circle (0.05);
\filldraw[color=black, fill=black](-1.875,0) circle (0.05);
\filldraw[color=black, fill=black](-1.093,0) circle (0.05);
\filldraw[color=black, fill=black](5.125,0) circle (0.05);
\filldraw[color=black, fill=black](8,0) circle (0.05);
\node[color=black] at (-5,0.2) {$ x_0 $};
\node[color=black] at (-3,0.2) {$ y_0 $};
\node[color=black] at (-2.5,0.2) {$ y_1 $};
\node[color=black] at (-1.875,0.2) {$ y_2 $};
\node[color=black] at (-1.093,0.2) {$ y_3 $};
\node[color=black] at (5.125,0.2) {$ y_N $};
\node[color=black] at (8.2,0.2) {$ \overline{y} $};
\draw [color=black, thick][-](-5,0) -- (8,0);
\draw [color=black, thick][dotted](3,0.2) -- (4.06,0.2);
\draw [color=black, thick][dashed](8,1.5) -- (8,-0.5);
\node[color=black] at (8.4,0.9) {$ \partial\Omega $};
\end{tikzpicture}
}
\centerline{Figure 1.}
\noindent
By applying $ \eqref{lianggexiangjian} $, we can obtain that for any $ k\in\mathbb{N}_+ $,
\begin{align}
|\nabla_1 (G_{\va,\lambda}(x_0,y_0)-G_{\va,\lambda}(x_0,y_1))|\leq \frac{C_{k,\theta_0}}{(1+|\lambda||x_0-y_0|^2)^k|x_0-y_0|}.\label{ww431}
\end{align}
Similarly, since $ |y_{j+1}-y_{j}|=\frac{1}{4}|x_0-y_j|<\frac{1}{2}|x_0-y_j| $, we have
\begin{equation}
|\nabla_1 (G_{\va,\lambda}(x_0,y_j)-G_{\va,\lambda}(x_0,y_{j+1}))|\leq \frac{C_{k,\theta_0}}{(1+|\lambda||x_0-y_j|^2)^k|x_0-y_j|},\label{ww43-i}
\end{equation}
for any $ j=1,\ldots,N-1 $. Finally, owing to the simple observation that 
\be
\frac{1}{4}|x_0-y_N|>|y_N-\overline{y}|\geq\delta(y_N), \nonumber
\ee
it can be inferred that
\begin{equation}
\begin{aligned}
|\nabla_1 G_{\va,\lambda}(x_0,y_N)|&\leq\frac{C_{k,\theta_0}\delta(y_N)}{(1+|\lambda||x_0-y_N|^2)^k|x_0-y_N|^2}\leq\frac{C_{k,\theta_0}}{(1+|\lambda||x_0-y_N|^2)^k|x_0-y_N|},\\
\end{aligned}\label{ww43-N}
\end{equation}
where we have used $ \eqref{Green Lipschitz 12} $. From \eqref{ww431}-\eqref{ww43-N}, we can obtain that
\begin{align}
|\nabla_1 G_{\va,\lambda}(x_0,y_0)|&\leq \sum_{j=0}^N\frac{C_{k,\theta_0}}{(1+|\lambda||x_0-y_j|^2)^k|x_0-y_j|}\nonumber\\
&\leq \sum_{j=0}^N\left(\frac{4}{5}\right)^j\frac{C_{k,\theta_0}}{(1+|\lambda|\left(\frac{5}{4}\right)^{2j}|x_0-y_0|^2)^k|x_0-y_0|}\nonumber\\
&\leq \frac{C_{k,\theta_0}}{(1+|\lambda||x_0-y_0|^2)^k|x_0-y_0|},\nonumber
\end{align}
for any $ k\in\mathbb{N}_+ $ and $ \frac{1}{4}|x_0-y_0|\leq\delta(y_0) $. This, together with $ \eqref{Green Lipschitz 12} $ and $ \eqref{duality} $, gives $ \eqref{Green Lipschitz 11} $.
\end{proof}
\begin{rem}
Under the same assumptions of Theorem \ref{Green2}, if we further assume that $ A $ satisfies $ \eqref{Hol} $, we can improve $ \eqref{Pointwise estimates for Green functions 4} $ to the sharp estimate. That is, for any $ k\in\mathbb{N}_+ $,
\begin{align}
|G_{\va,\lambda}(x,y)|\leq\frac{C_{k,\theta_0}}{(1+|\lambda||x-y|^2)^k}\left\{1+\ln\left(\frac{R_0}{|x-y|}\right)\right\},\label{sharppointwise}
\end{align}
where $ C_{k,\theta_0} $ depends only on $ \mu,m,\tau,\nu,k,\theta_0,\eta $ and $ \om $. The proof is similar to Theorem \ref{lipgg}. Like what we have done in $ \eqref{lianggexiangjian} $, one can find that if $ |y-z|<\frac{1}{2}|x-y| $, then
\be
\begin{aligned}
|G_{\va,\lambda}(x,y)-G_{\va,\lambda}(x,z)|&\leq\|\nabla_2G_{\va,\lambda}(x,\cdot)\|_{L^{\infty}(\om(y,\frac{|x-y|}{2}))}|y-z|\\
&\leq \frac{C_{k,\theta_0}|y-z|}{(1+|\lambda||x-y|^2)^k|x-y|}\leq \frac{C_{k,\theta_0}}{(1+|\lambda||x-y|^2)^k},
\end{aligned}\label{lianggexiangjian2}
\ee
where we have used $ \eqref{Green Lipschitz 11} $. Then using almost the same arguments, we can obtain that
\begin{align}
|G_{\va,\lambda}(x_0,y_j)-G_{\va,\lambda}(x_0,y_{j+1})|\leq \frac{C_{k,\theta_0}}{(1+|\lambda||x_0-y_0|^2)^k},\label{ww431111}
\end{align}
for any $ j=0,1,2,\ldots,N-1 $. Also, it can be obtained that
\begin{equation}
\begin{aligned}
|G_{\va,\lambda}(x_0,y_N)|&\leq\frac{C_{k,\theta_0}}{(1+|\lambda||x_0-y_N|^2)^k},\\
\end{aligned}\label{ww43-N111}
\end{equation}
for any $ k\in\mathbb{N}_+ $, where we have used $ \eqref{Boundedness estimates} $. Noticing that the boundedness of $ N $, $ \eqref{lnN} $, is also true in this case, it follows from direct computations that
\begin{align}
|G_{\va,\lambda}(x_0,y_0)|&\leq \sum_{j=0}^N\frac{C_{k,\theta_0}}{(1+|\lambda||x_0-y_j|^2)^k}\leq \frac{C_{k,\theta_0}}{(1+|\lambda||x_0-y_0|^2)^k}\left\{1+\ln\left(\frac{R_0}{|x_0-y_0|}\right)\right\},\nonumber
\end{align}
which completes the proof.
\end{rem}
\begin{lem}
For $ \va\geq 0 $ and $ d\geq 2 $, let $ \lambda\in\Sigma_{\theta_0}\cup\{0\} $ with $ \theta_0\in(0,\frac{\pi}{2}) $ and $ \om $ be a bounded $ C^1 $ domain in $ \R^d $. Suppose that $ A $ satisfies $ \eqref{sy} $, $ \eqref{el} $, $ \eqref{pe} $ and $ \eqref{VMO} $. Then for any $ x\in\om $, $ \sigma\in(0,1) $ and $ k\in\mathbb{N}_+ $, 
\begin{align}
\int_{\om}|G_{\va,\lambda}(x,y)|dy&\leq 
C_{\theta_0}(R_0^{-2}+|\lambda|)^{-1}\cdot\left\{\begin{matrix}
1&\text{if}&d\geq 3,\\
(1+|\lambda|R_0^{2})^{\sigma}&\text{if}&d=2,
\end{matrix}\right.\label{*}\\
\int_{\om}|\nabla_2G_{\va,\lambda}(x,y)|dy&\leq C_{\theta_0}(R_0^{-2}+|\lambda|)^{-\frac{1}{2}},\label{**}
\end{align}
where $ C_{\theta_0} $ depends only on $ \mu,d,m,\sigma,\omega(t),\theta_0 $ and $ \om $. If we further assume that $ A $ satisfies $ \eqref{Hol} $ and $ \om $ is a $ C^{1,\eta} $ domain with $ 0<\eta<1 $, then
\begin{align}
\int_{\om}|\nabla_1G_{\va,\lambda}(x,y)|dy&\leq C_{\theta_0}(R_0^{-2}+|\lambda|)^{-\frac{1}{2}},\label{***}
\end{align}
where $ C $ depends only on $ \mu,d,m,\tau,\nu,\theta_0,\eta $ and $ \om $.
\end{lem}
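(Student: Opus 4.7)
The plan is to reduce all three bounds to integrating the pointwise size estimates for $G_{\va,\lambda}$ and its derivatives that were proved in Theorems \ref{Green3}, \ref{Green2}, and \ref{lipgg}. The unifying idea is to introduce the intrinsic length scale $R=(R_0^{-2}+|\lambda|)^{-1/2}$ and split the $y$-integral over $\om$ into a ``near-diagonal'' region $\{|x-y|\leq R\}$ and a ``far-diagonal'' region $\{R<|x-y|<R_0\}$. In the near region, the decay factor $(1+|\lambda||x-y|^2)^{-k}$ is harmless (bounded by $1$), so we only need to integrate the pure power of $|x-y|$. In the far region, we use $(1+|\lambda|r^2)^k\geq(|\lambda|r^2)^k$ and choose $k$ sufficiently large to make the tail integrable; the combination $|\lambda|\approx R^{-2}$ in that regime balances so as to reproduce the same order as the near-region contribution.

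Concretely, for \eqref{*} I would apply \eqref{Greene} in dimension $d\geq 3$, giving $|G_{\va,\lambda}(x,y)|\leq C_{k,\theta_0}(1+|\lambda|r^2)^{-k}r^{2-d}$; after passing to polar coordinates the near-region contributes $\int_0^R r\,dr\asymp R^2$, while the far-region with $k\geq 2$ contributes $|\lambda|^{-k}\int_R^{R_0}r^{1-2k}\,dr\lesssim R^2$. For $d=2$ I would instead use \eqref{preliminary}, so the integrand is $(1+|\lambda|r^2)^{-k}(R_0/r)^{\sigma}r$; the change of variables $s=|\lambda|^{1/2}r$ (together with the trivial bound $\int_0^{R_0}r^{1-\sigma}dr\lesssim R_0^{2-\sigma}$ when $|\lambda|R_0^2\leq 1$) produces the stated factor $(R_0^{-2}+|\lambda|)^{-1}(1+|\lambda|R_0^2)^{\sigma}$, noting that the exponent $\sigma/2$ obtained directly is dominated by $\sigma$ whenever $1+|\lambda|R_0^2\geq 1$.

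For \eqref{**} and \eqref{***} I would use the uniform Lipschitz bound \eqref{Green Lipschitz 11}, namely $|\nabla_1 G_{\va,\lambda}|+|\nabla_2 G_{\va,\lambda}|\leq C_{k,\theta_0}(1+|\lambda|r^2)^{-k}r^{1-d}$, which kills one power of $r$ and makes the angular integration reduce, in every dimension $d\geq 2$, to $\int_0^{R_0}(1+|\lambda|r^2)^{-k}\,dr$. Splitting this single-variable integral at $r=R$ as above (with any $k\geq 1$) gives $\lesssim R=(R_0^{-2}+|\lambda|)^{-1/2}$, which is exactly what is required.

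The main obstacle is a delicate point of hypothesis: \eqref{**} is stated under only the $\VMO$ assumption on $A$, whereas the pointwise gradient estimate \eqref{Green Lipschitz 11} I plan to invoke was derived in Theorem \ref{lipgg} under the H\"older hypothesis \eqref{Hol}. There are two natural ways around this. The first is simply to observe that the Hölder assumption is implicit in the argument of Theorem \ref{lipgg} and should be added to \eqref{**} (as is done explicitly for \eqref{***}). The second, if one insists on $\VMO$ only, is to argue by duality: $\int_{\om}|\nabla_2 G_{\va,\lambda}(x,y)|\,dy$ equals the supremum over $h\in L^\infty(\om;\mathbb{C}^{m\times d})$ with $\|h\|_\infty\leq 1$ of $\bigl|\int_{\om}\overline{h(y)}\,\nabla_2 G_{\va,\lambda}(x,y)\,dy\bigr|$, and via the representation formula and the duality \eqref{duality} this quantity equals (up to sign) $|u(x)|$ where $u$ solves $(\mathcal{L}_\va-\bar\lambda I)u=\operatorname{div}(h)$ with $u=0$ on $\pa\om$; one would then conclude by combining the $L^p\to L^p$ bound of Theorem \ref{Lp estimates of resolventsf} (applied for large $p$) with a localized $L^\infty$ estimate from \eqref{Linfty} to get $\|u\|_{L^\infty(\om)}\lesssim R\|h\|_{L^\infty(\om)}$. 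Either route fills in the gap, and the corresponding argument for \eqref{***} then uses $|\nabla_1 G_{\va,\lambda}|\lesssim(1+|\lambda|r^2)^{-k}r^{1-d}$ from \eqref{Green Lipschitz 11} under the stated H\"older hypothesis.
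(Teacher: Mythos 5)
Your treatment of \eqref{*} and \eqref{***} is correct and coincides with the paper's: integrate the pointwise bounds \eqref{Greene}, \eqref{preliminary} and \eqref{Green Lipschitz 11} over dyadic shells (or in polar coordinates), using the factor $(1+|\lambda|r^2)^{-k}$ with $k\geq 2$ to control the far region. The problem is \eqref{**}, where you correctly notice that \eqref{Green Lipschitz 11} is unavailable under the $\VMO$ hypothesis, but neither of your proposed repairs closes the gap. Route 1 (adding \eqref{Hol} to the hypotheses) does not prove the lemma as stated, and is in fact unnecessary: the estimate genuinely holds under $\VMO$ alone. Route 2, as you describe it, aims at the global bound $\|u\|_{L^{\infty}(\om)}\lesssim (R_0^{-2}+|\lambda|)^{-1/2}\|h\|_{L^{\infty}(\om)}$ by combining the global $L^p$ resolvent estimate with the local estimate \eqref{Linfty}. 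This does not work: with $R=(R_0^{-2}+|\lambda|)^{-1/2}$ the averaged term in \eqref{Linfty} satisfies only $\bigl(\dashint_{\om(x_0,2R)}|u|^p\bigr)^{1/p}\leq C(R_0/R)^{d/p}\,R_0^{-d/p}\|u\|_{L^p(\om)}$, and the factor $(R_0/R)^{d/p}=(1+|\lambda|R_0^2)^{d/(2p)}$ is unbounded in $\lambda$ for any fixed $p$ (and the constants blow up as $p\to\infty$), so at best you obtain \eqref{**} with an extra $(1+|\lambda|R_0^2)^{\epsilon}$ loss. The defect is that a global duality discards exactly the mechanism that makes the integral converge with the right constant, namely the interior decay $(1+|\lambda||x-y|^2)^{-k}$ of $G_{\va,\lambda}(x,\cdot)$ away from the pole, which is available because $G_{\va,\lambda}(x,\cdot)$ solves the homogeneous adjoint equation there.

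What the paper actually does is keep everything localized on dyadic annuli $\om(x,2^{-j}R_0)\setminus\om(x,2^{-j-1}R_0)$, covered by finitely many balls of comparable radius. On each such ball it bounds $\|\nabla_2G_{\va,\lambda}(x,\cdot)\|_{L^2}$: for $d\geq 3$ by Caccioppoli's inequality \eqref{Cau2} applied in the \emph{second} variable together with the pointwise bound \eqref{Greene} on $G$ itself (no pointwise gradient bound, hence no H\"older continuity, is needed); for $d=2$ by a localized duality, taking $f$ supported in the small ball, using the interior decay in \eqref{Linfty} at the point $x$ (where the solution is $\lambda$-harmonic) together with \eqref{impoestim2}. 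Either way each annulus contributes $C_{k,\theta_0}2^{-j}R_0(1+|\lambda|4^{-j}R_0^2)^{-k}$, and the dyadic sum gives $(R_0^{-2}+|\lambda|)^{-1/2}$. If you want to salvage your duality idea, it must be performed annulus by annulus in exactly this way; as written, your argument for \eqref{**} has a genuine gap.
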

\begin{proof}
By using $ \eqref{Greene} $ with $ k=2 $, it is easy to infer that 
\be
\begin{aligned}
\int_{\om}|G_{\va,\lambda}(x,y)|dy&\leq \int_{B(x,R_0)}\frac{C_{\theta_0}}{(1+|\lambda||x-y|^2)^{2}}\frac{1}{|x-y|^{d-2}}dy=\int_{0}^{R_0}\frac{C_{\theta_0}\rho}{(1+|\lambda|\rho^2)^{2}}d\rho\\
&=-\left.\frac{C_{\theta_0}}{|\lambda|(1+|\lambda|\rho^2)}\right|_0^{R_0}=C_{\theta_0}\left\{\frac{1}{|\lambda|}-\frac{1}{|\lambda|(1+|\lambda|R_0^2)}\right\}\leq \frac{C_{\theta_0}}{|\lambda|}.
\end{aligned}\label{ww25} 
\ee
On the other hand, it can be easily seen that,
\begin{align}
\int_{\om}|G_{\va,\lambda}(x,y)|dy&\leq \int_{B(x,R_0)}\frac{C_{\theta_0}}{(1+|\lambda||x-y|^2)^{2}}\frac{1}{|x-y|^{d-2}}dy\leq\int_{0}^{R_0}C_{\theta_0}\rho d\rho\leq C_{\theta_0}R_0^2.\nonumber
\end{align}
This, together with $ \eqref{ww25} $, implies $ \eqref{*} $ with $ d\geq 3 $. For $ d=2 $, owing to $ \eqref{preliminary} $, we can get that, for any $ \sigma\in(0,1) $,
\begin{align}
\int_{\om}|G_{\va,\lambda}(x,y)|dy&\leq\sum_{j=0}^{\infty}\int_{\om(x,2^{-j}R_0)\backslash\om(x,2^{-j-1}R_0)}\frac{C_{\theta_0}R_0^{\sigma}}{(1+|\lambda||x-y|^2)^2|x-y|^{\sigma}}dy\nonumber\\
&\leq\sum_{j=0}^{\infty}\frac{C_{\theta_0}(2^{-j}R_0)^2R_0^{\sigma}}{(1+|\lambda|(2^{-j}R_0)^2)^{1-\sigma}(2^{-j}R_0)^{\sigma}}\leq C_{\theta_0}(1+|\lambda|R_0^{2})^{\sigma}(R_0^{-2}+|\lambda|)^{-1}.\nonumber
\end{align}
For $ R>0 $, consider the annulus $ \om(x,2R)\backslash \om(x,R) $. One can use small $ d $ dimensional balls, whose radius are $ \frac{5}{8}R $ and centers are on $ \pa B(x,\frac{3}{2}R)\cap\om $, to cover the annulus $ \om(x,2R)\backslash \om(x,R) $. We denote these small balls as $ \left\{B(x_i,\frac{5}{8}R)\right\}_{i=1}^{N} $. Obviously, $ N\leq C\frac{\pi((2R)^d-R^d)}{\pi(\frac{5}{8} R)^d}\leq C $, where $ C $ is a constant, independent of $ R $. Then by using Hölder's inequality, we have
\begin{align}
\int_{\om(x,2R)\backslash \om(x,R)}|\nabla_2 G_{\va,\lambda}(x,y)|dy&\leq C\left(\int_{\om(x,2R)\backslash \om(x,R)}|\nabla_2 G_{\va,\lambda}(x,y)|^2dy\right)^{\frac{1}{2}}R^{\frac{d}{2}}\nonumber\\
&\leq C\sum_{i=1}^{N}\left(\int_{\om(x_i,\frac{5}{8} R)}|\nabla_2 G_{\va,\lambda}(x,y)|^2dy\right)^{\frac{1}{2}}R^{\frac{d}{2}}.\nonumber
\end{align}
In view of Caccioppoli's inequality $ \eqref{Cau2} $ and $ \eqref{Greene} $, if $ d\geq 3 $, then for any $ k\in\mathbb{N}_+ $,
\begin{align}
\left(\int_{\om(x_i,\frac{5}{8} R)}|\nabla_2 G_{\va,\lambda}(x,y)|^2dy\right)^{\frac{1}{2}}&\leq\frac{C_{k,\theta_0}}{(1+|\lambda|R^2)^kR}\left(\int_{\om(x_i,\frac{2}{3}R)}|G_{\va,\lambda}(x,y)|^2dy\right)^{\frac{1}{2}}\nonumber\\
&\leq\frac{C_{k,\theta_0}R^{\frac{d}{2}}}{(1+|\lambda|R^2)^kR^{d-1}}\leq \frac{C_{k,\theta_0}}{(1+|\lambda|R^2)^kR^{\frac{d}{2}-1}}.\nonumber
\end{align}
This implies that
\begin{align}
\int_{\om(x,2R)\backslash \om(x,R)}|\nabla_2 G_{\va,\lambda}(x,y)|dy&\leq \frac{C_{k,\theta_0}R}{(1+|\lambda|R^2)^k}.\nonumber
\end{align}
Thus
\begin{align}
\int_{\om}|\nabla_2 G_{\va,\lambda}(x,y)|dy&\leq\sum_{i=0}^{\infty}\int_{\om(x,2^{-j}R_0)\backslash \om(x,2^{-j-1}R_0)}|\nabla_2 G_{\va,\lambda}(x,y)|dy\leq\sum_{i=0}^{\infty}\frac{C_{\theta_0}(2^{-j}R_0)}{(1+|\lambda|(2^{-j}R_0)^2)^{\frac{3}{2}}}\nonumber\\
&\approx\int_{B(0,2R_0)}\frac{1}{(1+|\lambda||x|^2)^{\frac{3}{2}}}\frac{1}{|x|^{d-1}}dy\leq C(R_0^{-2}+|\lambda|)^{-\frac{1}{2}},\nonumber
\end{align}
which gives the proof of $ \eqref{**} $ with $ d\geq 3 $. If $ d=2 $, to show $ \eqref{**} $, we have
\be
\begin{aligned}
\int_{\om(x,2R)\backslash \om(x,R)}|\nabla_2 G_{\va,\lambda}(x,y)|dy&\leq C\left(\int_{\om(x,2R)\backslash \om(x,R)}|\nabla_2 G_{\va,\lambda}(x,y)|^2dy\right)^{\frac{1}{2}}R\\
&\leq C\sum_{i=1}^{N}\left(\int_{\om(x_i,\frac{5}{8} R)}|\nabla_2 G_{\va,\lambda}(x,y)|^2dy\right)^{\frac{1}{2}}R.
\end{aligned}\label{suibian}
\ee
Choose $ f\in C_{0}^{\infty}(\om(x_i,\frac{5}{8}R);\mathbb{C}^{m\times d}) $, $ w_{\va,\lambda} $ such that $ (\mathcal{L}_{\va}-\lambda I)(w_{\va,\lambda})=\operatorname{div}(f) $ in $ \om $ and $ w_{\va,\lambda}=0 $ in $ \pa\om $. Then $ w_{\va,\lambda}(z)=\int_{\om}\nabla_2G_{\va,\lambda}(z,\cdot)\overline{f(\cdot)} $ due to $ \eqref{Representation formula} $. In view of $ \eqref{Linfty} $, $ \eqref{impoestim2} $, Hölder's inequality, the fact that $ (\mathcal{L}_{\va}-\lambda I)(w_{\va,\lambda})=0 $ in $ \om\backslash\om(x_i,\frac{5}{8}R) $ and $ \om(x,\frac{1}{20}R)\subset\om\backslash\om(x_i,\frac{5}{8}R) $, it is not hard to deduce that for any $ 2<p<\infty $ and $ k\in\mathbb{N}_+ $,
\begin{align}
|w_{\va,\lambda}(x)|&\leq \frac{C_{k,\theta_0}}{(1+|\lambda|R^2)^k}\left(\dashint_{\om(x,\frac{1}{20}R)}|w_{\va,\lambda}|^2\right)^{\frac{1}{2}}\leq \frac{C_{k,\theta_0}R^{-1+\frac{2}{p}}}{(1+|\lambda|R^2)^k}\|w_{\va,\lambda}\|_{L^{\frac{2p}{p-2}}(\om)}\nonumber\\
&\leq \frac{C_{k,\theta_0}R^{-1+\frac{2}{p}}}{(1+|\lambda|R^2)^k}\|f\|_{L^{\frac{p}{p-1}}(\om)}\leq \frac{C_{k,\theta_0}}{(1+|\lambda|R^2)^k}\|f\|_{L^{2}(\om(x_i,\frac{5}{8}R))}.\nonumber
\end{align}
Then by using the representation of $ w_{\va,\lambda} $ given above, we have
\begin{align}
\left|\int_{\om(x_i,\frac{5}{8}R)}\nabla_2G_{\va,\lambda}(x,y)\overline{f(y)}dy\right|\leq \frac{C_{k,\theta_0}}{(1+|\lambda|R^2)^k}\|f\|_{L^{2}(\om(x_i,\frac{5}{8}R))}.\nonumber
\end{align}
By duality arguments, we can infer that
\begin{align}
\left(\int_{\om(x_i,\frac{5}{8}R)}|\nabla_2G_{\va,\lambda}(x,y)|^2dy\right)^{\frac{1}{2}}\leq \frac{C_{k,\theta_0}}{(1+|\lambda|R^2)^k}.\nonumber
\end{align}
With the help of $ \eqref{suibian} $, it can be seen that for any $ k\in\mathbb{N}_+ $,
\begin{align}
\int_{\om(x,2R)\backslash \om(x,R)}|\nabla_2 G_{\va,\lambda}(x,y)|dy&\leq \frac{C_{k,\theta_0} R}{(1+|\lambda|R^2)^k}.\label{prepa}
\end{align}
Then according to the calculations with $ d\geq 3 $,
\begin{align}
\int_{\om}|\nabla_2 G_{\va,\lambda}(x,y)|dy&\leq\sum_{i=0}^{\infty}\int_{\om(x,2^{-j}R_0)\backslash \om(x,2^{-j-1}R_0)}|\nabla_2 G_{\va,\lambda}(x,y)|dy\nonumber\\
&\leq\sum_{i=0}^{\infty}\frac{C_{\theta_0} 2^{-j}R_0}{(1+|\lambda|(2^{-j}R_0)^2)^{\frac{3}{2}}}\leq C_{\theta_0}(R_0^{-2}+|\lambda|)^{-\frac{1}{2}}.\nonumber
\end{align}
For the proof of $ \eqref{***} $, we simply note that by $ \eqref{Green Lipschitz 11} $ with $ k=2 $,
\begin{align}
\int_{\om}|\nabla_1G_{\va,\lambda}(x,y)|dy&\leq\int_{\om}\frac{C_{\theta_0}}{(1+|\lambda||x-y|^2)^2|x-y|^{d-1}}dy\leq C_{\theta_0}(R_0^{-2}+|\lambda|)^{-\frac{1}{2}}.\nonumber
\end{align}
Then we can complete the proof.
\end{proof}

\section{Convergence estimates of resolvents}\label{Estimates of convergence of resolvents}

\subsection{Convergence of Green functions}

Now we turn to estimate $ |G_{\va,\lambda}(x,y)-G_{0,\lambda}(x,y)| $, where $ G_{\va,\lambda} $ and $ G_{0,\lambda} $ are Green functions for operators $ \mathcal{L}_{\va}-\lambda I $ and $ \mathcal{L}_0-\lambda I $ respectively. Such estimates are essential in the proof of Theorem \ref{Lpconres} and \ref{LpW1pconreso}.

\begin{thm}[Convergence of Green functions I]\label{Convergence of Green's functions}
For $ \va\geq 0 $, $ d\geq 2 $, $ \lambda\in\Sigma_{\theta_0}\cup\{0\} $ with $ \theta_0\in(0,\frac{\pi}{2}) $, let $ \om $ be a bounded $ C^{1,1} $ domain in $ \mathbb{R}^d $. Suppose that $ A $ satisfies $ \eqref{sy} $, $ \eqref{el} $, $ \eqref{pe} $ and $ \eqref{Hol} $. Then for any $ k\in\mathbb{N}_+ $ and $ x,y\in\om $ with $ x\neq y $,
\begin{align}
|G_{\va,\lambda}(x,y)-G_{0,\lambda}(x,y)|\leq \frac{C_{k,\theta_0}\va }{(1+|\lambda||x-y|^2)^{k}|x-y|^{d-1}},\label{Convergence of Green's functions formula}
\end{align}
where $ C_{k,\theta_0} $ depends only on $ \mu,d,m,\nu,\tau,k,\theta_0 $ and $ \om $.
\end{thm}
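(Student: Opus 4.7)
The plan is to deduce the pointwise convergence from the operator-level bound of Theorem \ref{Approximation 1}, combined with the sharp Lipschitz Green function estimates of Theorem \ref{lipgg} and interior regularity, via a duality and concentration argument. Fix $x_0,y_0\in\om$ with $r=|x_0-y_0|>0$, and assume without loss of generality that $\va$ is small compared to $r$, since otherwise \eqref{Convergence of Green's functions formula} follows from the triangle inequality applied to the individual bounds \eqref{Greene} (or \eqref{Pointwise estimates for Green functions 4} when $d=2$). For test functions $F\in L^2(\om;\mathbb{C}^m)$ supported in $\om\cap B(y_0,r/16)$ and $H\in L^2(\om;\mathbb{C}^m)$ supported in $\om\cap B(x_0,r/16)$, let $u_{\va,\lambda},u_{0,\lambda}$ denote the Dirichlet solutions of $(\mathcal{L}_\va-\lambda I)u=F$ and $(\mathcal{L}_0-\lambda I)u=F$. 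The representation formulas \eqref{repre} and \eqref{Representation formula} give
\begin{equation*}
\iint_{\om\times\om}(G_{\va,\lambda}-G_{0,\lambda})(x,y)\,\overline{F(y)\,H(x)}\,dy\,dx \;=\; \int_\om (u_{\va,\lambda}-u_{0,\lambda})\,\overline{H}\,dx,
\end{equation*}
so it is enough to bound the right-hand side by $C_{k,\theta_0}\va(1+|\lambda|r^2)^{-k}r^{1-d}\|F\|_{L^1}\|H\|_{L^1}$ and then concentrate $F,H$ toward Dirac masses at $y_0,x_0$ via Lebesgue differentiation, using the continuity of $G_{\va,\lambda}$ and $G_{0,\lambda}$ off the diagonal.

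Next I would apply Theorem \ref{Approximation 1} to split $u_{\va,\lambda}-u_{0,\lambda}=(\Phi_{\va,j}^\beta-P_j^\beta)\,\partial_j u_{0,\lambda}^\beta+R_{\va,\lambda}$, where $\|R_{\va,\lambda}\|_{H_0^1(\om)}\leq C\va\,c^2(\lambda,\theta)\|F\|_{L^2}$. For the explicit corrector piece I combine $\|\Phi_{\va,j}^\beta-P_j^\beta\|_{L^\infty(\om)}\leq C\va$ from \eqref{Estimate for Dirichlet correctors} with a pointwise bound on $\nabla u_{0,\lambda}$ on the small ball around $x_0$ where $H$ lives. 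Since $\mathrm{supp}(F)$ is separated from $B(x_0,r/16)$ by at least $r/2$, the representation $\nabla u_{0,\lambda}(x)=\int\nabla_1 G_{0,\lambda}(x,y)\,\overline{F(y)}\,dy$ and the sharp Lipschitz Green estimate \eqref{Green Lipschitz 11} yield
\begin{equation*}
\|\nabla u_{0,\lambda}\|_{L^\infty(B(x_0,r/16))}\leq \frac{C_{k,\theta_0}}{(1+|\lambda|r^2)^k\,r^{d-1}}\,\|F\|_{L^1},
\end{equation*}
which gives exactly the right factor for this contribution when paired with $\|H\|_{L^1}$.

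The main obstacle is the remainder $\int_\om R_{\va,\lambda}\overline{H}$: the $H_0^1$ smallness alone supplies the factor $\va$ but lacks the polynomial decay $(1+|\lambda|r^2)^{-k}$. To recover this decay I would symmetrize the argument by introducing adjoint Dirichlet solutions $v_{\va,\bar\lambda},v_{0,\bar\lambda}$ of $(\mathcal{L}_\va-\bar\lambda I)v=H$ and $(\mathcal{L}_0-\bar\lambda I)v=H$, apply Theorem \ref{Approximation 1} a second time to obtain $v_{\va,\bar\lambda}-v_{0,\bar\lambda}=(\Phi_{\va,j}^\beta-P_j^\beta)\partial_j v_{0,\bar\lambda}^\beta+\widetilde R_{\va,\bar\lambda}$, and exploit the bilinear identity $\int F\,\overline{(v_{\va,\bar\lambda}-v_{0,\bar\lambda})}=\int(u_{\va,\lambda}-u_{0,\lambda})\overline{H}$, which follows from the symmetry condition \eqref{sy}. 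After subtracting both explicit corrector contributions, the leading remaining term becomes an integral in which one factor is $\nabla u_{0,\lambda}$, smooth on $B(x_0,r/16)$, and the other is $\nabla v_{0,\bar\lambda}$, smooth on $B(y_0,r/16)$; both solve homogeneous equations on their respective balls, so the localized $W^{2,p}$/Caccioppoli estimates \eqref{W2p local} and \eqref{Cau1}--\eqref{Cau2} provide the decay factor $(1+|\lambda|r^2)^{-k}$ on each side, while the residual $O(\va^2)$ term is absorbed using the assumption $\va\leq r$. In the two-dimensional case the pointwise Green bound \eqref{Greene} must be replaced by \eqref{Pointwise estimates for Green functions 4} or \eqref{preliminary} at intermediate steps, but the sharp Lipschitz estimates of Theorem \ref{lipgg} remain strong enough to close the argument because we are always operating in the regime $|x-y|\sim r$.
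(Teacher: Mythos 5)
Your overall architecture --- duality in both variables, splitting off the Dirichlet--corrector term, and using the sharp Lipschitz bound \eqref{Green Lipschitz 11} to control $\nabla u_{0,\lambda}$ away from $\operatorname{supp}F$ --- is in the spirit of the paper's argument, and the corrector contribution is handled correctly. The genuine gap is the remainder term, and the proposed symmetrization does not close it. First, the global bound $\|R_{\va,\lambda}\|_{H_0^1(\om)}\le C\va c^2(\lambda,\theta)\|F\|_{L^2(\om)}$ from Theorem \ref{Approximation 1} cannot be ``concentrated toward Dirac masses'': Cauchy--Schwarz and Poincar\'e give only $|\int_\om R_{\va,\lambda}\overline{H}|\le C\va R_0\|F\|_{L^2}\|H\|_{L^2}$, which diverges as the supports of $F$ and $H$ shrink with $\|F\|_{L^1},\|H\|_{L^1}$ held fixed; even at the fixed scale $\rho\sim r$ it yields $\va R_0 r^{-d}$ rather than $\va r^{1-d}$, and carries no factor $(1+|\lambda|r^2)^{-k}$. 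Second, the symmetrization is circular: equating the two decompositions of $\int_\om(u_{\va,\lambda}-u_{0,\lambda})\overline{H}=\int_\om F\,\overline{(v_{\va,\bar\lambda}-v_{0,\bar\lambda})}$ gives
\begin{equation*}
\int_\om R_{\va,\lambda}\overline{H}=\int_\om F\,\overline{\widetilde{R}_{\va,\bar\lambda}}+\int_\om F\,\overline{(\Phi_{\va,j}^\beta-P_j^\beta)\partial_j v_{0,\bar\lambda}^\beta}-\int_\om(\Phi_{\va,j}^\beta-P_j^\beta)\partial_j u_{0,\lambda}^\beta\,\overline{H},
\end{equation*}
and the first term on the right is of exactly the same kind as the one you started with, with the roles of $F$ and $H$ exchanged. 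No mechanism is supplied that converts global $H^1$ smallness of the error into a \emph{local} $L^\infty$ bound with polynomial decay in $|\lambda|r^2$; the claimed reduction to ``an integral of $\nabla u_{0,\lambda}$ against $\nabla v_{0,\bar\lambda}$'' would require expanding $B_{\va,\lambda,\om}[w_{\va,\lambda},v_{\va,\bar\lambda}]$ via \eqref{J linear} and estimating $\va\int_\om|\nabla^2u_{0,\lambda}||\nabla v_{\va,\bar\lambda}|$ over all of $\om$, including near $\operatorname{supp}F$ where $\nabla^2u_{0,\lambda}$ is only in $L^p$, and this is not carried out.

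What the paper does instead is entirely local. It first proves Lemma \ref{ww74}: if $(\mathcal{L}_\va-\lambda I)(u_{\va,\lambda})=(\mathcal{L}_0-\lambda I)(u_{0,\lambda})$ on $\om(x_0,4R)$ with matching boundary values on $\Delta(x_0,4R)$, then $\|u_{\va,\lambda}-u_{0,\lambda}\|_{L^\infty(\om_R)}$ is controlled by $(1+|\lambda|R^2)^{-k}\dashint_{\om_{4R}}|u_{\va,\lambda}-u_{0,\lambda}|$ plus $\va$ times local norms of $\nabla u_{0,\lambda}$ and $\nabla^2u_{0,\lambda}$. There the factor $(1+|\lambda|R^2)^{-k}$ comes from the local $L^\infty$ estimate \eqref{Linfty} (via \eqref{ww65}) applied to the $(\mathcal{L}_\va-\lambda I)$-harmonic part of the error on an intermediate domain $\widetilde\om$, and the factor $\va$ comes from representing the $H_0^1(\widetilde\om)$ part through the local Green function of $\widetilde\om$ together with \eqref{*} and \eqref{Green Lipschitz 11}. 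The lemma is applied once in the $x$-variable (after a duality step producing \eqref{ww76}) and once in the $y$-variable for the adjoint operator, since $G_{\va,\lambda}^\gamma(x_0,\cdot)$ and $G_{0,\lambda}^\gamma(x_0,\cdot)$ solve homogeneous adjoint equations on $\om(y_0,R)$. To salvage your outline you need a local substitute of this type for the remainder; Theorem \ref{Approximation 1} alone cannot supply it. (A secondary point: for $d=2$ the case $\va\gtrsim r$ is not disposed of by the triangle inequality, since \eqref{Pointwise estimates for Green functions 4} carries a $\ln(R_0/|x-y|)$ that is not dominated by $\va/|x-y|$.)
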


For continuous function $ u $ in $ \om $, the nontangential maximal function is defined by
\begin{align}
(u)^*(y)=\sup\left\{|u(x)|:x\in\om\text{ and }|x-y|<C_0\delta(x)\right\}\label{nontangential maximal function}
\end{align}
for $ y\in\pa\om $, where $ C_0=C_0(\om)>1 $ is sufficiently large depending on $ \om $.

\begin{thm}[Nontangential-maximal-function estimates]
For $ \va\geq 0 $ and $ d\geq 2 $, $ \lambda\in\Sigma_{\theta_0}\cup\{0\} $ with $ \theta_0\in(0,\frac{\pi}{2}) $, let $ \om $ be a bounded $ C^{1,\eta} $ domain in $ \mathbb{R}^d $ with $ 0<\eta<1 $. Suppose that $ A $ satisfies $ \eqref{sy} $, $ \eqref{el} $, $ \eqref{pe} $, $ \eqref{Hol} $ and $ 1<p\leq\infty $. For $ g\in L^p(\pa\om;\mathbb{C}^m) $, let $ u_{\va,\lambda} $ be the unique solution to the Dirichlet problem $ (\mathcal{L}_{\va}-\lambda I)(u_{\va,\lambda})=0 $ in $ \om $ and $ u_{\va,\lambda}=g $ on $ \pa\om $ with the property $ (u_{\va,\lambda})^*\in L^p(\pa\om) $. Then
\begin{align}
\|(u_{\va,\lambda})^*\|_{L^p(\pa\om)}\leq C_{\theta_0}\|g\|_{L^p(\pa\om)}.\label{Lp for nontan}
\end{align}
where $ C_{\theta_0} $ depends only on $ \mu,d,m,\nu,\tau,\theta_0,\eta $ and $ \om $. In the case that $ p=\infty $, $ \eqref{Lp for nontan} $ implies,
\begin{align}
\|u_{\va,\lambda}\|_{L^{\infty}(\om)}\leq  C_{\theta_0}\|g\|_{L^{\infty}(\pa\om)}.\label{maximal principle}
\end{align}
\end{thm}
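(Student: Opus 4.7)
The plan is to represent $u_{\va,\lambda}$ through a Poisson-kernel formula obtained from the conormal derivative of the Green function in its second variable, pointwise-bound this kernel via the sharp Lipschitz estimates of Theorem~\ref{lipgg}, and then dominate the nontangential maximal function by the Hardy--Littlewood maximal operator on $\pa\om$.

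First, for $x\in\om$ I would apply the weak Green identity for $\mathcal{L}_{\va}-\lambda I$ to $u_{\va,\lambda}$ and $G_{\va,\bar\lambda}(\cdot,x)$ on $\om\setminus B(x,\rho)$, send $\rho\to 0^+$, and, using the duality \eqref{duality}, derive
\begin{align*}
u_{\va,\lambda}^{\gamma}(x)=-\int_{\pa\om}P_{\va,\lambda}^{\gamma\beta}(x,y)\,g^{\beta}(y)\,d\sigma(y),\qquad P_{\va,\lambda}^{\gamma\beta}(x,y)=n_{i}(y)\,a_{ij}^{\alpha\beta}(y/\va)\,\pa_{y_j}G_{\va,\lambda}^{\gamma\alpha}(x,y),
\end{align*}
where the boundary integral makes sense thanks to the interior-to-boundary regularity of $G_{\va,\lambda}(x,\cdot)$ contained in \eqref{Green Lipschitz 14}. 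The sharp bound \eqref{Green Lipschitz 13} then yields, for every $y\in\pa\om$ and every $k\in\mathbb{N}_+$,
\begin{align*}
|P_{\va,\lambda}(x,y)|\leq \frac{C_{k,\theta_0}\,\delta(x)}{(1+|\lambda||x-y|^2)^{k}\,|x-y|^{d}}.
\end{align*}

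Second, I would fix $z\in\pa\om$ and a nontangential point $x\in\om$ with $|x-z|<C_0\delta(x)$, set $r=\delta(x)\approx|x-z|$, and use the $C^{1,\eta}$ regularity of $\pa\om$ to obtain $|x-y|\geq c(r+|z-y|)$ for all $y\in\pa\om$. Decomposing $\pa\om$ into $\Delta(z,r)$ and the dyadic annuli $\Delta(z,2^{j}r)\setminus\Delta(z,2^{j-1}r)$, and using $\int_{\Delta(z,2^{j}r)}|g|\,d\sigma\leq C(2^{j}r)^{d-1}\mathcal{M}(g)(z)$ with $\mathcal{M}$ the Hardy--Littlewood maximal operator on $\pa\om$, the resulting geometric sum produces $|u_{\va,\lambda}(x)|\leq C_{\theta_0}\mathcal{M}(g)(z)$, and hence $(u_{\va,\lambda})^*(z)\leq C_{\theta_0}\mathcal{M}(g)(z)$. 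The $L^p(\pa\om)$-boundedness of $\mathcal{M}$ for $1<p<\infty$ then gives \eqref{Lp for nontan}, while the same dyadic sum applied directly to $|P_{\va,\lambda}(x,\cdot)|$ shows $\int_{\pa\om}|P_{\va,\lambda}(x,y)|\,d\sigma(y)\leq C_{\theta_0}$ uniformly in $x$ and $\lambda$, which delivers the $p=\infty$ statement \eqref{maximal principle}.

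The main obstacle is ensuring that every constant in the kernel bound and in the uniform $L^1(\pa\om)$ estimate for $P_{\va,\lambda}(x,\cdot)$ is independent of $|\lambda|$: the classical Poisson-type factor $\delta(x)/|x-y|^d$ handles the $\lambda=0$ regime, while the additional decay $(1+|\lambda||x-y|^2)^{-k}$---with $k$ chosen large enough depending only on $d$---absorbs the $|\lambda|$-dependent factors that would otherwise appear from the localized theory behind Theorem~\ref{lipgg}. A related subtlety is that in dimension $d=2$ the Green function $G_{\va,\lambda}(\cdot,y)$ is only in $\BMO(\om)$, so the Green-identity step on $\om\setminus B(x,\rho)$ must be executed carefully and its limit as $\rho\to 0^+$ justified via \eqref{Green Lipschitz 11} and the Hölder-type continuity implicit in \eqref{Lipesu}.
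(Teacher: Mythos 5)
Your proposal is correct and follows essentially the same route as the paper: the paper likewise represents $u_{\va,\lambda}$ by the Poisson kernel built from the conormal derivative of $G_{\va,\lambda}(x,\cdot)$, bounds it by $C_{\theta_0}\delta(x)/|x-y|^d$ via \eqref{Green Lipschitz 13}, and then invokes the standard dyadic/Hardy--Littlewood maximal function argument (cited from Theorem 4.6.5 of \cite{Shen2}) that you write out explicitly.
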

\begin{proof}
By using the definition of Green functions, we can represent $ u_{\va,\lambda} $ by the formula
\begin{align}
u_{\va,\lambda}(x)=\int_{\pa\om}P_{\va,\lambda}(x,y)\overline{g(y)}dS(y),\nonumber
\end{align}
where the Poisson kernel $ P_{\va,\lambda}(x,y)=(P_{\va,\lambda}^{\al\beta}(x,y)) $ for $ \mathcal{L}_{\va}-\lambda I $ in $ \om $ is given by 
\begin{align}
P_{\va,\lambda}^{\al\beta}(x,y)=-n_i(y)a_{ji}^{\gamma\beta}(y/\va)\frac{\pa }{\pa y_j} \{G_{\va,\lambda}^{\al\gamma}(x,y)\}\nonumber
\end{align}
for $ x\in\om $, $ y\in\pa\om $ and $ n(x)=(n_1(x),n_2(x),...,n_d(x)) $ denotes the outward unit normal to $ \pa\om $. In view of the estimate $ \eqref{Green Lipschitz 13} $, we have, for any $ k\in\mathbb{N}_+ $,
\begin{align}
|P_{\va,\lambda}(x,y)|\leq \frac{C_{k,\theta_0}\delta(x)}{(1+|\lambda||x-y|^2)^{k}|x-y|^d}\leq \frac{C_{\theta_0}\delta(x)}{|x-y|^d}.\nonumber
\end{align}
Hence, we can obtain that
\begin{align}
|u_{\va,\lambda}(x)|\leq \int_{\pa\om}\frac{C_{\theta_0}\delta(x)}{|x-y|^d}|g(y)|dS(y)\text{ for any }x\in\om.\nonumber
\end{align}
The rest of the proof is standard, which can be obtained in Theorem 4.6.5 of \cite{Shen2}.
\end{proof}

\begin{lem}
For $ \va\geq 0 $, $ d\geq 2 $, $ \lambda\in\Sigma_{\theta_0}\cup\{0\} $ with $ \theta_0\in(0,\frac{\pi}{2}) $, let $ \om $ be a bounded $ C^{1,1} $ domain in $ \mathbb{R}^d $. Suppose that $ A $ satisfies $ \eqref{sy} $, $ \eqref{el} $, $ \eqref{pe} $ and $ \eqref{Hol} $. If $ \Delta(x_0,3R)\neq\emptyset $, assume that $ u_{\va,\lambda}\in H^1(\om(x_0,3R);\mathbb{C}^m) $ is a solution of the boundary problem
\begin{align}
(\mathcal{L}_{\va}-\lambda I)(u_{\va,\lambda})=0\text{ in }\om(x_0,3R)\quad\text{and}\quad u_{\va,\lambda}=f\text{ on } \Delta(x_0,3R),\nonumber
\end{align}
with $ \|f\|_{L^{\infty}(\om)}<\infty $. If $ \Delta(x_0,3R)=\emptyset $, assume that $ u_{\va,\lambda}\in H^1(B(x_0,3R);\mathbb{C}^m) $ is the weak solution of the interior problem
\begin{align}
(\mathcal{L}_{\va}-\lambda I)(u_{\va,\lambda})=0\text{ in }B(x_0,3R).\nonumber
\end{align}
Then for any $ k\in\mathbb{N}_+ $,
\begin{align}
\|u_{\va,\lambda}\|_{L^{\infty}(\om(x_0,R))}\leq C_{k,\theta_0}\|f\|_{L^{\infty}(\Delta(x_0,3R))}+\frac{C_{k,\theta_0}}{(1+|\lambda|R^2)^{k}}\dashint_{\om(x_0,{3R})}|u_{\va,\lambda}|,\label{ww65}
\end{align}
where $ C_{k,\theta_0} $ depends only on $ \mu,d,m,\theta_0,k,\nu,\tau $ and $ \om $.
\end{lem}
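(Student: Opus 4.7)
The strategy is the standard split of $u_{\va,\lambda}$ into a part that carries the boundary data and a part that vanishes on the boundary piece $\Delta(x_0,2R)$. The interior case $\Delta(x_0,3R)=\emptyset$ is immediate: one applies the interior version of $\eqref{Linfty}$ (with $F\equiv 0$, $f\equiv 0$, $s=1$) on $B(x_0,R)$ using data on $B(x_0,2R)\subset B(x_0,3R)$, and the comparability of $|B(x_0,2R)|$ and $|B(x_0,3R)|$ yields the result without the boundary term. So the real content is the boundary case $\Delta(x_0,3R)\neq\emptyset$.

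In the boundary case I would first construct a global extension $\tilde f\in L^{\infty}(\pa\om;\mathbb{C}^m)$ of $f$ by multiplying $f|_{\Delta(x_0,5R/2)}$ by a boundary cutoff $\chi$ which equals $1$ on $\Delta(x_0,2R)$, is supported in $\Delta(x_0,5R/2)$, and extending by zero on the rest of $\pa\om$, so that $\|\tilde f\|_{L^{\infty}(\pa\om)}\leq\|f\|_{L^{\infty}(\Delta(x_0,3R))}$ and $\tilde f=f$ on $\Delta(x_0,2R)$. Let $v_1\in H^1(\om;\mathbb{C}^m)$ be the solution of $(\mathcal{L}_{\va}-\lambda I)(v_1)=0$ in $\om$ with $v_1=\tilde f$ on $\pa\om$. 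By the maximum principle $\eqref{maximal principle}$,
\begin{equation*}
\|v_1\|_{L^{\infty}(\om)}\leq C_{\theta_0}\|\tilde f\|_{L^{\infty}(\pa\om)}\leq C_{\theta_0}\|f\|_{L^{\infty}(\Delta(x_0,3R))}.
\end{equation*}
Set $v_2=u_{\va,\lambda}-v_1$ on $\om(x_0,2R)$. Then $v_2\in H^1(\om(x_0,2R);\mathbb{C}^m)$, $(\mathcal{L}_{\va}-\lambda I)(v_2)=0$ in $\om(x_0,2R)$, and $v_2=0$ on $\Delta(x_0,2R)$ by the choice of $\tilde f$.

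Now I apply the boundary $ L^{\infty}$ estimate $\eqref{Linfty}$ to $v_2$ with $F\equiv 0$, $f\equiv 0$, $s=1$, which gives, for any $k\in\mathbb{N}_+$,
\begin{equation*}
\|v_2\|_{L^{\infty}(\om(x_0,R))}\leq \frac{C_{k,\theta_0}}{(1+|\lambda|R^2)^{k}}\dashint_{\om(x_0,2R)}|v_2|\leq \frac{C_{k,\theta_0}}{(1+|\lambda|R^2)^{k}}\Bigl(\dashint_{\om(x_0,2R)}|u_{\va,\lambda}|+\|v_1\|_{L^{\infty}(\om)}\Bigr).
\end{equation*}
Using $|\om(x_0,2R)|\asymp|\om(x_0,3R)|$ together with the bound on $\|v_1\|_{L^{\infty}(\om)}$, and combining with the trivial $\|v_1\|_{L^{\infty}(\om(x_0,R))}\leq C_{\theta_0}\|f\|_{L^{\infty}(\Delta(x_0,3R))}$, the triangle inequality $\|u_{\va,\lambda}\|_{L^{\infty}(\om(x_0,R))}\leq\|v_1\|_{L^{\infty}(\om)}+\|v_2\|_{L^{\infty}(\om(x_0,R))}$ yields $\eqref{ww65}$, after absorbing $(1+|\lambda|R^2)^{-k}\|f\|_{L^{\infty}}\leq\|f\|_{L^{\infty}}$ into the first term.

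The routine technical points are the existence and uniqueness of $v_1$ with $L^{\infty}$ boundary data together with a bona-fide nontangential trace equal to $\tilde f$, for which the Poisson kernel representation supplied by $\eqref{Lp for nontan}$ and the boundary-gradient estimate $\eqref{Green Lipschitz 13}$ on Green functions are enough. The main (minor) obstacle is just to handle the extension $\tilde f$ so that the constant in the $L^{\infty}$ bound on $v_1$ depends only on $\|f\|_{L^{\infty}(\Delta(x_0,3R))}$ and not on the whole boundary of $\om$; the cutoff construction above takes care of this and the rest of the argument is a clean two-step split with no further difficulty.
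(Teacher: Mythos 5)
Your proposal is correct and follows essentially the same route as the paper: both decompose $u_{\va,\lambda}$ into an auxiliary solution carrying the boundary data $f$, bounded by the maximum principle $\eqref{maximal principle}$, plus a remainder vanishing on $\Delta(x_0,2R)$ to which the local $L^{\infty}$ estimate $\eqref{Linfty}$ applies. The only difference is cosmetic: the paper poses the auxiliary problem in an intermediate $C^{1,1}$ domain $\widetilde{\om}$ with $\om(x_0,2R)\subset\widetilde{\om}\subset\om(x_0,3R)$ (which tacitly requires the maximum-principle constant to be uniform over the rescaled family of such domains), whereas you solve globally in $\om$ with cut-off boundary data, which sidesteps that uniformity issue at the cost of the extension step.
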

\begin{proof}
We only need to show the case that $ \Delta(x_0,3R)\neq\emptyset $ since the other follows directly from $ \eqref{Linfty} $. If $ f\equiv 0 $, the estimate is also a consequence of $ \eqref{Linfty} $. To treat the general case, let $ v_{\va,\lambda} $ be the solution to $ (\mathcal{L}_{\va}-\lambda I)(v_{\va,\lambda})=0 $ in $ \widetilde{\om} $ with the Dirichlet condition $ v_{\va,\lambda}=f $ on $ \pa\widetilde{\om}\cap\pa\om $ and $ v_{\va,\lambda}=0 $ on $ \pa\widetilde{\om}\backslash\pa\om $, where $ \widetilde{\om} $ is a $ C^{1,1} $ domain such that $ \om(x_0,2R)\subset\widetilde{\om}\subset \om(x_0,3R) $. By the maximum principle $ \eqref{maximal principle} $, we have,
\begin{align} 
\|v_{\va,\lambda}\|_{L^{\infty}(\widetilde{\om})}\leq C_{\theta_0}\|v_{\va,\lambda}\|_{L^{\infty}(\pa\widetilde{\om})}\leq C_{\theta_0}\|f\|_{L^{\infty}(\Delta(x_0,3R))}.\nonumber
\end{align}
This, together with the fact that for any $ k\in\mathbb{N}_+ $,
\begin{align}
\|u_{\va,\lambda}-v_{\va,\lambda}\|_{L^{\infty}(\om_R)}&\leq \frac{C_{k,\theta_0}}{(1+|\lambda|R^2)^{k}}\dashint_{\om_{2R}}|u_{\va,\lambda}-v_{\va,\lambda}|\nonumber\\
&\leq \frac{C_{k,\theta_0}}{(1+|\lambda|R^2)^{k}}\dashint_{\om_{3R}}|u_{\va,\lambda}|+C_{k,\theta_0}\|f\|_{L^{\infty}(\Delta_{3R})},\nonumber
\end{align}
gives the results. 
\end{proof}

\begin{lem}\label{ww74}
For $ \va\geq 0 $ and $ d\geq 2 $, $ \lambda\in\Sigma_{\theta_0}\cup\{0\} $ with $ \theta_0\in(0,\frac{\pi}{2}) $, let $ \om $ be a bounded $ C^{1,1} $ domain in $ \mathbb{R}^d $. Suppose that $ A $ satisfies $ \eqref{sy} $, $ \eqref{el} $, $ \eqref{pe} $ and $ \eqref{Hol} $. Let $ u_{\va,\lambda}\in H^1(\om(x_0,4R);\mathbb{C}^m) $ and $ u_{0,\lambda}\in W^{2,p}(\om(x_0,4R);\mathbb{C}^m) $ for some $ d<p<\infty $. If $ \Delta(x_0,4R)\neq\emptyset $, assume that 
\begin{align}
(\mathcal{L}_{\va}-\lambda I)(u_{\va,\lambda})=(\mathcal{L}_{0}-\lambda I)(u_{0,\lambda})\text{ in }\om(x_0,4R)\quad\text{and}\quad u_{\va,\lambda}=u_{0,\lambda}\text{ on }\Delta(x_0,4R),\nonumber
\end{align}
and if $ \Delta(x_0,4R)=\emptyset $, assume that
\begin{align}
(\mathcal{L}_{\va}-\lambda I)(u_{\va,\lambda})=(\mathcal{L}_{0}-\lambda I)(u_{0,\lambda})\text{ in }B(x_0,4R).\nonumber
\end{align}
Then for any $ k\in\mathbb{N}_+ $,
\be
\begin{aligned}
\|u_{\va,\lambda}-u_{0,\lambda}\|_{L^{\infty}(\om_R)}&\leq 
\frac{C_{k,\theta_0}}{(1+|\lambda|R^2)^k}\dashint_{\om_{4R}}|u_{\va,\lambda}-u_{0,\lambda}|\\
&\quad+C_{k,\theta_0}\va\left\{ R^{1-\frac{d}{p}}\|\nabla^2u_{0,\lambda}\|_{L^p(\om_{4R})}+(1+|\lambda|R^2)\|\nabla u_{0,\lambda}\|_{L^{\infty}(\om_{4R})}\right\},
\end{aligned}\label{67}
\ee
where $ C_{k,\theta_0} $ depends on $ \mu,d,m,\nu,\tau,p,k,\theta_0 $ and $ \om $.
\end{lem}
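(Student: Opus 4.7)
My plan is a localized two-scale expansion combined with the pointwise estimate $\eqref{ww65}$ and the resolvent $L^p$-bounds of Theorem \ref{Lp estimates of resolventsf}. Fix a cutoff $\eta\in C_0^\infty(B(x_0,4R))$ with $\eta\equiv 1$ on $B(x_0,3R)$ and $|\nabla^j\eta|\le CR^{-j}$ for $j\le 2$, and set
\[
w_\va = u_{0,\lambda} + \va\,\eta\,\chi_j^\beta(\cdot/\va)\,\partial_j u_{0,\lambda}^\beta,\qquad z_\va = u_{\va,\lambda} - w_\va.
\]
Using the cell problem $\eqref{correctors}$, the flux-corrector identity $\eqref{Flux correctors}$, the antisymmetry $F_{kij}^{\al\beta}=-F_{ikj}^{\al\beta}$, and the Hölder hypothesis $\eqref{Hol}$ (which ensures $\chi,F\in L^\infty$), a direct computation gives
\[
(\mathcal{L}_\va - \lambda I)(z_\va) = \operatorname{div}(G_\va) + H_\va \qquad \text{in } \om(x_0,4R),
\]
where $G_\va$ collects the divergence-form homogenization residues that the flux correctors absorb up to an $\va$-factor, and $H_\va$ collects the cutoff-derivative terms, the quadratic-in-$\chi$ pieces involving $\nabla^2 u_{0,\lambda}$, and the term $-\va\lambda\,\eta\,\chi(\cdot/\va)\,\nabla u_{0,\lambda}$ produced by $\lambda I$ acting on $w_\va-u_{0,\lambda}$. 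Elementary $L^p$-bounds then yield
\[
\|G_\va\|_{L^p(\om_{4R})} + R\|H_\va\|_{L^q(\om_{4R})} \le C\va R^{d/p}\bigl\{R^{1-d/p}\|\nabla^2 u_{0,\lambda}\|_{L^p(\om_{4R})} + (1+|\lambda|R^2)\|\nabla u_{0,\lambda}\|_{L^\infty(\om_{4R})}\bigr\},
\]
with $q=pd/(p+d)$.

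Next I would decompose $z_\va = \tilde z_\va + \hat z_\va$, where $\tilde z_\va\in W_0^{1,p}(\om_{4R};\mathbb{C}^m)$ solves the inhomogeneous equation with zero trace and $\hat z_\va$ is $(\mathcal{L}_\va-\lambda I)$-harmonic on $\om_{4R}$. Theorem \ref{Lp estimates of resolventsf} with $p>d$, together with Morrey's embedding $W^{1,p}\hookrightarrow L^\infty$, bounds $\|\tilde z_\va\|_{L^\infty(\om_{4R})}$ by $C\va\{R^{1-d/p}\|\nabla^2 u_{0,\lambda}\|_{L^p}+(1+|\lambda|R^2)\|\nabla u_{0,\lambda}\|_{L^\infty}\}$. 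For $\hat z_\va$: on $\Delta(x_0,4R)\subset\partial\om$ we have $u_{\va,\lambda}=u_{0,\lambda}$, forcing $\hat z_\va|_{\Delta_{4R}} = -\va\eta\,\chi(\cdot/\va)\,\nabla u_{0,\lambda} - \tilde z_\va|_{\Delta_{4R}}$, whose $L^\infty$ norm is $\le C\va(1+|\lambda|R^2)\|\nabla u_{0,\lambda}\|_{L^\infty}$; on the interior portion $\om\cap\partial B(x_0,4R)$ the boundary data is simply the value of $\hat z_\va$ there. Applying $\eqref{ww65}$ (and its interior counterpart $\eqref{Linfty}$ for the latter piece) and then converting $\dashint_{\om_{3R}}|\hat z_\va|$ into $\dashint|u_{\va,\lambda}-u_{0,\lambda}|$ via triangle inequality, together with the trivial bound $|w_\va - u_{0,\lambda}|\le C\va\|\nabla u_{0,\lambda}\|_{L^\infty}$ on $\om_R$ (where $\eta\equiv 1$), produces $\eqref{67}$.

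The main obstacle will be the precise tracking of the $\lambda$-dependence: the lemma permits only one power $(1+|\lambda|R^2)$ on the $\|\nabla u_{0,\lambda}\|_{L^\infty}$ term, whereas the averaged term must retain the full decay $(1+|\lambda|R^2)^{-k}$. Consequently the $-\va\lambda\,\eta\,\chi\,\nabla u_{0,\lambda}$ contribution to $H_\va$ must be routed through $\tilde z_\va$ (where Theorem \ref{Lp estimates of resolventsf} provides the cancelling factor $(R_0^{-2}+|\lambda|)^{-1}$) rather than through $\eqref{ww65}$, and the "interior" portion of $\partial\om_{4R}$ must be treated by stitching $\eqref{ww65}$ with $\eqref{Linfty}$ so that both pieces yield matching polynomial decay in $(1+|\lambda|R^2)$; the Caccioppoli inequality $\eqref{Cau1}$ will be the natural device for absorbing any stray powers of $|\lambda|R^2$ produced by the cutoff-derivative terms of $G_\va$ and $H_\va$.
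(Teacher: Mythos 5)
Your proposal is sound and shares the paper's overall architecture: a two-scale ansatz $u_{0,\lambda}+\va\chi(\cdot/\va)\nabla u_{0,\lambda}$, a splitting of the error into a zero-boundary-data inhomogeneous piece and an $(\mathcal{L}_{\va}-\lambda I)$-harmonic piece, and the estimate $\eqref{ww65}$ for the harmonic piece (your worry about the ``interior portion'' of $\pa B(x_0,4R)$ is unnecessary --- $\eqref{ww65}$ absorbs it entirely into the average term). The genuine divergence from the paper is in the inhomogeneous piece: you bound it by the uniform resolvent estimates of Theorem \ref{Lp estimates of resolventsf} followed by Morrey's embedding $W_0^{1,p}\hookrightarrow L^{\infty}$ for $p>d$, whereas the paper represents $w_{\va,\lambda}^{(1)}$ through the Green function $\widetilde{G}_{\va,\lambda}$ of an intermediate domain and invokes $\bigl(\int_{\widetilde{\om}}|\nabla_2\widetilde{G}_{\va,\lambda}(x,y)|^{p'}dy\bigr)^{1/p'}\leq C_{\theta_0}R^{1-d/p}$ from $\eqref{Green Lipschitz 11}$ together with $\eqref{*}$ for the $\lambda$-term. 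Your route is more self-contained (it does not need the Green function of the auxiliary domain), and your observation that the $\va\lambda\chi\nabla u_{0,\lambda}$ source must be routed through the inhomogeneous piece --- where the factor $(R^{-2}+|\lambda|)^{-1/2}$ cancels $|\lambda|$ down to $(1+|\lambda|R^2)^{1/2}$ --- is exactly the right mechanism; the paper achieves the same cancellation via $\int|\widetilde{G}_{\va,\lambda}(x,y)|dy\leq C(R^{-2}+|\lambda|)^{-1}$, which yields the stated factor $(1+|\lambda|R^2)$.

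Two points need repair. First, you cannot apply Theorem \ref{Lp estimates of resolventsf} (nor construct the Green function) directly on $\om(x_0,4R)=\om\cap B(x_0,4R)$: this set is only Lipschitz at the interface $\pa\om\cap\pa B(x_0,4R)$, while the theorem requires a $C^1$ domain. You must interpose a $C^{1,1}$ domain $\widetilde{\om}$ with $\om(x_0,3R)\subset\widetilde{\om}\subset\om(x_0,4R)$ of uniformly controlled character, as the paper does, and rescale to unit size (this sends $\lambda\mapsto\lambda R^2$ and $\va\mapsto\va/R$, which is harmless precisely because the estimates are uniform in $\va$ and scaling-invariant in $\lambda$). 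Second, your displayed bound $\|G_{\va}\|_{L^p(\om_{4R})}\leq C\va R^{d/p}\cdot R^{1-d/p}\|\nabla^2u_{0,\lambda}\|_{L^p}=C\va R\|\nabla^2u_{0,\lambda}\|_{L^p}$ carries a spurious factor of $R$ (false for $R<1$); the correct bound is $\|G_{\va}\|_{L^p}\leq C\va\|\nabla^2u_{0,\lambda}\|_{L^p}$, which after $\|\tilde z_{\va}\|_{L^{\infty}}\leq CR^{1-d/p}\|\nabla\tilde z_{\va}\|_{L^p}+CR^{-d/p}\|\tilde z_{\va}\|_{L^p}$ still lands on the asserted term $C\va R^{1-d/p}\|\nabla^2u_{0,\lambda}\|_{L^p(\om_{4R})}$, so the slip is one of bookkeeping rather than substance.
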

\begin{proof}
Firstly, we consider the case that $ \Delta(x_0,3R)\neq\emptyset $. Choose a domain $ \widetilde{\om} $, which is $ C^{1,1} $ such that $ \om(x_0,3R)\subset\widetilde{\om}\subset \om(x_0,4R) $. Consider
\begin{align}
w_{\va,\lambda}(x)=u_{\va,\lambda}(x)-u_{0,\lambda}(x)-\va\chi_{j}^{\beta}(x/\va)\frac{\pa u_{0,\lambda}^{\beta}}{\pa x_j}=w_{\va,\lambda}^{(1)}(x)+w_{\va,\lambda}^{(2)}(x)\quad\text{in }\widetilde{\om},\nonumber
\end{align} 
where $ w_{\va,\lambda}^{(1)} $ and $ w_{\va,\lambda}^{(2)} $ are weak solutions for the Drichlet problems
\begin{align}
(\mathcal{L}_{\va}-\lambda I)(w_{\va,\lambda}^{(1)})&=(\mathcal{L}_{\va}-\lambda I)(w_{\va,\lambda})\text{ in }\widetilde{\om}\quad\text{and}\quad  w_{\va,\lambda}^{(1)}\in H_0^1(\widetilde{\om};\mathbb{C}^m),\label{ww63}\\
(\mathcal{L}_{\va}-\lambda I)(w_{\va,\lambda}^{(2)})&=0\text{ in }\widetilde{\om}\quad\text{and}\quad w_{\va,\lambda}^{(2)}=w_{\va,\lambda}\quad\text{on }\pa\widetilde{\om}.\label{ww64}
\end{align}
Since $ w_{\va,\lambda}^{(2)}=w_{\va,\lambda}=-\va\chi(x/\va)\nabla u_{0,\lambda} $ on $ \Delta(x_0,3R) $ and $ \|\chi\|_{L^{\infty}(\om)}\leq C $, it follows from $ \eqref{ww65} $ that for any $ k\in\mathbb{N}_+ $,
\begin{align}
\|w_{\va,\lambda}^{(2)}\|_{L^{\infty}(\om_R)}&\leq C_{k,\theta_0}\va \|\nabla u_{0,\lambda}\|_{L^{\infty}(\Delta_{3R})}+\frac{C_{k,\theta_0}}{(1+|\lambda|R^2)^k}\dashint_{\om_{3R}}|w_{\va,\lambda}^{(2)}|\nonumber\\
&\leq C_{k,\theta_0}\va\|\nabla u_{0,\lambda}\|_{L^{\infty}(\Delta_{3R})}+\frac{C_{k,\theta_0}}{(1+|\lambda|R^2)^k}\dashint_{\om_{3R}}|w_{\va,\lambda}^{(1)}|+\frac{C_{k,\theta_0}}{(1+|\lambda|R^2)^k}\dashint_{\om_{3R}}|w_{\va,\lambda}|\nonumber\\
&\leq \frac{C_{k,\theta_0}}{(1+|\lambda|R^2)^k}\left\{\dashint_{\om_{3R}}|u_{\va,\lambda}-u_{0,\lambda}|+\|w_{\va,\lambda}^{(1)}\|_{L^{\infty}(\om_{3R})}\right\}+C_{k,\theta_0}\va \|\nabla u_{0,\lambda}\|_{L^{\infty}(\om_{3R})}.\nonumber
\end{align}
By using definitions of $ w_{\va,\lambda} $, $ w_{\va,\lambda}^{(1)} $ and $ w_{\va,\lambda}^{(2)} $, this gives
\be
\begin{aligned}
\|u_{\va,\lambda}-u_{0,\lambda}\|_{L^{\infty}(\om_R)}&\leq \frac{C_{k,\theta_0}}{(1+|\lambda|R^2)^k}\dashint_{\om_{3R}}|u_{\va,\lambda}-u_{0,\lambda}|\\
&\quad+C_{k,\theta_0}\left\{\|w_{\va,\lambda}^{(1)}\|_{L^{\infty}(\om_{3R})}+\va\|\nabla u_{0,\lambda}\|_{L^{\infty}(\om_{3R})}\right\}.
\end{aligned}\label{ww66}
\ee
To estimate $ w_{\va,\lambda}^{(1)} $ on $ \om(x_0,3R) $, we use the representation formula to obtain that
\begin{align}
w_{\va,\lambda}^{(1)}(x)=\int_{\widetilde{\om}}\widetilde{G}_{\va,\lambda}(x,y)\overline{(\mathcal{L}_{\va}-\lambda I)(w_{\va,\lambda})}dy,\nonumber
\end{align}
where $ \widetilde{G}_{\va,\lambda}(x,y) $ denotes the matrix of the Green function for $ \mathcal{L}_{\va}-\lambda I $ in $ \widetilde{\om} $. Note that
\be
\begin{aligned}
[(\mathcal{L}_{\va}-\lambda I) w_{\va,\lambda})]^{\al}&=-\va\frac{\pa}{\pa x_i}\left\{F_{kij}^{\al\beta}(x/\va)\frac{\pa^2u_{0,\lambda}^{\beta}}{\pa x_j\pa x_k}\right\}+\va\lambda\chi_{j}^{\al\beta}(x/\va)\frac{\pa u_{0,\lambda}^{\beta}}{\pa x_j}\\
&\quad+\va\frac{\pa}{\pa x_i}\left\{a_{ij}^{\al\delta}(x/\va)\chi_k^{\delta\beta}(x/\va)\frac{\pa^2u_{0,\lambda}^{\beta}}{\pa x_j\pa x_k}\right\}.
\end{aligned}\label{ww69}
\ee
Then by using the representation formula of Green functions $ \eqref{repre} $ and $ \eqref{Representation formula} $, 
\begin{align}
w_{\va,\lambda}^{(1)\gamma}(x)&=\va\int_{\widetilde{\om}}\frac{\pa}{\pa y_i}\{\widetilde{G}_{\va,\lambda}^{\gamma\al}(x,y)\}\overline{\left[F_{kij}^{\al\beta}(y/\va)-a_{ij}^{\al\delta}(y/\va)\chi_k^{\delta\beta}(y/\va)\right]\frac{\pa^2 u_{0,\lambda}^{\beta}}{\pa y_j\pa y_k}}dy\nonumber\\
&\quad\quad+\va\overline{\lambda}\int_{\widetilde{\om}}\widetilde{G}_{\va,\lambda}^{\gamma\al}(x,y)\overline{\chi_{j}^{\al\beta}(y/\va)\frac{\pa u_{0,\lambda}^{\beta}}{\pa y_j}}dy.\nonumber
\end{align}
Since $ \|F_{kij}\|_{L^{\infty}(\om)},\|\chi_j\|_{L^{\infty}(\om)}\leq C $ and $ p>d $, we have
\begin{align}
\left|\va\lambda\int_{\widetilde{\om}}\widetilde{G}_{\va,\lambda}^{\gamma\al}(x,y)\overline{\chi_{j}^{\al\beta}(y/\va)\frac{\pa u_{0,\lambda}^{\beta}}{\pa y_j}}dy\right|&\leq C_{\theta_0}\va|\lambda|\|\nabla u_{0,\lambda}\|_{L^{\infty}(\om_{4R})}\int_{\widetilde{\om}}|\widetilde{G}_{\va,\lambda}^{\gamma\al}(x,y)|dy\nonumber\\
&\leq C_{\theta_0}\va |\lambda|R^2\|\nabla u_{0,\lambda}\|_{L^{\infty}(\om_{4R})},\nonumber
\end{align}
where we have used $ \eqref{*} $ for the second inequality. Then
\begin{align}
|w_{\va,\lambda}^{(1)}(x)|&\leq C\va\int_{\widetilde{\om}}|\nabla_2\widetilde{G}_{\va,\lambda}(x,y)||\nabla^2u_{0,\lambda}(y)|dy+C_{\theta_0}\va |\lambda|R^2\|\nabla u_{0,\lambda}\|_{L^{\infty}(\om_{4R})}\nonumber\\
&\leq C\va\|\nabla^2u_{0,\lambda}\|_{L^p(\om_{4R})}\left(\int_{\widetilde{\om}}|\nabla_2\widetilde{G}_{\va,\lambda}(x,y)|^{p'}dy\right)^{\frac{1}{p'}}+C_{\theta_0}\va |\lambda|R^2\|\nabla u_{0,\lambda}\|_{L^{\infty}(\om_{4R})}.\nonumber
\end{align}
In view of $ \eqref{Green Lipschitz 11} $ with $ k=0 $, we have $ \left(\int_{\widetilde{\om}}|\nabla_2\widetilde{G}_{\va,\lambda}(x,y)|^{p'}dy\right)^{\frac{1}{p'}}\leq C_{\theta_0}R^{1-\frac{d}{p}} $ and then
\begin{align}
|w_{\va,\lambda}^{(1)}(x)|\leq C_{\theta_0}\va\left\{R^{1-\frac{d}{p}}\|\nabla^2u_{0,\lambda}\|_{L^p(\om_{4R})}+|\lambda|R^2\|\nabla u_{0,\lambda}\|_{L^{\infty}(\om_{4R})}\right\}.\nonumber
\end{align}
This, together with $ \eqref{ww66} $, gives the proof for the case that $ \Delta(x_0,3R)\neq\emptyset $. If $ \Delta(x_0,3R)=\emptyset $, we can choose $ w_{\va,\lambda}^{(1)}$ and $ w_{\va,\lambda}^{(2)} $ by
\begin{align}
(\mathcal{L}_{\va}-\lambda I)(w_{\va,\lambda}^{(1)})&=(\mathcal{L}_{\va}-\lambda I)(w_{\va,\lambda})\text{ in }B\left(x_0,3R\right)\quad\text{and}\quad  w_{\va,\lambda}^{(1)}\in H_0^1\left(B\left(x_0,3R\right);\mathbb{C}^m\right),\nonumber\\
(\mathcal{L}_{\va}-\lambda I)(w_{\va,\lambda}^{(2)})&=0\text{ in }B\left(x_0,3R\right)\quad\text{and}\quad  w_{\va,\lambda}^{(2)}=w_{\va,\lambda}\text{ on }\pa\left(B\left(x_0,3R\right)\right).\nonumber
\end{align}
By using $ \eqref{Linfty} $, it can be seen that for any $ k\in\mathbb{N}_+ $,
\begin{align}
\|w_{\va,\lambda}^{(2)}\|_{L^{\infty}(B_R)}&\leq \frac{C_{k,\theta_0}}{(1+|\lambda|R^2)^k}\dashint_{B_{3R}}|w_{\va,\lambda}^{(2)}|\leq \frac{C_{k,\theta_0}}{(1+|\lambda|R^2)^k}\left\{\dashint_{B_{3R}}|w_{\va,\lambda}^{(1)}|+\dashint_{B_{3R}}|w_{\va,\lambda}|\right\}\nonumber\\
&\leq \frac{C_{k,\theta_0}}{(1+|\lambda|R^2)^k}\left\{\dashint_{B_{3R}}|u_{\va,\lambda}-u_{0,\lambda}|+\|w_{\va,\lambda}^{(1)}\|_{L^{\infty}(B_{3R})}+\va \|\nabla u_{0,\lambda}\|_{L^{\infty}(B_{3R})}\right\}.\nonumber
\end{align}
Then $ \eqref{ww66} $ is still true. Choosing $ \widetilde{\om}=B(x_0,3R) $ and using almost the same arguments for the case that $ \Delta(x_0,3R)\neq\emptyset $, we can complete the proof.
\end{proof}

\begin{proof}[Proof of Theorem \ref{Convergence of Green's functions}]
Fix $ x_0,y_0\in\om $ and $ R=\frac{|x_0-y_0|}{16}>0 $. For $ F\in C_0^{\infty}(\om(y_0,R);\mathbb{C}^m) $, let
\begin{align}
u_{\va,\lambda}(x)=\int_{\om}G_{\va,\lambda}(x,y)\overline{F(y)}dy\quad\text{and}\quad u_{0,\lambda}(x)=\int_{\om}G_{0,\lambda}(x,y)\overline{F(y)}dy.\nonumber
\end{align}
Then $ (\mathcal{L}_{\va}-\lambda I)(u_{\va,\lambda})=(\mathcal{L}_0-\lambda I)(u_{0,\lambda})=F $ in $ \om $ and $ u_{\va,\lambda}=u_{0,\lambda}=0 $ on $ \pa\om $. For any $ x\in\om $ and $ p>d $, using $ \eqref{Green Lipschitz 11} $ with $ k=0 $, it follows by Hölder's inequality that
\begin{align}
|\nabla u_{0,\lambda}(x)|&\leq\left|\int_{\om}\nabla_1 G_{0,\lambda}(x,y)\overline{F(y)}dy\right|\leq C\|F\|_{L^p(\om(y_0,R))}\left(\int_{\om(y_0,R)}|\nabla_1G_{0,\lambda}(x,y)|^{p'}dy\right)^{\frac{1}{p'}}\nonumber\\
&\leq C_{\theta_0}\left(\int_{\om(y_0,R)}\frac{1}{|x-y|^{(d-1)p'}}dy\right)^{\frac{1}{p'}}\|F\|_{L^p(\om(y_0,R))}\leq C_{\theta_0}R^{1-\frac{d}{p}}\|F\|_{L^p(\om(y_0,R))}.\nonumber
\end{align}
To this end, we can obtain the $ L^{\infty} $ estimates of $ \nabla u_{0,\lambda} $, that is,
\begin{align}
\|\nabla u_{0,\lambda}\|_{L^{\infty}(\om)}\leq C_{\theta_0}R^{1-\frac{d}{p}}\|F\|_{L^p(\om(y_0,R))}.\label{nabla infty}
\end{align}
To estimate $ \|u_{\va,\lambda}-u_{0,\lambda}\|_{L^{\infty}(\om(y_0,R))} $, set $ w_{\va,\lambda} $ by
\begin{align}
w_{\va,\lambda}(x)=u_{\va,\lambda}(x)-u_{0,\lambda}(x)-\va\chi_{j}^{\beta}(x/\va)\frac{\pa u_{0,\lambda}^{\beta}}{\pa x_j}=v_{\va,\lambda}(x)+z_{\va,\lambda}(x),\nonumber
\end{align}
where $ v_{\va,\lambda}\in H_0^1(\om;\mathbb{C}^m) $ and $ (\mathcal{L}_{\va}-\lambda I)(v_{\va,\lambda})=(\mathcal{L}_{\va}-\lambda I)(w_{\va,\lambda}) $ in $ \om $. By using $ \eqref{L2uva} $, $ \eqref{W2p} $ and $ \eqref{ww69} $ with $ p=2 $, we can obtain
\begin{align}
\|\nabla v_{\va,\lambda}\|_{L^2(\om)}&\leq C_{\theta_0}\va\|\nabla^2u_{0,\lambda}\|_{L^2(\om)}+C_{\theta_0}\va|\lambda|^{\frac{1}{2}}\|\nabla u_{0,\lambda}\|_{L^2(\om)}\nonumber\\
&\leq C_{\theta_0}\va\|F\|_{L^2(\om)}\leq C_{\theta_0}\va\|F\|_{L^2(\om(y_0,R))}.\nonumber
\end{align}
By Hölder's and Sobolev's inequalities, this implies that if $ d\geq 3 $, we have
\be
\begin{aligned}
\|v_{\va,\lambda}\|_{L^2(\om(y_0,R))}&\leq CR\|v_{\va,\lambda}\|_{L^{\frac{2d}{d-2}}(\om(y_0,R))}\leq CR\|\nabla v_{\va,\lambda}\|_{L^2(\om)}\\
&\leq C_{\theta_0}\va R\|F\|_{L^2(\om(y_0,R))}\leq C_{\theta_0}\va R^{1+\frac{d}{2}-\frac{d}{p}}\|F\|_{L^p(\om(y_0,R))},
\end{aligned}\label{zhongjianchanwu}
\ee
where $ p>d $. If $ d=2 $, one can use the following estimate
\begin{align}
\|v_{\va,\lambda}\|_{L^2(\om(x_0,R))}\leq C\va R\|F\|_{L^2(\om(y_0,R))}.\nonumber
\end{align}
in place of $ \eqref{zhongjianchanwu} $. Indeed, for any $ 2<q<\infty $, by using $ \eqref{ww69} $, Hölder's inequality and Sobolev embdeding theorem that $ W_0^{1,\frac{2q}{q+2}}(\om)\subset L^q(\om) $, it can be got that
\be
\begin{aligned}
\|v_{\va,\lambda}\|_{L^2(\om(y_0,R))}&\leq CR^{1-\frac{2}{q}}\|v_{\va,\lambda}\|_{L^q(\om(y_0,R))}\leq CR^{1-\frac{2}{q}}\|v_{\va,\lambda}\|_{L^q(\om)}\leq CR^{1-\frac{2}{q}}\|\nabla v_{\va,\lambda}\|_{L^{\frac{2q}{q+2}}(\om)}\\
&\leq C_{\theta_0}\va R^{1-\frac{2}{q}}\left\{\|\nabla^2u_{0,\lambda}\|_{L^{\frac{2q}{q+2}}(\om)}+|\lambda|^{\frac{1}{2}}\|\nabla u_{0,\lambda}\|_{L^{\frac{2q}{q+2}}(\om)}\right\}\\
&\leq C_{\theta_0}\va R^{1-\frac{2}{q}}\|F\|_{L^{\frac{2q}{q+2}}(\om(y_0,R))}\leq C_{\theta_0}R\|F\|_{L^{2}(\om(y_0,R))},
\end{aligned}\label{1111}
\ee
where for the forth and fifth inequalities, we have used Theorem \ref{Lp estimates of resolventsf} and $ \eqref{W2p} $. Since $ (\mathcal{L}_{\va}-\lambda I)(z_{\va,\lambda})=0 $ in $ \om $ and $ z_{\va,\lambda}=w_{\va,\lambda} $ on $ \pa\om $, by maximum principle $ \eqref{maximal principle} $,
\begin{align}
\|z_{\va,\lambda}\|_{L^{\infty}(\om)}\leq C_{\theta_0} \|z_{\va,\lambda}\|_{L^{\infty}(\pa\om)}\leq C_{\theta_0}\va \|\nabla u_{0,\lambda}\|_{L^{\infty}(\pa\om)}.\nonumber
\end{align}
In view of $ \eqref{nabla infty} $, we obtain
\begin{align}
\|u_{\va,\lambda}-u_{0,\lambda}\|_{L^2(\om(x_0,R))}&\leq \|w_{\va,\lambda}\|_{L^2(\om(x_0,R))}+C\va R^{\frac{d}{2}}\|\nabla u_{0,\lambda}\|_{L^{\infty}(\om)}\nonumber\\
&\leq\|v_{\va,\lambda}\|_{L^2(\om(x_0,R))}+\|z_{\va,\lambda}\|_{L^2(\om(x_0,R))}+C\va R^{\frac{d}{2}}\|\nabla u_{0,\lambda}\|_{L^{\infty}(\om)}\nonumber\\
&\leq\|v_{\va,\lambda}\|_{L^2(\om(x_0,R))}+C_{\theta_0}\va R^{\frac{d}{2}}\|\nabla u_{0,\lambda}\|_{L^{\infty}(\om)}\nonumber\\
&\leq C_{\theta_0}\va R^{1+\frac{d}{2}-\frac{d}{p}}\|F\|_{L^p(\om(y_0,R))},\nonumber
\end{align}
where $ p>d $ and $ d\geq 3 $. Similarly, if $ d=2 $, we have, for $ p>2 $,
\begin{align}
\|u_{\va,\lambda}-u_{0,\lambda}\|_{L^2(\om(x_0,R))}&\leq\|v_{\va,\lambda}\|_{L^2(\om(x_0,R))}+\|z_{\va,\lambda}\|_{L^2(\om(x_0,R))}+C\va R\|\nabla u_{0,\lambda}\|_{L^{\infty}(\om)}\nonumber\\
&\leq C_{\theta_0}\va R^{2-\frac{2}{p}}\|F\|_{L^p(\om(y_0,R))}.\nonumber
\end{align}
This, together with Lemma \ref{ww74} and $ \eqref{W2p} $, gives
\begin{align}
|u_{\va,\lambda}(x_0)-u_{0,\lambda}(x_0)|\leq C_{\theta_0}\va R^{1-\frac{d}{p}}\|F\|_{L^p(\om(y_0,R))}.\nonumber
\end{align}
Then it follows by duality arguments that
\begin{align}
\left(\int_{\om(y_0,R)}|G_{\va,\lambda}(x_0,y)-G_{0,\lambda}(x_0,y)|^{p'}dy\right)^{\frac{1}{p'}}\leq C_{\theta_0}\va R^{1-\frac{d}{p}}\text{ for any }p>d.\label{ww76}
\end{align}
Finally, since $ (\mathcal{L}_{\va}-\overline{\lambda} I)(G_{\va,\lambda}^{\gamma}(x_0,\cdot))=(\mathcal{L}_0-\overline{\lambda} I)(G_{0,\lambda}^{\gamma}(x_0,\cdot))=0 $ in $ \om(y_0,R) $ for any $ 1\leq \gamma\leq m $, we may invoke Lemma \ref{ww74} again to conclude that for any $ k\in\mathbb{N}_+ $,
\be
\begin{aligned}
|G_{\va,\lambda}(x_0,y_0)-G_{0,\lambda}(x_0,y_0)|&\leq \frac{C_{k,\theta_0}}{(1+|\lambda|R^2)^k}\dashint_{\om(y_0,R)}|G_{\va,\lambda}(x_0,y)-G_{0,\lambda}(x_0,y)|dy\\
&\quad+C_{k,\theta_0}\va R^{1-\frac{d}{p}}\|\nabla_2^2 G_{0,\lambda}(x_0,\cdot)\|_{L^p(\om(y_0,R))}\\
&\quad+C_{k,\theta_0}\va(1+|\lambda|R^2)\|\nabla_2G_{0,\lambda}(x_0,\cdot)\|_{L^{\infty}(\om(y_0,R))}.
\end{aligned}\label{ww75}
\ee
Firstly for the first term of $ \eqref{ww75} $, in view of $ \eqref{ww76}, $ we can obtain that it is bounded by 
\be 
C_{k,\theta_0}\va(1+|\lambda|R^2)^{-k}R^{1-d}.\label{zjbddx}
\ee
For the third term of $ \eqref{ww75} $, using $ \eqref{Green Lipschitz 11} $, it is also bounded by $ \eqref{zjbddx} $. For the second term, if $ d\geq 3 $, to obtain the same boundedness, we can use the $ W^{2,p} $ estimates for $ \mathcal{L}_0-\lambda I $, $ \eqref{W2p local} $, that is,
\begin{align}
\left(\dashint_{\om(y_0,R)}|\nabla_2^2G_{0,\lambda}(x_0,y)|^pdy\right)^{\frac{1}{p}}\leq \frac{C_{k,\theta_0}}{(1+|\lambda|R^2)^kR^2}\left(\dashint_{\om(y_0,2R)}|G_{0,\lambda}(x_0,y)|^2dy\right)^{\frac{1}{2}}.\label{GreenW2p}
\end{align}
This, together with $ \eqref{Greene} $, completes the proof for the case that $ d\geq 3 $. If $ d=2 $, we divide the proof into two cases. If $ \Delta(y_0,3R)\neq\emptyset $, then it follows directly from $ \eqref{Boundedness estimates} $ and $ \eqref{GreenW2p} $. If $ \Delta(y_0,3R)=\emptyset $, we can obtain the same result by using the interior Lipschitz estimate for the function $ \nabla_2G_{0,\lambda}(x_0,y) $,
\begin{align}
|\nabla_2^2G_{0,\lambda}(x_0,y_0)|\leq \frac{C_{k,\theta_0}}{(1+|\lambda|R^2)^kR}\left(\dashint_{B(y_0,2R)}|\nabla_2G_{0,\lambda}(x_0,y)|^2dy\right)^{\frac{1}{2}}
\end{align}
and $ \eqref{Green Lipschitz 11} $.
\end{proof}

\begin{thm}[Convergence of Green functions II]\label{Convergence of Green's functions 2t}
For $ \va>0 $ and $ d\geq 2 $, $ \lambda\in\Sigma_{\theta_0}\cup\{0\} $ with $ \theta_0\in(0,\frac{\pi}{2}) $, let $ \om $ be a bounded $ C^{2,1} $ domain in $ \mathbb{R}^d $. Suppose that $ A $ satisfies $ \eqref{sy} $, $ \eqref{el} $, $ \eqref{pe} $ and $ \eqref{Hol} $. Then for any $ k\in\mathbb{N}_+ $, $ 1\leq\al\leq m $ and $ x,y\in\om $ with $ x\neq y $,
\be
\begin{aligned}
\left|\frac{\pa }{\pa x_i}\{G_{\va,\lambda}^{\al\beta}(x,y)\}-\frac{\pa}{\pa x_i}\{\Phi_{\va,j}^{\al\beta}(x)\}\frac{\pa}{\pa x_j}\{G_{0,\lambda}^{\beta\gamma}(x,y)\}\right|\leq\frac{C_{k,\theta_0}\va \ln[\va^{-1}|x-y|+2]}{(1+|\lambda||x-y|^2)^{k}|x-y|^d},
\end{aligned}\label{Convergence of Green's functions 2}
\ee
where $ C_{k,\theta_0} $ depends only on $ \mu,d,m,\nu,\tau,k,\theta_0 $ and $ \om $.
\end{thm}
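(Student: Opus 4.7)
The plan is to adapt the standard two-scale corrected difference argument (in the style of Kenig--Lin--Shen) to the resolvent setting, combining the $\lambda$-weighted Lipschitz estimate $\eqref{Lipesu}$, the $L^\infty$ convergence of Green functions just proved in Theorem \ref{Convergence of Green's functions}, and the flux-corrector identity $\eqref{Flux correctors}$. Fix $x_0, y_0 \in \om$ with $x_0 \neq y_0$ and set $r = |x_0 - y_0|/16$. The range $\va \geq c r$ is disposed of directly: the target then dominates $r^{1-d}$, and the individual pointwise bounds $\eqref{Green Lipschitz 11}$ and $\eqref{Estimate for Dirichlet correctors}$ separately control both terms on the left of $\eqref{Convergence of Green's functions 2}$. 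So I concentrate on $\va < cr$.

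I introduce the auxiliary function
\[
v(x) \;:=\; G_{\va,\lambda}(x,y_0) - G_{0,\lambda}(x,y_0) - \bigl(\Phi_{\va,j}(x) - P_j(x)\bigr)\,\partial_{x_j} G_{0,\lambda}(x_0,y_0),
\]
treating $\partial_{x_j}G_{0,\lambda}(x_0,y_0)$ as an $x$-independent constant. A direct computation using $\partial_{x_i} P_j^{\al\beta} = \delta_{ij}\delta^{\al\beta}$ shows that $\partial_{x_i} v(x_0)$ coincides exactly with the expression bounded in $\eqref{Convergence of Green's functions 2}$, so the theorem reduces to an $L^{\infty}$-gradient bound for $v$ at $x_0$. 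From $(\mathcal{L}_\va - \lambda I) G_{\va,\lambda}(\cdot,y_0)=0$ on $\om(x_0,2r)$, $(\mathcal{L}_0-\lambda I) G_{0,\lambda}(\cdot,y_0)=0$, and $\mathcal{L}_\va \Phi_{\va,j}=0$, together with the flux-corrector identity $b_{ij} = \partial_{y_k} F_{kij}$ (with $F_{kij}$ antisymmetric in $(k,i)$ and in $L^\infty$ under $\eqref{Hol}$), I obtain
\[
(\mathcal{L}_\va - \lambda I) v \;=\; \operatorname{div}(H) + \lambda\,\bigl(\Phi_{\va,j}-P_j\bigr)\partial_{x_j} G_{0,\lambda}(x_0,y_0),
\qquad |H| \;\lesssim\; \va\,\bigl|\nabla^2 G_{0,\lambda}(\cdot,y_0)\bigr|.
\]

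Next I apply $\eqref{Lipesu}$ to $v$ on $\om(x_0,r)$; since $G_{\va,\lambda}(\cdot,y_0)$, $G_{0,\lambda}(\cdot,y_0)$ and $\Phi_{\va,j}-P_j$ all vanish on $\pa\om$, $v|_{\pa\om}=0$ and the boundary version applies when $\delta(x_0)<r$. The first term of $\eqref{Lipesu}$ is handled by Theorem \ref{Convergence of Green's functions} together with $|\Phi_{\va,j}-P_j|\leq C\va$ and $\eqref{Green Lipschitz 11}$: on $\om(x_0,2r)$ one has $|v|\lesssim \va(1+|\lambda|r^2)^{-k} r^{1-d}$, so the first term contributes $\va(1+|\lambda|r^2)^{-k} r^{-d}$. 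The logarithm emerges from the second term: representing the divergence source against $\nabla_2 G_{\va,\lambda}$ on $\om(x_0,2r)$ and applying $\eqref{Green Lipschitz 14}$ and $\eqref{ww80}$, the dominant integrand becomes $\va(1+|\lambda||x-x_0|^2)^{-k}|x-x_0|^{-d}$; integrating over the annulus $\va \leq |x-x_0| \leq r$ gives the factor $\ln[\va^{-1} r + 2]/r^d$, while the inner region $|x-x_0|<\va$ is absorbed by a single interior Lipschitz application at scale $\va$. The zero-order source $\lambda(\Phi_{\va,j}-P_j)\partial_{x_j} G_{0,\lambda}(x_0,y_0)$ is pointwise $\lesssim |\lambda|\va(1+|\lambda|r^2)^{-k}r^{1-d}$, and after multiplying by $(1+|\lambda|r^2)^n r$ from $\eqref{Lipesu}$ and choosing $k$ sufficiently large, it is majorized by the main term.

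The main obstacle will be the careful execution of the flux-corrector manipulation in the $\lambda \neq 0$ complex-valued setting: one must correctly combine $(\mathcal{L}_\va - \mathcal{L}_0) G_{0,\lambda}(\cdot,y_0)$ with $\mathcal{L}_\va P_j \cdot \partial_{x_j} G_{0,\lambda}(x_0,y_0)$ using antisymmetry of $F_{kij}$ to produce the divergence form $\operatorname{div}(H)$, while simultaneously keeping track of the extra zero-order error $\lambda(\Phi_{\va,j}-P_j)\partial_{x_j} G_{0,\lambda}(x_0,y_0)$ that has no analog in the $\lambda=0$ theory of Chapter 3 of \cite{Shen2}. Balancing the power $n$ from $\eqref{Lipesu}$ against $k$ so that every occurrence of $|\lambda|$ ultimately folds into the harmless weight $(1+|\lambda|r^2)^{-k}$ is the accounting step where the complex-valued resolvent theory differs most from the real-valued elliptic version. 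A secondary care is required in the boundary case $\delta(x_0)<r$, where the average-of-$|v|^2$ term must still enjoy the bound supplied uniformly up to $\pa\om$ by Theorem \ref{Convergence of Green's functions} together with the boundary vanishing of the Dirichlet corrector.
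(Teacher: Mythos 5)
Your overall skeleton (reduce to $\va<cr$, control the $L^\infty$ difference by Theorem \ref{Convergence of Green's functions}, represent the divergence-form source against $\nabla_1\nabla_2\widetilde G_{\va,\lambda}$ and extract the logarithm from the annulus $\va\le|x-x_0|\le r$) is the same as the paper's. But the central quantitative claim fails because you freeze the gradient. With
$v(x)=G_{\va,\lambda}(x,y_0)-G_{0,\lambda}(x,y_0)-(\Phi_{\va,j}(x)-P_j(x))c_j$, $c_j:=\pa_{x_j}G_{0,\lambda}(x_0,y_0)$ constant, one computes in $\om(x_0,2r)$, using $\mathcal{L}_{\va}(\Phi_{\va,j})=0$ and $\operatorname{div}(\widehat A\nabla P_j)=0$,
\begin{align}
(\mathcal{L}_{\va}-\lambda I)v=-\frac{\pa}{\pa x_i}\Big[\big(\widehat a_{ij}^{\al\beta}-a_{ij}^{\al\beta}(x/\va)\big)\big(\pa_{x_j}G_{0,\lambda}^{\beta\gamma}(x,y_0)-c_j^{\beta\gamma}\big)\Big]+\lambda(\Phi_{\va,j}-P_j)c_j.\nonumber
\end{align}
No flux corrector ever appears: $b_{ij}=\pa_{y_k}F_{kij}$ enters only through the combination $(\widehat A-A_\va)\nabla u_0-A_\va\nabla\chi(\cdot/\va)\,\nabla u_0$ with the \emph{non-constant} $\nabla u_0(x)$, and your frozen constant removes exactly the term that produces it. Consequently $H_i=-(\widehat a_{ij}-a_{ij}(x/\va))(\pa_jG_{0,\lambda}(x)-c_j)$ satisfies only $|H(x)|\lesssim|x-x_0|\,\|\nabla^2G_{0,\lambda}(\cdot,y_0)\|_{L^\infty}\sim|x-x_0|\,r^{-d}$, not $\va|\nabla^2G_{0,\lambda}|\sim\va r^{-d}$: the coefficient difference $\widehat A-A(x/\va)$ is $O(1)$, not $O(\va)$. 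Feeding this into $\int_{\om(x_0,2r)}|x_0-y|^{-d}|H(y)|\,dy$ gives only $r^{1-d}$, i.e.\ the trivial bound already implied by $\eqref{Green Lipschitz 11}$ and $\eqref{Estimate for Dirichlet correctors}$, with no factor $\va\ln[\va^{-1}r+2]$.

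The fix is the one the paper uses: take the corrector with the full $x$-dependent gradient, $w_{\va,\lambda}=u_{\va,\lambda}-u_{0,\lambda}-(\Phi_{\va,j}^{\beta}-P_j^{\beta})\pa_{x_j}u_{0,\lambda}^{\beta}$ with $u_{\va,\lambda}=G_{\va,\lambda}^{\gamma}(\cdot,y_0)$, $u_{0,\lambda}=G_{0,\lambda}^{\gamma}(\cdot,y_0)$. Then the identity $\eqref{Equality 1}$ shows every term of $(\mathcal{L}_\va-\lambda I)w_{\va,\lambda}$ carries either an explicit $\va$ (via $F_{kij}$ or $|\Phi_\va-P|\le C\va$) or the small factor $|\nabla(\Phi_\va-P-\va\chi(\cdot/\va))|\le C\min\{1,\va/\delta\}$, paired with $\nabla^2u_{0,\lambda}$; the price is the extra term $(\Phi_{\va,j}-P_j)\pa_{x_i}\pa_{x_j}u_{0,\lambda}$ in $\nabla w_{\va,\lambda}$, which is $O(\va\,r^{-d})$ by $\eqref{Estimate for Dirichlet correctors}$ and $\eqref{ww80}$ and must be subtracted off separately (this is $\eqref{ww78}$). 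Note also that $\eqref{Lipesu}$ is stated only for $L^p$ right-hand sides, so the divergence-form part cannot be handled by "applying $\eqref{Lipesu}$ to $v$"; one must split off a particular solution in an intermediate $C^{2,1}$ domain and estimate its gradient through the representation formula, using the Hölder continuity of the density (hence the $C^{2,1}$ hypothesis on $\om$ and $\eqref{ww81}$) to tame the region $|x-y|<\va$.
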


\begin{lem}
For $ \va>0 $ and $ d\geq 2 $, $ \lambda\in\Sigma_{\theta_0}\cup\{0\} $ with $ \theta_0\in(0,\frac{\pi}{2}) $, let $ \om $ be a bounded $ C^{2,1} $ domain in $ \mathbb{R}^d $. Suppose that $ A $ satisfies $ \eqref{sy} $, $ \eqref{el} $, $ \eqref{pe} $ and $ \eqref{Hol} $. Assume that $ u_{\va,\lambda}\in H^1(\om(x_0,4R);\mathbb{C}^m) $ and $ u_{0,\lambda}\in C^{2,\rho}(\om(x_0,4R);\mathbb{C}^m) $ for some $ 0<\rho<1 $. If $ \Delta(x_0,4R)\neq\emptyset $, assume that 
\begin{align}
(\mathcal{L}_{\va}-\lambda I)(u_{\va,\lambda})=(\mathcal{L}_0-\lambda I)(u_{0,\lambda})\text{ in } \om(x_0,4R)\quad\text{and}\quad u_{\va,\lambda}=u_{0,\lambda}\text{ on }\Delta(x_0,4R).\nonumber 
\end{align} 
If $ \Delta(x_0,4R)=\emptyset $, assume that
\begin{align}
(\mathcal{L}_{\va}-\lambda I)(u_{\va,\lambda})=(\mathcal{L}_0-\lambda I)(u_{0,\lambda})\text{ in } B(x_0,4R).\nonumber 
\end{align} 
Then for any $ 0<\va<r $, $ 1\leq\al\leq m $ and $ k\in\mathbb{N}_+ $,
\begin{align}
\|\frac{\pa u_{\va,\lambda}^{\al}}{\pa x_i}-\frac{\pa \Phi_{\va,j}^{\al\beta}}{\pa x_i}\frac{\pa u_{0,\lambda}^{\beta}}{\pa x_j}\|_{L^{\infty}(\om_R)}&\leq\frac{C_{k,\theta_0}}{(1+|\lambda|R^2)^kR}\dashint_{\om_{4R}}|u_{\va,\lambda}-u_{0,\lambda}|\nonumber\\
&\quad+C_{k,\theta_0}\va \left\{ (1+|\lambda|R^2)R^{-1}\|\nabla u_{0,\lambda}\|_{L^{\infty}(\om_{4R})}\right.\label{ww79}\\
&\quad+\ln[\va^{-1}R+2]\|\nabla^2u_{0,\lambda}\|_{L^{\infty}(\om_{4R})}+\left. R^{\rho}[\nabla^2 u_{0,\lambda}]_{C^{0,\rho}(\om_{4R})}\right\},\nonumber
\end{align}
where $ C_{k,\theta_0} $ depends on $ \mu,d,m,k,\theta_0,\nu,\tau,p,\rho $ and $ \om $.
\end{lem}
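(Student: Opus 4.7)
The plan is to adapt the argument of Lemma \ref{ww74}, using the Dirichlet corrector $\Phi_\va-P$ in place of the bulk corrector $\va\chi(x/\va)$ so that the error vanishes on the piece of $\pa\om$ inside the reference ball. Set
$$
w_{\va,\lambda}(x)=u_{\va,\lambda}(x)-u_{0,\lambda}(x)-\bigl(\Phi_{\va,j}^{\beta}(x)-P_{j}^{\beta}(x)\bigr)\frac{\pa u_{0,\lambda}^{\beta}}{\pa x_j}(x).
$$
Because $\Phi_{\va,j}^\beta=P_j^\beta$ on $\pa\om$ by \eqref{Dirichlet correctors} and $u_{\va,\lambda}=u_{0,\lambda}$ on $\Delta(x_0,4R)$, we have $w_{\va,\lambda}=0$ on $\Delta(x_0,4R)$. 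Differentiating and using $\pa_i P_j^{\al\beta}=\delta_{ij}\delta^{\al\beta}$ yields the algebraic identity
$$
\frac{\pa u_{\va,\lambda}^{\al}}{\pa x_i}-\frac{\pa\Phi_{\va,j}^{\al\beta}}{\pa x_i}\frac{\pa u_{0,\lambda}^{\beta}}{\pa x_j}=\frac{\pa w_{\va,\lambda}^{\al}}{\pa x_i}+\bigl(\Phi_{\va,j}^{\al\beta}-P_{j}^{\al\beta}\bigr)\frac{\pa^{2}u_{0,\lambda}^{\beta}}{\pa x_i\pa x_j},
$$
whose last term is dominated in $L^\infty(\om_R)$ by $C\va\|\nabla^2 u_{0,\lambda}\|_{L^\infty(\om_{4R})}$ via \eqref{Estimate for Dirichlet correctors}. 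It therefore suffices to estimate $\|\nabla w_{\va,\lambda}\|_{L^\infty(\om_R)}$.

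\textbf{Splitting and the harmonic piece.} Pick a $C^{2,1}$ intermediate domain $\widetilde{\om}$ with $\om(x_0,3R)\subset\widetilde{\om}\subset\om(x_0,4R)$ (or the ball $B(x_0,3R)$ in the interior case) and decompose $w_{\va,\lambda}=w^{(1)}+w^{(2)}$ on $\widetilde{\om}$, where $w^{(1)}\in H_0^1(\widetilde{\om};\mathbb{C}^m)$ solves $(\mathcal{L}_\va-\lambda I)(w^{(1)})=(\mathcal{L}_\va-\lambda I)(w_{\va,\lambda})$ and $(\mathcal{L}_\va-\lambda I)(w^{(2)})=0$ in $\widetilde{\om}$ with $w^{(2)}=w_{\va,\lambda}$ on $\pa\widetilde{\om}$. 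Since $w^{(2)}$ vanishes on $\Delta(x_0,3R)$, the localized Lipschitz estimate \eqref{Lipesu}, combined with the local boundedness $\eqref{Linfty}$ for the $L^2$-to-$L^1$ passage, bounds $\|\nabla w^{(2)}\|_{L^\infty(\om_R)}$ by $C_{k,\theta_0}(1+|\lambda|R^2)^{-k}R^{-1}\dashint_{\om_{3R}}|w^{(2)}|$, which through the triangle inequality and \eqref{Estimate for Dirichlet correctors} is in turn controlled by $R^{-1}\dashint_{\om_{4R}}|u_{\va,\lambda}-u_{0,\lambda}|$, $R^{-1}\|w^{(1)}\|_{L^\infty(\widetilde{\om})}$, and $\va R^{-1}\|\nabla u_{0,\lambda}\|_{L^\infty(\om_{4R})}$. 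The quantity $\|w^{(1)}\|_{L^\infty(\widetilde{\om})}$ is handled separately, exactly as in the proof of Lemma \ref{ww74}, by the integrable Green-function bound \eqref{Green Lipschitz 11}.

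\textbf{The inhomogeneous piece and the logarithmic loss.} Represent $w^{(1)}$ through the Green matrix $\widetilde{G}_{\va,\lambda}$ of $\mathcal{L}_\va-\lambda I$ on $\widetilde{\om}$. Computing $(\mathcal{L}_\va-\lambda I)(w_{\va,\lambda})$ via the cell-problem identity $\mathcal{L}_\va(\Phi_{\va,j}^\beta)=0$, the flux-corrector relation \eqref{Flux correctors}, and the product rule---in close parallel to \eqref{ww69} but with $\Phi_\va-P$ replacing $\va\chi(x/\va)$---and integrating by parts once yields, after some algebra, an expression of the form
$$
\nabla w^{(1)}(x)=\va\int_{\widetilde{\om}}\nabla_1\nabla_2\widetilde{G}_{\va,\lambda}(x,y)\,\Psi(y)\,\nabla^2 u_{0,\lambda}(y)\,dy+\mathcal{R},
$$
where $\Psi$ is a uniformly bounded combination of $A(y/\va)$, $\chi(y/\va)$ and $F_{kij}^{\al\beta}(y/\va)$, and the remainder $\mathcal{R}$ collects zeroth-order contributions of order $\va\lambda(\Phi_\va-P)\nabla u_{0,\lambda}$, dominated by $C\va(1+|\lambda|R^2)R^{-1}\|\nabla u_{0,\lambda}\|_{L^\infty(\om_{4R})}$ through \eqref{*} and \eqref{Estimate for Dirichlet correctors}. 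Plugging in the sharp bound \eqref{Green Lipschitz 14}, $|\nabla_1\nabla_2\widetilde{G}_{\va,\lambda}(x,y)|\leq C_{k,\theta_0}(1+|\lambda||x-y|^2)^{-k}|x-y|^{-d}$, and splitting the integral at scale $|y-x|\sim\va$---controlling the near-diagonal piece $|y-x|<\va$ by $L^\infty$ to obtain $\va\|\nabla^2 u_{0,\lambda}\|_{L^\infty}$, and on $\va\le|y-x|\le R$ writing $\nabla^2 u_{0,\lambda}(y)=\nabla^2 u_{0,\lambda}(x)+[\nabla^2 u_{0,\lambda}(y)-\nabla^2 u_{0,\lambda}(x)]$ so that the Hölder remainder supplies $\va R^\rho[\nabla^2 u_{0,\lambda}]_{C^{0,\rho}}$ and the frozen constant gives $\va\ln[\va^{-1}R+2]\|\nabla^2 u_{0,\lambda}\|_{L^\infty}$ from $\int_\va^R s^{-1}\,ds$---assembles the announced bound. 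The main obstacle is precisely extracting this sharp logarithm, rather than a power of $R/\va$, from the borderline non-integrable kernel $|x-y|^{-d}$: this is exactly what the $C^{0,\rho}$ regularity of $\nabla^2 u_{0,\lambda}$ (available because $\om$ is $C^{2,1}$ and $A$ is Hölder continuous) enables, while the tunable polynomial decay in \eqref{Green Lipschitz 14} keeps every constant independent of $|\lambda|$; the bookkeeping for the two cases $\Delta(x_0,4R)\neq\emptyset$ and $\Delta(x_0,4R)=\emptyset$ parallels Lemma \ref{ww74}.
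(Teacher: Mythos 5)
Your overall strategy coincides with the paper's: the same corrector $w_{\va,\lambda}$, the same intermediate $C^{2,1}$ domain and splitting into an $H_0^1$ piece plus a piece vanishing on $\Delta(x_0,3R)$ (handled by the boundary Lipschitz estimate \eqref{Lipesu}), the same Green-function representation with the kernel bound \eqref{Green Lipschitz 14}, and the same mechanism for extracting the logarithm. However, there are two concrete gaps in the execution. First, your computation of $(\mathcal{L}_{\va}-\lambda I)(w_{\va,\lambda})$ is incomplete: besides the divergence-form term and the zeroth-order $\lambda$-term, the algebra (cf.\ \eqref{Equality 1}) produces the non-divergence term $a_{ij}^{\al\beta}(x/\va)\,\partial_j[\Phi_{\va,k}^{\beta\gamma}-x_k\delta^{\beta\gamma}-\va\chi_k^{\beta\gamma}(x/\va)]\,\partial_i\partial_k u_{0,\lambda}^{\gamma}$, which cannot be absorbed into a remainder "of order $\va\lambda(\Phi_{\va}-P)\nabla u_{0,\lambda}$". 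In the representation of $\nabla w^{(1)}$ it is paired with $\nabla_1\widetilde{G}_{\va,\lambda}$ and needs its own argument: one must combine the boundary decay $|\nabla_1\widetilde{G}_{\va,\lambda}(x,y)|\lesssim |x-y|^{1-d}\min\{1,\dist(y,\pa\widetilde{\om})/|x-y|\}$ with $|\nabla\{\Phi_{\va,j}^{\beta}-P_j^{\beta}-\va\chi_j^{\beta}(\cdot/\va)\}|\lesssim\min\{1,\va[\dist(\cdot,\pa\widetilde{\om})]^{-1}\}$, whose product is $\lesssim\va|x-y|^{-d}$ and contributes another $\va\ln[\va^{-1}R+2]\|\nabla^2 u_{0,\lambda}\|_{L^{\infty}}$.

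Second, on the near-diagonal region $|x-y|<\va$ you cannot control the main integral "by $L^{\infty}$": the kernel $|\nabla_1\nabla_2\widetilde{G}_{\va,\lambda}(x,y)|\sim|x-y|^{-d}$ is not locally integrable, so $\int_{B(x,\va)}|x-y|^{-d}\,dy$ diverges. The Hölder subtraction must be performed there as well, and it should be applied to the full divergence-form coefficient $f_i(y)=-\va F_{jik}(y/\va)\,\partial^2u_{0,\lambda}+a_{ij}(y/\va)[\Phi_{\va,k}(y)-P_k(y)]\,\partial^2u_{0,\lambda}$ rather than to $\nabla^2u_{0,\lambda}$ alone (the subtraction of the constant $f_i(x)$ is legitimate because $\int_{\widetilde{\om}}\nabla_2\widetilde{G}_{\va,\lambda}(x,y)\,\overline{c}\,dy=0$ for constant $c$). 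Its Hölder seminorm is $O(\va^{1-\rho}\|\nabla^2u_{0,\lambda}\|_{L^{\infty}}+\va[\nabla^2u_{0,\lambda}]_{C^{0,\rho}})$, and $\int_{B(x,\va)}|x-y|^{-d+\rho}\,dy\sim\va^{\rho}$ then closes the near-diagonal estimate. Both repairs stay within your framework, so the approach is sound, but as written these two steps do not go through.
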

\begin{proof}
We only prove the case that $ \Delta(x_0,3R)\neq\emptyset $ and the other is similar in view of the proof of Lemma \ref{ww74}. We start by choosing a $ C^{2,1} $ domain $ \widetilde{\om} $ such that $ \om(x_0,3R)\subset\widetilde{\om}\subset \om(x_0,4R) $. Let
\begin{align}
w_{\va,\lambda}(x)=u_{\va,\lambda}(x)-u_{0,\lambda}(x)-[\Phi_{\va,j}^{\beta}(x)-P_j^{\beta}(x)]\frac{\pa u_{0,\lambda}^{\beta}}{\pa x_j}.\nonumber
\end{align}
Simple calculations imply that
\be
\begin{aligned}
\left\{(\mathcal{L}_{\va}-\lambda I)(w_{\va,\lambda})\right\}^{\alpha}&=-\va\frac{\pa}{\pa x_i}\left\{ F_{jik}^{\al\gamma}(x/\varepsilon)\frac{\pa^2u_{0,\lambda}^{\gamma}}{\pa x_j\pa x_k}\right\}\\
&\quad+a_{ij}^{\al\beta}(x/\va)\frac{\pa}{\pa x_j}[\Phi_{\va,k}^{\beta\gamma}(x)-x_k\delta^{\beta\gamma}-\va\chi_k^{\beta\gamma}(x/\va)]\frac{\pa^2 u_{0,\lambda}^{\gamma}}{\pa x_i\pa x_k}\\
&\quad+\frac{\pa}{\pa x_i}\left\{a_{ij}^{\al\beta}(x/\va)[\Phi_{\va,k}^{\beta\gamma}(x)-x_k\delta^{\beta\gamma}]\frac{\pa^2u_{0,\lambda}^{\gamma}}{\pa x_j\pa x_k}\right\}\\
&\quad+\lambda[\Phi_{\va,k}^{\al\beta}(x)-x_k\delta^{\al\beta}]\frac{\pa u_{0,\lambda}^{\beta}}{\pa x_k}
\end{aligned}\label{Equality 1}
\ee
Note that $ w_{\va,\lambda}=0 $ on $ \Delta(x_0,4R) $. Write $ w_{\va,\lambda}=v_{\va,\lambda}+z_{\va,\lambda} $ in $ \widetilde{\om} $, where $ v_{\va,\lambda}\in H_0^1(\widetilde{\om};\mathbb{C}^m) $ and $ (\mathcal{L}_{\va}-\lambda I)(v_{\va,\lambda})=(\mathcal{L}_{\va}-\lambda I)(w_{\va,\lambda}) $ in $ \widetilde{\om} $. Since $ (\mathcal{L}_{\va}-\lambda I)(z_{\va,\lambda})=0 $ in $ \widetilde{\om} $ and $ z_{\va,\lambda}=w_{\va,\lambda}=0 $ on $ \Delta(x_0,3R) $, it follows from the boundary Lipschitz estimate $ \eqref{Linfty} $ and $ \eqref{Lipesu} $ that for any $ k\in\mathbb{N}_+ $, 
\begin{align}
\|\nabla z_{\va,\lambda}\|_{L^{\infty}(\om_R)}&\leq \frac{C_{\theta_0}}{R}\left(\dashint_{\om_{3/2R}}|z_{\va,\lambda}|^{2}\right)^{\frac{1}{2}}\leq \frac{C_{\theta_0}}{R}\|z_{\va,\lambda}\|_{L^{\infty}(\om_{3/2R})}\nonumber\\
&\leq \frac{C_{k,\theta_0}}{(1+|\lambda|R^2)^kR}\dashint_{\om_{2R}}|z_{\va,\lambda}|\leq\frac{C_{k,\theta_0}}{(1+|\lambda|R^2)^kR}\dashint_{\om_{2R}}|w_{\va,\lambda}|+\frac{C_{k,\theta_0}}{R}\|v_{\va,\lambda}\|_{L^{\infty}(\om_{2R})}\nonumber\\
&\leq\frac{C_{k,\theta_0}}{(1+|\lambda|R^2)^kR}\dashint_{\om_{2R}}|u_{\va,\lambda}-u_{0,\lambda}|+\frac{C_{k,\theta_0}}{R}\left\{\|v_{\va,\lambda}\|_{L^{\infty}(\om_{2R})}+\va\|\nabla u_{0,\lambda}\|_{L^{\infty}(\om_{2R})}\right\},\nonumber 
\end{align}
where we have used $ \eqref{Estimate for Dirichlet correctors} $. This implies that
\begin{align}
&\|\nabla w_{\va,\lambda}\|_{L^{\infty}(\om_R)}\leq\|\nabla v_{\va,\lambda}\|_{L^{\infty}(\om_R)}+\|\nabla z_{\va,\lambda}\|_{L^{\infty}(\om_R)}\nonumber\\
&\quad\quad\leq\frac{C_{k\theta_0}}{(1+|\lambda|R^2)^kR}\dashint_{\om_{2R}}|u_{\va,\lambda}-u_{0,\lambda}|+\frac{C_{k,\theta_0}}{R}\left\{R\|\nabla v_{\va,\lambda}\|_{L^{\infty}(\om_{2R})}+\va\|\nabla u_{0,\lambda}\|_{L^{\infty}(\om_{2R})}\right\},\nonumber
\end{align}
where we have used $ \|v_{\va,\lambda}\|_{L^{\infty}(\om_{2R})}\leq CR\|\nabla v_{\va,\lambda}\|_{L^{\infty}(\om_{2R})} $ since $ v_{\va,\lambda}=0 $ on $ \pa\widetilde{\om} $. Owing to
\begin{align}
\frac{\pa w_{\va,\lambda}^{\al}}{\pa x_i}=\frac{\pa u_{\va,\lambda}^{\al}}{\pa x_i}-\frac{\pa \Phi_{\va,j}^{\al\beta}}{\pa x_i}\frac{\pa u_{0,\lambda}^{\beta}}{\pa x_j}-[\Phi_{\va,j}^{\al\beta}(x)-x_j\delta^{\al\beta}]\frac{\pa^2u_{0,\lambda}^{\beta}}{\pa x_i\pa x_j},\nonumber
\end{align}
we can obtain that for any $ 1\leq\al\leq m $ and $ k\in\mathbb{N}_+ $,
\be
\begin{aligned}
&\|\frac{\pa u_{\va,\lambda}^{\al}}{\pa x_i}-\frac{\pa \Phi_{\va,j}^{\al\beta}}{\pa x_i}\frac{\pa u_{0,\lambda}^{\beta}}{\pa x_j}\|_{L^{\infty}(\om_R)}\leq \|\nabla w_{\va,\lambda}\|_{L^{\infty}(\om_R)}+\|[\Phi_{\va}-P]\nabla^2u_{0,\lambda}\|_{L^{\infty}(\om_R)}\\
&\quad\quad\leq \|\nabla w_{\va,\lambda}\|_{L^{\infty}(\om_R)}+C\va\|\nabla^2 u_{0,\lambda}\|_{L^{\infty}(\om_R)}\\
&\quad\quad\leq\frac{C_{k\theta_0}}{(1+|\lambda|R^2)^kR}\dashint_{\om_{2R}}|u_{\va,\lambda}-u_{0,\lambda}|\\
&\quad\quad\quad\quad+C_{k,\theta_0}\left\{\va R^{-1}\|\nabla u_{0,\lambda}\|_{L^{\infty}(\om_{2R})}
+\|\nabla v_{\va,\lambda}\|_{L^{\infty}(\om_{2R})}+\va\|\nabla^2 u_{0,\lambda}\|_{L^{\infty}(\om_{2R})}\right\}.
\end{aligned}\label{ww78}
\ee
It remains to estimate $ \nabla v_{\va,\lambda} $ in $ \om_{2R} $. To this end, we use the representation formula
\begin{align}
v_{\va,\lambda}(x)=\int_{\widetilde{\om}}\widetilde{G}_{\va,\lambda}(x,y)\overline{(\mathcal{L}_{\va}-\lambda I)(w_{\va,\lambda})(y)}dy,\nonumber
\end{align}
where $ \widetilde{G}_{\va,\lambda}(x,y) $ is the Green function for $ \mathcal{L}_{\va}-\lambda I $ in the $ C^{2,1} $ domain $ \widetilde{\om} $. Let
\begin{align}
f_i^{\al}(x)=-\va F_{jik}^{\al\gamma}(x/\varepsilon)\frac{\pa^2u_{0,\lambda}^{\gamma}}{\pa x_j\pa x_k}+a_{ij}^{\al\beta}(x/\va)[\Phi_{\va,k}^{\beta\gamma}(x)-x_k\delta^{\beta\gamma}]\frac{\pa^2u_{0,\lambda}^{\gamma}}{\pa x_j\pa x_k}.\nonumber
\end{align}
In view of $ \eqref{Equality 1} $, we can obtain
\begin{align}
v_{\va,\lambda}(x)&=-\int_{\widetilde{\om}}\frac{\pa}{\pa y_i}\{\widetilde{G}_{\va,\lambda}(x,y)\}\overline{[f_i(y)-f_i(x)]}dy\nonumber\\
&\quad\quad+\int_{\widetilde{\om}}\widetilde{G}_{\va,\lambda}(x,y)\overline{a_{ij}(y/\va)\frac{\pa}{\pa y_i}[\Phi_{\va,k}(y)-P_k(y)-\va\chi_k(y/\va)]\frac{\pa^2 u_{0,\lambda}}{\pa y_i\pa y_k}}dy\nonumber\\
&\quad\quad+\overline{\lambda}\int_{\widetilde{\om}}\widetilde{G}_{\va,\lambda}(x,y)\overline{[\Phi_{\va,k}(y)-P_k(y)]\frac{\pa u_{0,\lambda}}{\pa y_k}}dy.\nonumber
\end{align}
It follows that
\be
\begin{aligned}
|\nabla v_{\va,\lambda}(x)|&\leq\int_{\widetilde{\om}}|\nabla_1\nabla_2\widetilde{G}_{\va,\lambda}(x,y)||f(y)-f(x)|dy\\
&\quad+C\|\nabla^2u_{0,\lambda}\|_{L^{\infty}(\om_{4R})}\int_{\widetilde{\om}}|\nabla_1\widetilde{G}_{\va,\lambda}(x,y)||\nabla[\Phi_{\va}(y)-P(y)-\va\chi(y/\va)]|dy\\
&\quad+C|\lambda|\|\nabla u_{0,\lambda}\|_{L^{\infty}(\om_{4R})}\int_{\widetilde{\om}}|\widetilde{G}_{\va,\lambda}(x,y)||\Phi_{\va}(y)-P(y)|dy.
\end{aligned}\label{ww77}
\ee
In view of the Lipshcitz estimates of Green functions, $ \eqref{Green Lipschitz 14} $, we have
\begin{align}
|\nabla_1\nabla_2\widetilde{G}_{\va,\lambda}(x,y)|\leq\frac{C_{k,\theta_0}}{(1+|\lambda||x-y|^2)^{k}|x-y|^d}\text{ for any }k\in\mathbb{N}_+.\label{yyd}
\end{align}
Meanwhile, simple calculations and $ \eqref{Estimate for Dirichlet correctors} $ give the $ L^{\infty} $ and Hölder estimates of $ f $, that is
\begin{align}
\|f\|_{L^{\infty}(\om_{4R})}&\leq C\va\|\nabla^2u_{0,\lambda}\|_{L^{\infty}(\om_{4R})},\nonumber\\
|f(x)-f(y)|&\leq C|x-y|^{\rho}\left\{\va^{1-\rho}\|\nabla^2u_{0,\lambda}\|_{L^{\infty}(\om_{4R})}+\va[\nabla^2u_{0,\lambda}]_{C^{0,\rho}(\om_{4R})}\right\}.\nonumber
\end{align}
Choosing $ k=0 $ in $ \eqref{yyd} $, it can be obtained that
\begin{align}
&\int_{\widetilde{\om}}|\nabla_1\nabla_2\widetilde{G}_{\va,\lambda}(x,y)||f(y)-f(x)|dy\nonumber\\
&\quad\quad\leq C_{\theta_0}\va\|\nabla^2u_{0,\lambda}\|_{L^{\infty}(\om_{4R})}\int_{\widetilde{\om}\backslash B(x,\va)}\frac{dy}{|x-y|^{d}}\nonumber\\
&\quad\quad\quad+C_{\theta_0} \left\{\va^{1-\rho}\|\nabla^2u_{0,\lambda}\|_{L^{\infty}(\om_{4R})}+\va[\nabla^2u_{0,\lambda}]_{C^{0,\rho}(\om_{4R})}\right\}\int_{\widetilde{\om}\cap B(x,\va)}\frac{dy}{|x-y|^{d-\rho}}\nonumber\\
&\quad\quad\leq C_{\theta_0} \left\{\va\ln[\va^{-1}R+2]\|\nabla^2u_{0,\lambda}\|_{L^{\infty}(\om_{4R})}+C\va^{1+\rho}[\nabla^2u_{0,\lambda}]_{C^{0,\rho}(\om_{4R})}\right\}.\nonumber
\end{align}
Finally, using the estimates
\begin{align}
|\nabla_1\widetilde{G}_{\va,\lambda}(x,y)|&\leq\frac{C_{k,\theta_0}}{(1+|\lambda||x-y|^2)^{k}|x-y|^{d-1}}\min\left\{1,\frac{\dist(y,\pa\widetilde{\om})}{|x-y|}\right\}\text{ for any }k\in\mathbb{N}_+,\nonumber
\end{align}
as well as the observation that for any $ 1\leq j\leq d $ and $ 1\leq \beta\leq m $,
\begin{align}
\left|\nabla\left\{\Phi_{\va,j}^{\beta}(x)-P_{j}^{\beta}(x)-\va \chi_{j}^{\beta}(x/\va)\right\}\right| \leq C \min \left\{1,\va[\operatorname{dist}(x,\pa\widetilde{\Omega})]^{-1}\right\},\nonumber
\end{align}
we can bound the second term in the right hand side of $ \eqref{ww77} $ by
\begin{align}
&C_{\theta_0}\|\nabla^2u_{0,\lambda}\|_{L^{\infty}(\om_{4R})}\left\{\va\int_{\widetilde{\om}\backslash B(x,\va)}\frac{dy}{|x-y|^{d}}+\int_{\widetilde{\om}\cap B(x,\va)}\frac{dy}{|x-y|^{d-1}}\right\}\nonumber\\
&\quad\quad\leq C_{\theta_0}\va \ln[\va^{-1}R+2]\|\nabla^2u_{0,\lambda}\|_{L^{\infty}(\om_{4R})}.\nonumber
\end{align}
For the third term of the right hand side of $ \eqref{ww77} $, by rescaling and $ \eqref{*} $, we can obtain that this term is bounded by
\begin{align}
C_{\theta_0}\va (1+|\lambda|R^2)R^{-1}\|\nabla u_{0,\lambda}\|_{L^{\infty}(\om_{4R})}.\nonumber
\end{align}
As a result, we have proved that
\begin{align}
\|\nabla v_{\va,\lambda}\|_{L^{\infty}(\om_{3R})}&\leq  C_{\theta_0}\va (1+|\lambda|R^2) R^{-1}\|\nabla u_{0,\lambda}\|_{L^{\infty}(\om_{4R})}\nonumber\\
&\quad\quad+C_{\theta_0} \left\{\va\ln[\va^{-1}R+2]\|\nabla^2u_{0,\lambda}\|_{L^{\infty}(\om_{4R})}+\va R^{\rho}[\nabla^2u_{0,\lambda}]_{C^{0,\rho}(\om_{4R})}\right\}.\nonumber
\end{align}
This, together with $ \eqref{ww78} $, completes the proof.
\end{proof}

\begin{proof}[Proof of Theorem \ref{Convergence of Green's functions 2t}] Fix $ x_0,y_0\in\om $ and $ R=\frac{|x_0-y_0|}{16} $. We may assume that $ 0<\va<r $, since the case $ \va\geq r $ is trivial and follows directly from the size estimates of $ |\nabla_1G_{\va,\lambda}(x,y)| $, $ |\nabla_2G_{\va,\lambda}(x,y)| $ (see $ \eqref{Green Lipschitz 11} $) and $ \eqref{Estimate for Dirichlet correctors} $. For any $ 1\leq\gamma\leq m $, let $ u_{\va,\lambda}(x)=G_{\va,\lambda}^{\gamma}(x,y_0) $ and $ u_{0,\lambda}(x)=G_{0,\lambda}^{\gamma}(x,y_0) $. Observe that $ (\mathcal{L}_{\va}-\lambda I)(u_{\va,\lambda})=(\mathcal{L}_0-\lambda I)(u_{0,\lambda})=0 $ in $ \om(x_0,4R) $ and $ u_{\va,\lambda}=u_{0,\lambda}=0 $ on $ \Delta(x_0,4R) $ (if $ \Delta(x_0,4R)\neq\emptyset $). By Theorem \ref{Convergence of Green's functions}, it can be obtained,
\begin{align}
\|u_{\va,\lambda}-u_{0,\lambda}\|_{L^{\infty}(\om(x_0,4R))}\leq C_{\theta_0}\va R^{1-d}.\nonumber
\end{align}
Also, since $ \om $ is $ C^{2,1} $, we have, for any $ k\in\mathbb{N}_+ $,
\begin{align}
\|\nabla u_{0,\lambda}\|_{L^{\infty}(\om_{4R})}&\leq \frac{C_{k,\theta_0} }{(1+|\lambda|R^2)^kR^{d-1}},\nonumber\\
\|\nabla^2 u_{0,\lambda}\|_{L^{\infty}(\om_{4R})}\leq \frac{C_{k,\theta_0} }{(1+|\lambda|R^2)^kR^d}\quad&\text{and}\quad\|\nabla^2 u_{0,\lambda}\|_{C^{0,\rho}(\om_{4R})}\leq \frac{C_{k,\theta_0} }{(1+|\lambda|R^2)^kR^{d+\rho}}.\nonumber
\end{align}
Here, we have used $ \eqref{ww80} $, $ \eqref{ww81} $, $ \eqref{Green Lipschitz 11} $ and arguments in the proof of Theorem \ref{Convergence of Green's functions}. Hence, in view of $ \eqref{ww79} $, we can complete the proof.
\end{proof}

\subsection{Proof of Theorem \ref{Approximation 1}, Theorem \ref{Lpconres} and \ref{LpW1pconreso}}
\begin{proof}[Proof of Theorem \ref{Approximation 1}]
For $ u_{\va,\lambda}=R(\lambda,\mathcal{L}_{\va})F $ and $ u_{0,\lambda}=R(\lambda,\mathcal{L}_{0})F $, let 
\begin{align}
w_{\va,\lambda}(x)=u_{\va,\lambda}(x)-u_{0,\lambda}(x)-[\Phi_{\va,j}^{\beta}(x)-P_{j}^{\beta}(x)]\frac{\pa u_{0,\lambda}^{\beta}}{\pa x_j}.\nonumber
\end{align}
Using the equality $ \eqref{Equality 1} $ and choosing $ w_{\va,\lambda} $ as the test function, we can obtain that
\begin{align}
B_{\va,\lambda,\om}[w_{\va,\lambda},w_{\va,\lambda}]=\int_{\om}A(x/\va)\nabla w_{\va,\lambda}\overline{\nabla w_{\va,\lambda}}dx-\lambda\int_{\om}|w_{\va,\lambda}|^2dx=J_{\va,\lambda,\om}[u_{0,\lambda},w_{\va,\lambda}],\quad\va>0,\label{Equation 1}
\end{align}
where the bilinear form $ J_{\va,\lambda,\om}[\cdot,\cdot]:H_0^1(\om;\mathbb{C}^m)\times H_0^1(\om;\mathbb{C}^m)\to\mathbb{C} $ is defined by
\be
\begin{aligned}
J_{\va,\lambda,\om}[u,v]&=-\int_{\om}\va  F_{jik}^{\al\gamma}(x/\varepsilon)\frac{\pa^2 u^{\gamma}}{\pa x_j\pa x_k}\overline{\frac{\pa v^{\al}}{\pa x_i}}dx+\lambda\int_{\om}[\Phi_{\va,k}^{\al\beta}(x)-x_k\delta^{\al\beta}]\frac{\pa u^{\beta}}{\pa x_k}\overline{v^{\al}}dx\\
&\quad\quad-\int_{\om}a_{ij}^{\al\beta}(x/\va)[\Phi_{\va,k}^{\beta\gamma}(x)-x_k\delta^{\beta\gamma}]\frac{\pa^2 u^{\beta}}{\pa x_j\pa x_k}\overline{\frac{\pa v^{\al}}{\pa x_i}}dx\\
&\quad\quad+\int_{\om}a_{ij}^{\al\beta}(x/\va)\frac{\pa}{\pa x_j}[\Phi_{\va,k}^{\beta\gamma}(x)-x_k\delta^{\beta\gamma}-\va\chi_k^{\beta\gamma}(x/\va)]\frac{\pa^2 u^{\gamma}}{\pa x_i\pa x_k}\overline{v^{\al}}dx.
\end{aligned}\label{J linear}
\ee
Taking $ u=u_{0,\lambda} $ and  $ v=w_{\va,\lambda} $ and using $ \eqref{Estimate for Dirichlet correctors} $, it can be inferred that
\begin{align}
|J_{\va,\lambda,\om}[u_{0,\lambda},w_{\va,\lambda}]|&\leq C\va \int_{\om}|\nabla^2u_{0,\lambda}||\nabla w_{\va,\lambda}|dx+C\va|\lambda|\int_{\om}|\nabla u_{0,\lambda}||w_{\va,\lambda}|dx\nonumber\\
&\quad+C\int_{\om}|\nabla[\Phi_{\va}(x)-P(x)-\va\chi(x/\va)]||\nabla^2u_{0,\lambda}||w_{\va,\lambda}|dx\nonumber\\
&\leq C\va|\lambda|\|\nabla u_{0,\lambda}\|_{L^2(\om)}\|w_{\va,\lambda}\|_{L^2(\om)}+C\va\|\nabla^2 u_{0,\lambda}\|_{L^2(\om)}\|\nabla w_{\va,\lambda}\|_{L^2(\om)}\nonumber\\
&\quad+C\|\nabla[\Phi_{\va}(\cdot)-P(\cdot)-\va\chi(\cdot/\va)]w_{\va,\lambda}\|_{L^2(\om)}\|\nabla^2 u_{0,\lambda}\|_{L^2(\om)}.\nonumber
\end{align}
To estimate $ |J_{\va,\lambda,\om}[u_{0,\lambda},w_{\va,\lambda}]| $, we first claim that for any $ 1\leq j\leq d $ and $ 1\leq \beta\leq m $,
\begin{align}
\|\nabla[\Phi_{\va,j}^{\beta}(\cdot)-P_j^{\beta}(\cdot)-\va\chi_j^{\beta}(\cdot/\va)]w_{\va,\lambda}\|_{L^2(\om)}\leq C\va\|\nabla w_{\va,\lambda}\|_{L^2(\om)}.\label{Claim 1}
\end{align}
To see $ \eqref{Claim 1} $, we fix $ 1\leq \beta_0\leq m $, $ 1\leq j_0\leq d $ and let
\begin{align}
h_{\va}(x)=\Phi_{\va,j_0}^{\beta_0}(x)-P_{j_0}^{\beta_0}(x)-\va\chi_{j_0}^{\beta_0}(x/\va),\text{ where }x\in\om.\nonumber
\end{align}
Note that $ h_{\va}\in H^1(\Omega;\mathbb{C}^m)\cap L^{\infty}(\Omega;\mathbb{C}^m) $ and $ \mathcal{L}_{\va}(h_{\va})=0 $ in $ \om $. It follows that
\begin{align}
\mu\int_{\om}|\nabla h_{\va}|^2|w_{\va,\lambda}|^2&\leq\int_{\om}a_{ij}^{\al\beta}(x/\va)\frac{\pa h_{\va}^{\beta}}{\pa x_j}\overline{\frac{\pa  h_{\va}^{\al}}{\pa x_i}}|w_{\va,\lambda}|^2dx\nonumber\\
&=-\int_{\om}\overline{h_{\va}^{\al}}a_{ij}^{\al\beta}(x/\va)\frac{\pa h_{\va}^{\beta}}{\pa x_j}\overline{\frac{\pa  w_{\va,\lambda}^{\gamma}}{\pa x_i}}w_{\va,\lambda}^{\gamma}dx-\int_{\om}\overline {h_{\va}^{\al}}a_{ij}^{\al\beta}(x/\va)\frac{\pa h_{\va}^{\beta}}{\pa x_j}\frac{\pa  w_{\va,\lambda}^{\gamma}}{\pa x_i}\overline{w_{\va,\lambda}^{\gamma}}dx,\nonumber
\end{align}
where we have used integration by parts. Hence
\begin{align}
\int_{\om}|\nabla h_{\va}|^2|w_{\va,\lambda}|^2dx\leq C\int_{\om}|h_{\va}||\nabla h_{\va}||\nabla w_{\va,\lambda}||w_{\va,\lambda}|dx,\nonumber
\end{align}
where $ C $ depends on $ d,m $ and $ \mu $. This directly implies the claim by noticing that $ \|h_{\va}\|_{L^{\infty}(\om)}\leq C\va $ and using the inequality
\begin{align}
\int_{\om}|\nabla h_{\va}|^2|w_{\va,\lambda}|^2dx\leq \frac{1}{2}\int_{\om}|\nabla h_{\va}|^2|w_{\va,\lambda}|^2dx+C\int_{\om}| h_{\va}|^2|\nabla w_{\va,\lambda}|^2dx,\nonumber
\end{align}
where we have used the $ \eqref{inte} $. Then the claim implies that
\begin{align}
|J_{\va,\lambda,\om}[u_{0,\lambda},w_{\va,\lambda}]|\leq C\va|\lambda|\|\nabla u_{0,\lambda}\|_{L^2(\om)}\|w_{\va,\lambda}\|_{L^2(\om)}+C\va\|\nabla^2 u_{0,\lambda}\|_{L^2(\om)}\|\nabla w_{\va,\lambda}\|_{L^2(\om)}.\label{Estimate of J}
\end{align}
In view of $ \eqref{ABCDE} $, $ \eqref{inte} $ and $ \eqref{Equation 1} $, it can be easily shown that
\begin{align}
\|w_{\va,\lambda}\|_{L^2(\om)}^2&\leq \frac{Cc(\lambda,\theta)}{R_0^{-2}+|\lambda|}|B_{\va,\lambda,\om}[w_{\va,\lambda},w_{\va,\lambda}]|\leq \frac{Cc(\lambda,\theta)}{R_0^{-2}+|\lambda|}|J_{\va,\lambda,\om}[u_{0,\lambda},w_{\va,\lambda}]|\nonumber\\
&\leq \frac{C\va c(\lambda,\theta)}{R_0^{-2}+|\lambda|}\left\{|\lambda|\|\nabla u_{0,\lambda}\|_{L^2(\om)}\|w_{\va,\lambda}\|_{L^2(\om)}+\|\nabla^2 u_{0,\lambda}\|_{L^2(\om)}\|\nabla w_{\va,\lambda}\|_{L^2(\om)}\right\}\nonumber\\
&\leq C\va^2 c^2(\lambda,\theta)\|\nabla u_{0,\lambda}\|_{L^2(\om)}^2+\frac{1}{2}\|w_{\va,\lambda}\|_{L^2(\om)}^2+\frac{C\va c(\lambda,\theta)}{R_0^{-2}+|\lambda|}\|\nabla^2u_{0,\lambda}\|_{L^2(\om)}\|\nabla w_{\va,\lambda}\|_{L^2(\om)}.\nonumber
\end{align}
Then it is not hard to obtain the $ L^2 $ estimate of $ w_{\va,\lambda} $, that is,
\begin{align}
\|w_{\va,\lambda}\|_{L^2(\om)}^2\leq C\va^2 c^2(\lambda,\theta)\|\nabla u_{0,\lambda}\|_{L^2(\om)}^2+\frac{C\va c(\lambda,\theta)}{R_0^{-2}+|\lambda|}\|\nabla^2u_{0,\lambda}\|_{L^2(\om)}\|\nabla w_{\va,\lambda}\|_{L^2(\om)}.\label{wvadiyi}
\end{align}
Similarly, owing to $ \eqref{laxmil} $, it can be obtained without difficulty that
\begin{align}
\|\nabla w_{\va,\lambda}\|_{L^2(\om)}^2&\leq Cc(\lambda,\theta)|B_{\va,\lambda,\om}[w_{\va,\lambda},w_{\va,\lambda}]|\leq Cc(\lambda,\theta)|J_{\va,\lambda,\om}[u_{0,\lambda},w_{\va,\lambda}]|\nonumber\\
&\leq C\va c(\lambda,\theta)\left\{|\lambda|\|\nabla u_{0,\lambda}\|_{L^2(\om)}\|w_{\va,\lambda}\|_{L^2(\om)}+\|\nabla^2 u_{0,\lambda}\|_{L^2(\om)}\|\nabla w_{\va,\lambda}\|_{L^2(\om)}\right\}\nonumber\\
&\leq C\va^2 c^2(\lambda,\theta)|\lambda|\|\nabla u_{0,\lambda}\|_{L^2(\om)}^2+|\lambda|\|w_{\va,\lambda}\|_{L^2(\om)}^2+C\va c(\lambda,\theta)\|\nabla^2 u_{0,\lambda}\|_{L^2(\om)}\|\nabla w_{\va,\lambda}\|_{L^2(\om)}.\nonumber
\end{align}
This, together with $ \eqref{inte} $ and $ \eqref{wvadiyi} $, gives that
\begin{align}
\|\nabla w_{\va,\lambda}\|_{L^2(\om)}^2&\leq C\va^2c^2(\lambda,\theta)|\lambda|\|\nabla u_{0,\lambda}\|_{L^2(\om)}^2+C\va c(\lambda,\theta)\|\nabla^2 u_{0,\lambda}\|_{L^2(\om)}\|\nabla w_{\va,\lambda}\|_{L^2(\om)}\nonumber\\
&\leq C\va^2c^2(\lambda,\theta)|\lambda|\|\nabla u_{0,\lambda}\|_{L^2(\om)}^2+C\va^2 c^2(\lambda,\theta)\|\nabla^2 u_{0,\lambda}\|_{L^2(\om)}^2+\frac{1}{2}\|\nabla w_{\va,\lambda}\|_{L^2(\om)}^2.\nonumber
\end{align}
In view of $ \eqref{L2uva}$ and $ \eqref{L2n2u0} $, we can estimate $ \|\nabla u_{0,\lambda}\|_{L^2(\om)} $ and $ \|\nabla^2u_{0,\lambda}\|_{L^2(\om)} $. Then
\begin{align}
\|\nabla w_{\va,\lambda}\|_{L^2(\om)}\leq C\va c^2(\lambda,\theta)\|F\|_{L^2(\om)},\nonumber
\end{align}
which completes the proof of $ \eqref{Convergence rate 1} $. This, together with $\eqref{wvadiyi} $, shows $ \eqref{Convergence rate 11} $.
\end{proof}

\begin{proof}[Proof of Theorem \ref{Lpconres}]
In view of the representation formula $ \eqref{repre} $ and $ \eqref{Convergence of Green's functions formula} $, it can be seen that
\begin{align}
\|R(\lambda,\mathcal{L}_{\va})-R(\lambda,\mathcal{L}_{0})\|_{L^{\infty}(\om)\to L^{\infty}(\om)}&\leq C_{\theta_0}\va(R_0^{-2}+|\lambda|)^{-\frac{1}{2}},\nonumber\\
\|R(\lambda,\mathcal{L}_{\va})-R(\lambda,\mathcal{L}_{0})\|_{L^{1}(\om)\to L^{1}(\om)}&\leq C_{\theta_0}\va(R_0^{-2}+|\lambda|)^{-\frac{1}{2}}.\nonumber
\end{align}
These, together with $ \eqref{Operator estimate 11} $ and the M. Riesz interpolation theorem, give $ \eqref{**-} $.
\end{proof}

\begin{proof}[Proof of Theorem \ref{LpW1pconreso}]
It follows directly from $ \eqref{Convergence of Green's functions 2} $ and the arguments in Theorem 6.5.2 in \cite{Shen2}. We give the proof here for the sake of completeness. Let $ u_{\va,\lambda},u_{0,\lambda}\in H_0^1(\om;\mathbb{C}^m) $ such that $ (\mathcal{L}_{\va}-\lambda I)(u_{\va,\lambda})=F $ and $ (\mathcal{L}_{0}-\lambda I)(u_{0,\lambda})=F $ with $ F\in L^p(\om;\mathbb{C}^m) $. By using $ \eqref{Estimate for Dirichlet correctors} $ and $ \eqref{W2p} $, we only need to show that for any $ 1\leq p\leq \infty $ and $ 1\leq\al\leq m $,
\begin{align}
\|\frac{\pa u_{\va,\lambda}^{\al}}{\pa x_i}-\frac{\pa\Phi_{\va,j}^{\al\beta}}{\pa x_i}\frac{\pa u_{0,\lambda}^{\beta}}{\pa x_j}\|_{L^p(\om)}\leq C_{\theta_0}\va\{\ln[\va^{-1}R_0+2]\}^{4|\frac{1}{2}-\frac{1}{p}|}\|F\|_{L^p(\om)}.\nonumber
\end{align}
In view of Theorem \ref{Convergence of Green's functions 2t}, we can obtain that
\begin{align}
\left|\frac{\pa u_{\va,\lambda}^{\al}}{\pa x_i}-\frac{\pa\Phi_{\va,j}^{\al\beta}}{\pa x_i}\frac{\pa u_{0,\lambda}^{\beta}}{\pa x_j}\right|\leq C_{\theta_0}\int_{\om}K_{\va}(x,y)|F(y)|dy,\nonumber
\end{align}
where the kernel $ K_{\va} $ is defined by
\begin{align}
K_{\va}(x,y)=\left\{\begin{matrix}
\va|x-y|^{-d}\ln[\va^{-1}|x-y|+2]&\text{if}&|x-y|\geq\va,\\
|x-y|^{1-d}&\text{if}&|x-y|<\va.
\end{matrix}\right.\nonumber
\end{align}
Direct computations imply that
\begin{align}
\sup_{x\in\om}\int_{\om}K_{\va}(x,y)dy+\sup_{y\in\om}\int_{\om}K_{\va}(x,y)dy\leq C_{\theta_0}\{\ln[\va^{-1}R_0+2]\}^2.\label{Kva1}
\end{align}
This gives $ \eqref{Convergence rate LpW1p} $ for cases $ p=1 $ and $ \infty $. Thus, by the M. Riesz interpolation theorem, the result is a direct consequence of the case $ p=2 $, which is given by $ \eqref{Convergence rate 1} $.
\end{proof}

\section*{Acknowledgments}
I am grateful to Professor Jun Geng of Lanzhou University for warm guidance on the topics of the homogenization theory for elliptic systems. I am also grateful to Professor Zhifei Zhang of Peking University for some important inspirations on the homogenization theory. I sincerely thank the anonymous reviewers for their constructive revision suggestions.


\begin{thebibliography}{99}

\bibitem{RA}
\newblock R.A. Adams and J.J.F. Fournier,
\newblock Sobolev spaces,
\newblock Elsevier/Academic Press, Amsterdam, 2003.

\bibitem{Av1}
\newblock M. Avellaneda and F. Lin,
\newblock Homogenization of elliptic problems with $ L^p $,
\newblock \emph{Applied Mathematics and Optimization}, \textbf{15} (1987), 93-107.

\bibitem{Av2}
\newblock M. Avellaneda and F. Lin,
\newblock Compactness methods in the theory of homogenization,
\newblock \emph{Communications on pure and applied mathematics}, \textbf{40} (1987), 803-847.

\bibitem{Bensou}
\newblock A. Bensoussan, J.L. Lions and G. Papanicolaou,
\newblock Asymptotic analysis for periodic structures,
\newblock American Mathematical Soc, 2011

\bibitem{Birman1}
\newblock M.S. Birman and T.A. Suslina,
\newblock Threshold effects near the lower edge of the spectrum for periodic differential operators of mathematical physics,
\newblock Birkhäuser, Basel, 2001.

\bibitem{Birman2}
\newblock M.S. Birman and T.A. Suslina,
\newblock Second order periodic differential operators. Threshold properties and homogenization,
\newblock \emph{St. Petersburg Mathematical Journal}, \textbf{15} (2004), 639-714.

\bibitem{Caf1}
\newblock L. Caffarelli and I. Peral, 
\newblock On $ W^{1,p} $ estimates for elliptic equations in divergence form,
\newblock \emph{Communications on pure and applied mathematics} \textbf{51} (1998), 1-21.

\bibitem{Dong2}
\newblock H. Dong and S. Kim,
\newblock Green's matrices of second order elliptic systems with measurable coefficients in two dimensional domains,
\newblock \emph{Transactions of the American Mathematical Society}, \textbf{361} (2009), 3303-3323.

\bibitem{Dong1}
\newblock H. Dong and S. Kim,
\newblock Green's function for nondivergence elliptic operators in two dimensions,
\newblock \emph{SIAM Journal on Mathematical Analysis}, \textbf{53} (2021), 4637-4656.

\bibitem{MG}
\newblock M. Giaquinta and L. Martinazzi,
\newblock An introduction to the regularity theory for elliptic systems, harmonic maps and minimal graphs,
\newblock Edizioni della Normale, Pisa, 2005.

\bibitem{Hofmann1}
\newblock S. Hofmann and S. Kim,
\newblock The Green function estimates for strongly elliptic systems of second order,
\newblock \emph{Manuscripta Mathematica}, \textbf{124} (2007), 139-172.

\bibitem{Lin1}
\newblock F. Lin and Z. Shen,
\newblock Uniform boundary controllability and homogenization of wave equations,
\newblock \emph{Journal of the European Mathematical Society}, \textbf{24} (2021), 3031-3053.

\bibitem{Kang1}
\newblock B. Kang and H. Kim,
\newblock On $ L^p $-Resolvent estimates for second-order elliptic equations in divergence Form
\newblock \emph{Potential Analysis}, \textbf{50} (2019), 107-133.

\bibitem{Kenig3}
\newblock C.E. Kenig, F. Lin and Z. Shen,
\newblock Estimates of eigenvalues and eigenfunctions in periodic homogenization,
\newblock \emph{Journal of the European Mathematical Society}, \textbf{15} (2013), 1901-1925.

\bibitem{Kenig4}
\newblock C.E. Kenig, F. Lin and Z. Shen,
\newblock Periodic homogenization of Green and Neumann functions,
\newblock \emph{Communications on pure and applied mathematics} \textbf{67} (2014), 1219–1262.

\bibitem{Meshkova1}
\newblock Y.M. Meshkova and T.A Suslina,
\newblock Homogenization of initial boundary value problems for parabolic systems with periodic coefficients,
\newblock \emph{Applicable Analysis}, \textbf{95} (2016), 1736-1775.

\bibitem{Meshkova2}
\newblock Y.M. Meshkova,
\newblock On homogenization of the first initial-boundary value problem for periodic hyperbolic systems,
\newblock \emph{Applicable Analysis}, \textbf{99} (2020), 1528-1563.

\bibitem{Shen1}
\newblock Z. Shen,
\newblock Resolvent estimates in $ L^p $ for elliptic systems in Lipschitz domains,
\newblock \emph{Journal of Functional Analysis}, \textbf{133} (1995), 224-251.

\bibitem{Shen4}
\newblock Z. Shen,
\newblock The $ L^p $ boundary value problems on Lipschitz domains,
\newblock \emph{Advances in Mathematics}, \textbf{216} (2007), 212–254.

\bibitem{Shen3}
\newblock Z. Shen,
\newblock $ W^{1,p} $ estimates for elliptic homogenization problems in nonsmooth domains,
\newblock \emph{Indiana University Mathematics Journal}, \textbf{57} (2008), 2283–2298.

\bibitem{Shen2}
\newblock Z. Shen,
\newblock Periodic homogenization of elliptic systems,
\newblock Birkhäuser/Springer, 2018.

\bibitem{Su1}
\newblock T.A. Suslina,
\newblock Homogenization of elliptic operators with periodic coefficients in dependence of the spectral parameter,
\newblock \emph{St. Petersburg Mathematical Journal}, \textbf{27} (2016), 651-708.

\bibitem{Taylor}
\newblock J.L. Taylor, S. Kim and R.M. Brown,
\newblock The Green function for elliptic systems in two dimensions,
\newblock \emph{Communications in Partial Differential Equations}, \textbf{38} (2013), 1574-1600.

\bibitem{Wei1}
\newblock W. Wei and Z. Zhang,
\newblock $ L^p $ resolvent estimates for variable coefficient elliptic systems on Lipschitz domains,
\newblock \emph{Analysis and Applications}, \textbf{13} (2015), 591-609.

\bibitem{Wei2}
\newblock W. Wei and Z. Zhang,
\newblock $ L^p $ resolvent estimates for constant coefficient elliptic systems on Lipschitz domains,
\newblock \emph{Journal of Functional Analysis}, \textbf{267} (2014), 3262-3293.

\bibitem{Xu1}
\newblock Q. Xu,
\newblock Uniform regularity estimates in homogenization theory of elliptic system with lower order terms,
\newblock \emph{Journal of Mathematical Analysis and Applications}, \textbf{438} (2016), 1066-1107.

\bibitem{Xu2}
\newblock Q. Xu,
\newblock The methods of layer potentials for general elliptic homogenization problems in Lipschitz domains,
\newblock arXiv preprint, arXiv:1801.09220 (2018).

\end{thebibliography}
\end{document}